\documentclass[10pt,a4paper]{amsart}
\usepackage[utf8]{inputenc}
\usepackage[T1]{fontenc}
\usepackage{amsmath}
\usepackage{amsfonts}
\usepackage{amssymb}
\usepackage[english]{babel}

\newtheorem{defi}{Definition}
\newtheorem{prop}{Proposition}
\newtheorem{con}{Conjecture}
\newtheorem{thm}{Theorem}
\newtheorem{que}{Question}
\newtheorem{ex}{Example}
\newtheorem{cor}{Corollary}
\newtheorem{rem}{Remark}
\newtheorem*{lem*}{Lemma}

\def\cal{\mathcal}
\def\bbb{\mathbb}

\newcommand{\lcm}{\emph{lcm}}
\newcommand{\elcm}{\mbox{\scriptsize{lcm}}}
\newcommand{\eqlcm}{\mbox{lcm}}
\newcommand{\sgn}{\mbox{sgn}}
\newcommand{\bs}{\backslash}
\newcommand{\N}{\mathbb{N}}
\newcommand{\Z}{\mathbb{Z}}
\newcommand{\Q}{\mathbb{Q}}

\newcommand{\R}{\mathbb{R}}

\newcommand{\eps}{\varepsilon}

\theoremstyle{remark}

\title[Arithmetic properties of the sequence of derangements]{Arithmetic properties of the sequence of derangements and its generalizations}
\author{Piotr Miska}
\address{Institute of Mathematics \\
	Faculty of Mathematics and Computer Science \\
	Jagiellonian University in Cracow
}
\email{piotr.miska@uj.edu.pl}

\keywords{derangement, Hensel's lemma, $p$-adic valuation, periodicity, prime number} \subjclass[2010]{11B50, 11B83}

\begin{document}

\setlength{\parindent}{10mm}
\maketitle

\begin{abstract}
The sequence of derangements is given by the formula $D_0 = 1, D_n = nD_{n-1} + (-1)^n, n>0$. It is a classical object appearing in combinatorics and number theory. In this paper we consider two classes of sequences: first class is given by the formulae $a_0 = h_1(0), a_n = f(n)a_{n-1} + h_1(n)h_2(n)^n, n>0$, where $f,h_1,h_2 \in\Z[X]$, and the second one is defined by $a_n = \sum_{j=0}^n \frac{n!}{j!} h(n)^j, n\in\N$, where $h\in\Z[X]$. Both classes are a generalization of the sequence of derangements. We study such arithmetic properties of these sequences as: periodicity modulo $d$, where $d\in\N_+$, $p$-adic valuations, asymptotics, boundedness, periodicity, recurrence relations and prime divisors. Particularly we focus on the properties of the sequence of derangements and use them to establish arithmetic properties of the sequences of even and odd derangements.
\end{abstract}

\tableofcontents

\section{Introduction}

By the term of \emph{derangement} we call a permutation in $S_n$ without fixed points. We define \emph{the $n$-th number of derangements} as the number of all derangements of the set with $n$ elements. We denote this number by $D_n$. The sequence $(D_n)_{n\in\N}$, can be described by the recurrence $D_0 = 1, D_n = nD_{n-1} + (-1)^n, n>0$. The sequence $(D_n)_{n\in\N}$ is a subject of reaserch of many mathematicians. It is connected to other well known sequences. In particular, the sequence of numbers of derangements (or shortly, the sequence of derangements) appears in a natural way in the paper \cite{SunZa}, devoted to the Bell numbers.

In \cite{Mi} we gave and proved a criterion for behavior of $p$-adic valuation of the Schenker sum $a_n$, given by the formula $a_n = \sum_{j=0}^n \frac{n!}{j!} n^j$, $n\in\N$. We expected that the method of proving this criterion could be generalized to other class of integer sequences. The trial of generalization of this method is one of the motivations for preparing this paper.

In Section \ref{sec1} we set conventions and recall facts which are used in further parts of the thesis.

In Section \ref{sec2} we define pseudo-polynomial decomposition modulo $p$ of a given sequence. If a sequence has this property then we can use the same method of proof as in \cite{Mi} to obtain the description of $p$-adic valuation of elements of this sequence. Furthermore, we show that a sequence with pseudo-polynomial decomposition modulo $p$ can be expressed as a product of functions $f$ and $g$, where $f:\Z_p\rightarrow\Z_p$ is a $p$-adic continuous function which can be approximated by polynomials with integer coefficients and $g:\N\rightarrow\Z_p\bs p\Z_p$. The last part of Section \ref{sec2} is devoted to description of $p$-adic valuation of the exponential function $\Z\ni n\mapsto a^n\in\Z$.

The results from Section \ref{sec2} are used in Section \ref{sec3} to study arithmetic properties of a family of sequences ${\bf a}={\bf a}(f,h_1,h_2)=(a_n)_{n\in\N}$ given by the recurrence relation
\begin{equation}\label{def}
a_0 = h_1(0), a_n = f(n)a_{n-1} + h_1(n)h_2(n)^n, n>0,
\end{equation}
where $f,h_1,h_2\in\Z[X]$. Let us define 
$$
\cal{R}:=\{{\bf a}(f,h_1,h_2)\in\R^{\N}:\;f, h_{1}, h_{2}\in\Z[X]\}.
$$ If $f=X$, $h_1=1$ and $h_2=-1$ then we obtain the sequence of derangements $(D_n)_{n\in\N}$, hence the class of sequences given by the relation (\ref{def}) can be treated as a generalization of the sequence of derangements.

Section \ref{subsec3.1} is concerned with the periodicity of the sequences $(a_n\pmod{d})_{n\in\N}$ of remainders modulo $d$ of a given sequence $(a_n)_{n\in\N}$, where $d\in\N_+$. Moreover we focus on $p$-adic valuations of the sequence $(a_n)_{n\in\N}$, when $p\mid f(n)$ for some $n\in\N$ and $h_2=\pm 1$. Due to the divisibility $n-1\mid D_n$ for all $n\in\N$, we study closer prime divisors and $p$-adic valuations of the sequence $(\frac{D_n}{n-1})_{n\in\N_2}$. We prove that the set of prime divisors of the numbers $\frac{D_n}{n-1}$, $n\geq 2$, is infinite.

Section \ref{subsec3.2} is devoted to asymptotics of a given sequence ${\bf a}\in\cal{R}$ and connection between boundedness and periodicity of this sequence. The main result of this section is that each bounded sequence $(a_n)_{n\in\N}$ is ultimately constant or ultimately periodic with period 2.

In Section \ref{subsec3.3} we obtain some recurrence relations for a sequence ${\bf a}(f,h_{1},h_{2})\in\cal{R}$, when $h_2=\pm 1$. Then we study real roots of the polynomials ocurring in these relations and we conclude that for $d\in\N_+$ the polynomial
\begin{equation*}
f_d = \sum_{j=0}^{d-1} (-1)^j \prod_{i=0}^{j-1} (X-i) \in\Z[X],
\end{equation*}
which arises in the formula
\begin{equation*}
D_n = D_{n-d}\prod_{i=0}^{d-1} (n-i) + (-1)^n f_d(n), \quad n\geq d,
\end{equation*}
has exactly $d-1$ real roots and exactly one rational root 1.

Section \ref{subsec3.4} deals with divisors of terms of a sequence ${\bf a}\in\cal{R}$. Section \ref{subsubsec3.4.1} is a trial of generalization of the result from Section \ref{subsec3.1} that there are infinitely many prime divisors of the numbers $\frac{D_n}{n-1}$, $n\geq 2$. We give some conditions for infinitude of set of prime divisors of a given sequence ${\bf a}$. The last two results in Section \ref{subsubsec3.4.1} show that if a sequence $(a_n)_{n\in\N}$ is given by the formula $a_0 = c, a_n = (b_1n+b_0)a_{n-1} + c, n>0$ for some integers $b_0,b_1,c$ and $b_0,b_1$ are not simultaneously 0 then there are infinitely many prime divisors of the numbers $a_n$, $n\in\N$. In Section \ref{subsubsec3.4.2} we generalize the property $n-1\mid D_n$, $n\in\N$. Namely, we consider sequences given by the formula $a_0=h_1(0), a_n = (n-b)a_{n-1} + h_1(n)h_2(n)^n, n>0$, where $b\in\Z$ is fixed, and study when $n-b-1\mid a_n$.

In Section \ref{sec4} we use the results on the sequence of derangements to obtain arithmetic properties of the sequences of even and odd derangements. First of all we present recurrence relations for these two sequences. We obtain relations involving numbers of even and odd derandements in order to write them as expressions dependent on numbers of derangements. Next we show their asymptotics and periodicity modulo $d$, where $d\in\N_+$. From the periodicity properties we conclude divisibilities of these numbers and describe their $p$-adic valuations.

The subject of Section \ref{sec4,5} are diophantine equations with numbers of usual, odd and even derangements, respectively. In Section \ref{subsec4,5.1} we find all the numbers of usual and odd derangements which are factorials. Meanwhile in Section \ref{subsec4,5.2} we try to establish for which indices $n$ the numbers of usual, odd and even derangements, respectively, are powers of prime numbers.

Section \ref{sec5} is devoted to the $h$-Schenker sums, given by the formula
\begin{equation*}
a_n = \sum_{j=0}^n \frac{n!}{j!} h(n)^j, n\in\N,
\end{equation*}
where $h$ is a given polynomial with integer coefficients. If $h=X$ then we obtain the sequence of Schenker sums, hence the motivation to call the mentioned class of sequences by $h$-Schenker sums. If $h=-1$ then $h$-Schenker sums are numbers of derangements, so the sequence of $h$-Schenker sums can be seen as a generalization of the sequence of derangements. In \cite{AmdCalMo} and \cite{Mi} there were established some results on $p$-adic valuations of Schenker sums and infinitude of the set of so-called Schenker primes (such prime numbers $p$ that $p\mid a_n$ for some $n\in\N$ not divisible by $p$). In Section \ref{sec5} we generalize these results.

In Section \ref{subsec5.1} we prove periodicty modulo $d$ of $h$-Schenker sums for a given $d\in\N_+$ and describe their $p$-adic valuations. During considerations on $p$-adic valuations we define $h$-Schenker prime as prime number $p$ such that $p\mid a_n$ and $p\nmid h(n)$ for some $n\in\N$.

Section \ref{subsec5.2} starts with giving bounds on absolute values of $h$-Schenker sums. Next these bounds are used to establish infinitude of the set of $h$-Schenker primes for $h\neq 0$.

\section{Definitions and conventions}\label{sec1}

We assume that $\N = \{0, 1, 2, 3, ...\}$ and $\N_+ = \{1, 2, 3, ...\}$. For a given positive integer $k$ we denote the set of all integers greater than or equal to $k$ by $\N_k$. We denote the set of all prime numbers by $\bbb{P}$.

We set a convention that $0^0 = 1$, $\sum_{i=j}^k = 0$ and $\prod_{i=j}^k = 1$, when $j,k\in\Z$ and $j>k$.

Let $a,b\in\Z$ and $b\neq 0$. Then by $a\pmod{b}$ we denote the remainder from the division of $a$ by $b$.

By $s_d(n)$ we denote the sum of digits of positive integer $n$ in base $d$, i.e. if $n=\sum_{i=0}^m c_id^i$ is an expansion of $n$ in base $d$ then $s_d(n)=\sum_{i=0}^m c_i$.

Let $A,B$ be topological spaces. The set of all continuous functions $f:A\rightarrow B$ we denote by $\cal{C}(A,B)$. If $C$ is a subset of $A$ then its closure in $A$ we denote by $\overline{C}$.

Fix a prime number $p$. Every nonzero rational number $x$ can be written in the form $x=\frac{a}{b}p^t$, where $a\in\mathbb{Z}$, $b\in\mathbb{N}_+$, $\gcd(a,b)=1$, $p\nmid ab$ and $t\in\Z$. Such a representation of $x$ is unique, thus the number $t$ is well defined. We call $t$ the $p$-adic valuation of the number $x$ and denote it by $v_p(x)$. By convention, $v_p(0)=+\infty$. In particular, if $x\in\mathbb{Q}\setminus\lbrace 0\rbrace$ then $\vert x\vert=\prod_{p \mbox{\scriptsize{ prime}}}p^{v_p(x)}$, where $v_p(x)\neq 0$ for finitely many prime numbers $p$.

For every rational number $x$ we define its $p$-adic norm $|x|_p$ by the formula
\begin{equation*}
|x|_p =
\begin{cases}
p^{-v_p(x)}, & \mbox{when } x\neq 0
\\ 0, & \mbox{when } x=0
\end{cases}.
\end{equation*}
Since for all rational numbers $x,y$ we have $|x+y|_p \leq \min\{ |x|_p, |y|_p\}$, hence $p$-adic norm gives a metric space structure on $\Q$. Namely, the distance between rational numbers $x,y$ is equal to $d_p(x,y) = |x-y|_p$.

The field $\Q$ equipped with $p$-adic metric $d_p$ is not a complete metric space. The completion of $\Q$ with respect to this metric has structure of field and this field is called the field of $p$-adic numbers $\Q_p$. We extend the $p$-adic valuation and $p$-adic norm on $\Q_p$ in the following way: $v_p(x) = \lim_{n\rightarrow +\infty} v_p(x_n)$, $|x|_p = \lim_{n\rightarrow +\infty} |x_n|_p$, where $x\in\Q_p$, $(x_n)_{n\in\N} \subset \Q$ and $x = \lim_{n\rightarrow +\infty} x_n$. The values $v_p(x)$ and $|x|_p$ do not depend on the choice of a sequence $(x_n)_{n\in\N}$, thus they are well defined (see \cite{Cas}).

We define the ring of integer $p$-adic numbers $\Z_p$ as a set of all $p$-adic numbers with nonnegative $p$-adic valuation. Note that $\Z_p$ is the completion of $\Z$ as a space with $p$-adic metric.

We assume that the expression $x \equiv y \pmod{p^k}$ means $v_p(x-y) \geq k$ for prime number $p$, an integer $k$ and $p$-adic numbers $x,y$.

By the term $p$-adic continuous function we mean function $f:S\rightarrow\Q_p$ defined on some subset $S$ of $\Q_p$, which is continuous with respect to $p$-adic metric. By the term $p$-adic contraction we mean such function $f:S\rightarrow\Q_p$ that $|f(x)-f(y)|_p \leq |x-y|_p$ for arbitrary $x,y\in S$. Assuming that $S$ is an open subset of $\Q_p$, we will say that $f$ is differentiable at a point $x_0\in S$, if there exists a limit $\lim_{x\rightarrow x_0}\frac{f(x)-f(x_0)}{x-x_0}$. In this situation this limit we will call the derivative of $f$ at the point $x_0$ and denote it by $f'(x_0)$.

We use the Landau symbol $O$ in the following sense: if $f,g:\N\rightarrow\R$ are two real-valued functions defined on $\N$ then $f(n)$ is $O(g(n))$, when there exists such a constant $M\geq 0$ and a nonnegative integer $n_0$ that $|f(n)|\leq M|g(n)|$ for $n\geq n_0$. In general, if there exists an $n_0\in\N$ such that some property holds for $n\geq n_0$ then for simplicity of notation we will write that this property is satisfied for $n\gg 0$.

\pagebreak

\section{Hensel's lemma for $p$-adic continuous functions approximated by polynomials over $\Z$}\label{sec2}

In \cite{Mi} there was presented a consideration which allows to describe $p$-adic valuation of Schenker sums, given by the formula $a_n = \sum_{j=0}^n \frac{n!}{j!} n^j$. In this section we will extend this method to more general class of sequences. The results given in the following section will be used in the sequel.

\subsection{Hensel's lemma for pseudo-polynomial decomposition modulo $p$}

\begin{defi}\label{def1}
Let $p$ be a prime number and $(a_n)_{n\in\N}\subset\Z_p$. By \emph{pseudo-polynomial decomposition of the sequence $(a_n)_{n\in\N}$ modulo $p$ on a set $S\subset\N$} we mean a sequence of pairs $(f_{p,k},g_{p,k})_{k\in\N_2}$ such that:
\begin{itemize}
\item $f_{p,k}\in\Z_p[X]$, $g_{p,k}:S\rightarrow\Z_p\bs p\Z_p$, $k\geq 2$;
\item $a_n \equiv f_{p,k}(n)g_{p,k}(n) \pmod{p^k}$ for all $n\in S$, $k\geq 2$;
\item $f'_{p,k}(n) \equiv f'_{p,2}(n) \pmod{p}$ for any $k\geq 2$ and $n\in S$,
\end{itemize}
where $f'$ means the derivative of a polynomial $f$. We say that $(a_n)_{n\in\N}$ has a pseudo-polynomial decomposition modulo $p$ if it has a pseudo-polynomial decomposition modulo $p$ on $\N$.
\end{defi}

\begin{rem}
Let $n_k\in\N$ and assume that a set $S\in\N$ is dense in the set \newline$\{n\in\N: n\equiv n_k\pmod{p^k}\}$ with respect to $p$-adic metric. Since $\{n\in\N: n\equiv n_l\pmod{p^l}\} = \{n\in\N: d_p(n,n_l)<p^{1-l}\}$ for any integer $l\geq k$ and positive integer $n_l\equiv n_k\pmod{p^k}$, hence $S\cap\{n\in\N: n\equiv n_l\pmod{p^l}\}\neq\emptyset$.
\end{rem}

\begin{thm}[Hensel's lemma for pseudo-polynomial decomposition modulo $p$]\label{thm1}
Let $p$ be a prime number, $k\in\N_+, n_k\in\N$ be such that $p^k\mid a_{n_k}$ and assume that $(a_n)_{n\in\N}\subset\Z_p$ has a pseudo-polynomial decomposition modulo $p$ on $S\subset\N$, where $n_k\in S$. Let us define $q_p(n_k)=\frac{1}{p}\left(\frac{a_{n_k+p}}{g_{p,2}(n_k+p)}-\frac{a_{n_k}}{g_{p,2}(n_k)}\right)$.
\begin{itemize}
\item If $v_p(q_p(n_k))=0$ and $S$ is dense in the set $\{n\in\N: n\equiv n_k\pmod{p^k}\}$ with respect to $p$-adic metric then there exists a unique $n_{k+1}$ modulo  for which $n_{k+1}\equiv n_k \pmod{p^k}$ and $p^{k+1}\mid{a_n}$ for all $n\in S$ congruent to $n_{k+1}$ modulo $p^{k+1}$. What is more, $n_{k+1} \equiv n_k-\frac{a_{n_k}}{g_{p,k+1}(n_k)q_p(n_k)} \pmod{p^{k+1}}$.
\item If $v_p(q_p(n_k))>0$ and $p^{k+1}\mid{a_{n_k}}$ then $p^{k+1}\mid{a_n}$ for all $n\in S$ satisfying $n\equiv n_k \pmod{p^k}$.
\item If $v_p(q_p(n_k))>0$ and $p^{k+1}\nmid{a_{n_k}}$ then $p^{k+1}\nmid{a_n}$ for any $n\in S$ satisfying $n\equiv n_k \pmod{p^k}$.
\end{itemize}
In particular, if $k=1$, $p\mid{a_{n_1}}$, $v_p(q_p(n_1))=0$ then for any $l\in\mathbb{N}_+$ there exists a unique $n_l$ modulo $p^l$ such that $n_l\equiv n_1 \pmod{p}$ and $v_p(a_n)\geq{l}$ for all $n\in S$ congruent to $n_l$ modulo $p^l$. Moreover, $n_l$ satisfies the congruence $n_l \equiv n_{l-1}-\frac{a_{n_{l-1}}}{g_{p,l}(n_{l-1})q_p(n_1)} \pmod{p^l}$ for $l>1$.
\end{thm}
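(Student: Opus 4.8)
The plan is to transport the whole statement to a question about the single polynomial $f_{p,k+1}$ and then run the classical Hensel lifting argument. The starting observation is that, because $g_{p,k+1}(n)\in\Z_p\bs p\Z_p$ is a unit and $a_n\equiv f_{p,k+1}(n)g_{p,k+1}(n)\pmod{p^{k+1}}$, for every $n\in S$ and every $j\leq k+1$ one has $p^j\mid a_n$ if and only if $p^j\mid f_{p,k+1}(n)$; thus all divisibility claims about $a_n$ become divisibility claims about $f_{p,k+1}(n)$. The second, decisive, preliminary step is to identify $q_p(n_k)$ with a derivative modulo $p$. Replacing $a_{n_k}/g_{p,2}(n_k)$ and $a_{n_k+p}/g_{p,2}(n_k+p)$ by $f_{p,2}(n_k)$ and $f_{p,2}(n_k+p)$ (legitimate modulo $p^2$ since the $g$'s are units) and expanding $f_{p,2}$ by its Taylor formula at $n_k$, I obtain
\begin{equation*}
q_p(n_k)\equiv\frac{1}{p}\bigl(f_{p,2}(n_k+p)-f_{p,2}(n_k)\bigr)\equiv f'_{p,2}(n_k)\equiv f'_{p,k+1}(n_k)\pmod{p},
\end{equation*}
where the last congruence is precisely the third axiom of a pseudo-polynomial decomposition. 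Hence $v_p(q_p(n_k))=0$ is nothing but the Hensel non-degeneracy condition $f'_{p,k+1}(n_k)\not\equiv 0\pmod p$.

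With these two reductions the three cases all flow from one Taylor expansion. For $n\equiv n_k\pmod{p^k}$, write $n=n_k+tp^k$ with $t\in\Z_p$; since $k\geq 1$ the quadratic and higher terms carry a factor $p^{2k}$ and vanish modulo $p^{k+1}$, so
\begin{equation*}
f_{p,k+1}(n)\equiv f_{p,k+1}(n_k)+tp^k f'_{p,k+1}(n_k)\pmod{p^{k+1}}.
\end{equation*}
When $v_p(q_p(n_k))>0$ the linear term is already divisible by $p^{k+1}$, so $f_{p,k+1}(n)\equiv f_{p,k+1}(n_k)\pmod{p^{k+1}}$; this yields the second and third bullet points at once, according to whether $p^{k+1}$ divides $f_{p,k+1}(n_k)$ (equivalently $a_{n_k}$) or not. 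When $v_p(q_p(n_k))=0$, I solve $f_{p,k+1}(n_k)+tp^k f'_{p,k+1}(n_k)\equiv 0\pmod{p^{k+1}}$ for $t$; since $p^k\mid f_{p,k+1}(n_k)$ and $f'_{p,k+1}(n_k)$ is a unit, this fixes $t$ uniquely modulo $p$, hence a unique residue $n_{k+1}$ modulo $p^{k+1}$ inside the class of $n_k$ modulo $p^k$. The density of $S$ in $\{n\in\N:n\equiv n_k\pmod{p^k}\}$, via the Remark, guarantees that this class and each of the competing $p-1$ classes actually meet $S$, so that the uniqueness statement has content and $q_p$ is evaluated at admissible indices.

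To obtain the explicit formula I rewrite the solution as $tp^k\equiv -f_{p,k+1}(n_k)/f'_{p,k+1}(n_k)\pmod{p^{k+1}}$ and then use $f_{p,k+1}(n_k)\equiv a_{n_k}/g_{p,k+1}(n_k)\pmod{p^{k+1}}$ together with $f'_{p,k+1}(n_k)\equiv q_p(n_k)\pmod p$; because the numerator already has valuation $k$, replacing $f'_{p,k+1}(n_k)$ by $q_p(n_k)$ perturbs the quotient only by $O(p^{k+1})$, giving exactly $n_{k+1}\equiv n_k-\frac{a_{n_k}}{g_{p,k+1}(n_k)q_p(n_k)}\pmod{p^{k+1}}$. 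Finally, the ``in particular'' assertion is the case $k=1$ iterated: starting from $v_p(q_p(n_1))=0$, the first bullet produces $n_2,n_3,\ldots$, and non-degeneracy is preserved at every step because $n_l\equiv n_1\pmod p$ forces $q_p(n_l)\equiv f'_{p,2}(n_l)\equiv f'_{p,2}(n_1)\equiv q_p(n_1)\pmod p$, so $v_p(q_p(n_l))=0$ automatically; this is why the hypothesis need only be checked once. I expect the main technical obstacle to lie exactly in this last bookkeeping: ensuring that the density required at each successive level is available (each lift refines to a smaller residue class) and that the telescoping of the explicit formulae for the $n_l$ is carried out consistently with the chosen representatives.
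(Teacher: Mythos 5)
Your proof is correct and follows essentially the same route as the paper: reduce divisibility of $a_n$ to divisibility of $f_{p,k+1}(n)$ via the unit $g_{p,k+1}$, identify $q_p(n_k)$ with $f'_{p,k+1}(n_k)$ modulo $p$ through the Taylor expansion of $f_{p,2}$ and the third axiom of the decomposition, then run the Hensel lifting and the induction on $l$ for the $k=1$ case. The only cosmetic difference is that the paper cites the classical polynomial Hensel lemma (Theorem \ref{thm2}) as a black box for the three-case analysis, whereas you inline the degree-one Taylor argument that proves it.
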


In \cite{Mi} there was showed that the sequence of Schenker sums $(a_n)_{n\in\N}$ satisfies the congruence $a_n \equiv n^{n-p^k+2}f_{p,k}(n) \pmod{p^k}$ for each positive integer $k$, prime number $p$ and positive integer $n$ not divisible by $p$, where
\begin{equation*}
f_{p,k}=\sum_{j=0}^{d-1}X^{d-j-2}\prod_{i=0}^{j-1}(X-i).
\end{equation*}

Moreover, if $k_1,k_2\geq 2$ then $f'_{p,k_1}(n) \equiv f'_{p,k_2}(n) \pmod{p}$. This fact and Hensel's lemma allow to state the criterion for behavior of $p$-adic valuation of the Schenker sums.

In order to prove Theorem \ref{thm1} we will use the following version of Hensel's lemma (see \cite[p. 44]{Nar} and \cite[p. 49]{Cas}):

\begin{thm}[Hensel's lemma]\label{thm2}
Let $p$ be a prime number, $k$ be a positive integer and $f$ be a polynomial with integer $p$-adic coefficients. Assume that $f(n_0)\equiv 0 \pmod{p^k}$ for some integer $n_0$. Then the number of solutions $n$ of the congruence $f(n)\equiv 0 \pmod{p^{k+1}}$, satisfying the condition $n \equiv n_0 \pmod{p^k}$, is equal to:
\begin{itemize}
\item $1$, when $f'(n_0)\not\equiv 0 \pmod{p}$;
\item $0$, when $f'(n_0)\equiv 0 \pmod{p}$ and $f(n_0)\not\equiv 0 \pmod{p^{k+1}}$;
\item $p$, when $f'(n_0)\equiv 0 \pmod{p}$ and $f(n_0)\equiv 0 \pmod{p^{k+1}}$.
\end{itemize}
\end{thm}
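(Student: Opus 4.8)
The plan is to count the admissible residues directly by writing every candidate solution in the form $n=n_0+tp^k$ with $t\in\{0,1,\dots,p-1\}$. These $p$ values of $t$ parametrize, bijectively, exactly the residues modulo $p^{k+1}$ that reduce to $n_0$ modulo $p^k$, so the theorem reduces to deciding, for each such $t$, whether $f(n_0+tp^k)\equiv 0\pmod{p^{k+1}}$ and then counting the admissible $t$.

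The key tool will be the finite Taylor (Hasse) expansion of $f$ about $n_0$,
\begin{equation*}
f(n_0+h)=\sum_{i=0}^{\deg f}c_i(n_0)h^i,\qquad c_i(n_0)=\frac{f^{(i)}(n_0)}{i!}.
\end{equation*}
The first thing I would verify is that each coefficient $c_i(n_0)$ lies in $\Z_p$: writing $f=\sum_m a_mX^m$ with $a_m\in\Z_p$ gives $c_i(n_0)=\sum_m a_m\binom{m}{i}n_0^{m-i}$, and the binomial coefficients are ordinary integers, whence $c_i(n_0)\in\Z_p$. Substituting $h=tp^k$ and reducing modulo $p^{k+1}$, every term with $i\geq 2$ carries a factor $p^{ik}$ with $ik\geq 2k\geq k+1$ (this is where $k\geq 1$ is used), and therefore vanishes. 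Hence
\begin{equation*}
f(n_0+tp^k)\equiv f(n_0)+f'(n_0)\,t\,p^k\pmod{p^{k+1}}.
\end{equation*}

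Writing $f(n_0)=p^k b$ with $b\in\Z_p$, which is legitimate by the hypothesis $p^k\mid f(n_0)$, the target congruence $f(n_0+tp^k)\equiv 0\pmod{p^{k+1}}$ becomes $p^k\bigl(b+f'(n_0)t\bigr)\equiv 0\pmod{p^{k+1}}$, that is, the \emph{linear} congruence
\begin{equation*}
f'(n_0)\,t\equiv -b\pmod{p}.
\end{equation*}
The final step is then the elementary case analysis of this single linear congruence in $t$ modulo $p$. If $f'(n_0)\not\equiv 0\pmod p$, then $f'(n_0)$ is invertible modulo $p$ and there is a unique admissible $t$, hence a unique $n$ modulo $p^{k+1}$, giving the first case. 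If $f'(n_0)\equiv 0\pmod p$, the congruence collapses to the $t$-independent condition $b\equiv 0\pmod p$; since $b\equiv 0\pmod p$ is equivalent to $v_p(f(n_0))\geq k+1$, i.e.\ to $f(n_0)\equiv 0\pmod{p^{k+1}}$, this condition either fails—yielding no admissible $t$, the second case—or holds—making all $p$ values of $t$ admissible, the third case.

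I expect the only genuinely delicate point to be the justification that the divided derivatives $c_i(n_0)$ are $p$-adic integers, since this is precisely what forces the quadratic and higher terms of the expansion to disappear modulo $p^{k+1}$ and thereby linearizes the problem. Once that integrality is in hand, everything else is the routine reduction to, and analysis of, a linear congruence modulo $p$.
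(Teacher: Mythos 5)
Your proof is correct. The paper does not supply its own argument for this theorem (it cites Narkiewicz and Cassels), and your Taylor-expansion reduction to a linear congruence in $t$ modulo $p$ — including the verification that the divided derivatives $f^{(i)}(n_0)/i!$ lie in $\Z_p$ — is exactly the standard proof found in those references.
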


Now we are ready to prove Theorem \ref{thm1}.

\begin{proof}[Proof of Theorem \ref{thm1}]
Let us note that if $f\in\Z_p[X]$ then for any $x_0\in\Z_p$ there exists an $r\in\Z_p[X]$ such that
\begin{equation}\label{eq1}
f(X) = f(x_0) + (X-x_0)f'(x_0) + (X-x_0)^2r(X).
\end{equation}

Using the equality above for $f=f_{p,2}$, $x_0=n_k$ and $X=n_k+p$, we have:
\begin{equation*}
f_{p,2}(n_k+p) \equiv f_{p,2}(n_k) + pf'_{p,2}(n_k) \pmod{p^2}.
\end{equation*}

This congruence and Definition \ref{def1} imply the following:
\begin{equation*}
\begin{split}
& f'_{p,k+1}(n_k) \equiv f'_{p,2}(n_k) \equiv \frac{1}{p}(f_{p,2}(n_k+p) - f_{p,2}(n_k)) \equiv \\
& \equiv \frac{1}{p}\left(\frac{a_{n_k+p}}{g_{p,2}(n_k+p)} - \frac{a_{n_k}}{g_{p,2}(n_k)}\right) = q_p(n_k) \pmod{p}.
\end{split}
\end{equation*}

Thus $q_p(n_k) \equiv f'_{p,k+1}(n_k) \pmod{p}$. Since $p\nmid g_{p,k+1}(n)$ for each nonnegative $n \equiv n_k \pmod{p^k}$, hence $v_p(a_n)=v_p(f_{p,k+1}(n))$ for such $n$. By Theorem \ref{thm2} we conclude that:
\begin{itemize}
\item if $v_p(q_p(n_k))=0$ then $f'(n_0)\not\equiv 0 \pmod{p}$ and there exists a unique $n_{k+1}$ modulo $p^{k+1}$ for which $p^{k+1}\mid{a_{n_{k+1}}}$ and $n_{k+1}\equiv n_k \pmod{p^k}$;
\item if $v_p(q_p(n_k))>0$ and $p^{k+1}\mid{a_{n_k}}$ then $f'(n_0)\equiv 0 \pmod{p}$ and $p^{k+1}\mid{a_{n_{k+1}}}$ for any $n_{k+1}\in S$ satisfying $n_{k+1}\equiv n_k \pmod{p^k}$;
\item if $v_p(q_p(n_k))>0$ and $p^{k+1}\nmid{a_{n_k}}$ then $f'(n_0)\not\equiv 0 \pmod{p}$ and $p^{k+1}\nmid{a_{n_{k+1}}}$ for any $n_{k+1}\in S$ satisfying $n_{k+1}\equiv n_k \pmod{p^k}$.
\end{itemize}

Let us consider the case $v_p(q_p(n_k))=0$ and write $n_{k+1} = n_k + p^k t_{k+1}$, where $t_{k+1}\in\Z$. Use (\ref{eq1}) for $f=f_{p,k+1}$, $x_0=n_k$ and $X=n_k+p^k t_{k+1}$ to obtain the sequence of congruences:
\begin{equation}\label{eq0}
\begin{split}
& 0 \equiv f_{p,k+1}(n_k) + p^k t_{k+1} f'_{p,k+1}(n_k) \pmod{p^{k+1}} \\
& t_{k+1} q_p(n_k) \equiv t_{k+1} f'_{p,k+1}(n_k) \equiv -\frac{f_{p,k+1}(n_k)}{p^k} \pmod{p} \\
& t_{k+1} \equiv -\frac{f_{p,k+1}(n_k)}{p^k q_p(n_k)} \pmod{p} \\
& n_{k+1} = n_k + p^k t_{k+1} \equiv n_k -\frac{f_{p,k+1}(n_k)}{q_p(n_k)} \pmod{p^{k+1}}.
\end{split}
\end{equation}

Since $f_{p,k+1}(n_k) \equiv \frac{a_{n_k}}{g_{p,k+1}(n_k)} \pmod{p^{k+1}}$, we get $n_{k+1} \equiv n_k-\frac{a_{n_k}}{g_{p,k+1}(n_k)q_p(n_k)} \pmod{p^{k+1}}$.

Assume now that $k=1$ and $v_p(q_p(n_1))=0$. By simple induction on $l\in\N_+$ we obtain that the inequality $v_p(a_n)\geq{l}$ has a unique solution $n_l$ modulo $p^l$ with condition $n_l\equiv n_1 \pmod{p}$ and this solution satisfies the congruence 
\begin{equation*}
n_l \equiv n_{l-1}-\frac{f_{p,l}(n_{l-1})}{q_p(n_1)}\pmod{p^l}
\end{equation*}
for $l>1$.

Certainly the statement is true for $l=1$. Now, assume that there exists a unique $n_l$ modulo $p^l$ satisfying the conditions in the statement. Note that
\begin{equation*}
q_p(n_l) \equiv f'_{p,2}(n_l) \equiv f'_{p,2}(n_1) \equiv q_p(n_1) \pmod{p}.
\end{equation*}
Since $v_p(q_p(n_1))=0$ then there exists a unique $n_{l+1}$ modulo $p^{l+1}$ such that $p^{l+1} \mid a_{n_{l+1}}$ and $n_{l+1} \equiv n_l \pmod{p^l}$. Additionaly the congruences (\ref{eq0}) showed that $n_{l+1} \equiv n_l-\frac{a_{n_l}}{g_{p,l+1}(n_l)q_p(n_1)} \pmod{p^{l+1}}$.
\end{proof}

\subsection{Connection between pseudo-polynomial decomposition modulo $p$ and $p$-adic continuous functions approximated by polynomials over $\Z$}\label{subsec2.2}

It is worth to see that if a sequence $(a_n)_{n\in\N}$ has a pseudo-polynomial decomposition modulo $p$ then there exist functions $f_{p,\infty}\in\cal{C}(\Z_p,\Z_p)$ and $g_{p,\infty}:\N\rightarrow\Z_p\bs p\Z_p$ such that $a_n = f_{p,\infty}(n)g_{p,\infty}(n)$ for each nonnegative integer $n$.

Let $k\in\N_+$, $p\in\bbb P$ and put:
\begin{equation*}
\cal{P}_{p,k} = \{F\mbox{ polynomial function on }\Z /p^k\Z: \exists_{G:\N\rightarrow\Z\bs p\Z} \forall_{n\in\N}:\;a_n \equiv F(n)G(n) \pmod{p^k}\}.
\end{equation*}
For each $k\in\N_+$ set $\cal{P}_{p,k}$ is finite (because there are only finitely many functions on $\Z /p^k\Z$) and nonempty (by existence of pseudo-polynomial decomposition). We have the map $\psi_{k+1}:\cal{P}_{p,k+1}\ni F\mapsto F\pmod{p^k}\in\cal{P}_{p,k}$ of reduction modulo $p^k$.

\begin{thm}\label{thm3}
Let $(S_k)_{k\in\N_+}$ be a sequence of finite nonempty sets with mappings $\psi_{k+1}:S_{k+1}\rightarrow S_k, k\in\N_+$. Then there exists a sequence $(s_k)_{k\in\N_+}$ such that $s_k\in S_k$ and $\psi_{k+1}(s_{k+1})=s_k$ for each $k\in\N_+$.
\end{thm}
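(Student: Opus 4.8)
The plan is to recognize this statement as the nonemptiness of an inverse (projective) limit of finite nonempty sets, and to prove it by isolating, inside each $S_k$, the elements that admit preimages arbitrarily high in the tower. For integers $m\geq k\geq 1$ I would introduce the composite maps $\Psi_{m,k}\colon S_m\to S_k$ given by $\Psi_{m,k}=\psi_{k+1}\circ\psi_{k+2}\circ\cdots\circ\psi_m$, with the convention $\Psi_{k,k}=\mathrm{id}_{S_k}$, and define
$$T_k=\bigcap_{m\geq k}\Psi_{m,k}(S_m)\subseteq S_k.$$
The point of $T_k$ is that it collects precisely those $s\in S_k$ that can be lifted to $S_m$ for every $m\geq k$.

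First I would show that each $T_k$ is nonempty. Since $\Psi_{m+1,k}(S_{m+1})=\Psi_{m,k}(\psi_{m+1}(S_{m+1}))\subseteq\Psi_{m,k}(S_m)$, the images $\Psi_{m,k}(S_m)$ form a nonincreasing chain of nonempty subsets of the finite set $S_k$. A nonincreasing chain of nonempty finite sets must stabilize, so $T_k=\Psi_{m,k}(S_m)$ for all sufficiently large $m$; in particular $T_k\neq\emptyset$. Next I would check that $\psi_{k+1}$ maps $T_{k+1}$ onto $T_k$: choosing $m$ large enough that simultaneously $T_{k+1}=\Psi_{m,k+1}(S_m)$ and $T_k=\Psi_{m,k}(S_m)$, one obtains $\psi_{k+1}(T_{k+1})=\psi_{k+1}(\Psi_{m,k+1}(S_m))=\Psi_{m,k}(S_m)=T_k$. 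With these two facts the sequence is built by an immediate induction: pick any $s_1\in T_1$, and given $s_k\in T_k$ use the surjectivity of $\psi_{k+1}\colon T_{k+1}\to T_k$ to choose $s_{k+1}\in T_{k+1}$ with $\psi_{k+1}(s_{k+1})=s_k$.

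The crux of the matter, and the reason a naive argument fails, is that one cannot simply start from an arbitrary $s_1\in S_1$ and lift step by step: a chosen $s_k$ may have no preimage under $\psi_{k+1}$ at all, or may have preimages that are themselves dead ends further up. Passing to the stable sets $T_k$ is exactly what rules this out, since every element of $T_k$ is, by construction, liftable to all higher levels. The essential input is the finiteness of the $S_k$, which enters only through the stabilization of the decreasing chains of images in the first step; this is the sole obstacle, and it is precisely the content of König's lemma, of which the statement is a reformulation (the $T_k$ amount to the nodes lying on an infinite branch of the finitely branching tree with vertex set $\bigsqcup_k S_k$ and edges joining $s_{k+1}$ to $\psi_{k+1}(s_{k+1})$).
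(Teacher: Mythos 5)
Your proof is correct and complete; the paper itself gives no argument for this statement, merely citing Serre's \emph{A Course in Arithmetic}, and the argument you give (stabilization of the decreasing chain of images $\Psi_{m,k}(S_m)$ in the finite set $S_k$, followed by surjectivity of $\psi_{k+1}\colon T_{k+1}\to T_k$ and an inductive lift) is exactly the standard proof found in that reference. Nothing further is needed.
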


\begin{proof}
See \cite[p. 13]{Ser}.
\end{proof}

By Theorem \ref{thm3} there exist polynomial functions $F_{p,k}: \Z /p^k\Z \rightarrow \Z /p^k\Z$ such that $F_{p,k+1}\equiv F_{p,k}\pmod{p^k}, k\in\N_+$. Furthermore, for each $k\in\N_+$ there exists $f_{p,k}\in\Z[X]$ such that $f_{p,k}\equiv F_{p,k}\pmod{p^k}$. As a result, if $k_1\leq k_2$ then $f_{p,k_2}(x)-f_{p,k_1}(x)\equiv 0 \pmod{p^{k_1}}, x\in\Z_p$, or in other words $|f_{p,k_2}(x)-f_{p,k_1}(x)|_p \leq p^{-k_1}$. Since $\Z[X]$ is a subset of $\cal{C}(\Z_p,\Z_p)$ and $\cal{C}(\Z_p,\Z_p)$ with metric $d_{\sup}(f,g)=\sup_{x\in\Z_p}|f(x)-g(x)|_p$ is a complete metric space, hence the sequence $(f_{p,k})_{k\in\N_+}\subset\cal{C}(\Z_p,\Z_p)$ is uniformly convergent to a continuous function $f_{p,\infty}:\Z_p\rightarrow\Z_p$. Each polynomial from $\Z_p[X]$ is a $p$-adic contraction, so $f_{p,\infty} = \lim_{k\rightarrow +\infty} f_{p,k}$ is a $p$-adic contraction, too. From the definition of $\cal{P}_{p,k}$ there exists $g_{p,k}:N\rightarrow\Z\bs p\Z$ such that $a_n \equiv f_{p,k}(n)g_{p,k}(n) \pmod{p^k}, n\in\N$. As a consequence of our reasoning we have that if $a_n\neq 0$ for some $n\in\N$ then $v_p(a_n)=v_p(f_{p,k}(n))$ for sufficiently large $k$ and by continuity of $p$-adic valuation (with respect to $p$-adic norm) $v_p(a_n)=v_p(f_{p,\infty}(n))$. If $a_n=0$ for some $n\in\N$ then $v_p(f_{p,k}(n))\geq k$ and going with $k$ to $+\infty$ we obtain $v_p(f_{p,\infty})=+\infty$, which means that $f_{p,\infty}(n)=0$. Then we define $g_{p,\infty}$ by the formula:
\begin{equation*}
g_{p,\infty}(n)=
\begin{cases}
\frac{a_n}{f_{p,\infty}(n)}, & \mbox{ if } a_n\neq 0
\\ 1, & \mbox{ if } a_n=0
\end{cases}.
\end{equation*}

Conversely, assume that $a_n = f_{p,\infty}(n)g_{p,\infty}(n)$ for some $f_{p,\infty}\in\overline{\Z_p[X]}\subset\cal{C}(\Z_p,\Z_p)$ (with respect to metric $d_{\sup}$) and $g_{p,\infty}:\N\rightarrow\Z_p\bs p\Z_p$. For $k\geq 2$, let $f_{p,k}\in\Z_p[X]$ be such that $d_{\sup}(f_{p,k},f_{p,\infty})\leq p^{-k}$ (replacing coefficients of $f_{p,k}$ by integers congruent to them modulo $p^k$ we can assume that $f_{p,k}\in\Z[X]$). Because there are only finitely many polynomial functions on $\Z /p\Z$ then we can choose a sequence of polynomials $(f_{p,k})_{k\in\N_2}\subset\Z_p[X]$ such that $f'_{p,k_1}(n)\equiv f'_{p,k_2} \pmod{p}$ for each $n\in\Z$ and $k_1,k_2\geq 2$.  For $k\geq 2$ and $n\in\N$ we put $g_{p,k}(n)$ as an integer congruent to $g_{p,\infty}(n)$ modulo $p^k$. Finally, we obtain pseudo-polynomial decomposition $(f_{p,k}, g_{p,k})_{k\in\N_2}$ modulo $p$ of the sequence $(a_n)_{n\in\N}$.

In particular, if $p\mid a_{n_1}$ for some $n_1\in\N$ and $p\nmid f'_{p,2}(n_1)$ (or equivalently $v_p\left(\frac{1}{p}\left(\frac{a_{n_1+p}}{g_{p,2}(n_1+p)}-\frac{a_{n_1}}{g_{p,2}(n_1)}\right)\right)=0$) then there exists a unique $n_{\infty}\in\Z_p$ such that $n_{\infty}\equiv n_1 \pmod{p}$ and $f_{p,\infty}(n_{\infty})=0$. Indeed, Theorem \ref{thm1} gives us existence and uniqueness of $n_k$ modulo $p^k$ such that $n_k\equiv n_1 \pmod{p}$ and $p^k\mid a_{n_k}$ for each $k\in\N_+$. Hence $p^k\mid f_{p,\infty}(n_k)$, which means that $|f_{p,\infty}(n_k)|_p\leq p^{-k}$. If $k_1\leq k_2$ then by uniqueness of $n_{k_1}$ modulo $p^{k_1}$ we have $n_{k_1}\equiv n_{k_2} \pmod{p^{k_1}}$, or in other words $|n_{k_1}-n_{k_2}|_p \leq p^{-k_1}$. We thus conclude that $(n_k)_{k\in\N_+}$ is a Cauchy sequence and by completeness of $\Z_p$ this sequence is convergent to some $n_{\infty}$. By continuity of $f_{p,\infty}$, $|f_{p,\infty}(n_{\infty})|_p=\lim_{k\rightarrow +\infty} |f_{p,\infty}(n_k)|_p = 0$. For all $k\in\N_+$, $n_{\infty}\equiv n_k \pmod{p^k}$ and by uniqueness of $n_k$ modulo $p^k$, such $n_{\infty}$, that $n_{\infty}\equiv n_1 \pmod{p}$ and $f_{p,\infty}(n_{\infty})=0$, is unique.

Let us note that if $f:\Z_p\rightarrow\Z_p$ is a $p$-adic contraction then $f$ can be approximated uniformly on $\Z_p$ by the sequence of polynomials $(f_{p,k})_{k\in\N_+}\subset\Z[X]$ such that $f_{p,k}(n)  = f(n), n\in\{0,1,2,...,p^k-1\}$ for each $k\in\N_+$. Indeed, for each $x\in\Z_p$ we have
\begin{equation*}
\begin{split}
& |f_{p,k}(x)-f(x)|_p = |(f_{p,k}(x)-f_{p,k}(x\pmod{p^k}))-(f_{p,k}(x\pmod{p^k})-f(x))|_p\leq \\
& \leq \max\{|f_{p,k}(x)-f_{p,k}(x\pmod{p^k})|_p, |f(x\pmod{p^k})-f(x)|_p\} \leq p^{-k}.
\end{split}
\end{equation*}
As a result, the closure of the rings $\Z[X]$ and $\Z_p[X]$ in the space $\cal{C}(\Z_p,\Z_p)$ with metric $d_{\sup}$ is the set of all $p$-adic contractions $f:\Z_p\rightarrow\Z_p$.

In particular, if a sequence $(a_n)_{n\in\N}$ is such that $(a_n \pmod{p^k})_{n\in\N}$ is periodic of period $p^k$ for each $k\in\N_+$ then $(a_n)_{n\in\N}$ (as a function mapping $\N$ to $\Z_p$) is a $p$-adic contraction (if $|n-m|_p = p^{-k}$ then $p^k\mid n-m$ and $p^k\mid a_n-a_m$, which means that $|a_n-a_m|_p\leq p^{-k}$). Since $\N$ is dense in $\Z_p$, the sequence $(a_n)_{n\in\N}$ can be extended to a function $f\in\cal{C}(\Z_p,\Z_p)$. Thus there exists a sequence $(f_{p,k})_{k\in\N_+}\in\Z[X]^{\N_+}$ converging uniformly to $f$ on $\Z_p$. Because of finiteness of the set of polynomial functions on $\Z /p\Z$ we can choose polynomials $f_{p,k}$, $k\in\N_2$, such that $f'_{p,k_1}(n)\equiv f'_{p,k_2} \pmod{p}$ for each $n\in\Z$ and $k_1,k_2\geq 2$. Finally, the sequence $(f_{p,k}, 1)_{k\in\N_2}$ is a pseudo-polynomial decomposition modulo $p$ of the sequence $(a_n)_{n\in\N}$ (where $1$ means the function mapping each nonnegative integer $n$ to $1$).

The following three examples show that there is no connection between approximability of a given function $f:\Z_p\rightarrow\Z_p$ by polynomials over $\Z_p$ and its differentiability.

\begin{ex}
Let us note that each $x\in\Z_p$ can be written uniquely as a series $\sum_{j=0}^{+\infty} a_j(x)p^j$, where $a_j(x)\in\{0,1,...,p-1\}$ for each $j\in\N$ (see \cite{Cas}). Moreover, if $x\neq 0$ then $v_p(x)$ is the least index $j$ such that $a_j(x)\neq 0$. Let $p$ be an odd prime number and let us consider a function $f:\Z_p\rightarrow\Z_p$ given by the formula
\begin{equation*}
f(x)=
\begin{cases}
\frac{x}{a_{v_p(x)}(x)}, & \mbox{ if } x\neq 0, \\
0, & \mbox{ if } x=0.
\end{cases}
\end{equation*}
The function $f$ is a $p$-adic contraction. First we see that $|f(x)|_p=|x|_p$ for each $x\in\Z_p$. Let $k\in\N$ and $x,y\in\Z_p$ be such that $|x-y|_p=p^{-k}$. Let us write $x=\sum_{j=0}^{+\infty} a_j(x)p^j$ and $y=\sum_{j=0}^{+\infty} a_j(y)p^j$, where  $a_j(x),a_j(y)\in\{0,1,...,p-1\}$ for $j\in\N$. Then $k$ is the least index $j$ such that $a_j(x)\neq a_j(y)$. Let us consider two cases.
\begin{enumerate}
\item Assume first that $v_p(x)=v_p(y)<k$. Then $a_{v_p(x)}(x)=a_{v_p(y)}(y)\neq 0$ and as a result $|f(x)-f(y)|_p=\left|\frac{x-y}{a_{v_p(x)}(x)}\right|_p=|x-y|_p=p^{-k}$.
\item Assume now that one of the numbers $x,y$ has $p$-adic valuation equal to $k$. Without loss of generality we set $v_p(x)=k$. Then $v_p(y)\geq k$ and as a consequence $v_p(f(x))=k$ and $v_p(f(y))\geq k$. We thus have $v_p(f(x)-f(y))\leq k$ or in other words $|f(x)-f(y)|_p\leq p^{-k}$.
\end{enumerate}
Hence $f\in\overline{\Z[X]}$. On the other hand, the function $f$ is not differentiable at $0$, because $\lim_{n\rightarrow +\infty}\frac{f(ap^n)-f(0)}{ap^n}=\lim_{n\rightarrow +\infty}\frac{p^n}{ap^n}=\frac{1}{a}$ for each $a\in\{1,...,p-1\}$.
\end{ex}

\begin{ex}
Let $p$ be an arbitrary prime number and function $f:\Z_p\rightarrow\Z_p$ be given by the formula
\begin{equation*}
f(x)=
\begin{cases}
(-1)^{v_p(x)}x, & \mbox{ if } x\neq 0, \\
0, & \mbox{ if } x=0.
\end{cases}
\end{equation*}
The function $f$ is a $p$-adic contraction (in fact, $f$ is an isometry, i.e. $|f(x)-f(y)|_p=|x-y|_p$ for any $x,y\in\Z_p$). First we see that $|f(x)|_p=|x|_p$ for each $x\in\Z_p$. Let $k\in\N$ and $x,y\in\Z_p$ be such that $|x-y|_p=p^{-k}$. Let us consider two cases.
\begin{enumerate}
\item Assume first that $v_p(x)=v_p(y)$. Then $|f(x)-f(y)|_p=\left|(-1)^{v_p(x)}(x-y)\right|_p=|x-y|_p=p^{-k}$.
\item Assume now that $v_p(x)\neq v_p(y)$. Then one of the numbers $x,y$ has $p$-adic valuation equal to $k$ and the second one has $p$-adic valuation greater than $k$. Without loss of generality we set $v_p(x)=k$ and $v_p(y)>k$. As a consequence $v_p(f(x))=k$ and $v_p(f(y))>k$. We thus have $v_p(f(x)-f(y))=k$ or in other words $|f(x)-f(y)|_p=p^{-k}$.
\end{enumerate}
Hence $f\in\overline{\Z[X]}$. On the other hand, the function $f$ is not differentiable at $0$, because $\lim_{n\rightarrow +\infty}\frac{f(p^{2n+r})-f(0)}{p^{2n+r}}=\lim_{n\rightarrow +\infty}\frac{(-1)^rp^{2n+r}}{p^{2n+r}}=(-1)^r$ for $r\in\{0,1\}$.
\end{ex}

\begin{ex}
Let a function $f:\Z_p\rightarrow\Z_p$ be given by the formula
\begin{equation*}
f(x)=
\begin{cases}
\frac{x}{p}, & \mbox{ if } p\mid x, \\
x, & \mbox{ if } p\nmid x.
\end{cases}
\end{equation*}
Obviously, the function $f$ is differentiable at each point $x\in\Z_p$ and its derivative is equal to $\frac{1}{p}$ for $x\in p\Z_p$ and $1$ otherwise. However, if $x,y\in p\Z_p$ then $|f(x)-f(y)|_p=\left|\frac{x-y}{p}\right|_p=p\cdot |x-y|_p$. Hence $f$ is not a $p$-adic contraction, which means that $f\not\in\overline{\Z[X]}$. 
\end{ex}

\subsection{Hensel's lemma for exponential function}

Let us fix a prime number $p$, an integer $a$ and consider now exponential function $f:\;\N\ni n\mapsto a^n\in\Z$. In general it is not a $p$-adic continuous function, but if $p\nmid a$ and $m\in\N$ is fixed then the function $g:\;\N\ni n\mapsto a^{n(p-1)+m}\in\Z$ is continuous. Indeed, by Fermat's little theorem $a^{p-1} = 1+pb$ for some $b\in\Z$ and
\begin{equation*}
a^{n(p-1)} = \sum_{j=0}^n {n \choose j}(pb)^j = \sum_{j=0}^{n} \frac{p^j b^j}{j!} \prod_{i=0}^{j-1} (n-i) = \sum_{j=0}^{+\infty} \frac{p^j b^j}{j!} \prod_{i=0}^{j-1} (n-i).
\end{equation*}

Note that $v_p(\frac{p^j b^j}{j!}) \geq j - v_p(j!) = j - \frac{j-s_p(j)}{p-1} \geq \frac{p-2}{p-1} j \geq 0$ (we use Legendre's formula $v_p(j!) = \frac{j-s_p(j)}{p-1}, j\in\N$, see \cite{Le}) and $v_p(\prod_{i=0}^{j-1} (n-i)) \geq \lfloor\frac{j}{p}\rfloor$ (between $j$ consecutive integers there are at least $\lfloor\frac{j}{p}\rfloor$ integers divisible by $p$). This suggests to define
\begin{equation*}
f_{p,k} = \sum_{j=0}^{kp-1} \frac{p^j b^j}{j!} \prod_{i=0}^{j-1} (X-i) \in\Z_p[X]
\end{equation*}
for $k\geq 2$. Then $a^{n(p-1)} \equiv f_{p,k}(n) \pmod{p^k}$ and
\begin{equation*}
f'_{p,k}(n) = \sum_{j=0}^{kp-1} \frac{p^j b^j}{j!} \sum_{s=0}^{j-1}\prod_{i=0, i\neq s}^{j-1} (n-i) \equiv \sum_{j=0}^{2p-1} \frac{p^j b^j}{j!} \sum_{s=0}^{j-1}\prod_{i=0, i\neq s}^{j-1} (n-i) \pmod{p}.
\end{equation*}

Hence $(f_{p,k},1)_{k\in\N_2}$, where $1$ means the function defined on $\N$ constantly equal to $1$, is a pseudo-polynomial decomposition of $(a^{n(p-1)})_{n\in\N}$. Additionaly, the formula $a^{x(p-1)} = f_{p,\infty}(x) = \sum_{j=0}^{+\infty} \frac{p^j b^j}{j!} \prod_{i=0}^{j-1} (x-i)$ extends the function $\N\ni n\mapsto a^{n(p-1)+m}\in\Z$ to a continuous function defined on $\Z_p$. However, this function has one more property. Namely, if $p^k\mid x-y$ then $p^{k+1}\mid a^{x(p-1)}-a^{y(p-1)}$, or in other words $|a^{x(p-1)}-a^{y(p-1)}|_p \leq \frac{1}{p}|x-y|_p$. Because $\N$ is dense in $\Z_p$, thus it suffices to show this property for $x,y\in\N$, where $x>y$. Let $x = y + p^k t$ for some positive integer $t$ not divisible by $p$. Then $a^{x(p-1)}-a^{y(p-1)} = a^{y(p-1)}(a^{tp^k(p-1)}-1)$ and by Euler's theorem $p^{k+1}\mid (a^t)^{p^k(p-1)}-1$.

The mentioned property is a motivation to state an analogue of Hensel's lemma for exponential function.

\begin{thm}[Hensel's lemma for exponential function]\label{thm4}
Let $p$ be a prime number and $k$ be a positive integer. Let $a,c$ be integers not divisible by $p$. Let $m,n_k$ be nonnegative integers such that $n_k<m$ and $s = v_p(a^{p^k(p-1)}-1)$ (by Euler's theorem $s \geq k+1$).
\begin{itemize}
\item If $p^s\mid a^{n_k(p-1)+m}-c$ then for each positive integer $l$ there exists a unique $n_{k+l}$ modulo $p^{k+l}$ such that $n_{k+l} \equiv n_k \pmod{p^k}$ and\newline $p^{s+l}\mid a^{n_{k+l}(p-1)+m}-c$.
\item If $p^s\nmid a^{n_k(p-1)+m}-c$ then $p^s\nmid a^{n(p-1)+m}-c$ for all $n\equiv n_k \pmod{p^k}$.
\end{itemize}
In particular, if $m=0$ and $c=1$ then for each positive integer $l$ there exists a unique $n_{k+l}$ modulo $p^{k+l}$ such that $p^{s+l}\mid a^{n_{k+l}(p-1)}-1$.
\end{thm}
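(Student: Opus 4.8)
The plan is to track the $p$-adic valuation of $F(n) := a^{n(p-1)+m}-c$ as $n$ runs over the progression $n\equiv n_k\pmod{p^k}$, and to run a Hensel-type lift in which the linear term of a binomial expansion plays the role of a nonvanishing derivative. The whole argument rests on the exact valuation
\[
v_p\!\left(a^{p^{k+l}(p-1)}-1\right)=s+l\qquad(l\geq 0),
\]
which I would prove by induction on $l$. The lower bound $\geq s+l$ is immediate from the contraction property established just before the theorem (if $p^{j}\mid x-y$ then $p^{j+1}\mid a^{x(p-1)}-a^{y(p-1)}$), applied with $x=p^{k+l}$, $y=0$ and iterated. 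For equality, write $a^{p^{k+l}(p-1)}=1+p^{s+l}u_l$ with $p\nmid u_l$, raise to the $p$-th power, and expand: in $(1+p^{s+l}u_l)^p-1=p^{s+l+1}u_l+\binom{p}{2}p^{2(s+l)}u_l^2+\cdots+p^{p(s+l)}u_l^p$ the leading term $p^{s+l+1}u_l$ strictly dominates every other in valuation, so the valuation is exactly $s+l+1$ and $u_{l+1}\equiv u_l\pmod p$ is again a unit.

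With this in hand I would carry out the lift itself, inducting on $l$. Suppose $n_{k+l}\equiv n_k\pmod{p^k}$ satisfies $p^{s+l}\mid F(n_{k+l})$, and write a candidate $n=n_{k+l}+p^{k+l}r$ with $r\in\{0,\dots,p-1\}$, so that the residue classes modulo $p^{k+l+1}$ lying over $n_{k+l}$ are enumerated exactly once. Setting $B=a^{n_{k+l}(p-1)+m}$, a $p$-adic unit, the binomial theorem gives
\[
F(n)=(B-c)+Bu_l\,p^{s+l}r+\bigl(\text{terms divisible by }p^{2(s+l)}\bigr).
\]
Since $2(s+l)\geq s+l+1$ and $F(n_{k+l})=B-c=p^{s+l}w$, reduction modulo $p^{s+l+1}$ yields $F(n)\equiv p^{s+l}\bigl(w+Bu_l r\bigr)$. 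As $Bu_l$ is a unit modulo $p$, the congruence $w+Bu_l r\equiv 0\pmod p$ has a unique solution $r\bmod p$, producing a unique $n_{k+l+1}$ modulo $p^{k+l+1}$ over $n_{k+l}$ with $p^{s+l+1}\mid F(n_{k+l+1})$; chaining these from the base class $n_k$ modulo $p^k$ gives the uniqueness modulo $p^{k+l}$ asserted in the first bullet.

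For the second bullet the same expansion, taken at an arbitrary $n=n_k+p^{k}t$, gives $F(n)-F(n_k)=a^{n_k(p-1)+m}\bigl((1+p^su_0)^t-1\bigr)$, whose valuation is at least $s$; hence $F(n)\equiv F(n_k)\pmod{p^s}$ for all such $n$, so $p^s\nmid F(n_k)$ forces $p^s\nmid F(n)$. In the special case $m=0,\ c=1$ one simply takes the base point $n_k=0$: then $F(0)=0$ is divisible by every power of $p$, the hypothesis $p^s\mid F(n_k)$ holds trivially, and the lift above applies verbatim. I expect the only genuine obstacle to be the \emph{exactness} of $v_p(a^{p^{k+l}(p-1)}-1)=s+l$, since the contraction property yields only the lower bound; pinning down equality requires the leading-term comparison in the binomial expansion, and this is the sole point where $p=2$ must be handled separately (there the expansion is $2^{s+l+1}u_l+2^{2(s+l)}u_l^2$, and exactness needs $2(s+l)>s+l+1$, which is guaranteed by $s\geq k+1\geq 2$). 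Everything after that is a routine Hensel iteration once the linear coefficient is known to be a unit.
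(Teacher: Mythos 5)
Your proposal is correct and follows essentially the same route as the paper's proof: an induction establishing the exact valuation $v_p(a^{p^{k+l}(p-1)}-1)=s+l$ via the leading term of the binomial expansion, followed by a Hensel-type lift in which the linear coefficient $Bu_l$ (a unit, since $B\equiv c\not\equiv 0\pmod p$) yields a unique solution $r$ modulo $p$ at each level, and the same one-line divisibility argument for the second bullet. Your explicit remark that exactness at $p=2$ needs $s+l\geq 2$, guaranteed by $s\geq k+1\geq 2$, is a point the paper glosses over but handles implicitly by the same grouping of terms.
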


\begin{proof}
The second case of the statement is very easy. If $n\equiv n_k \pmod{p^k}$ then $(a^{n(p-1)+m}-c)-(a^{n_k(p-1)+m}-c) = a^{n_k(p-1)}(a^{(n-n_k)(p-1)}-1)$. Since $p\nmid a$ and $p^k\mid n-n_k$, thus $a^{p^k(p-1)}-1\mid a^{(n-n_k)(p-1)}-1$ and as a consequence $p^s\mid a^{n_k(p-1)}(a^{(n-n_k)(p-1)}-1)$. Because $p^s\nmid a^{n_k(p-1)+m}-c$, hence $p^s\nmid a^{n(p-1)+m}-c$.

Now we prove the first case of the statement of our theorem.

First, we show by induction on $l\in\N$ that $v_p(a^{p^{k+l}(p-1)}-1) = s + l$. The induction hypothesis is obviously true for $l=0$. Assume that $a^{p^{k+l}(p-1)} = 1+p^{s+l}t_l$ and $p\nmid t_l$. Then
\begin{equation*}
\begin{split}
& a^{p^{k+l+1}(p-1)} = (a^{p^{k+l}(p-1)})^p = (1+p^{s+l}t_l)^p = \sum_{j=0}^p {p \choose j} p^{j(s+l)}t_l^j = 
\\ & = 1 + p^{s+l+1}t_l + p^{s+l+2}u_l = 1 + p^{s+l+1}(t_l + pu_l),
\end{split}
\end{equation*}
where $u_l\in\Z$. Assume now that $l\in\N$ and $p^{s+l}\mid a^{n_{k+l}(p-1)+m}-c$. Write $n_{k+l+1} = n_{k+l} + p^{k+l}w$ and $a^{n_{k+l}(p-1)+m}-c \equiv p^{s+l}z \pmod{p^{s+l+1}}$. Then we obtain the sequence of equivalent congruences:
\begin{equation*}
\begin{split}
& a^{n_{k+l+1}(p-1)+m}-c \equiv 0 \pmod{p^{s+l+1}} \\
\iff\quad & a^{(n_{k+l}+p^{k+l}w)(p-1)+m} \equiv c \pmod{p^{s+l+1}} \\
\iff\quad & a^{wp^{k+l}(p-1)}a^{n_{k+l}(p-1)+m} \equiv c \pmod{p^{s+l+1}} \\
\iff\quad & (1+p^{s+l}t_l)^w(c+p^{s+l}z) \equiv c \pmod{p^{s+l+1}} \\
\iff\quad & (1+wp^{s+l}t_l)(c+p^{s+l}z) \equiv c \pmod{p^{s+l+1}} \\
\iff\quad & wp^{s+l}t_l(c+p^{s+l}z) \equiv -p^{s+l}z \pmod{p^{s+l+1}} \\
\iff\quad & wt_lc \equiv -z \pmod{p}
\end{split}
\end{equation*}

Since $p\nmid t_{k+l}c$, thus the last congruence has exactly one solution $w$ modulo $p$, which means that the first congruence has exactly one solution $n_{k+l+1}$ modulo $p^{k+l+1}$ such that $n_{k+l+1} \equiv n_k \pmod{p^k}$.
\end{proof}

\pagebreak

\section{Arithmetic properties of sequences ${\bf a}\in\cal{R}$}\label{sec3}

The main point of this paper is to investigate arithmetic properties of sequence of derangements and to generalize this properties to some class of sequences. This section is devoted to the family $\cal{R}$ of sequences ${\bf a}$ given by the recurrence $a_0 = h_1(0), a_n = f(n)a_{n-1} + h_1(n)h_2(n)^n, n>0$, where $f,h_1,h_2\in\Z[X]$. In order to emphasize the polynomials $f,h_1,h_2$ appearing in this recurrence, we will denote ${\bf a}={\bf a}(f,h_1,h_2)$. Sequences from the class $\cal{R}$ are natural generalization of the sequence of derangements, which is obtained for $f=X, h_1=1, h_2=-1$. Note that many well-known sequences belong to this class:
\begin{itemize}
\item if $f,h_2 = 1, h_1 = c\in\Z$ then $(a_n)_{n\in\N} = (cn)_{n\in\N}$ is an arithmetic progression;
\item if $f = q\in\Z, h_1 = c\in\Z, h_2 = 0$ then $(a_n)_{n\in\N} = (cq^n)_{n\in\N}$ is a geometric progression;
\item if $f = 1, h_1 = c\in\Z, h_2 = q\in\Z$ then $(a_n)_{n\in\N} = (\sum_{j=0}^n cq^j)_{n\in\N}$ is a sequence of partial sums of a geometric progression;
\item if $f = X, h_1 = 1, h_2 = 0$ then $(a_n)_{n\in\N} = (n!)_{n\in\N}$ is the sequence of factorials;
\item if $f = 2X+l, l\in\{0,1\}, h_1 = 1, h_2 = 0$ then $(a_n)_{n\in\N} = ((2n+l)!!)_{n\in\N}$ is the sequence of double factorials.
\end{itemize}

One can easily obtain the closed formula $a_n = \sum_{j=0}^n h_1(j)h_2(j)^j \prod_{i=j+1}^n f(i)$ for $n\in\N$. 

A particular subclasses of $\cal{R}$ are class $\cal{R}'$ of sequences for which $h_2 = 1$ and class $\cal{R}''$ of sequences for which $h_2 = -1$. It is worth to note that if  ${\bf a}={\bf a}(f,h_1,-1)=(a_n)_{n\in\N}$ then $a_n = (-1)^n \widetilde{a}_n, n\in\N$, where $\widetilde{\bf a}=\widetilde{\bf a}(-f,h_1,1)=(\widetilde{a}_n)_{n\in\N}$. Certainly the equality is true for $n=0$. Now, assume that $a_{n-1} = (-1)^{n-1} \widetilde{a}_{n-1}$ for $n>0$. Then $a_n = f(n)a_{n-1} + (-1)^nh_1(n) = (-1)^n(-f(n)\widetilde{a}_{n-1} + h_1(n)) = (-1)^n \widetilde{a}_n$. The sequence $\widetilde{\bf a}$ we will call \emph{associated} to the sequence ${\bf a}$.

Hence the study of such properties as: periodicity, $p$-adic valuations, divisors, boundedness for sequences from class $\cal{R}''$ comes down to study of this properties for sequences from $\cal{R}'$.

\subsection{Periodicity modulo $d$ and $p$-adic valuations}\label{subsec3.1}

\subsubsection{\bf Periodicity modulo $d$, when $h_2=1$ or $h_2=-1$ and $d\mid f(n_0)$ for some $n_0\in\N$}\label{subsubsec3.1.1}

Assume that ${\bf a}={\bf a}(f,h_1,1)$. Let $d\in\N_+$ be such that $d\mid f(n_0)$ for some $n_0\in\N$. Then for each $n\geq n_0$ we have
\begin{equation}\label{eq2}
\begin{split}
a_n = & \sum_{j=0}^n h_1(j) \prod_{i=j+1}^n f(i) \equiv \sum_{j=n-d+1}^n h_1(j) \prod_{i=j+1}^n f(i) \\
= & \sum_{j=0}^{d-1} h_1(n-j) \prod_{i=0}^{j-1} f(n-i) \pmod{d}.
\end{split}
\end{equation}

Making reduction modulo $d$ we can skip the summands from $0$th to $(n-d)$th because if $0\leq j\leq n-d$ then among (at least $d$) numbers $j+1, j+2, ..., n$ there is such number $i_0$ that $d\mid f(i_0)$ and $d\mid\prod_{i=j+1}^n f(i)$. If $n_0\leq n<d$ then $f(n_0)$ appears in the product $\prod_{i=j+1}^n f(i)$ for $n-d+1\leq j<0$, thus this product has no influence on the value $a_n \pmod d$.

Let us define $f_d = \sum_{j=0}^{d-1} h_1(X-j) \prod_{i=0}^{j-1} f(X-i) \in\Z[X]$. Then the equation (\ref{eq2}) takes the form $a_n \equiv f_d(n) \pmod{d}, n\geq n_0$ and because of periodicity modulo $d$ of any polynomial we conclude that the sequence $(a_n \pmod d)_{n\in\N_{n_0}}$ is periodic of period $d$. One can ask the natural question now: Is $d$ the basic period of $(a_n \pmod d)_{n\in\N_{n_0}}$? If $\gcd (d,a_n)=1$ for some $n\in\N$, $f=b_1X+b_0$, where $\gcd (d,b_1)=1$ and $h_1=c\in\Z$ then the answer is positive.

\begin{prop}\label{prop1}
Let as assume ${\bf a}={\bf a}(b_1X+b_0,c,1)$, where $b_0,b_1,c\in\Z$. Let $d\in\N_+$ divide $b_1n_0+b_0$ for some $n_0\in\N$ and $\gcd (d,a_{n_1})=1$ for some $n_1\geq n_0$. Then the sequence $(a_n \pmod d)_{n\in\N_{n_0}}$ has the basic period divisible by $\frac{d}{\gcd(d,b_1)}$. In particular, if $\gcd(d,b_1)=1$ then the basic period is equal to $d$.
\end{prop}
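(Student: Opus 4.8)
The plan is to build on the periodicity already established in the discussion preceding the proposition. There it is shown that $(a_n \pmod{d})_{n\in\N_{n_0}}$ is periodic with period $d$; denote by $T$ its basic period, so that $T\mid d$ and $a_{n+T}\equiv a_n\pmod{d}$ for all $n\geq n_0$. It therefore remains only to prove the divisibility $\frac{d}{\gcd(d,b_1)}\mid T$, after which the final assertion for $\gcd(d,b_1)=1$ follows immediately: in that case $d\mid T$ combined with $T\mid d$ forces $T=d$.

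The key idea is to feed the period relation back into the defining recurrence in order to isolate the factor $b_1T$. First I would write the recurrence at an index $n\geq n_0$ and at the shifted index $n+T$:
\begin{equation*}
a_{n+1}\equiv (b_1(n+1)+b_0)a_n+c,\qquad a_{n+T+1}\equiv (b_1(n+T+1)+b_0)a_{n+T}+c \pmod{d}.
\end{equation*}
Using the periodicity to replace $a_{n+T+1}$ by $a_{n+1}$ and $a_{n+T}$ by $a_n$ (legitimate since both $n,n+1\geq n_0$) and then subtracting the two congruences cancels the constant term $c$ and the common value $a_n$, leaving
\begin{equation*}
b_1 T a_n\equiv 0\pmod{d}\qquad\text{for all }n\geq n_0.
\end{equation*}

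Next I would specialise this to the index $n_1$ supplied by the hypothesis. Since $\gcd(d,a_{n_1})=1$, the factor $a_{n_1}$ is invertible modulo $d$, so $d\mid b_1 T a_{n_1}$ collapses to $d\mid b_1 T$. Writing $g=\gcd(d,b_1)$, $d=gd'$ and $b_1=gb_1'$ with $\gcd(d',b_1')=1$, the divisibility $d\mid b_1T$ reduces to $d'\mid b_1'T$, whence $d'=\frac{d}{\gcd(d,b_1)}\mid T$, which is exactly the claimed conclusion.

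The only genuinely delicate point is the subtraction step: one must check that the constant $c$ and the common factor $a_n$ really do cancel, and this is precisely where the hypotheses $f=b_1X+b_0$ (linear) and $h_1=c$ (constant) are used — they are what make $b_1T$ emerge cleanly as the coefficient of $a_n$, with no residual terms depending on $n$. Everything after that is routine: the coprimality assumption removes $a_{n_1}$, and the passage from $d\mid b_1T$ to $\frac{d}{\gcd(d,b_1)}\mid T$ is elementary divisibility bookkeeping.
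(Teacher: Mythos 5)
Your argument is correct and is essentially the paper's own proof: the paper likewise writes the recurrence at indices $n_1+per+1$ and $n_1+1$, cancels $c$, uses $\gcd(d,a_{n_1})=1$ to divide out $a_{n_1}$, and reduces $b_1\cdot per\equiv 0\pmod d$ to $per\equiv 0\pmod{d/\gcd(d,b_1)}$. Your only cosmetic difference is deriving $b_1Ta_n\equiv 0\pmod d$ for general $n\geq n_0$ before specialising to $n_1$, and spelling out that the basic period divides the known period $d$ for the final assertion.
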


\begin{proof}
Denote the basic period of $(a_n \pmod d)_{n\in\N_{n_0}}$ by $per$. We thus obtain the following chain of equivalences:
\begin{equation*}
\begin{split}
& a_{n_1+per+1} \equiv a_{n_1+1} \pmod{d} \\
\iff\quad & (b_1(n_1+per+1)+b_0)a_{n_1+per}+c \equiv (b_1(n_1+1)+b_0)a_{n_1}+c \pmod{d} \\
\iff\quad & (b_1(n_1+per+1)+b_0)a_{n_1} \equiv (b_1(n_1+1)+b_0)a_{n_1} \pmod{d} \\
\iff\quad & b_1(n_1+per+1)+b_0 \equiv b_1(n_1+1)+b_0 \pmod{d} \\
\iff\quad & b_1(n_1+per+1) \equiv b_1(n_1+1) \pmod{d} \\
\iff\quad & n_1+per+1 \equiv n_1+1 \left(\mbox{mod }\frac{d}{\gcd(d,b_1)}\right) \\
\iff\quad & per \equiv 0 \left(\mbox{mod }\frac{d}{\gcd(d,b_1)}\right).
\end{split}
\end{equation*}
Our proposition is proved.
\end{proof}

\begin{ex}
Let $c$ be as in the statement of {\rm Proposition \ref{prop1}} and suppose that $|c|>1$. Then by simple induction one can prove that $c\mid a_n$ for all $n\in\N$. In other words, the sequence $(a_n \pmod c)_{n\in\N}$ is constant and equal to $0$. This means that the assumption $\gcd (d,a_{n_1})=1$ for some $n_1\geq n_0$ in Proposition \ref{prop1} is essential.
\end{ex}

\begin{ex}
Let us fix $c,d\in\Z$ such that $d>1$. Consider the sequence given by the formula $a_0 = c, a_n = (n^{\varphi (d)+1}-n)a_{n-1} + c, n>0$ (by $\varphi$ we mean Euler's totient function). By Euler's theorem $d\mid n^{\varphi (d)+1}-n$ for all $n\in\N$, hence the sequence $(a_n \pmod c)_{n\in\N}$ is constant and equal to $c\pmod d$. Thus the assumption $f=b_1X+b_0$ in {\rm Proposition \ref{prop1}} is essential.
\end{ex}

\begin{ex}
Let us consider the sequence ${\bf a}={\bf a}(X,-aX+a,1)$, where $a\in\Z$. It is very easy to prove that $a_n = a$ for all $n\in\N$. Hence the assumption $h_1 = c$ in {\rm Proposition \ref{prop1}} is essential.
\end{ex}

The last example shows us that Proposition \ref{prop1} is no longer true if we replace a constant polynomial $h_1 = c$ with an affine polynomial $h_1 = c_1X + c_0$. However, we can modify Proposition \ref{prop1} and get the following.

\begin{prop}
Let ${\bf a}={\bf a}(b_1X+b_0,c_1X+c_0,1)$, where $b_0,b_1,c_0,c_1\in\Z$. Let $d\in\N_+$ be such that $d\mid b_1n_0+b_0$ for some $n_0\in\N$. Let us assume that $\gcd (d,b_1a_{n_1}+c_1)=1$ for some $n_1\geq n_0$. Then the sequence $(a_n \pmod d)_{n\in\N_{n_0}}$ has the basic period equal to $d$.
\end{prop}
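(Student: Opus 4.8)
The plan is to imitate the proof of Proposition \ref{prop1}, the only genuinely new ingredient being that the inhomogeneous term $c_1X+c_0$ now survives the cancellation and contributes its leading coefficient $c_1$. First I would observe that the reduction carried out in (\ref{eq2}) was valid for an \emph{arbitrary} $h_1\in\Z[X]$, and in particular for $h_1=c_1X+c_0$; since here $d\mid b_1n_0+b_0=f(n_0)$, that computation shows $(a_n \pmod d)_{n\in\N_{n_0}}$ is periodic of period $d$. Consequently its basic period $per$ divides $d$, and it remains only to establish the reverse divisibility $d\mid per$.

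To obtain $d\mid per$ I would write down the defining recurrence $a_n=(b_1n+b_0)a_{n-1}+(c_1n+c_0)$ at the indices $n_1+1$ and $n_1+per+1$, both of which exceed $n_0$ because $n_1\geq n_0$. Since $per$ is a period we have $a_{n_1+per+1}\equiv a_{n_1+1} \pmod d$ and $a_{n_1+per}\equiv a_{n_1} \pmod d$. Substituting the recurrence into the first congruence and replacing $a_{n_1+per}$ by $a_{n_1}$, every term not involving $per$ cancels between the two sides, and the congruence simplifies to
\begin{equation*}
per\,(b_1a_{n_1}+c_1)\equiv 0 \pmod d.
\end{equation*}
Here the contrast with Proposition \ref{prop1} is visible: there $h_1$ was constant, so the factor reduced to $b_1a_{n_1}$ alone; now the coefficient $c_1$ is absorbed into the single factor $b_1a_{n_1}+c_1$.

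The hypothesis $\gcd(d,b_1a_{n_1}+c_1)=1$ then makes this factor a unit modulo $d$, forcing $per\equiv 0 \pmod d$, i.e. $d\mid per$. Together with $per\mid d$ this yields $per=d$, as claimed. I expect the only delicate point to be the bookkeeping in the simplification of $a_{n_1+per+1}-a_{n_1+1}$ to $per(b_1a_{n_1}+c_1)$: one must track the effect of the shift by $per$ on both the multiplier $b_1n+b_0$ and the additive term $c_1n+c_0$, and confirm that, after using $a_{n_1+per}\equiv a_{n_1}$, all $per$-independent contributions vanish. It is precisely the appearance of $c_1$ in this factor that explains why the stronger coprimality assumption $\gcd(d,b_1a_{n_1}+c_1)=1$ is imposed, and why the conclusion is the exact period $d$ rather than the weaker $d/\gcd(d,b_1)$ of Proposition \ref{prop1}.
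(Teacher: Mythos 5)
Your proposal is correct and follows essentially the same route as the paper: the paper's proof is exactly the chain of congruences reducing $a_{n_1+per+1}\equiv a_{n_1+1}\pmod d$ to $(b_1a_{n_1}+c_1)\,per\equiv 0\pmod d$ and then invoking the coprimality hypothesis. You are slightly more explicit than the paper in recording that $per\mid d$ follows from the general periodicity result of Section \ref{subsubsec3.1.1}, but the substance is identical.
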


\begin{proof}
Denote the basic period of $(a_n \pmod d)_{n\in\N_{n_0}}$ by $per$. We thus obtain the following chain of equivalences:
\begin{equation*}
\begin{split}
& a_{n_1+per+1} \equiv a_{n_1+1} \pmod{d} \\
\iff\quad & (b_1(n_1+per+1)+b_0)a_{n_1+per} + c_1(n_1+per+1) + c_0 \\
 & \equiv (b_1(n_1+1)+b_0)a_{n_1} + c_1(n_1+1) + c_0 \pmod{d} \\
\iff\quad & (b_1(n_1+per+1)+b_0)a_{n_1} + c_1per \equiv (b_1(n_1+1)+b_0)a_{n_1} \pmod{d} \\
\iff\quad & b_1a_{n_1}per + c_1per \equiv 0 \pmod{d} \\
\iff\quad & (b_1a_{n_1}+c_1)per \equiv 0 \pmod{d}.
\end{split}
\end{equation*}
Since $\gcd (d,b_1a_{n_1}+c_1)=1$, we get $d\mid per$ and we are done.
\end{proof}

Let ${\bf a}={\bf a}(f,h_1,-1)$. Let us assume that $d\in\N_+$ is such that $d\mid f(n_0)$ for some $n_0\in\N$. Then $a_n = (-1)^n \widetilde{a}_n$ for each $n\in\N$. This means that $a_n \equiv (-1)^n \widetilde{a}_n \pmod{d}$ for all $n\in\N$. Since $\widetilde{\bf a}=\widetilde{\bf a}(-f,h_1,1)$ and $d\mid -f(n_0)$ we deduce that the sequence  $(\widetilde{a}_n \pmod d)_{n\in\N_{n_0}}$ is periodic. If we denote the basic period of $(\widetilde{a}_n \pmod d)_{n\in\N_{n_0}}$ by $per>1$ then $(a_n \pmod d)_{n\in\N_{n_0}}$ has the basic period equal to:
\begin{itemize}
\item $per$, when $2\mid per$;
\item $2per$, when $2\nmid per$.
\end{itemize}

If the sequence  $(\widetilde{a}_n \pmod d)_{n\in\N_{n_0}}$ is constant then $(a_n \pmod d)_{n\in\N_{n_0}}$ has the basic period equal to:
\begin{itemize}
\item $1$, when $d\mid a_n, n\geq n_0$ or $d=2$;
\item $2$, otherwise.
\end{itemize}

In particular, since the associated sequence $(\widetilde{D}_n)_{n\in\N}$ to the sequence of derangements satisfies the assumptions of the Proposition \ref{prop1}, hence $(\widetilde{D}_n \pmod d)_{n\in\N}$ has the basic period $d$ for arbitrary $d\in\N_+$ and as a result the basic period of $(D_n \pmod d)_{n\in\N}$ is equal to:
\begin{itemize}
\item $d$, when $2\mid d$;
\item $2d$, when $2\nmid d$.
\end{itemize}

\begin{rem}\label{rem1}
It is worth to recall a well known fact that if $(a_n)_{n\in\N}$ is a sequence of integers, $d_1, d_2$ are two coprime positive integers and the sequences $(a_n\pmod{d_1})_{n\in\N},\newline (a_n\pmod{d_2})_{n\in\N}$ are periodic with basic periods $per_1, per_2$ respectively then the sequence $(a_n\pmod{d_1d_2})_{n\in\N}$ is periodic with basic period $\lcm (per_1, per_2)$.
\end{rem}

\subsubsection{\bf $p$-adic valuations of numbers $a_n, n\in\N$, when $h_2=1$ or $h_2=-1$ and $p\mid f(n_0)$ for some $n_0\in\N$}

Let us fix a prime number $p$ and assume that ${\bf a}={\bf a}(f,h_1,1)$. If $p\mid f(n_0)$ for some $n_0\in\N$ then for each $k\in\N_+$ and $n\geq n_0+(k-1)p$ we have
\begin{equation}\label{eq3}
\begin{split}
a_n & = \sum_{j=0}^n h_1(j) \prod_{i=j+1}^n f(i) \equiv \sum_{j=n-kp+1}^n h_1(j) \prod_{i=j+1}^n f(i) \\ 
& = \sum_{j=0}^{kp-1} h_1(n-j) \prod_{i=0}^{j-1} f(n-i)\equiv f_{p,k}(n) \pmod{p^k},
\end{split}
\end{equation} 
where $f_{p,k} = \sum_{j=0}^{kp-1} h_1(X-j) \prod_{i=0}^{j-1} f(X-i) \in\Z[X]$. We can skip the summands from $0$th to $(n-kp)$th because if $0\leq j\leq n-d$ then among (at least $kp$) numbers $j+1, j+2, ..., n$ there are at least $k$ numbers congruent to $n_0$ modulo $p$, thus $p$ divides at least $k$ factors in product $\prod_{i=j+1}^n f(i)$. Additionaly, if $f(n_0)=0$ and $n_0\leq n < n_0+(k-1)p$ then $\prod_{i=j+1}^n f(i)=0$ for $j<n_0$. Hence the congruence (\ref{eq3}) is satisfied for $n\geq n_0$.

What is more,
\begin{equation*}
\begin{split}
& f'_{p,k}(n) = \sum_{j=0}^{kp-1} [h'_1(n-j) \prod_{i=0}^{j-1} f(n-i) + h_1(n-j) \sum_{s=0}^{j-1}(f'(n-s)\prod_{i=0, i\neq s}^{j-1} f(n-i))] \\
\equiv & \sum_{j=0}^{2p-1} [h'_1(n-j) \prod_{i=0}^{j-1} f(n-i) + h_1(n-j) \sum_{s=0}^{j-1}(f'(n-s)\prod_{i=0, i\neq s}^{j-1} f(n-i))] \pmod{p}.
\end{split}
\end{equation*}
Hence $(f_{p,k},1)_{k\in\N_2}$ is a pseudo-polynomial decomposition modulo $p$ of ${\bf a}$ and thus we can use Theorem \ref{thm1} to obtain the criterion for behavior of $p$-adic valuation of the number $a_n$.

\begin{thm}\label{thm5}
Assume that ${\bf a}={\bf a}(f,h_1,1)$. Let $p$ be a prime number and $k\in\N_+$. Let $f(n_0)=0$ (respectively $p\mid f(n_0)$) and $n_k\geq n_0$ (respectively $n_k\geq n_0+kp$) be such that $p^k\mid a_{n_k}$.
\begin{itemize}
\item If $v_p(a_{n_k+p}-a_{n_k})=1$ then there exists a unique $n_{k+1}$ modulo $p^{k+1}$ such that $n_{k+1}\equiv n_k \pmod{p^k}$ and $p^{k+1}\mid{a_n}$ for all $n\geq n_0$ (respectively $n\geq n_0 +{kp}$) congruent to $n_{k+1}$ modulo $p^{k+1}$. Moreover, $n_{k+1} \equiv n_k-\frac{pa_{n_k}}{a_{n_k+p}-a_{n_k}} \pmod{p^{k+1}}$.
\item If $v_p(a_{n_k+p}-a_{n_k})>1$ and $p^{k+1}\mid{a_{n_k}}$ then $p^{k+1}\mid{a_n}$ for any $n$ satisfying $n\equiv n_k \pmod{p^k}$ and $n\geq n_0$ (respectively $n\geq n_0+{kp}$).
\item If $v_p(a_{n_k+p}-a_{n_k})>1$ and $p^{k+1}\nmid{a_{n_k}}$ then $p^{k+1}\nmid{a_n}$ for any $n$ satisfying $n\equiv n_k \pmod{p^k}$ and $n\geq n_0$ (respectively $n\geq n_0+{kp}$).
\end{itemize}
In particular, if $k=1$, $p\mid{a_{n_1}}$ and $v_p(a_{n_1+p}-a_{n_1})=1$ then for any $l\in\mathbb{N}_+$ there exists a unique $n_l$ modulo $p^l$ such that $n_l\equiv n_1 \pmod{p}$ and $v_p(a_n)\geq{l}$ for all $n\geq n_0$ (respectively $n\geq n_0 +(l-1)p$) congruent to $n_l$ modulo $p^l$. Moreover, $n_l$ satisfies the congruence $n_l \equiv n_{l-1}-\frac{pa_{n_{l-1}}}{a_{n_1+p}-a_{n_1}} \pmod{p^l}$ for $l>1$.
\end{thm}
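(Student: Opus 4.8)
The plan is to derive Theorem \ref{thm5} as a direct specialization of the general Hensel-type result, Theorem \ref{thm1}, applied to the pseudo-polynomial decomposition $(f_{p,k},1)_{k\in\N_2}$ of $\bf a$ already exhibited in the discussion preceding the statement. The decisive observation is that here every $g_{p,k}$ is identically $1$, so the auxiliary quantity
\begin{equation*}
q_p(n_k)=\frac{1}{p}\left(\frac{a_{n_k+p}}{g_{p,2}(n_k+p)}-\frac{a_{n_k}}{g_{p,2}(n_k)}\right)
\end{equation*}
from Theorem \ref{thm1} collapses to $q_p(n_k)=\frac{1}{p}(a_{n_k+p}-a_{n_k})$. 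Consequently $v_p(q_p(n_k))=v_p(a_{n_k+p}-a_{n_k})-1$, so the dichotomy $v_p(q_p(n_k))=0$ versus $v_p(q_p(n_k))>0$ that governs Theorem \ref{thm1} translates verbatim into the dichotomy $v_p(a_{n_k+p}-a_{n_k})=1$ versus $v_p(a_{n_k+p}-a_{n_k})>1$ appearing in the present statement.

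First I would record that $(f_{p,k},1)_{k\in\N_2}$ really is a pseudo-polynomial decomposition modulo $p$ of $\bf a$: the congruence $a_n\equiv f_{p,k}(n)\pmod{p^k}$ is exactly (\ref{eq3}), and the required stability $f'_{p,k}(n)\equiv f'_{p,2}(n)\pmod p$ was verified in the display following (\ref{eq3}). Next I would check the density hypothesis of Theorem \ref{thm1}: the set on which the decomposition is being used contains all sufficiently large integers in the prescribed class modulo $p^k$, and since such a class meets every finer class modulo $p^l$ for $l\geq k$, a cofinite subset of it remains dense in $\{n\in\N:n\equiv n_k\pmod{p^k}\}$ for the $p$-adic metric. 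With these two items in place, the three bullet points are immediate readings of the three cases of Theorem \ref{thm1}, while the explicit congruence for $n_{k+1}$ follows by substituting $g_{p,k+1}\equiv 1$ and $q_p(n_k)=\frac{1}{p}(a_{n_k+p}-a_{n_k})$ into the formula $n_{k+1}\equiv n_k-\frac{a_{n_k}}{g_{p,k+1}(n_k)q_p(n_k)}\pmod{p^{k+1}}$ supplied by Theorem \ref{thm1}. The concluding ``in particular'' assertion is then the specialization of the inductive ($k=1$) conclusion of Theorem \ref{thm1} in the same way.

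The main obstacle I anticipate is purely bookkeeping. The congruence (\ref{eq3}) at level $k$ is valid only for $n\geq n_0+(k-1)p$ (and for all $n\geq n_0$ in the degenerate case $f(n_0)=0$), so the set supporting the decomposition shrinks as $k$ grows rather than being a single fixed $S$. I must therefore verify that every index at which a level-$(k+1)$ congruence is invoked — in particular the candidate solutions $n\equiv n_{k+1}\pmod{p^{k+1}}$, for which one needs $a_n\equiv f_{p,k+1}(n)\pmod{p^{k+1}}$ to pass from divisibility of $f_{p,k+1}$ to divisibility of $a_n$ — lies in the admissible range $n\geq n_0+kp$. This is precisely the source of the hypothesis $n_k\geq n_0+kp$ in the case $p\mid f(n_0)$ and of the shift to $n\geq n_0+(l-1)p$ in the $k=1$ statement; once these inequalities are tracked carefully, the remaining content is a faithful transcription of Theorem \ref{thm1}.
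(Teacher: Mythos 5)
Your proposal is correct and follows essentially the same route as the paper: the author likewise observes that since every $g_{p,k}$ is the constant function $1$, the quantity $q_p(n_k)$ from Theorem \ref{thm1} collapses to $\frac{1}{p}(a_{n_k+p}-a_{n_k})$, so that $v_p(a_{n_k+p}-a_{n_k})=v_p(q_p(n_k))+1$, and then applies Theorem \ref{thm1} with $S=\{n\in\N:n\geq n_0\}$ (respectively $S=\{n\in\N:n\geq n_0+kp\}$). Your extra care about the ranges $n\geq n_0+(k-1)p$ on which the level-$k$ congruence (\ref{eq3}) holds is exactly the bookkeeping the paper's one-line proof leaves implicit, and you resolve it the same way the indexing in the statement does.
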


\begin{proof}
Note that $q_p(n_k) = \frac{1}{p}(a_{n_k+p}-a_{n_k})$ ($q_p(n_k)$ is as in the statement of Theorem \ref{thm1}), which implies that $v_p(a_{n_k+p}-a_{n_k})=v_p(q_p(n_k))+1$. We use Theorem \ref{thm1} for the set $S = \{n\in\N: n\geq n_0\}$ (respectively $S = \{n\in\N: n\geq n_0+kp\}$) and get the result.
\end{proof}

Let us observe that if ${\bf a}\in\cal{R}''$ then $v_p(a_n) = v_p(\widetilde{a}_n)$. Hence it suffices to apply Theorem \ref{thm5} for the sequence $\widetilde{\bf a}$ in order to obtain the description of $p$-adic valuation of numbers $a_n$, $n\in\N$.

\subsubsection{\bf Prime divisors and $p$-adic valuations of the sequence of derangements}\label{subsubsec3.1.3}

Theorem \ref{thm5} can be used to describe $p$-adic valuations of numbers of derangements, but we will study these numbers more precisely. Namely,
\begin{equation*}
\begin{split}
D_n & = nD_{n-1} + (-1)^n = D_{n-1} + (n-1)D_{n-1} + (-1)^n \\
& = (n-1)D_{n-2} + (-1)^{n-1} + (n-1)D_{n-1} + (-1)^n = (n-1)(D_{n-2}+D_{n-1})
\end{split}
\end{equation*}

for $n>1$. We thus have $n-1\mid D_n$ for $n\in\N$ and as a consequence $v_p(n-1)\leq v_p(D_n)$ for each prime $p$. Let us define two sets:
\begin{equation*}
\cal{A} = \{ p\in\bbb{P}: v_p(n-1) = v_p(D_n) \mbox{ for all } n\in\N\},\mbox{ } \cal{B} = \bbb{P}\bs\cal{A}.
\end{equation*}

Denote $E_n = \frac{D_n}{n-1} = D_{n-2}+D_{n-1}, n>1$. Hence it suffices to study $p$-adic valuations of the sequence $(E_n)_{n\in\N_2}$ for $p\in\cal{B}$. Firstly note that the set $\cal{B}$ is infinite.

\begin{prop}
The set $\cal{B}$ is infinite.
\end{prop}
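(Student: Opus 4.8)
The plan is to reinterpret $\cal{B}$ as the set of prime divisors of the integers $E_n$ and then run a Euclid-type argument. First I would record the equivalence that $p\in\cal{B}$ if and only if $p\mid E_n$ for some $n\geq 2$. Indeed $D_n=(n-1)E_n$, so $v_p(D_n)=v_p(n-1)+v_p(E_n)$, and since $v_p(D_n)\geq v_p(n-1)$ always holds, the failure $p\notin\cal{A}$ occurs exactly when $v_p(E_n)>0$ for some $n$ (the indices $n=0,1$ contribute nothing, as the two valuations agree there). In particular $\cal{B}$ is non-empty, since $E_4=D_2+D_3=3$ forces $3\in\cal{B}$.

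Next I would suppose for contradiction that $\cal{B}=\{p_1,\dots,p_s\}$ is finite. The load-bearing algebraic simplification is the identity $E_n=nD_{n-2}+(-1)^{n-1}$, which follows immediately from $E_n=D_{n-1}+D_{n-2}$ together with $D_{n-1}=(n-1)D_{n-2}+(-1)^{n-1}$. Choosing any index $n$ divisible by the product $p_1\cdots p_s$, the summand $nD_{n-2}$ vanishes modulo each $p_i$, so $E_n\equiv(-1)^{n-1}\equiv\pm1\pmod{p_i}$ for every $i$. Hence none of the primes $p_1,\dots,p_s$ divides $E_n$.

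To finish, I would invoke the growth of the sequence: since $D_n\to+\infty$ we have $E_n=D_{n-1}+D_{n-2}\to+\infty$, so for $n$ a sufficiently large multiple of $p_1\cdots p_s$ we get $E_n>1$. Such an $E_n$ must then possess a prime divisor $q$, and by the first paragraph $q\in\cal{B}=\{p_1,\dots,p_s\}$, contradicting the previous step in which no $p_i$ divided $E_n$. Therefore $\cal{B}$ is infinite.

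The argument is short once the right index is chosen, and I do not expect a serious obstacle; the only genuinely essential points are the reduction of membership in $\cal{B}$ to divisibility of $E_n$ and the simplification $E_n\equiv(-1)^{n-1}\pmod{p_i}$ on multiples of $p_i$. It is worth emphasizing that this structural computation is precisely what makes a Euclid-style contradiction succeed despite the factorial-type (hence extremely sparse) growth of the sequence, for which a naive smooth-number counting argument would not suffice.
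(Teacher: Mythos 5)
Your proof is correct, and it follows the same overall architecture as the paper's (assume $\cal{B}=\{p_1,\dots,p_s\}$ is finite, exhibit infinitely many $n$ with $E_n$ coprime to every $p_i$, and contradict the growth $E_n\to+\infty$), but the central coprimality step is obtained by a genuinely different device. The paper uses the periodicity of $((-1)^nE_n\pmod{p_i})_{n\in\N_2}$ with period $p_i$ (inherited from the periodicity of $((-1)^nD_n\pmod{p_i})_{n\in\N}$ established earlier) together with the initial value $E_2=1$, and therefore evaluates along $n\equiv 2\pmod{p_1\cdots p_s}$. You instead use the closed identity $E_n=nD_{n-2}+(-1)^{n-1}$ (which follows in one line from $E_n=D_{n-1}+D_{n-2}$ and the defining recurrence, and which the paper itself records in Section \ref{subsec3.3}, there with a sign slip) and evaluate along $n\equiv 0\pmod{p_1\cdots p_s}$. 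Your route is more self-contained, since it does not lean on the periodicity machinery of Section \ref{subsec3.1}; the paper's route is more in the spirit of the rest of the article, where periodicity modulo $d$ is the systematic tool, and it generalizes more readily to sequences for which no such clean algebraic identity is available. Your preliminary reduction of membership in $\cal{B}$ to divisibility of some $E_n$ with $n\geq 2$, and the handling of the indices $n=0,1$, are both accurate, and the concluding growth argument matches the paper's (which justifies $E_n\to+\infty$ via $D_n/n!\to e^{-1}$).
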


\begin{proof}
Assume that $\cal{B} = \{p_1, ..., p_s\}$. Since $((-1)^nD_n \pmod p_i)_{n\in\N}$ has period $p_i$ for each $i\in\{1,...,s\}$, thus $((-1)^nE_n \pmod p_i)_{n\in\N_2}$ has period $p_i$, too. Because $E_2=1$, hence $p_1...p_s\nmid E_{p_1...p_sm+2}$ for all $m\in\N$. $\cal{B}$ is the set of all prime divisors of numbers $E_n, n>1$ and $E_n>0$ (because $D_n>0$ for $n\neq 1$), so $E_{p_1...p_sm+2}=1$ for all $m\in\N$. On the other hand, $\frac{E_n}{(n-2)!n} = \frac{D_n}{n!} = \sum_{j=0}^n \frac{(-1)^j}{j!} \rightarrow e^{-1}$, when $n\rightarrow +\infty$. This fact implies that $E_n\rightarrow +\infty$, when $n\rightarrow +\infty$, and this is a contradiction.
\end{proof}

For a given prime number $p$ it is easy to verify if $p\in\cal{A}$. Because of periodicity of the sequence $((-1)^nE_n \pmod p)_{n\in\N_2}$ it suffices to check that $p$ divides none of the numbers $E_n$, $n\in\{2,...,p+1\}$. The first numbers in $\cal{A}$ are $2,5,7,17,19,23,29$. Numerical computations show that among all prime numbers less than $10^6$ there are $28990$ numbers which belong to $\cal{A}$, while $49508$ primes belong to $\cal{B}$. This means that primes less than $10^6$ contained in $\cal{A}$ are approx. $37\%$ of all primes less than $10^6$. However, we are not able to prove that the set $\cal{A}$ is infinite.

\begin{con}\label{con1}
The set $\cal{A}$ is infinite. Moreover, $\lim_{n\rightarrow +\infty} \frac{\sharp (\cal{A}\cap\{1,...,n\})}{\sharp (\bbb{P}\cap\{1,...,n\})} = \frac{1}{e}$.
\end{con}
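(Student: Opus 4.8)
The plan is to reduce the conjecture to a statement about the distribution of the residues of $E_n=D_{n-2}+D_{n-1}$ modulo $p$, and then to analyse that distribution. First I would record the reformulation already implicit in the text: since $D_n=(n-1)E_n$, a prime $p$ lies in $\cal{A}$ exactly when $p\nmid E_n$ for every $n\geq 2$, and by the periodicity of $((-1)^nE_n\pmod p)_{n\in\N_2}$ with period $p$ it suffices to examine a single period, i.e. $p\in\cal{A}$ iff $p$ divides none of $E_2,\dots,E_{p+1}$. It is convenient to pass to the recurrence $E_{n+1}=(n-1)E_n+(n-2)E_{n-1}$ for $n\geq 3$, with $E_2=E_3=1$, which follows from $E_{n+1}=D_n+D_{n-1}=(n-1)E_n+D_{n-1}$ together with $D_{n-1}=(n-2)E_{n-1}$. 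This exhibits $(E_n\pmod p)$ as the orbit of a linear system with (non-constant) polynomial coefficients, hence an explicitly computable periodic sequence whose zeros in one period govern membership in $\cal{A}$.

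Second, I would set up the probabilistic heuristic that produces the constant $1/e$. Let $Z_p$ denote the number of indices $n\in\{2,\dots,p+1\}$ for which $p\mid E_n$. Modelling the $p$ residues $E_2\bmod p,\dots,E_{p+1}\bmod p$ as independent and uniform on $\Z/p\Z$ makes $Z_p$ a sum of $p$ independent trials each of probability $\tfrac1p$, hence approximately Poisson with parameter $1$; thus $P(p\in\cal{A})=P(Z_p=0)\approx(1-\tfrac1p)^p$, and $\lim_{p\to\infty}(1-\tfrac1p)^p=e^{-1}$. To convert this into the density assertion I would aim to prove an equidistribution input: that the events $\{p\mid E_n\}$, $n$ ranging over a period, each occur with frequency $\tfrac1p+o(\tfrac1p)$ and are asymptotically independent, uniformly enough in $p$ that averaging $(1-\tfrac1p)^p$ over primes $p\leq x$ yields the limit $1/e$ after dividing by $\pi(x)$. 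Positive density would then immediately give the first assertion that $\cal{A}$ is infinite.

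The hard part --- and the reason the statement is only a conjecture --- is precisely this equidistribution input. The sequence $(E_n\pmod p)$ is entirely deterministic and generated by a recurrence whose coefficients vary with $n$, so there is no genuine source of randomness and the standard tools for equidistribution (character sums, large-sieve inputs) do not obviously apply through the non-constant coefficients. Proving that a full period of $p$ values avoids $0$ for infinitely many $p$, let alone with the exact Poisson frequency $e^{-1}$, lies beyond the periodicity and $p$-adic valuation techniques of Sections \ref{sec2}--\ref{sec3}; even the weaker infinitude of $\cal{A}$ is a prime-avoidance problem of a flavour typically intractable by elementary means. The realistic outcome of this plan is therefore a conditional theorem --- the density equals $1/e$ under a precisely formulated equidistribution hypothesis for $(E_n\bmod p)$ --- together with the heuristic and the numerical evidence (the computed proportion $\approx 37\%\approx e^{-1}$ among primes below $10^6$) supporting the unconditional statement.
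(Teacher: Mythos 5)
This statement is a conjecture, and the paper offers no proof of it — only the same random-model heuristic you describe: the values $\widetilde{E}_n \bmod p$ over one period of length $p$ are treated as uniform and independent, giving $(1-\tfrac{1}{p})^p \to e^{-1}$ as the putative probability that $p\in\mathcal{A}$. Your proposal reproduces this heuristic (with the correct reformulation $p\in\mathcal{A}\iff p\nmid E_n$ for $n\in\{2,\dots,p+1\}$ and a valid three-term recurrence for $E_n$) and rightly identifies the missing equidistribution input as the obstruction to an actual proof, so it matches the paper's approach and its honest status as a conjecture.
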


The following heuristic reasoning allows us to claim the second statement in the conjecture above. If we fix a prime number $p$ and choose by random a sequence $(a_n)_{n\in\N}$ such that the sequence of remainders $(a_n\pmod{p})_{n\in\N}$ has period $p$ then the probability that $p$ does not divide any term of this sequence is equal to $\left(1-\frac{1}{p}\right)^p$. When $p\rightarrow +\infty$ then this probability tends to $\frac{1}{e}$. Note that $p\in\cal{A}$ if and only if $p$ does not divide any number $\widetilde{E}_n$, $n\geq 2$ and the sequence $(\widetilde{E}_n\pmod{p})_{n\in\N_2}$ is periodic of period $p$. Therefore we suppose that the probability that $p\in\cal{A}$ tends to $\frac{1}{e}$, when $p\rightarrow +\infty$ and hence the asymptotic density of the set $\cal{A}$ in the set $\bbb{P}$ is equal to $\frac{1}{e}$.

Now we are obtaining a pseudo-polynomial decomposition modulo $p$ of the sequence $(E_n)_{n\in\N_2}$. Note, that $(-1)^nE_n = (-1)^nD_{n-2} + (-1)^nD_{n-1} = \widetilde{D}_{n-2} - \widetilde{D}_{n-1}$, $n>1$. Let $f_{p,k} = \sum_{j=0}^{kp-1} (-1)^j \prod_{i=0}^{j-1} (X-i)$, $k>1$. Then $(f_{p,k},1)_{k\in\N_2}$ is a pseudo-polynomial decomposition modulo $p$ of $(\widetilde{D}_n)_{n\in\N}$ (recall that $\widetilde{D}_0 = 1, \widetilde{D}_n = -n\widetilde{D}_{n-1}+1$, $n>0$). Hence $(f_{p,k}(X-2)-f_{p,k}(X-1),1)_{k\in\N_2}$ is a pseudo-polynomial decomposition modulo $p$ of $((-1)^nE_n)_{n\in\N_2}$ and $(f_{p,k}(X-2)-f_{p,k}(X-1),(-1)^n)_{k\in\N_2}$ (where $(-1)^n$ means the function which maps a nonnegative integer $n$ to $(-1)^n$) is a pseudo-polynomial decomposition modulo $p$ of $(E_n)_{n\in\N_2}$.

\begin{rem}\label{rem2}
We can define $E_0, E_1$ so that $(E_n)_{n\in\N}$ has a pseudo-polynomial decomposition. The sequence of functions $(f_{p,k})_{k\in\N_2}$ converges uniformly to the function $f_{p,\infty} = \sum_{j=0}^{+\infty} (-1)^j \prod_{i=0}^{j-1} (X-i)$ on $\Z_p$ and thus (see Section \ref{subsec2.2}):
\begin{equation*}
E_n = (-1)^n (f_{p,\infty}(n-2) - f_{p,\infty}(n-1)), n\geq 2,
\end{equation*}
so there must be:
\begin{equation*}
\begin{split}
& E_0 = f_{p,\infty}(-2) - f_{p,\infty}(-1) = \sum_{j=0}^{+\infty} (j+1)! - \sum_{j=0}^{+\infty} j! = -1, \\
& E_1 = f_{p,\infty}(0) - f_{p,\infty}(-1) = 1 - \sum_{j=0}^{+\infty} j! = -\sum_{j=1}^{+\infty} j!.
\end{split}
\end{equation*}

It is worth to remark that $E_0 = \frac{D_0}{-1}$. Thus the definition of $E_0$ coincides with the definition $E_n = \frac{D_n}{n-1}$ for $n\geq 2$.

One can observe that $E_1\notin\Z$. Indeed, if $E_1\in\Z$ then the sequence of remainders $(E_1\pmod{n!})_{n\in\N}$ or $(-E_1\pmod{n!})_{n\in\N}$ is ultimately constant. However, for $n>1$ we have
\begin{equation*}
E_1\pmod{n!} = n! - \sum_{j=1}^{n-1} j! > n! - (n-1)(n-1)! = (n-1)!
\end{equation*}
and
\begin{equation*}
-E_1\pmod{n!} = \sum_{j=1}^{n-1} j!,
\end{equation*}
which leads to a contradiction.
\end{rem}

\begin{thm}\label{thm6}
Let $p\in\cal{B}, k\in\N_+$ and $n_k\in\N$, $n_k\geq 2$ be such that $p^k\mid\frac{D_{n_k}}{n_k-1}$. Let us define $\widehat{q}_p(n_k) = \frac{1}{p}\left(\frac{D_{n_k+p}}{n_k+p-1}+\frac{D_{n_k}}{n_k-1}\right)$.
\begin{itemize}
\item If $p\nmid\widehat{q}_p(n_k)$ then there exists a unique $n_{k+1}$ modulo $p^{k+1}$ such that $n_{k+1}\equiv n_k \pmod{p^k}$ and $p^{k+1}\mid\frac{D_n}{n-1}$ for all $n\geq 2$ congruent to $n_{k+1}$ modulo $p^{k+1}$. What is more, $n_{k+1} \equiv n_k+\frac{D_{n_k}}{(n_k-1)\widehat{q}_p(n_k)} \pmod{p^{k+1}}$.
\item If $p\mid\widehat{q}_p(n_k)$ and $p^{k+1}\mid\frac{D_{n_k}}{n_k-1}$ then $p^{k+1}\mid\frac{D_{n}}{n-1}$ for all $n$ satisfying $n\equiv n_k \pmod{p^k}$ and $n\geq 2$.
\item If $p\mid\widehat{q}_p(n_k)$ and $p^{k+1}\nmid\frac{D_{n_k}}{n_k-1}$ then $p^{k+1}\nmid\frac{D_{n}}{n-1}$ for any $n$ satisfying $n\equiv n_k \pmod{p^k}$ and $n\geq 2$.
\end{itemize}
In particular, if $k=1$, $p\mid \frac{D_{n_1}}{n_1-1}$ and $p^2\nmid\left(\frac{D_{n_1+p}}{n_1+p-1}+\frac{D_{n_1}}{n_1-1}\right)$ then for any $l\in\mathbb{N}_+$ there exists a unique $n_l$ modulo $p^l$ such that $n_l\equiv n_1 \pmod{p}$ and $v_p\left(\frac{D_n}{n-1}\right)\geq{l}$ for all $n\geq 2$ congruent to $n_l$ modulo $p^l$. Moreover, $n_l$ satisfies the congruence $n_l \equiv n_{l-1}+\frac{D_{n_{l-1}}}{(n_{l-1}-1)\widehat{q}_p(n_1)} \pmod{p^l}$ for $l>1$.
\end{thm}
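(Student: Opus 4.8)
The plan is to apply Theorem \ref{thm1} directly to the pseudo-polynomial decomposition of $(E_n)_{n\in\N_2}$ recorded just above the statement, namely $(f_{p,k}(X-2)-f_{p,k}(X-1),(-1)^n)_{k\in\N_2}$, where $f_{p,k}=\sum_{j=0}^{kp-1}(-1)^j\prod_{i=0}^{j-1}(X-i)$ and the second coordinate is the function $n\mapsto(-1)^n$. I would take the set on which the decomposition lives to be $S=\N_2$; since $\{n\in\N:n\equiv n_k\pmod{p^k}\}$ and $S\cap\{n\in\N:n\equiv n_k\pmod{p^k}\}$ differ by at most the two integers $0,1$, and a ball in $\Z_p$ is the closure of any tail of the corresponding arithmetic progression, the density hypothesis in the first bullet of Theorem \ref{thm1} is automatic. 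To match the hypothesis $(a_n)_{n\in\N}\subset\Z_p$ I would extend the sequence by the values $E_0=-1$ and $E_1=-\sum_{j=1}^{+\infty}j!\in\Z_p$ furnished by Remark \ref{rem2}. One essential observation precedes everything: since $2\in\cal{A}$, every $p\in\cal{B}$ is odd, which is exactly what lets me evaluate $(-1)^p=-1$.

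The decisive step is to rewrite the quantity $q_p(n_k)$ of Theorem \ref{thm1} in terms of $\widehat{q}_p(n_k)$. With $g_{p,2}(n)=(-1)^n$ one has $a_n/g_{p,2}(n)=(-1)^nE_n$, so
\[
q_p(n_k)=\frac1p\big((-1)^{n_k+p}E_{n_k+p}-(-1)^{n_k}E_{n_k}\big).
\]
Using $(-1)^p=-1$ for the odd prime $p$, the right-hand side collapses to $-(-1)^{n_k}\cdot\frac1p(E_{n_k+p}+E_{n_k})=-(-1)^{n_k}\widehat{q}_p(n_k)$. Consequently $v_p(q_p(n_k))=v_p(\widehat{q}_p(n_k))$, so the trichotomy $v_p(q_p(n_k))=0$ versus $v_p(q_p(n_k))>0$ of Theorem \ref{thm1}, combined with the split on whether $p^{k+1}\mid\frac{D_{n_k}}{n_k-1}$, becomes precisely the three bulleted cases of the statement, with $p\nmid\widehat{q}_p(n_k)$ in place of $v_p(q_p(n_k))=0$.

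It remains to transcribe the explicit congruence for $n_{k+1}$. Theorem \ref{thm1} gives $n_{k+1}\equiv n_k-\frac{a_{n_k}}{g_{p,k+1}(n_k)q_p(n_k)}\pmod{p^{k+1}}$; substituting $a_{n_k}=E_{n_k}=\frac{D_{n_k}}{n_k-1}$, $g_{p,k+1}(n_k)=(-1)^{n_k}$ and $q_p(n_k)=-(-1)^{n_k}\widehat{q}_p(n_k)$, the two factors $(-1)^{n_k}$ multiply to $-1$ and the sign flips, producing $n_{k+1}\equiv n_k+\frac{D_{n_k}}{(n_k-1)\widehat{q}_p(n_k)}\pmod{p^{k+1}}$, as claimed. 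The final ``$k=1$'' assertion would follow by the same induction as in Theorem \ref{thm1}, noting that $p^2\nmid(E_{n_1+p}+E_{n_1})=p\,\widehat{q}_p(n_1)$ is equivalent to $v_p(\widehat{q}_p(n_1))=0$. I expect the only genuinely delicate point to be the bookkeeping of the signs $(-1)^{n_k}$ arising simultaneously from $g_{p,k+1}$ and from $q_p$: verifying that they cancel to the clean ``$+$'' sign in the one-step formula, and checking that in the iterated ``$k=1$'' formula the passage from $\widehat{q}_p(n_{l-1})$ to $\widehat{q}_p(n_1)$ (valid only modulo $p$, since $\widehat{q}_p(n_{l-1})\equiv(-1)^{n_{l-1}-n_1}\widehat{q}_p(n_1)\pmod p$) is carried out so that the residual parity factor is correctly accounted for.
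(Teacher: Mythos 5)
Your proposal is correct and follows essentially the same route as the paper: apply Theorem \ref{thm1} to the pseudo-polynomial decomposition $(f_{p,k}(X-2)-f_{p,k}(X-1),(-1)^n)_{k\in\N_2}$ of $(E_n)_{n\in\N_2}$, use that every $p\in\cal{B}$ is odd (since $2\in\cal{A}$) to obtain $q_p(n_k)=-(-1)^{n_k}\widehat{q}_p(n_k)$, and transcribe the three cases and the root-refinement congruence. The sign bookkeeping you single out at the end is precisely the entire content of the paper's short proof, which resolves it by the same cancellation $(-1)^{n_k}q_p(n_k)=-\widehat{q}_p(n_k)$.
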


\begin{proof}
Let $q_p(n_k)$ be as specified in Theorem \ref{thm1}. Note that
\begin{equation*}
\begin{split}
q_p(n_k) = & \frac{1}{p}\left(\frac{D_{n_k+p}}{(-1)^{n_k+p}(n_k+p-1)} - \frac{D_{n_k}}{(-1)^{n_k}(n_k-1)}\right) \\
= & \frac{(-1)^{n_k+p}}{p}\left(\frac{D_{n_k+p}}{n_k+p-1}+\frac{D_{n_k}}{n_k-1}\right) = -(-1)^{n_k}\widehat{q}_p(n_k),
\end{split}
\end{equation*}
where the equalities above hold, because $p\neq 2$ ($2\in\cal{A}$). Thus $v_p(\widehat{q}_p(n_k))=v_p(q_p(n_k))$ and if $v_p(\widehat{q}_p(n_k))=0$ then $n_{k+1} \equiv n_k-\frac{\frac{D_{n_k}}{n_k-1}}{(-1)^{n_k}q_p(n_k)} = n_k+\frac{D_{n_k}}{(n_k-1)\widehat{q}_p(n_k)} \pmod{p^{k+1}}$.
\end{proof}

According to numerical computations based on the theorem above, among primes less than $10^6$ there are three primes $p$ with the property that there exists an $n_1\geq 2$ such that $p\mid \frac{D_{n_1}}{n_1-1}$ and $p\mid\widehat{q}_p(n_1)$. Namely, they are:
\begin{itemize}
\item $p=2633$ with $n_1=1578$,
\item $p=429943$ with $n_1=317291$,
\item $p=480143$ with $n_1=121716$.
\end{itemize}

In addition, if a tuple $(p,n_1)$ is one of these tree tuples above then $v_p\left(\frac{D_{n_1}}{n_1-1}\right) = 1$. Therefore, by Theorem \ref{thm6}, $v_p\left(\frac{D_n}{n-1}\right) = 1$ for all integers $n\geq 2$ congruent to $n_1$ modulo $p$. Since $2633\mid \frac{D_n}{n-1}$ if and only if $n\equiv 1578\pmod{2633}$, thus the 2633-adic valuation of numbers $\frac{D_n}{n-1}, n\geq 2$, is bounded by 1. 

For $p=480143$ we have $429943\mid \frac{D_n}{n-1}$ if and only if $n\equiv 172017, 223393, 317291\pmod{429943}$ and $429943\nmid \widehat{q}_{429943}(172017), \widehat{q}_{429943}(223393)$, and $480143\mid \frac{D_n}{n-1}$ if and only if $n\equiv 121716, 265745\pmod{480143}$ and $480143\nmid \widehat{q}_{480143}(265745)$, so the 429943-adic valuation and 480143-adic valuation of numbers $\frac{D_n}{n-1}, n\geq 2$, are unbounded.

Hence it is not true that the $p$-adic valuation of numbers $\frac{D_n}{n-1}, n\geq 2$, is unbounded for all $p\in\cal{B}$. In the light of these results it is natural to ask the following questions:

\begin{que}
Are there infinitely many primes $p$ with the property that there exists $n_1\in\N_2$ such that $p\mid \frac{D_{n_1}}{n_1-1}$ and $p\mid\widehat{q}_p(n_1)$?
\end{que}

\begin{que}
Are there infinitely many primes $p\in\cal{B}$ such that the set $$\left\{v_p\left(\frac{D_n}{n-1}\right): n\in\N_2\right\}$$ is finite?
\end{que}

\subsubsection{\bf Periodicity modulo $d$ in case when $d$ divides $f(n_0)$ for some integer $n_0$ and $h_2$ is arbitrary}

\begin{prop}\label{prop4}
Let us consider a sequence ${\bf a}(f,h_1,h_2)$. Let $d\in\N_+$ and $n_0\in\N$ be such that $d\mid f(n_0)$ and $n_0\geq (p+1)v_p(d)-1$ for all primes $p$. Then the sequence $(a_n\pmod{d})_{n\in\N_{n_0}}$ is periodic of period $\lcm\{p_i^{k_i}(p_i-1): i\in\{1,2,...,s\}\}$, where $d = p_1^{k_1}\cdot... \cdot p_s^{k_s}$ is the factorization of the number $d$. In particular, the number $d\prod_{i=0}^s (p_i-1)$ is a period of $(a_n\pmod{d})_{n\in\N_{n_0}}$.
\end{prop}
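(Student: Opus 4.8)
The plan is to reduce the claim to prime power moduli and then to treat the only genuinely new feature, the exponential factor $h_2(j)^j$, by separating the cases $p\mid h_2(j)$ and $p\nmid h_2(j)$. Write $d=p_1^{k_1}\cdots p_s^{k_s}$ and recall that if $T_i$ is a period of $(a_n\pmod{p_i^{k_i}})_{n\in\N_{n_0}}$ then, since the $p_i^{k_i}$ are pairwise coprime, $\lcm\{T_1,\dots,T_s\}$ is a period of $(a_n\pmod{d})_{n\in\N_{n_0}}$ (this is the period half of Remark \ref{rem1}, immediate from the Chinese remainder theorem). So it suffices to prove that, for a fixed prime $p=p_i$ with $k=v_p(d)=k_i$, the number $p^k(p-1)$ is a period of $(a_n\pmod{p^k})_{n\in\N_{n_0}}$.

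First I would truncate the closed formula modulo $p^k$. Since $p^k\mid d\mid f(n_0)$, we have $p\mid f(m)$ for every $m\equiv n_0\pmod p$; hence among any $kp$ consecutive integers the product of the $f$-values is divisible by $p^k$. Note that $n_0\ge (p+1)k-1\ge kp$, so for $n\ge n_0$ the tail indices are genuine. Reindexing $a_n=\sum_{j=0}^n h_1(j)h_2(j)^j\prod_{i=j+1}^n f(i)$ by $j\mapsto n-j$ and discarding the negligible tail gives, for all $n\ge n_0$,
\begin{equation*}
a_n\equiv\sum_{j=0}^{kp-1} h_1(n-j)\,h_2(n-j)^{n-j}\prod_{i=0}^{j-1} f(n-i)\pmod{p^k},
\end{equation*}
exactly as in (\ref{eq3}), the only change being the extra factor $h_2(n-j)^{n-j}$.

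Next I would show that each summand is periodic in $n$ with period $T=p^k(p-1)$ for $n\ge n_0$. The factors $h_1(n-j)$ and $\prod_{i=0}^{j-1}f(n-i)$ are polynomials in $n$, hence periodic modulo $p^k$ with period $p^k\mid T$. For the factor $g(m):=h_2(m)^m$ with $m=n-j$, here is where the hypothesis $n_0\ge (p+1)v_p(d)-1=(p+1)k-1$ enters: for $n\ge n_0$ and $0\le j\le kp-1$ one has $m=n-j\ge n_0-kp+1\ge k$. If $p\mid h_2(m)$ then $v_p(g(m))=m\,v_p(h_2(m))\ge m\ge k$, so $g(m)\equiv 0\pmod{p^k}$; as the alternative $p\mid h_2(m)$ versus $p\nmid h_2(m)$ depends only on $m\bmod p$ and $p\mid T$, this vanishing is $T$-periodic. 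If $p\nmid h_2(m)$ then $h_2(m)\in(\Z/p^k\Z)^*$, whose order divides $\varphi(p^k)=p^{k-1}(p-1)\mid T$; since also $h_2(m+T)\equiv h_2(m)\pmod{p^k}$, we get $g(m+T)\equiv h_2(m)^{m+T}=g(m)\,h_2(m)^{T}\equiv g(m)\pmod{p^k}$. Thus $g$, and therefore every summand, is $T$-periodic in $n$, so $(a_n\pmod{p^k})_{n\in\N_{n_0}}$ has period $T=p^k(p-1)$.

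Combining over $i$ via the coprimality remark yields the period $\lcm\{p_i^{k_i}(p_i-1):i\in\{1,\dots,s\}\}$. The final ``in particular'' assertion follows because each $p_i^{k_i}(p_i-1)$ divides $d\prod_{i=1}^{s}(p_i-1)$ (indeed $p_i^{k_i}\mid d$ and $(p_i-1)\mid\prod_{i=1}^s(p_i-1)$), so this $\lcm$ divides $d\prod_{i=1}^{s}(p_i-1)$, and any multiple of a period is again a period. The main obstacle is precisely the exponential factor $h_2(j)^j$: unlike the polynomial factors it is only eventually periodic modulo $p^k$, and the whole point of the lower bound $n_0\ge (p+1)v_p(d)-1$ is to guarantee that every exponent $m=n-j$ occurring in the truncated sum is at least $k$, which is exactly what forces the $p\mid h_2(m)$ contributions to vanish and places $g$ in its periodic regime.
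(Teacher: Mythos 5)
Your proposal is correct and follows essentially the same route as the paper's proof: reduce to prime powers via the Chinese remainder theorem, truncate the closed formula to the last $kp$ summands using $p\mid f(m)$ for $m\equiv n_0\pmod p$, kill the terms with $p\mid h_2(n-j)$ using the bound $n-j\ge k$ guaranteed by $n_0\ge(p+1)k-1$, and handle the remaining exponential factors with Euler's theorem modulo $p^k$. The only difference is presentational (you argue termwise $T$-periodicity, the paper compares $a_{n_1}$ and $a_{n_2}$ directly), so nothing further is needed.
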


\begin{proof}
By Remark \ref{rem1}, it suffices to prove this fact for $d=p^k$, where $p$ is a prime number and $k$ is a positive integer.

For each $n\geq n_0$ we have congruence similar to (\ref{eq2}).
\begin{equation}\label{eq7}
\begin{split}
a_n & = \sum_{j=0}^n h_1(j)h_2(j)^j \prod_{i=j+1}^n f(i) \equiv \sum_{j=n-kp+1}^n h_1(j)h_2(j)^j \prod_{i=j+1}^n f(i) \\
& = \sum_{j=0}^{kp-1} h_1(n-j)h_2(n-j)^{n-j} \prod_{i=0}^{j-1} f(n-i) \pmod{p^k}.
\end{split}
\end{equation}
Since $n\geq n_0$, $n_0\geq k(p+1)-1$ and $j\leq kp-1$, thus $n-j\geq k$ and if $p\mid h_2(n-j)$ then $p^k\mid h_2(n-j)^{n-j}$. Let us define $\cal{N} = \{n\in\N: p\nmid h_2(n)\}$. Then the congruence (\ref{eq7}) takes the form
\begin{equation*}
a_n \equiv \sum_{0\leq j\leq kp-1, n-j\in\cal{N}} h_1(n-j)h_2(n-j)^{n-j} \prod_{i=0}^{j-1} f(n-i) \pmod{p^k}.
\end{equation*}
If $n_1 \equiv n_2 \pmod{p^k(p-1)}$ then $n_1-j\in\cal{N}$ if and only if $n_2-j\in\cal{N}$. Since $n_1 \equiv n_2 \pmod{p^k}$, thus $h_1(n_1-j)\equiv h_1(n_2-j) \pmod{p^k}$, $h_2(n_1-j)\equiv h_2(n_2-j) \pmod{p^k}$ and $f(n_1-j)\equiv f(n_2-j) \pmod{p^k}$ for any $j\in\N$. Since $n_1 \equiv n_2 \pmod{p^{k-1}(p-1)}$, hence by Euler's theorem $h_2(n_1-j)^{n_1-j} \equiv h_2(n_2-j)^{n_2-j} \pmod{p^k}$ for $j$ such that $n_1-j\in\cal{N}$. Finally
\begin{equation}
\begin{split}
a_{n_1} & \equiv \sum_{0\leq j\leq kp-1, n_1-j\in\cal{N}} h_1(n_1-j)h_2(n_1-j)^{n_1-j} \prod_{i=0}^{j-1} f(n_1-i) \equiv \\
& \equiv \sum_{0\leq j\leq kp-1, n_2-j\in\cal{N}} h_1(n_2-j)h_2(n_2-j)^{n_2-j} \prod_{i=0}^{j-1} f(n_2-i) \equiv a_{n_2} \pmod{p^k},
\end{split}
\end{equation}
which means that $p^k(p-1)$ is a period of the sequence $(a_n\pmod{p^k})_{n\in\N_{n_0}}$.
\end{proof}

\begin{ex}
Let ${\bf a}={\bf a}(b,h_1,d)$, where $h_1\in\Z[X]$, $b,d\in\Z$ and $\gcd(b,d) = 1$. Then the sequence $(a_n\pmod{d})_{n\in\N} = (b^na_0)_{n\in\N}$ has period $\lambda(d) = \lcm\{p_i^{k_i-1}(p_i-1): i\in\{1,2,...,s\}\}$, where $d = p_1^{k_1}\cdot... \cdot p_s^{k_s}$ is the factorization of the number $d$ and $\lambda$ is Carmichael's function. This means that in general $\lcm\{p_i^{k_i}(p_i-1): i\in\{1,2,...,s\}\}$ is not the basic period of $(a_n\pmod{d})_{n\in\N_{n_0}}$.
\end{ex}

\begin{ex}
The sequence ${\bf a}={\bf a}(X,1,2)$ is the example that $\lcm\{p_i^{k_i}(p_i-1): i\in\{1,2,...,s\}\}$ may be the basic period of $(a_n\pmod{d})_{n\in\N_{n_0}}$, where $d = p_1^{k_1}\cdot... \cdot p_s^{k_s}$ is the factorization of the number $d$. Namely, if $d=225=3^2\cdot 5^2$ then the basic period of $(a_n\pmod{225})_{n\in\N}$ is equal to $900=\lcm\{3^2\cdot 2, 5^2\cdot 4\}$.
\end{ex}

\subsubsection{\bf Periodicity modulo $p^k$ in case when $p$ does not divide $f(n)$ for any integer $n$}

Now we are considering periodicity modulo $p^k$ of sequences given by the relation $a_0 = h_1(0), a_n = f(n)a_{n-1} + h_1(n)h_2(n)^n, n>0$, where $h_2\in\Z[X]$ is an arbitrary polynomial and a prime number $p$ does not divide $f(n)$ for any integer $n$.

\begin{prop}\label{prop5}
Let $p$ be a prime number, $k$ be a positive integer and a sequence ${\bf a}={\bf a}(f,h_1,h_2)$. Assume that $p\nmid f(n)$ for any integer $n$. Then the sequence $(a_n\pmod{p^k})_{n\in\N_{k-1}}$ is periodic of period of the form $tp^k(p-1)$, where $t\in\{1, 2, 3, ..., p^k\}$. Moreover, if $p\nmid h_2(n)$ for any integer $n$ then the sequence $(a_n\pmod{p^k})_{n\in\N}$ is periodic of period of the form as above. If $h_2 = -1$ and $p\neq 2$ then there is a period of the form $2tp^k$, $t\in\{1, 2, 3, ..., p^k\}$. If $h_2 = -1$ and $p=2$ or $h_2 = 1$ then there is a period of the form $tp^k$, $t\in\{1, 2, 3, ..., p^k\}$.
\end{prop}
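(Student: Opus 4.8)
The plan is to track how the recurrence acts over one block of $p-1$ consecutive steps, exploiting that $p\nmid f(n)$ makes each factor $f(n)$ invertible modulo $p^k$, and that the nonhomogeneous part $h_1(n)h_2(n)^n$ becomes eventually periodic once the exponent stabilizes modulo the appropriate Euler/Carmichael period. First I would write the closed form $a_n = \sum_{j=0}^n h_1(j)h_2(j)^j \prod_{i=j+1}^n f(i)$ from the excerpt and observe that, since $p\nmid f(i)$ for every $i$, the product $\prod_{i=j+1}^n f(i)$ is a unit modulo $p^k$; this means no tail of the sum dies modulo $p^k$, so I cannot truncate as in Proposition 4. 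Instead I would compare $a_{n+T}$ with $a_n$ directly for a candidate period $T$ and show the difference is $\equiv 0\pmod{p^k}$.

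The key algebraic step is to set up a \emph{transfer relation}: for a shift $T$, I would use the recurrence to express $a_{n+T}$ in terms of $a_n$, obtaining $a_{n+T} = a_n\prod_{i=n+1}^{n+T} f(i) + \sum_{j=n+1}^{n+T} h_1(j)h_2(j)^j\prod_{i=j+1}^{n+T} f(i)$. I want to choose $T$ so that both the multiplicative coefficient $\prod_{i=n+1}^{n+T} f(i)$ is $\equiv 1\pmod{p^k}$ and the inhomogeneous correction is $\equiv 0\pmod{p^k}$ for all $n\ge k-1$. For the coefficient, since the values $f(i)\bmod p^k$ are periodic in $i$ with period $p^k$ and each lies in $(\Z/p^k\Z)^{*}$, the product over a full block of length $p^k$ is a fixed unit $u\in(\Z/p^k\Z)^{*}$; raising to the order of $u$ in the group of units (which divides $|(\Z/p^k\Z)^{*}| = p^{k-1}(p-1)$) makes it trivial, and this is where the factor $t\in\{1,\dots,p^k\}$ together with $p^k(p-1)$ enters as $T = tp^k(p-1)$. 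The lower index bound $n\ge k-1$ is exactly what guarantees, via the argument already used in Proposition 4, that whenever $p\mid h_2(m)$ the term $h_2(m)^m\equiv 0\pmod{p^k}$ (since then $m\ge k$), so only the indices $m\in\cal N=\{m:p\nmid h_2(m)\}$ contribute and on those the exponent can be controlled by Euler's theorem.

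I would then verify the inhomogeneous sum is unchanged: after reindexing, the correction term for the shifted sequence matches that of the original provided $h_1(j)\equiv h_1(j+T)$, $f(i)\equiv f(i+T)$ (both automatic since $p^k\mid T$ and polynomials are periodic modulo $p^k$ with period dividing $p^k$), and $h_2(j)^j\equiv h_2(j+T)^{j+T}\pmod{p^k}$ for $j\in\cal N$. The last congruence follows from Euler's theorem once $T$ is a multiple of $p^{k-1}(p-1)$, which $tp^k(p-1)$ certainly is. Combining, $a_{n+T}\equiv a_n\pmod{p^k}$, giving a period of the stated form. The refinements then specialize this: if $p\nmid h_2(n)$ for all $n$ the restriction $n\ge k-1$ is unnecessary (nothing is ever killed, and the argument runs for all $n\in\N$); for $h_2=1$ the factor $h_2(j)^j\equiv 1$ trivially so no Euler period in the exponent is needed beyond the unit-order, yielding period $tp^k$; and for $h_2=-1$ with $p\ne 2$ the sign $(-1)^j$ forces an extra factor $2$, giving $2tp^k$, whereas for $p=2$ one has $-1\equiv 1\pmod 2$ so the sign is invisible and period $tp^k$ suffices.

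The main obstacle I expect is pinning down the multiplicative coefficient: I must argue carefully that $\prod_{i=n+1}^{n+T} f(i)\equiv 1\pmod{p^k}$ holds \emph{uniformly in $n$} and not merely for one residue class. This reduces to showing the product over any window of length $T$ is $1$, which works because the window decomposes into $t(p-1)$ full blocks of length $p^k$, each contributing the same unit $u$, so the product is $u^{t(p-1)}$ independent of the starting point $n$; choosing $t$ to be the order of $u^{\,p-1}$ in $(\Z/p^k\Z)^{*}$ (a divisor of $p^{k-1}$, hence $\le p^{k}$) closes the argument and explains the range of $t$.
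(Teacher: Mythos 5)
Your reduction of the multiplicative coefficient is fine: $\prod_{i=n+1}^{n+p^k}f(i)\equiv u\pmod{p^k}$ independently of $n$, and the order of $u^{p-1}$ is a $p$-power dividing $p^{k-1}$, so some $t\le p^k$ makes $\prod_{i=n+1}^{n+T}f(i)\equiv 1\pmod{p^k}$ for $T=tp^k(p-1)$. The gap is in the other half: you assert that for this same $T$ the inhomogeneous correction $S(n)=\sum_{j=n+1}^{n+T}h_1(j)h_2(j)^j\prod_{i=j+1}^{n+T}f(i)$ is $\equiv 0\pmod{p^k}$, but your justification (matching $h_1(j+T)\equiv h_1(j)$, $f(i+T)\equiv f(i)$, $h_2(j+T)^{j+T}\equiv h_2(j)^j$) only shows that $S$ is periodic in $n$, not that it vanishes. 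Since every product $\prod_{i=j+1}^{n+T}f(i)$ is a unit, none of these $T$ terms is killed, and there is no reason for them to cancel. The paper's own example right after the proposition refutes your choice of $t$: take $f=X^2+1$, $h_1=h_2=1$, $p=3$, $k=1$. Then $u=f(0)f(1)f(2)\equiv 1\pmod 3$, so your $t$ would be $1$ and $T=6$ (or $T=3$ in your $h_2=1$ refinement), yet the sequence $(a_n\bmod 3)$ is $1,0,1,2,2,2,0,1,0,\dots$ with basic period $9=p^{2k}$; here $S(0)\equiv a_6-a_0\equiv -1\not\equiv 0\pmod 3$. So trivializing the multiplicative part does not suffice, and the correct $t$ cannot in general be read off from $\mathrm{ord}(u^{p-1})$.

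The missing idea is an existence argument for $t$ that controls the additive part as well. The paper gets it by pigeonhole: among the $p^k+1$ values $a_{k-1+sp^k(p-1)}\pmod{p^k}$, $s=0,1,\dots,p^k$, two coincide, say at $s_1<s_2$, and one sets $t=s_2-s_1$. The coincidence at that single index is then propagated to every $n\ge k-1$ by induction along the recurrence in both directions: the inhomogeneous terms $h_1(n+1)h_2(n+1)^{n+1}$ and $h_1(n+1+T)h_2(n+1+T)^{n+1+T}$ agree modulo $p^k$ (by Euler's theorem when $p\nmid h_2(n+1)$, and because both are $\equiv 0$ when $p\mid h_2(n+1)$ and $n+1\ge k$ — this is where the threshold $k-1$ enters), and the hypothesis $p\nmid f(n+1)$ lets one cancel $f(n+1)$ to go backwards. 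Your transfer relation is the right object to look at, but without the pigeonhole step (or some substitute argument that the affine maps $x\mapsto u^{p-1}x+S_0(n)$ eventually return to the identity) the proof does not close.
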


\begin{proof}
Let us consider the numbers $a_{k-1}$, $a_{p^k(p-1)+k-1}$, $a_{2p^k(p-1)+k-1}$, ..., $a_{p^kp^k(p-1)+k-1}$. By pigeon hole principle there are $s_1, s_2\in\{0,1, 2, 3, ..., p^k\}$, $s_1<s_2$ such that $a_{s_1p^k(p-1)+k-1} \equiv a_{s_2p^k(p-1)+k-1} \pmod{p^k}$. Let us put $t = s_2 - s_1$. We will show that
\begin{equation}\label{eq4}
a_n \equiv a_{tp^k(p-1)+n} \pmod{p^k} \mbox{ for } n\geq k-1.
\end{equation}

The congruence (\ref{eq4}) is satisfied for $n = s_1p^k(p-1)+k-1$. Assume now that $k-1 \leq n < s_1p^k(p-1)+k-1$ and (\ref{eq4}) is satisfied for $n+1$. Then we have
\begin{equation*}
\begin{split}
& f(n+1)a_n + h_1(n+1)h_2(n+1)^{n+1} \equiv f(tp^k(p-1)+n+1)a_{tp^k(p-1)+n} + \\
& + h_1(tp^k(p-1)+n+1)h_2(tp^k(p-1)+n+1)^{tp^k(p-1)+n+1} \pmod{p^k}.
\end{split}
\end{equation*}

Let us assume that $p\mid h_2(n+1)$. Then $p\mid h_2(tp^k(p-1)+n+1)$. Since $n+1\geq k$, we infer that $p^k\mid h_2(n+1)^{n+1}$, $p^k\mid h_2(tp^k(p-1)+n+1)^{tp^k(p-1)+n+1}$. Suppose now that $p\nmid h_2(n+1)$. Then $p\nmid h_2(tp^k(p-1)+n+1)$ and by Euler's theorem we obtain the following
\begin{equation*}
h_2(tp^k(p-1)+n+1)^{tp^k(p-1)+n+1} \equiv h_2(tp^k(p-1)+n+1)^{n+1} \equiv h_2(n+1)^{n+1} \pmod{p^k}.
\end{equation*}

Finally, we get
\begin{equation*}
h_1(n+1)h_2(n+1)^{n+1} \equiv h_1(tp^k(p-1)+n+1)h_2(tp^k(p-1)+n+1)^{tp^k(p-1)+n+1} \pmod{p^k}. 
\end{equation*}

As a consequence we have
\begin{equation*}
f(n+1)a_n \equiv f(tp^k(p-1)+n+1)a_{tp^k(p-1)+n} \equiv f(n+1)a_{tp^k(p-1)+n} \pmod{p^k}.
\end{equation*}
Moreover, the fact that $p\nmid f(n+1)$ implies that $a_n \equiv a_{tp^k(p-1)+n} \pmod{p^k}$.

Let us note that if $p\nmid h_2(n)$ for any integer $n$ then the consideration above allows us to conclude that
\begin{equation*}
a_n \equiv a_{tp^k(p-1)+n} \pmod{p^k} \mbox{ for any } n\in\{0,1,...,s_1p^k(p-1)+k-1\}.
\end{equation*}

Similarly we prove (\ref{eq4}) for $n > s_1p^k(p-1)+k-1$.

The proof in the cases $h_2 = -1$, $h_2 = 1$ runs in the same way: we consider the numbers $a_{k-1}$, $a_{2p^k+k-1}$, $a_{4p^k+k-1}$, ..., $a_{2p^kp^k+k-1}$ (respectively $a_{k-1}$, $a_{p^k+k-1}$, $a_{2p^k+k-1}$, ..., $a_{p^kp^k+k-1}$) and we use the fact that
\begin{equation*}
\begin{split}
& (-1)^{n+1}h_1(n+1) \equiv (-1)^{2tp^k+n+1}h_1(2tp^k+n+1) \pmod{p^k} \\
& \mbox{(respectively } h_1(n+1) \equiv h_1(tp^k+n+1) \pmod{p^k}\mbox{).}
\end{split}
\end{equation*}
\end{proof}

\begin{ex}
Let $f = X^2 - 2$, $h_1 = 1$, $h_2 = 2$, $p=5$ and $k=1$. Then the basic period of the sequence $(a_n\pmod{5})_{n\in\N}$ is equal to $100 = 5^2 \cdot 4$. Hence it is possible that the basic period of the sequence $(a_n\pmod{p^k})_{n\in\N_{k-1}}$ is exactly $p^{2k}(p-1)$.
\end{ex}

\begin{ex}
Let $f = X^2 - 2$, $h_1 = 1$, $h_2 = 3$, $p=3$ and $k\in\N_+$. Then the sequence $(a_n\pmod{3^k})_{n\in\N_{k-2}}$ is not periodic while $(a_n\pmod{3^k})_{n\in\N_{k-1}}$ is. Indeed, $2\cdot 3^k t$, where $t$ is some number from the set $\{1, 2, 3, ..., 3^k\}$, is the period of $(a_n\pmod{3^k})_{n\in\N_{k-1}}$. This fact implies that if the sequence $(a_n\pmod{3^k})_{n\in\N_{k-2}}$ is periodic then there exists its period of the form $2\cdot 3^k u$, where $u\in\N_+$. Since $a_{k-1+2\cdot 3^k u} \equiv a_{k-1} \pmod{3^k}$, hence
\begin{equation*}
\begin{split}
& ((k-1+2\cdot 3^k u)^2 - 2)a_{k-2+2\cdot 3^k u} + 3^{k-1+2\cdot 3^k u} \equiv ((k-1)^2 - 2)a_{k-2} + 3^{k-1} \pmod{3^k} \\
& ((k-1)^2 - 2)a_{k-2+2\cdot 3^k u} \equiv ((k-1)^2 - 2)a_{k-2} + 3^{k-1} \pmod{3^k}.
\end{split}
\end{equation*}
Thus $a_{k-2+2\cdot 3^k u} \not\equiv a_{k-2} \pmod{3^k}$ - a contradiction.

In addition, the basic period of $(a_n\pmod{9})_{n\in\N_+}$ equals 18. Hence it is possible that the basic period of the sequence $(a_n\pmod{p^k})_{n\in\N_{k-1}}$ is equal to $p^k(p-1)$.
\end{ex}

\begin{ex}
Let $f = 11X^4 + 7$, $h_1 = 1$, $h_2 = 7$, $p=5$ and $k=2$. Then the sequence $(a_n\pmod{25})_{n\in\N}$ has basic period equal to $500 = 5^3 \cdot 4$. This means that the basic period of the sequence $(a_n\pmod{p^k})_{n\in\N_{k-1}}$ can be strictly greater than $p^k(p-1)$ and smaller than $p^{2k}(p-1)$.
\end{ex}

\begin{ex}
Let $f = X^2 + 1$, $h_1 = 1$, $h_2 = -1$, $p=3$ and $k=1$. Then the basic period of the sequence $(a_n\pmod{3})_{n\in\N}$ is equal to $18 = 3^2 \cdot 2$. Hence it is possible that the basic period of the sequence $(a_n\pmod{p^k})_{n\in\N_{k-1}}$, where $h_2 = -1$, is exactly $2p^{2k}$.
\end{ex}

\begin{ex}
Let $f = X^2 + 1$, $h_1 = h_2 = 1$, $p=3$ and $k=1$. Then the basic period of the sequence $(a_n\pmod{3})_{n\in\N}$ is equal to 9. Hence it is possible that the basic period of the sequence $(a_n\pmod{p^k})_{n\in\N_{k-1}}$, where $h_2 = -1$, is exactly $p^{2k}$.
\end{ex}

\begin{ex}
Let $f = h_1 = 1$, $h_2 = b$ and $b \neq 1$. Then $a_n = \frac{b^{n+1}-1}{b-1}$, $n\in\N$. Assume that $p$ is such a prime number that $p\nmid b$. We consider two cases.
\begin{enumerate}
\item We assume that $b \not\equiv 1 \pmod{p}$. In this case $p$ must be odd (because each integer is divisible by $2$ or is congruent to $1$ modulo $2$). Hence the multiplicative group $(\Z /p^k\Z)^*$ is cyclic of order $p^{k-1}(p-1)$ and all the elements of order being a power of $p$ are exactly these ones which are congruent to 1 modulo $p$. This means that the basic period of the sequence $(a_n\pmod{p^k})_{n\in\N_{k-1}}$ can be any positive integer dividing $p^k(p-1)$ on condition that it is not equal to some power of $p$.
\item We suppose that $b \equiv 1 \pmod{p}$ and put $s = v_p(b-1)$. If $p$ is odd or $p=2$ and $s>1$ then we prove by induction that $p^k$ is the order of $b$ in $(\Z /p^{k+s}\Z)^*$ for each $k\in\N$ ($p^k$ is the least positive integer $r$ with property that $p^{k+s}\mid b^r-1$ - compare with the proof of Theorem \ref{thm4}). Hence the sequence $(b^{n+1}-1\pmod{p^{k+s}})_{n\in\N}$ has the basic period $p^k$ and thus the sequence $(\frac{b^{n+1}-1}{b-1}\pmod{p^k})_{n\in\N}$ has the same basic period. If $p=2$ and $s=1$ then we prove similarly that $2^{k-1}$ is the order of $b$ in $(\Z /2^{k+1}\Z)^*$ for each $k\geq 2$ and $2$ is the order of $b$ in $(\Z /4\Z)^*$. Hence the sequence $(b^{n+1}-1\pmod{2^{k+1}})_{n\in\N}$ has the basic period $2^{k-1}$ for $k\geq 2$ and $2$ for $k=1$. Thus the sequence $(\frac{b^{n+1}-1}{b-1}\pmod{2^k})_{n\in\N}$ has the same basic period.
\end{enumerate}
\end{ex}

Let us observe that if $d = p_1^{k_1}\cdot ...\cdot p_s^{k_s}$ and $n_0 = \max\{k_i - 1: i\in\{1,2,...,s\}\}$ then knowing the basic periods of the sequences $(a_n\pmod{p_i^{k_i}})_{n\in\N_{k_i-1}}$, $i\in\{1,2,...,s\}$, we can use Remark \ref{rem1} to compute the basic period of the sequence $(a_n\pmod{d})_{n\in\N_{n_0}}$. Namely, if $per_i$ is the basic period of $(a_n\pmod{p_i^{k_i}})_{n\in\N_{k_i-1}}$ then the basic period of $(a_n\pmod{d})_{n\in\N_{n_0}}$ equals lcm$\{per_i: i\in\{1,2,...,s\}\}$.

Proposition \ref{prop5} states only existence of some period of the sequence \newline$(a_n\pmod{p^k})_{n\in\N_{k-1}}$ of the form $tp^k(p-1)$, where $t\in\{1,2,...,p^k\}$. We would like to obtain an exact formula for some period of this sequence. What is more, the examples above showed that number $tp^k(p-1)$, where $t\in\{1,2,...,p^k\}$, can be the basic period of $(a_n\pmod{p^k})_{n\in\N_{k-1}}$ when $t$ is a power of $p$. Now we will show that $p^{3k-1}(p-1)$ is a period of $(a_n\pmod{p^k})_{n\in\N_{k-1}}$ and this fact together with Proposition \ref{prop5} gives the form of basic period of this sequence.

\begin{prop}\label{prop6}
If $p$ is a prime number which does not divide $f(n)$ for any integer $n$ and $k$ is a positive integer then $p^{3k-1}(p-1)$ is the period of the sequence\newline $(a_n\pmod{p^k})_{n\in\N_{k-1}}$. Moreover, if $p$ does not divide $h_2(n)$ for any integer $n$ then $p^{3k-1}(p-1)$ is the period of the sequence $(a_n\pmod{p^k})_{n\in\N}$.
\end{prop}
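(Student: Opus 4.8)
The plan is to treat the recurrence as an affine one-term recurrence $a_n=f(n)a_{n-1}+g(n)$, with $g(n):=h_1(n)h_2(n)^n$, whose coefficient and inhomogeneous term are \emph{already} periodic modulo $p^k$, and then to control the homogeneous obstruction that prevents the solution from inheriting the period of its data. First I would record that, modulo $p^k$, the polynomial $f$ has period $p^k$, and that $T_0:=p^k(p-1)$ is a period of $g$ for all indices $n\ge k$: indeed $h_1$ and the base $h_2$ have period $p^k$, for $n$ with $p\nmid h_2(n)$ Euler's theorem makes the exponent periodic modulo $p^{k-1}(p-1)\mid T_0$, and for $n\ge k$ with $p\mid h_2(n)$ both $g(n)$ and $g(n+T_0)$ vanish modulo $p^k$. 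This is exactly the computation already carried out in the proof of Proposition \ref{prop5}; moreover, if $p\nmid h_2(n)$ for every integer $n$, the same argument gives periodicity of $g$ for all $n\ge 0$.

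Next I would introduce the difference sequence $e_n:=a_{n+T_0}-a_n$. Subtracting the recurrence at $n+T_0$ from the one at $n$ and using the periodicity of $f$ and $g$, the inhomogeneous parts cancel and one obtains $e_n\equiv f(n)e_{n-1}\pmod{p^k}$ for $n\ge k$, whence $e_n\equiv e_{k-1}\prod_{i=k}^{n}f(i)\pmod{p^k}$ for $n\ge k-1$. Telescoping over $r$ blocks then yields, for $n\ge k-1$,
\begin{equation*}
a_{n+rT_0}-a_n\equiv\sum_{\ell=0}^{r-1}e_{n+\ell T_0}\equiv e_n\sum_{\ell=0}^{r-1}A^{\ell}\pmod{p^k},
\end{equation*}
where $A:=Q^{p-1}$ and $Q:=\prod_{i=0}^{p^k-1}f(i)$; here I use that each block of $T_0=p^k(p-1)$ consecutive factors of $f$ contributes $Q^{p-1}=A$ modulo $p^k$, since $f$ has period $p^k$. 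Because some $e_n$ is a unit in general, showing that $rT_0$ is a period reduces to proving that the geometric sum $S_r:=\sum_{\ell=0}^{r-1}A^{\ell}$ vanishes modulo $p^k$.

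The heart of the argument, and the step I expect to be the main obstacle, is the $p$-adic valuation of $S_r$. Since $f(i)$ is never divisible by $p$, $Q$ is a unit, and $A=Q^{p-1}\equiv 1\pmod p$ (its order in $(\Z/p^k\Z)^{*}$ is a power of $p$ dividing $p^{k-1}$). Writing $A=1+p^{s}u$ with $s\ge 1$ and $p\nmid u$ (the case $A=1$ being immediate, since then $S_r=r$), I would compute $v_p(S_r)=v_p\!\left(\frac{A^r-1}{A-1}\right)=v_p(A^r-1)-s$ and, by a lifting-the-exponent (equivalently, binomial) computation of the type used in the proof of Theorem \ref{thm4}, obtain $v_p(A^r-1)=s+v_p(r)$ for odd $p$, so that $v_p(S_r)=v_p(r)$. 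The prime $p=2$ must be treated separately, because the relevant valuation identity differs there, but the analogous direct expansion still gives $v_2(S_r)\ge v_2(r)$.

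Taking $r=p^{2k-1}$ makes $v_p(r)=2k-1\ge k$, so $S_{p^{2k-1}}\equiv 0\pmod{p^k}$, and therefore $T:=p^{2k-1}T_0=p^{3k-1}(p-1)$ is a period of $(a_n\bmod p^k)_{n\in\N_{k-1}}$, as claimed. In the case $p\nmid h_2(n)$ for all integers $n$, the periodicity of $g$ holds from $n=0$, so the difference recurrence $e_n\equiv f(n)e_{n-1}$ is valid already for $n\ge 1$; the telescoping then runs from $n=0$ and yields the same period $p^{3k-1}(p-1)$ on all of $\N$.
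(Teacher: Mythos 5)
Your argument is correct, and it takes a genuinely different route from the paper. The paper works directly with the closed formula $a_n=\sum_{j=0}^n h_1(n-j)h_2(n-j)^{n-j}\prod_{i=0}^{j-1}f(n-i)$: it groups the summands by the residue of $j$ modulo $p^{2k-1}(p-1)$, rewrites each product $\prod_{i=0}^{j-1}f(n-i)$ as $\prod_{r=0}^{p^k-1}f(n-r)^{\lfloor (j-1-r)/p^k\rfloor}$ so that Euler's theorem makes it depend only on $j$ modulo $p^{2k-1}(p-1)$, and then observes that the only remaining dependence on $n$ is through the multiplicities $\lfloor (n-t)/(p^{2k-1}(p-1))\rfloor$, which are periodic modulo $p^k$ with period $p^{3k-1}(p-1)$ in $n$. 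You instead isolate the homogeneous obstruction: the difference $e_n=a_{n+T_0}-a_n$ satisfies $e_n\equiv f(n)e_{n-1}\pmod{p^k}$ once the data $f$ and $g$ are periodic, telescoping turns the question into the vanishing of the geometric sum $S_r=\sum_{\ell=0}^{r-1}A^{\ell}$ with $A=Q^{p-1}\equiv 1\pmod p$, and a lifting-the-exponent computation (of exactly the kind the paper itself uses in the proof of Theorem \ref{thm4}) gives $v_p(S_r)=v_p(r)$ for odd $p$ and $v_2(S_r)\geq v_2(r)$ for $p=2$. Your choice $r=p^{2k-1}$ reproduces the stated period, but note that your method actually yields more: $r=p^k$ already gives $v_p(S_r)\geq k$, so it proves directly that $p^{2k}(p-1)$ is a period, a sharpening that the paper only reaches in the subsequent corollary by intersecting Proposition \ref{prop6} with the pigeonhole bound of Proposition \ref{prop5}. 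The one point worth stating explicitly when writing this up is the index bookkeeping in the ``moreover'' clause: the recurrence $e_n\equiv f(n)e_{n-1}$ is only available for $n\geq 1$ (the defining recurrence starts at $n=1$), but since the telescoping for $a_{n+rT_0}-a_n$ with $n\geq 0$ only ever invokes the homogeneous step at indices $\geq 1$, the conclusion on all of $\N$ goes through as you claim.
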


\begin{proof}
Let us define $\cal{P}_p = \{n\in\N: p\mid h_2(n)\}$ and $\cal{N}_p = \N\bs\cal{P}_p$. For $n\geq k-1$, we have:

\begin{equation}\label{eq5}
\begin{split}
& a_n = \sum_{j=0}^n h_1(n-j)h_2(n-j)^{n-j} \prod_{i=0}^{j-1} f(n-i) \\
= & \sum_{t=0}^{p^{2k-1}(p-1)-1} \sum_{s=0}^{\left\lfloor \frac{n-t}{p^{2k-1}(p-1)} \right\rfloor} h_1(n-sp^{2k-1}(p-1)-t) \times \\
& \times h_2(n-sp^{2k-1}(p-1)-t)^{n-sp^{2k-1}(p-1)-t}\prod_{i=0}^{sp^{2k-1}(p-1)+t-1} f(n-i) \\
\equiv & \sum_{t\in\cal{P}_p, 0\leq t<k} h_1(t)h_2(t)^t \prod_{i=0}^{n-t-1} f(n-i) + \\
& + \sum_{0\leq t<p^{2k-1}(p-1), n-t\in\cal{N}_p} \left(1+\left\lfloor\frac{n-t}{p^{2k-1}(p-1)}\right\rfloor\right)\times \\
& \times h_1(n-t)h_2(n-t)^{n-t} \prod_{i=0}^{t-1} f(n-i)\pmod{p^k}.
\end{split}
\end{equation}

The congruence above is true because for each $j\in\N$ we have
\begin{equation*}
\begin{split}
\prod_{i=0}^{j-1} f(n-i) & = \prod_{r=0}^{p^k-1} \prod_{s=0}^{\left\lfloor\frac{j-1-r}{p^k}\right\rfloor} f(n-sp^k-r) \\
& \equiv \prod_{r=0}^{p^k-1} f(n-r)^{\left\lfloor\frac{j-1-r}{p^k}\right\rfloor} \pmod{p^k}.
\end{split}
\end{equation*}

If $j_1 \equiv j_2 \pmod{p^{2k-1}(p-1)}$ then $p^{k-1}(p-1)\mid \left\lfloor\frac{j_1-1-r}{p^k}\right\rfloor - \left\lfloor\frac{j_2-1-r}{p^k}\right\rfloor$ and by Euler's theorem we obtain
\begin{equation*}
\prod_{i=0}^{j_1-1} f(n-i) \equiv \prod_{i=0}^{j_2-1} f(n-i) \pmod{p^k}.
\end{equation*}

Using Euler's theorem once again we conclude that
\begin{equation*}
h_2(n-j_1)^{n-j_1} \equiv  h_2(n-j_2)^{n-j_2} \pmod{p^k}.
\end{equation*}

Finally, if $n_1\equiv n_2\pmod{p^{3k-1}(p-1)}$ then
\begin{equation*}
\left\lfloor\frac{n_1-t}{p^{2k-1}(p-1)}\right\rfloor\equiv\left\lfloor\frac{n_2-t}{p^{2k-1}(p-1)}\right\rfloor\pmod{p^k}.
\end{equation*}

Note that (\ref{eq5}) holds for all $n\in\N$, if $\cal{P}_p = \emptyset$.
\end{proof}

Combining Propositions \ref{prop4}, \ref{prop5}, \ref{prop6} and Remark \ref{rem1} we obtain two corollaries.

\begin{cor}
Let ${\bf a}={\bf a}(f,h_1,h_2)$, $p$ be a prime number which does not divide $f(n)$ for any integer $n$ and $k$ be a positive integer. Then the basic period of the sequence $(a_n\pmod{p^k})_{n\in\N_{k-1}}$ is of the form $p^lc$, where $l\leq 2k$ and $c\mid p-1$. In particular $p^{2k}(p-1)$ is a period of the sequence $(a_n\pmod{p^k})_{n\in\N_{k-1}}$.  If $h_2 = -1$ and $p\neq 2$ then the basic period is of the form $2p^l$, $l\leq 2k$. If $h_2 = -1$ and $p=2$ or $h_2 = 1$ then the basic period is of the form $p^l$, $l\leq 2k$.

If additionally $p$ does not divide $h_2(n)$ for any integer $n$ then the sequence $(a_n\pmod{p^k})_{n\in\N}$ is periodic with basic period of the form as above.
\end{cor}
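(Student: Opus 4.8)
The plan is to exploit the elementary fact that, for an (ultimately) periodic sequence, the basic period divides \emph{every} period, the set of periods being closed under taking greatest common divisors. So the first step is to assemble the two explicit periods already available for the single prime power $p^k$. Proposition~\ref{prop5} furnishes a period $P_1$ of the stated shape (namely $tp^k(p-1)$ in general, $2tp^k$ when $h_2=-1$ and $p\neq 2$, and $tp^k$ when $h_2=1$ or $h_2=-1,p=2$, with $t\in\{1,\dots,p^k\}$), while Proposition~\ref{prop6} furnishes the period $P_2=p^{3k-1}(p-1)$. Writing $per$ for the basic period of $(a_n\pmod{p^k})_{n\in\N_{k-1}}$, I then have $per\mid\gcd(P_1,P_2)$, and since $\gcd(p,p-1)=1$ I may split $per=p^lc$ with $c$ coprime to $p$.

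To pin down $l$ and $c$ I would treat the two coprime parts separately. For the prime-to-$p$ part: in the general case both $P_1$ and $P_2$ carry the factor $p-1$, and $P_2$ carries no further prime-to-$p$ factor, so the prime-to-$p$ part of $\gcd(P_1,P_2)$ is exactly $p-1$, forcing $c\mid p-1$. For the $p$-part: divisibility of $per$ by $P_1$ gives $l=v_p(per)\le v_p\big(tp^k(p-1)\big)=k+v_p(t)\le 2k$, because $t\le p^k$. This yields the general form $per=p^lc$ with $l\le 2k$ and $c\mid p-1$; and since then $per\mid p^{2k}(p-1)$, the number $p^{2k}(p-1)$ is a period, which is the ``in particular'' clause. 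The last sentence, about the full range $n\in\N$ when $p\nmid h_2(n)$ for every integer $n$, follows at once, since in that case Propositions~\ref{prop5} and~\ref{prop6} already produce periods valid on all of $\N$.

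For the refined shapes when $h_2=\pm1$ I would pass through the associated sequence. When $h_2=-1$ we have $a_n=(-1)^n\widetilde a_n$ with $\widetilde{\bf a}=\widetilde{\bf a}(-f,h_1,1)$ and $p\nmid(-f)(n)$ for all $n$; applying the $h_2=1$ conclusion to $\widetilde{\bf a}$ and then the period rule recorded in Section~\ref{subsubsec3.1.1} (multiplying by $(-1)^n$ doubles an odd basic period and leaves an even one unchanged) turns a basic period $p^{l}$ of $\widetilde{\bf a}$ into a basic period $2p^{l}$ for $p$ odd and a basic period which is again a power of $2$ for $p=2$, i.e.\ exactly the two stated forms. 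Thus everything reduces to the assertion that for $h_2=1$ the prime-to-$p$ part $c$ is trivial, for which I would start from the fact that Proposition~\ref{prop5} in the case $h_2=1$ provides a period $tp^k$ carrying no factor $p-1$, and combine it with the $p$-adic bound above.

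The hard part is precisely this control of the prime-to-$p$ part $c$, and especially the claim that $c=1$ in the $h_2=1$ case. The two available periods $tp^k$ and $p^{3k-1}(p-1)$ only force $c\mid\gcd(t,p-1)$ through the naive $\gcd$ argument, which does not by itself remove common factors of $t$ and $p-1$; eliminating them genuinely requires understanding the prime-to-$p$ order of the block transfer map $x\mapsto\big(\prod_{i=1}^{p^k}f(i)\big)x+\beta$ on $\Z/p^k\Z$ over one full period-$p^k$ block, rather than merely invoking Proposition~\ref{prop6}. I would therefore isolate the multiplicative behaviour of $\prod_{i=1}^{p^k}f(i)\pmod{p^k}$ (the translation regime, where this product is $\equiv 1$ and the block map has $p$-power order, being the favourable one) as the delicate heart of the refined statements; the robust general conclusion $per=p^lc$ with $l\le 2k$ and $c\mid p-1$, together with the ``in particular'' period $p^{2k}(p-1)$, falls out cleanly from the $\gcd$ of the two explicit periods and needs no such analysis.
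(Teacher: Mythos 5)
Your $\gcd$ argument for the unrefined part is correct and is exactly the paper's (one\nobreakdash-line) proof: the basic period divides both the period $tp^k(p-1)$ supplied by Proposition \ref{prop5} and the period $p^{3k-1}(p-1)$ supplied by Proposition \ref{prop6}; since $v_p\bigl(tp^k(p-1)\bigr)=k+v_p(t)\le 2k$ and the prime-to-$p$ part of the $\gcd$ is exactly $p-1$, one gets $per=p^lc$ with $l\le 2k$ and $c\mid p-1$, hence the period $p^{2k}(p-1)$, and the extension to all of $\N$ when $p\nmid h_2(n)$ follows from the corresponding clauses of those propositions. No complaints there.

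The gap you flag in the refined claims is genuine, and you should know it is not yours to close: the paper's proof is literally ``combine Propositions \ref{prop5} and \ref{prop6}'', which, as you observe, only yields $c\mid\gcd(t,p-1)$ when $h_2=1$. In fact the refined claim for $h_2=1$ is false as stated. Take ${\bf a}={\bf a}(2,1,1)$, so $a_0=1$, $a_n=2a_{n-1}+1$, i.e. $a_n=2^{n+1}-1$, and take $p=5$, $k=1$. Then $5\nmid f(n)=2$ for every $n$ and $h_2=1$, yet $(a_n\bmod 5)_{n\in\N}=(1,3,2,0,1,3,2,0,\dots)$ has basic period $4=p-1$, not a power of $5$. (This is the paper's own example $a_n=\frac{b^{n+1}-1}{b-1}$ with $b=2$, re-parametrized so that the constant sits in $f$ rather than in $h_2$; the reduction of $h_2=-1$ to $h_2=1$ via the associated sequence shows the $2p^l$ claim fails similarly.) Your diagnosis via the block transfer map $x\mapsto Ax+B$ with $A=\prod_{i=1}^{p^k}f(i)$ locates the obstruction precisely: when $A\not\equiv 1\pmod{p}$ the orbit acquires the multiplicative order of $A$ modulo $p^k$, which need not be a power of $p$. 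So the refined statements require an extra hypothesis of the type $\prod_{i=0}^{p-1}f(i)\equiv 1\pmod{p}$ and cannot be proved as they stand; your proposal correctly establishes everything in the corollary that is actually true.
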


\begin{cor}
Let $d = p_1^{k_1}\cdot... \cdot p_s^{k_s} \cdot q_1^{m_1} \cdot .. \cdot q_t^{m_t}$ be the factorization of a given positive integer $d$. Let $p_i$, $i\in\{1,2,...,s\}$, does not divide $f(n)$ for any integer $n$ and $q_i$, $i\in\{1,2,...,t\}$, divides $f(n)$ for some integer $n$. Assume that $n_0\geq k_i-1$, $i\in\{1,2,...,s\}$ and $n_0\geq (q_i+1)v_{q_i}(d)-1$, $i\in\{1,2,...,t\}$. Then
\begin{equation*}
\lcm\{ p_i^{2k_i}(p_i-1), q_j^{m_j}(q_j-1): i\in\{1,2,...,s\}, j\in\{1,2,...,t\}\}
\end{equation*}
is the period of the sequence $(a_n\pmod{d})_{n\in\N_{n_0}}$.
\end{cor}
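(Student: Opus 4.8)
The plan is to assemble the statement from the prime-power results already proved, by means of the Chinese Remainder Theorem in the form recalled in Remark \ref{rem1}. Since the moduli $p_1^{k_1},\dots,p_s^{k_s},q_1^{m_1},\dots,q_t^{m_t}$ are pairwise coprime with product $d$, it suffices to exhibit, for each of them separately, a period of the corresponding reduced sequence valid from the index $n_0$ onwards, and then to note that a common multiple of these periods is a period modulo $d$. Indeed, for this last step I only need the easy direction of Remark \ref{rem1}: if $L$ is divisible by a period of $(a_n\pmod{p_i^{k_i}})_{n\in\N_{n_0}}$ for every $i$ and by a period of $(a_n\pmod{q_j^{m_j}})_{n\in\N_{n_0}}$ for every $j$, then $a_{n+L}\equiv a_n$ modulo each prime-power factor for all $n\geq n_0$, whence $a_{n+L}\equiv a_n\pmod{d}$ by the Chinese Remainder Theorem. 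Taking $L$ to be the displayed least common multiple then yields the claim.

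It therefore remains to supply one period for each prime-power reduction, valid from $n_0$. For the primes $p_i$, which divide $f(n)$ for no integer $n$, the preceding corollary gives that $(a_n\pmod{p_i^{k_i}})_{n\in\N_{k_i-1}}$ is periodic and that $p_i^{2k_i}(p_i-1)$ is one of its periods; since $n_0\geq k_i-1$ by hypothesis, the same number is a period of $(a_n\pmod{p_i^{k_i}})_{n\in\N_{n_0}}$. For the primes $q_j$, which divide $f(n)$ for some integer $n$, I would invoke Proposition \ref{prop4} for the single modulus $q_j^{m_j}$: its index condition $n_0\geq (q_j+1)v_{q_j}(d)-1=(q_j+1)m_j-1$ is exactly the assumption made here (it is the bound $n_0\geq k(p+1)-1$ of that proof, with $k=m_j$ and $p=q_j$), and it delivers $q_j^{m_j}(q_j-1)$ as a period of $(a_n\pmod{q_j^{m_j}})_{n\in\N_{n_0}}$. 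Feeding these two families of periods into the reduction of the first paragraph shows that
\[
\op{lcm}\{\,p_i^{2k_i}(p_i-1),\,q_j^{m_j}(q_j-1):i\in\{1,\dots,s\},\,j\in\{1,\dots,t\}\,\}
\]
is a period of $(a_n\pmod{d})_{n\in\N_{n_0}}$, as asserted.

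The points demanding care, and the closest things to genuine obstacles, are two. First is the bookkeeping of starting indices: one must verify that every component sequence is periodic from the \emph{common} index $n_0$ rather than from its own natural threshold $k_i-1$ or $(q_j+1)m_j-1$, which is guaranteed precisely by the two lower bounds imposed on $n_0$ in the statement. Second, and more subtle, is that Proposition \ref{prop4} is phrased with the hypothesis $d\mid f(n_0)$, which for the modulus $q_j^{m_j}$ would read $q_j^{m_j}\mid f(n_0)$, whereas here we only assume $q_j\mid f(n)$ for some $n$. I would resolve this by observing that the truncation driving that proof uses only the existence of a residue class modulo $q_j$ on which $f$ vanishes modulo $q_j$ (so that among any $m_j q_j$ consecutive integers at least $m_j$ factors of the relevant product are divisible by $q_j$, forcing divisibility by $q_j^{m_j}$); this is exactly what $q_j\mid f(n)$ for some $n$ provides, so the argument of Proposition \ref{prop4} applies verbatim. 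No new periodicity argument is needed beyond those already established in Propositions \ref{prop4}, \ref{prop5} and \ref{prop6} and the preceding corollary.
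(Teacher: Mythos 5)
Your proof is correct and follows the same route the paper intends: reduce to the prime-power moduli, take $p_i^{2k_i}(p_i-1)$ from the preceding corollary and $q_j^{m_j}(q_j-1)$ from Proposition \ref{prop4}, and glue with the easy direction of Remark \ref{rem1}. Your observation that Proposition \ref{prop4} is stated with $d\mid f(n_0)$ but its proof only uses the existence of a residue class modulo $q_j$ on which $f$ vanishes (together with the index bound $n_0\geq (q_j+1)m_j-1$) is exactly the right way to close the only gap between the hypotheses of that proposition and those of the corollary.
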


\subsection{Asymptotics and connection between boundedness and periodicity of a sequence ${\bf a}\in\mathcal{R}$}\label{subsec3.2}

\subsubsection{\bf Asymptotics of a sequence ${\bf a}\in\cal{R}$}

Let us notice that if ${\bf a}={\bf a}(f,h_1,h_2)$, $c$ is the leading coefficient of $h_2$ and $n_0\in\N$ is such that $f(n)\neq 0$ for all integers $n > n_0$ then
\footnotesize
\begin{equation}\label{eq5,5}
a_n = 
\begin{cases}
O(ne^{(|c|+\eps)n}h_1(n)h_2(n)^{n_0}\prod_{i=n_0+1}^n f(i)), & \mbox{if } |f(n)| \geq |h_2(n)| \mbox{ for } n\gg 0, \deg h_2>0 \mbox{ and } \eps>0 \\
O(nh_1(n)h_2(n)^{n_0}\prod_{i=n_0+1}^n f(i)), & \mbox{if } |f(n)| \geq |h_2(n)| \mbox{ for } n\gg 0 \mbox{ and } \deg h_2=0 \\
O(nh_1(n)h_2(n)^n), & \mbox{if } |f(n)| \leq |h_2(n)| \mbox{ for } n\gg0
\end{cases}
\end{equation}
\normalsize
when $n\rightarrow +\infty$. Indeed, when $|f(n)| \geq |h_2(n)|$ for $n\gg0$, we have
\begin{equation}\label{eq21}
\begin{split}
& \frac{a_n}{ne^{(|c|+\eps)n}h_1(n)h_2(n)^{n_0}\prod_{i=n_0+1}^n f(i))} = \frac{\sum_{j=0}^n h_1(j)h_2(j)^j \prod_{i=j+1}^n f(i)}{ne^{(|c|+\eps)n}h_1(n)h_2(n)^{n_0}\prod_{i=n_0+1}^n f(i))} \\
= & \sum_{j=0}^{n_0} \frac{h_1(j)h_2(j)^j \prod_{i=j+1}^{n_0} f(i)}{ne^{(|c|+\eps)n}h_1(n)h_2(n)^{n_0}} + \sum_{j=n_0+1}^n \frac{h_1(j)h_2(j)^j}{ne^{(|c|+\eps)n}h_1(n)h_2(n)^{n_0} \prod_{i=n_0+1}^j f(i)} \\
= & \sum_{j=0}^{n_0} \frac{h_1(j)h_2(j)^j \prod_{i=j+1}^{n_0} f(i)}{ne^{(|c|+\eps)n}h_1(n)h_2(n)^{n_0}} + \sum_{j=n_0+1}^n \frac{1}{ne^{(|c|+\eps)n}}\cdot\frac{h_1(j)}{h_1(n)}\cdot\frac{h_2(j)^{n_0}}{h_2(n)^{n_0}}\cdot\frac{h_2(j)^{j-n_0}}{\prod_{i=n_0+1}^j f(i)}.
\end{split}
\end{equation}

If $n\gg0$ then $|h_1(n)|\geq |h_1(j)|$ and $|h_2(n)|\geq |h_2(j)|$ for $0\leq j\leq n$. Moreover, the following equality holds.
\begin{equation*}
\left|\frac{h_2(j+1)^{j+1-n_0}}{\prod_{i=n_0+1}^{j+1} f(i)}\right| \cdot \left|\frac{\prod_{i=n_0+1}^j f(i)}{h_2(j)^{j-n_0}}\right| = \left|\frac{h_2(j+1)^{j-n_0}}{h_2(j)^{j-n_0}}\right| \cdot \left|\frac{h_2(j+1)}{f(j+1)}\right|
\end{equation*}
We have $\left|\frac{h_2(j+1)}{f(j+1)}\right| \leq 1$ for sufficiently large prositive integer $j$. Additionally, if $\deg h_2 > 0$ then
\scriptsize
\begin{equation*}
\begin{split}
& \left(\frac{|h_2(j+1)|}{|h_2(j)|}\right)^{j-n_0} = \left(\left(1+\frac{|h_2(j+1)|-|h_2(j)|}{|h_2(j)|}\right)^{\frac{|h_2(j)|}{|h_2(j+1)|-|h_2(j)|}}\right)^{\frac{(|h_2(j+1)|-|h_2(j)|)(j-n_0)}{|h_2(j)|}} \leq \\
& \leq e^{\frac{(|h_2(j+1)|-|h_2(j)|)(j-n_0)}{|h_2(j)|}} \leq e^{|c|+\eps},
\end{split}
\end{equation*}
\normalsize
since $\lim_{n\rightarrow +\infty}\frac{(|h_2(j+1)|-|h_2(j)|)(j-n_0)}{|h_2(j)|}\rightarrow |c|$. Hence each summand in the sum in (\ref{eq21}) is $O(\frac{1}{n})$, when $n\rightarrow +\infty$. Finally,
\begin{equation*}
\frac{a_n}{ne^{(|c|+\eps)n}h_1(n)h_2(n)^{n_0}\prod_{i=n_0+1}^n f(i))} = \sum_{j=0}^n O\left(\frac{1}{n}\right) = O(1), n\rightarrow +\infty.
\end{equation*}

The second and third equality from (\ref{eq5,5}) can be proved in the same way.

Consider now a sequence ${\bf a}(f,h_1,1)$. We assume that $f\neq b$, where $b\in\{-1,0,1\}$, and $n_0\in\N$ is such that $f(n)\neq 0$ for all integers $n > n_0$. We know that $a_n = O(nh_1(n)\prod_{i=n_0+1}^n f(i))$, when $n\rightarrow +\infty$. However, we can show something stronger. Namely, there is such a real number $\xi$ that $a_n \sim \xi\prod_{i=n_0+1}^n f(i)$, when $n\rightarrow +\infty$. Indeed,
\begin{equation*}
\begin{split}
\frac{a_n}{\prod_{i=n_0+1}^n f(i)} & = \frac{\sum_{j=0}^n h_1(j) \prod_{i=j+1}^n f(i)}{\prod_{i=n_0+1}^n f(i)} \\
& = \sum_{j=0}^{n_0} h_1(j) \prod_{i=j+1}^{n_0} f(i) + \sum_{j=n_0+1}^n \frac{h_1(j)}{\prod_{i=n_0+1}^j f(i)} \\
& = a_{n_0} + \sum_{j=n_0+1}^n \frac{h_1(j)}{\prod_{i=n_0+1}^j f(i)}
\end{split}
\end{equation*}
for $n>n_0$ and by ratio test the expression $a_{n_0} + \sum_{j=n_0+1}^n \frac{h_1(j)}{\prod_{i=n_0+1}^j f(i)}$ converges to the real number $\xi = a_{n_0} + \sum_{j=n_0+1}^{+\infty} \frac{h_1(j)}{\prod_{i=n_0+1}^j f(i)}$.

Using similar reasoning we show the asymptotic equality for a sequence ${\bf a}(f,h_1,-1)$.

In particular, for the sequence of derangements $(D_n)_{n\in\N}$ the following equality holds:
\begin{equation*}
\lim_{n\rightarrow +\infty}\frac{D_n}{\frac{n!}{e}}=1.
\end{equation*}
In order to establish the equality above, it suffices to compute the limit of $\frac{D_n}{n!}$, when $n\rightarrow +\infty$. We have
\begin{equation*}
\lim_{n\rightarrow +\infty}\frac{D_n}{n!} = \lim_{n\rightarrow +\infty}\frac{\sum_{j=0}^n (-1)^j \prod_{i=j+1}^n i}{n!} = \lim_{n\rightarrow +\infty}\sum_{j=0}^n \frac{(-1)^j}{j!} = e^{-1}
\end{equation*}
In fact, we know that for $n\in\N_+$, $D_n$ is the best integer approximation of $\frac{n!}{e}$ because the difference between these two numbers is less than $\frac{1}{n}$:
\begin{equation*}
\left| \frac{n!}{e}-D_n\right| = \left|\sum_{j=n+1}^{+\infty} \frac{n!}{j!}(-1)^j\right| < \sum_{j=n+1}^{+\infty} \frac{n!}{j!} < \sum_{j=n+1}^{+\infty} \frac{1}{(n+1)^{j-n}} = \frac{\frac{1}{n+1}}{1-\frac{1}{n+1}} = \frac{1}{n}.
\end{equation*}

\subsubsection{\bf Boundedness and periodicity of a sequence ${\bf a}\in\cal{R}$}

It is obvious that any periodic sequence of integers is bounded. On the other hand, in general, boundedness of a sequence does not imply its periodicity. In this section we will show that if a sequence ${\bf a}\in\cal{R}$ is bounded then $h_1 = 0$ or $h_2 = b$, where $b\in\{-1,0,1\}$, and we will give the form of this sequence. In particular, such a sequence is ultimately constant or ultimately periodic with period 2.

First, we prove that if there is a constant subsequence of the form $(a_{kn+l})_{n\in\N}$ for some $k\in\N_+$ and $l\in\N$ then $h_1 = 0$ or $h_2 = b$, where $b\in\{-1,0,1\}$, or $f,h_1,h_2$ are constant and $h_2=-f$. Then, assuming that $h_1 = 0$ or $h_2 = b$, where $b\in\{-1,0,1\}$, we will show that the sequence $(a_n)_{n\in\N}$ is ultimately constant or ultimately periodic with period 2. Next, assuming boundedness of $(a_n)_{n\in\N}$, we will use periodicity of $(a_n\pmod{p})_{n\in\N}$ for sufficiently large prime number $p$ to obtain the statement on ultimate periodicity of the sequence $(a_n)_{n\in\N}$.

\begin{thm}\label{thm7}
Let ${\bf a}={\bf a}(f,h_1,h_2)$, $k\in\N_+$ and $l\in\N$. If the sequence $(a_{kn+l})_{n\in\N}$ is constant then one of the following conditions is true:
\begin{itemize}
\item $h_1 = 0$ (and then the sequence $(a_n)_{n\in\N}$ is constant and equal to 0),
\item $h_2 = b$, where $b\in\{-1,0,1\}$,
\item $h_1=c\in\Z$ and $h_2=-f=b\in\Z$ (then $a_{2n}=b^{2n}c$ and $a_{2n+1}=0$ for all $n\in\N$).
\end{itemize}
\end{thm}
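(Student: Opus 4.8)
The plan is to pass to a block recurrence of step $k$ and to compare growth rates. If $h_1=0$ then $a_0=h_1(0)=0$ and $a_n=f(n)a_{n-1}$ give $a_n=0$ for all $n$ by induction, which is the first alternative; so from now on I would assume $h_1\neq0$, whence $h_1(m)\neq0$ for all large $m$. Iterating the recurrence $k$ times across the block ending at $N:=kn+l$ yields
\[
a_{kn+l}=P_n\,a_{k(n-1)+l}+R_n,\qquad P_n=\prod_{m=N-k+1}^{N}f(m),
\]
\[
R_n=\sum_{s=0}^{k-1}h_1(N-s)\,h_2(N-s)^{N-s}\prod_{m=N-s+1}^{N}f(m).
\]
Because $P_n$ is a product of a \emph{fixed} number $k$ of polynomial values, it grows at most polynomially in $N$. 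If $(a_{kn+l})_n$ is constant, equal to $C$, then $C(1-P_n)=R_n$, so everything reduces to the single requirement that $R_n$ be polynomially bounded in $N$.

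The main step is a growth analysis of the summands $T_s:=h_1(N-s)h_2(N-s)^{N-s}\prod_{m=N-s+1}^{N}f(m)$. Writing $c,\ell$ for the leading coefficients of $h_2,f$, a routine estimate using $|h_2(x)|\asymp|c|x^{\deg h_2}$ and $|f(x)|\asymp|\ell|x^{\deg f}$ gives, for the magnitudes,
\[
\frac{|T_s|}{|T_0|}\ \asymp\ \Big(\frac{|\ell|}{|c|\,e^{\deg h_2}}\Big)^{\!s}N^{(\deg f-\deg h_2)s}\qquad(N\to\infty).
\]
I would then argue that $\deg h_2\ge1$ is impossible. When $\deg f\neq\deg h_2$ the power $N^{(\deg f-\deg h_2)s}$ makes $|T_s|$ strictly monotone in $s$ for large $N$, so the maximum is attained at a unique endpoint; when $\deg f=\deg h_2$ the ratio tends to $(|\ell|/(|c|e^{\deg h_2}))^{s}$, whose base differs from $1$ because $e^{\deg h_2}$ is irrational (as $e$ is transcendental), so again a single index dominates. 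Either way one term survives, $|R_n|\asymp\max_s|T_s|$, and this is \emph{super}-exponential in $N$ since it carries a factor $|h_2(\cdot)|^{N-s}$ with $\deg h_2\ge1$; this contradicts the polynomial bound. Hence $h_2=b$ is constant.

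It then remains to treat $h_2=b$ with $|b|\ge2$, the values $b\in\{-1,0,1\}$ being exactly the second alternative. Here $T_s=h_1(N-s)b^{N-s}\prod_{m=N-s+1}^{N}f(m)$. If $\deg f\ge1$ the product forces $|T_{k-1}|$ to dominate, making $|R_n|$ exponential in $N$ and again contradicting the polynomial bound; so $f=q$ must be constant. In that case
\[
R_n=b^{N}\sum_{s=0}^{k-1}h_1(N-s)(q/b)^{s}=b^{N}\Psi(N),
\]
whereas $C(1-P_n)=C(1-q^{k})$ is constant; since $|b|\ge2$, this forces the polynomial $\Psi$ to vanish identically (and $C(1-q^k)=0$). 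Reading $\Psi\equiv0$ as a linear recurrence with characteristic polynomial $\chi(t)=\sum_{s=0}^{k-1}(q/b)^{s}t^{\,k-1-s}$, a nonzero polynomial solution $h_1$ can exist only if $t=1$ is a root, i.e. $\sum_{s}(q/b)^{s}=0$, which over $\Z$ forces $q=-b$ with $k$ even; as $t=1$ is then a simple root of $\chi$, only constant solutions survive and $h_1=c$. This is precisely the third alternative $h_1=c$, $h_2=-f=b$, and $a_{2n}=b^{2n}c$, $a_{2n+1}=0$ follow by substitution.

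I expect the delicate point to be the boundary case $\deg f=\deg h_2\ge1$, where the leading super-exponential parts of the $T_s$ have the same order and could a priori cancel. The resolution is the explicit constant $|\ell|/(|c|e^{\deg h_2})$ appearing in the ratio above: its irrationality, inherited from that of $e^{\deg h_2}$, keeps the dominant index unique and so blocks any cancellation. A secondary technical point is justifying that $|R_n|\asymp\max_s|T_s|$, i.e. that the surviving term is genuinely unrivalled, together with the final multiplicity count for the root $t=1$ of $\chi$ that pins $h_1$ down to a constant.
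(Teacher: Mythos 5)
Your overall strategy coincides with the paper's: pass to the block recurrence, compute the limiting ratios of the block summands (the factor $e^{s\deg h_2}$ coming from $(h_2(N-s)/h_2(N))^{N-s}$), rule out $\deg h_2\geq 1$ by a growth contradiction that ultimately rests on the transcendence of $e$, and then, for constant $h_2=b$ with $|b|\geq 2$, force $f$ to be constant and pin $h_1$ down to a constant. The paper organizes the first half as three separate limit computations, normalizing by the dominant summand in each of the cases $\deg f>\deg h_2$, $\deg f<\deg h_2$, $\deg f=\deg h_2$, and handles the last step by unrolling the recurrence and taking a discrete derivative of $h_1$, where you use the characteristic polynomial $\chi(t)$ of the difference operator $\sum_s (q/b)^s E^{-s}$; these are differences of packaging, and your treatment of the final $h_1$ step (simple root $t=1$ of $\chi$, hence only constant polynomial solutions) is clean and correct.

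The one place where your written justification does not support its conclusion is the balanced case $\deg f=\deg h_2\geq 1$: you claim that since the base $|\ell|/(|c|e^{\deg h_2})$ differs from $1$, "a single index dominates" and therefore $|R_n|\asymp\max_s|T_s|$. Having a strictly largest term is not enough. If the consecutive ratio tends to a finite constant $\rho$ with, say, $1<|\rho|<2$, the largest term need not exceed the sum of the others, and signed cancellation could in principle make $R_n=o(\max_s|T_s|)$. What actually closes this case is that $R_n/T_0\to\sum_{s=0}^{k-1}\rho^s$ for the \emph{signed} ratio $\rho=\ell/(c\,e^{\deg h_2})$, and this limit vanishes only if $\rho$ is a $k$-th root of unity; since $\rho$ is real the only candidates are $\pm 1$, and both are excluded because $e^{\deg h_2}$ is irrational. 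This is exactly the computation the paper performs (its limit $\sum_{j=1}^k(u_d/w_d)^{k-j}e^{dj}$ is nonzero by transcendence of $e$), so you have all the ingredients; you only need to replace "unique dominant index blocks cancellation" by "the limiting sum of the ratios is nonzero." A very minor further point: the degenerate case $f=0$, where $\ell$ is undefined and your ratio formula breaks down, should be split off at the outset as the paper does; there $a_n=h_1(n)h_2(n)^n$ and constancy of a subsequence forces $h_2\in\{-1,0,1\}$ directly.
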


\begin{proof}
Let us assume that $h_1\neq 0$. If $f=0$ then $a_n = h_1(n)h_2(n)^n$ for all $n\in\N$ and thus the assumption of our theorem can be satisfied only if $h_2 = b$, where $b\in\{-1,0,1\}$. Hence we can assume that $f \neq 0$.

Let us denote $a = a_{kn+l}, n\in\N$. Then
\footnotesize
\begin{equation}\label{eq9}
\begin{split}
a = & a_{k(n+1)+l} \\
= & a_{kn+l}\prod_{i=1}^k f(kn+l+i) + \sum_{j=1}^k h_1(kn+l+j)h_2(kn+l+j)^{kn+l+j}\prod_{i=j+1}^k f(kn+l+i) \\
= & a\prod_{i=1}^k f(kn+l+i) + h_2(kn+l)^{kn+l}\sum_{j=1}^k h_1(kn+l+j)\times \\
& \times h_2(kn+l+j)^j\left(\frac{h_2(kn+l+j)}{h_2(kn+l)}\right)^{kn+l}\prod_{i=j+1}^k f(kn+l+i).
\end{split}
\end{equation}
\normalsize

Let us put $d = \deg h_2 > 0$ and write $h_2 = \sum_{i=0}^d w_i X^i$. Then for each $j\in\N$ we have
\begin{equation*}
\begin{split}
& h_2(kn+l+j) - h_2(kn+l) = \sum_{i=0}^d w_i (kn+l+j)^i - \sum_{i=0}^d w_i (kn+l)^i \\
& = w_d (kn+l)^d + dw_d j(kn+l)^{d-1} + w_{d-1} (kn+l)^{d-1} + O((kn+l)^{d-2}) - w_d (kn+l)^d - \\
& - w_{d-1} (kn+l)^{d-1} + O((kn+l)^{d-2}) = dw_d j(kn+l)^{d-1} + O((kn+l)^{d-2})
\end{split}
\end{equation*}
with $n\rightarrow +\infty$. Since
\begin{equation*}
\begin{split}
& \lim_{n\rightarrow +\infty}\frac{h_2(kn+l+j) - h_2(kn+l)}{h_2(kn+l)} = 0 \\
& \lim_{n\rightarrow +\infty}\frac{(kn+l)^d}{h_2(kn+l)} = \frac{1}{w_d} \\
& \lim_{n\rightarrow +\infty}\frac{O((kn+l)^{d-1})}{h_2(kn+l)} = 0,
\end{split}
\end{equation*}
thus
\footnotesize
\begin{equation}\label{eq8}
\begin{split}
& \lim_{n\rightarrow +\infty}\left(\frac{h_2(kn+l+j)}{h_2(kn+l)}\right)^{kn+l} \\
= & \lim_{n\rightarrow +\infty}\left(1 + \frac{h_2(kn+l+j) - h_2(kn+l)}{h_2(kn+l)}\right)^{\frac{h_2(kn+l)}{h_2(kn+l+j) - h_2(kn+l)} \cdot \frac{(kn+l)(dw_d j(kn+l)^{d-1} + O((kn+l)^{d-2}))}{h_2(kn+l)}} \\
= & \lim_{n\rightarrow +\infty}\left(\left(1 + \frac{h_2(kn+l+j) - h_2(kn+l)}{h_2(kn+l)}\right)^{\frac{h_2(kn+l)}{h_2(kn+l+j) - h_2(kn+l)}}\right)^{\frac{dw_d j(kn+l)^d + O((kn+l)^{d-1})}{h_2(kn+l)}} = e^{dj}.
\end{split}
\end{equation}
\normalsize

If $\deg f > \deg h_2$ then we compute the following limits.
\footnotesize
\begin{equation*}
\begin{split}
& \lim_{n\rightarrow +\infty}\frac{a\prod_{i=1}^k f(kn+l+i)}{h_2(kn+l)^{kn+l} h_1(kn+l+1)h_2(kn+l+1)e^d\prod_{i=2}^k f(kn+l+i)} = 0, \\
& \lim_{n\rightarrow +\infty}\frac{h_2(kn+l)^{kn+l} h_1(kn+l+1)h_2(kn+l+1)\left(\frac{h_2(kn+l+1)}{h_2(kn+l)}\right)^{kn+l}\prod_{i=2}^k f(kn+l+i)}{h_2(kn+l)^{kn+l} h_1(kn+l+1)h_2(kn+l+1)e^d\prod_{i=2}^k f(kn+l+i)} = 1, \\
& \lim_{n\rightarrow +\infty}\frac{h_2(kn+l)^{kn+l} h_1(kn+l+j)h_2(kn+l+j)^j\left(\frac{h_2(kn+l+j)}{h_2(kn+l)}\right)^{kn+l}\prod_{i=j+1}^k f(kn+l+i)}{h_2(kn+l)^{kn+l} h_1(kn+l+1)h_2(kn+l+1)e^d\prod_{i=2}^k f(kn+l+i)} = 0 \mbox{, as } j>1.
\end{split}
\end{equation*}
\normalsize
After adding these limits we obtain the following one.
\begin{equation*}
\begin{split}
& \lim_{n\rightarrow +\infty}\frac{a}{h_2(kn+l)^{kn+l} h_1(kn+l+1)h_2(kn+l+1)e^d\prod_{i=2}^k f(kn+l+i)} \\
& = \lim_{n\rightarrow +\infty}\left(\frac{a\prod_{i=1}^k f(kn+l+i)}{h_2(kn+l)^{kn+l} h_1(kn+l+1)h_2(kn+l+1)e^d\prod_{i=2}^k f(kn+l+i)} +\right. \\
& \left.+ \frac{\sum_{j=1}^k h_1(kn+l+j) h_2(kn+l+j)^j\left(\frac{h_2(kn+l+j)}{h_2(kn+l)}\right)^{kn+l}\prod_{i=j+1}^k f(kn+l+i)}{h_1(kn+l+1)h_2(kn+l+1)e^d\prod_{i=2}^k f(kn+l+i)}\right) = 1,
\end{split}
\end{equation*}
which leads to a contradiction with the fact that
\begin{equation*}
\lim_{n\rightarrow +\infty}\frac{a}{h_2(kn+l)^{kn+l} h_1(kn+l+1)h_2(kn+l+1)e^d\prod_{i=2}^k f(kn+l+i)} = 0.
\end{equation*}

Similarly, if $\deg f < \deg h_2$ then we compute the following limits.
\footnotesize
\begin{equation*}
\begin{split}
& \lim_{n\rightarrow +\infty}\frac{a\prod_{i=1}^k f(kn+l+i)}{h_2(kn+l)^{kn+l} h_1(kn+l+k)h_2(kn+l+k)^k e^{dk}} = 0, \\
& \lim_{n\rightarrow +\infty}\frac{h_2(kn+l)^{kn+l} h_1(kn+l+k)h_2(kn+l+k)^k \left(\frac{h_2(kn+l+k)}{h_2(kn+l)}\right)^{kn+l}}{h_2(kn+l)^{kn+l} h_1(kn+l+k)h_2(kn+l+k)^k e^{dk}} = 1, \\
& \lim_{n\rightarrow +\infty}\frac{h_2(kn+l)^{kn+l} h_1(kn+l+j)h_2(kn+l+j)^j\left(\frac{h_2(kn+l+j)}{h_2(kn+l)}\right)^{kn+l}\prod_{i=j+1}^k f(kn+l+i)}{h_2(kn+l)^{kn+l} h_1(kn+l+k)h_2(kn+l+k)^k e^{dk}} = 0 \mbox{, as } j<k.
\end{split}
\end{equation*}
\normalsize
We add these limits to obtain the following.
\begin{equation*}
\begin{split}
& \lim_{n\rightarrow +\infty}\frac{a}{h_2(kn+l)^{kn+l} h_1(kn+l+k)h_2(kn+l+k)^k e^{dk}} \\
& = \lim_{n\rightarrow +\infty}\left(\frac{a\prod_{i=1}^k f(kn+l+i)}{h_2(kn+l)^{kn+l} h_1(kn+l+k)h_2(kn+l+k)^k e^{dk}} +\right. \\
& \left.+ \frac{\sum_{j=1}^k h_1(kn+l+j) h_2(kn+l+j)^j\left(\frac{h_2(kn+l+j)}{h_2(kn+l)}\right)^{kn+l}\prod_{i=j+1}^k f(kn+l+i)}{h_1(kn+l+k)h_2(kn+l+k)^k e^{dk}}\right) = 1,
\end{split}
\end{equation*}
which leads to a contradiction with the fact that
\begin{equation*}
\lim_{n\rightarrow +\infty}\frac{a}{h_2(kn+l)^{kn+l} h_1(kn+l+k)h_2(kn+l+k)^k e^{dk}} = 0.
\end{equation*}

Consider the case when $\deg f = \deg h_2$. Let us denote $f=\sum_{i=0}^d u_i X^i$. Then we compute the following limits
\scriptsize
\begin{equation*}
\begin{split}
& \lim_{n\rightarrow +\infty}\frac{a\prod_{i=1}^k f(kn+l+i)}{h_2(kn+l)^{kn+l} h_1(kn+l+k)h_2(kn+l+k)^k} = 0, \\
& \lim_{n\rightarrow +\infty}\frac{h_2(kn+l)^{kn+l} h_1(kn+l+j)h_2(kn+l+j)^j \left(\frac{h_2(kn+l+j)}{h_2(kn+l)}\right)^{kn+l}\prod_{i=j+1}^k f(kn+l+i)}{h_2(kn+l)^{kn+l} h_1(kn+l+k)h_2(kn+l+k)^k} = \left(\frac{u_d}{w_d}\right)^{k-j}e^{dj},
\end{split}
\end{equation*}
\normalsize
as $1\leq j\leq k$. We add them and obtain the following limit.
\begin{equation*}
\begin{split}
& \lim_{n\rightarrow +\infty}\frac{a}{h_2(kn+l)^{kn+l} h_1(kn+l+k)h_2(kn+l+k)^k} \\
& = \lim_{n\rightarrow +\infty}\left(\frac{a\prod_{i=1}^k f(kn+l+i)}{h_2(kn+l)^{kn+l} h_1(kn+l+k)h_2(kn+l+k)^k} +\right. \\
& \left.+ \frac{\sum_{j=1}^k h_1(kn+l+j) h_2(kn+l+j)^j\left(\frac{h_2(kn+l+j)}{h_2(kn+l)}\right)^{kn+l}\prod_{i=j+1}^k f(kn+l+i)}{h_1(kn+l+k)h_2(kn+l+k)^k}\right) \\
& = \sum_{j=1}^k \left(\frac{u_d}{w_d}\right)^{k-j}e^{dj}.
\end{split}
\end{equation*}
However
\begin{equation*}
\lim_{n\rightarrow +\infty}\frac{a}{h_2(kn+l)^{kn+l} h_1(kn+l+k)h_2(kn+l+k)^k e^{dk}} = 0
\end{equation*}
and $\sum_{j=1}^k \left(\frac{u_d}{w_d}\right)^{k-j}e^{dj} \neq 0$ because $e$ is a transcendental number (see \cite{Her}) - a contradiction.

We proved that if the sequence $(a_n)_{n\in\N}$ is bounded and $h_1 \neq 0$ then $h_2 = b$, where $b\in\Z$. In the case when $h_2 = b$ the equality (\ref{eq9}) takes the form
\begin{equation}\label{eq11}
a = a\prod_{i=1}^k f(kn+l+i) + b^{kn+l}\sum_{j=1}^k h_1(kn+l+j) b^j\prod_{i=j+1}^k f(kn+l+i).
\end{equation}

Assume that $|b| > 1$ and define
\begin{equation*}
G = \sum_{j=1}^k h_1(kX+l+j) b^j\prod_{i=j+1}^k f(kX+l+i) \in\Z[X].
\end{equation*}

If $G \neq 0$ then by (\ref{eq11}) we have
\begin{equation*}
\frac{a}{b^{kn+l}G(n)} = \frac{a\prod_{i=1}^k f(kn+l+i)}{b^{kn+l}G(n)} + 1.
\end{equation*}
Since $\lim_{n\rightarrow +\infty}\frac{a\prod_{i=1}^k f(kn+l+i)}{b^{kn+l}} = 0$ we deduce the following
\begin{equation*}
\lim_{n\rightarrow +\infty}\frac{a}{b^{kn+l}G(n)} = 1.
\end{equation*}
We get a contradiction because $\lim_{n\rightarrow +\infty}\frac{a}{b^{kn+l}G(n)} = 0$.

If $G = 0$ then $h_1 = 0$ or $f = c$, where $c\in\Z$. Indeed, if $h_1 \neq 0$ and $\deg f > 0$ then $\deg \left[h_1(kX+l+j) b^j\prod_{i=j+1}^k f(kX+l+i)\right] = (k-j)\deg f + \deg h_1$ for $j\in\{1,2,...,k\}$ and as a result we get $\deg G = k\deg f + \deg h_1 > 0$. Assume that $f = c$, where $c\in\Z$. Then
\begin{equation*}
\begin{split}
0 = & \lim_{n\rightarrow +\infty}\frac{G(n)}{bh_1(kn+l)} = \lim_{n\rightarrow +\infty}\frac{\sum_{j=1}^k h_1(kn+l+j) b^{j-1} c^{k-j}}{h_1(kn+l)} \\
= & \sum_{j=1}^k b^{j-1} c^{k-j} =
\begin{cases}
\frac{b^k-c^k}{b-c}, &\mbox{ when } b\neq c \\
kb^{k-1}, &\mbox{ when } b=c
\end{cases},
\end{split}
\end{equation*}
which means that there must be $b=c=0$ or $c=-b$ with $2\mid k$. The case $b=c=0$ contradicts with the assumption that $|b| > 1$. If $c=-b$ then by induction we obtain the following formula
\begin{equation*}
a_{l+n} = (-b)^n a_l + \sum_{j=1}^n (-1)^{n-j}b^n h_1(l+j) = (-b)^n a_l + (-b)^n\sum_{j=1}^n (-1)^j h_1(l+j), n\in\N.
\end{equation*}
Let us define
\begin{equation*}
H(n) = \sum_{j=1}^{2n} (-1)^j h_1(l+j) = \sum_{j=1}^n \left(h_1(2j)-h_1(2j-1)\right) = h_1(0) + \sum_{j=1}^n \Delta h_1(2j-1), n\in\N,
\end{equation*}
where $\Delta h_1 = h_1(X+1)-h_1(X)$ means the discrete derivative of $h_1$. The function $H$ can be seen as a polynomial in $n$ and its degree is equal to
\begin{equation*}
\deg H(X) = 1 + \deg\Delta h_1(2X-1) = 1 + \deg\Delta h_1(X) = \deg h_1(X).
\end{equation*}
Since
\begin{equation*}
a=a_l=a_{l+kn}=(-b)^{kn} a + (-b)^{kn}H\left(\frac{k}{2}n\right) = b^{kn}\left(a+H\left(\frac{k}{2}n\right)\right)
\end{equation*}
for all $n\in\N$ and $|b|>1$ we deduce that the polynomial $H$ must be constant. This implies that $h_1$ is constant.

Summing up, we showed that $h_2=b\in\{-1,0,1\}$ or $f, h_1, h_2$ are constant and $f=-h_2$.
\end{proof}

\begin{thm}\label{thm8}
Let ${\bf a}={\bf a}(f,h_1,h_2)$, $k\in\N_+$ and $l\in\N$. If the sequence $(a_{kn+l})_{n\in\N}$ is constant then the sequence $(a_n)_{n\in\N}$ is ultimately constant or of the form $(c,0,c,0,c,0,...)$ for some integer $c$.
\end{thm}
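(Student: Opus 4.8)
The plan is to run the hypothesis through Theorem \ref{thm7} and then pin down the whole sequence $(a_n)_{n\in\N}$ in each of the three cases it produces. By Theorem \ref{thm7}, constancy of $(a_{kn+l})_{n\in\N}$ forces one of: $h_1=0$; or $h_2=b$ with $b\in\{-1,0,1\}$; or $f,h_1,h_2$ constant with $h_1=c$ and $f=-h_2=-b$ for some $b\in\Z$. In each branch I would read $(a_n)_{n\in\N}$ off from the recurrence and verify that it is ultimately constant or ultimately two-periodic.

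Two of the branches are settled at once by the explicit formulas already at hand. If $h_1=0$, then $a_0=0$ and $a_n=f(n)a_{n-1}$ give $a_n=0$ for all $n$. If $f,h_1,h_2$ are constant with $f=-h_2=-b$ and $h_1=c$, then the values $a_{2n}=b^{2n}c$ and $a_{2n+1}=0$ recorded in Theorem \ref{thm7} describe the sequence completely: for $b\in\{-1,1\}$ it is exactly $(c,0,c,0,\dots)$, while for $b=0$ it is $(c,0,0,\dots)$, hence ultimately constant.

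The substantial branch is $h_2=b\in\{-1,0,1\}$ with $h_1\neq 0$; by the reduction to the associated sequence it suffices to treat $b\in\{0,1\}$. A constant subsequence is bounded, so the first task is to exclude unbounded growth. When $f$ is not a constant in $\{-1,0,1\}$ (hence eventually nonvanishing), the asymptotic estimate of Subsection \ref{subsec3.2} gives $a_n\sim\xi\prod_{i=n_0+1}^n f(i)$; since the product then grows without bound in absolute value, boundedness of $(a_{kn+l})_{n\in\N}$ forces the constant $\xi$ to vanish. In that case the same estimate yields $a_n=-\sum_{j>n}\frac{h_1(j)}{\prod_{n<i\le j}f(i)}$, which tends to a finite limit $L$ as $n\to\infty$; as the $a_n$ are integers, they must eventually equal $L$, so the sequence is ultimately constant. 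The remaining possibility is that $f$ is itself a constant in $\{-1,0,1\}$: then the recurrence has constant coefficients, a short computation shows $h_1$ must be constant (otherwise $(a_n)$ is again unbounded), and solving explicitly produces an ultimately constant sequence or a two-periodic tail of the shape $(c,0,c,0,\dots)$ or $(c,-c,c,-c,\dots)$.

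The step I expect to be the genuine obstacle is the third alternative of Theorem \ref{thm7} when $|b|>1$. There $a_{2n}=b^{2n}c$ is unbounded while the only constant subsequence is the zero subsequence $(a_{2n+1})_{n\in\N}$, so the hypothesis can be met by a sequence that is neither ultimately constant nor two-periodic. Forcing the conclusion in this branch is exactly the delicate point: it is resolved only by bringing in boundedness of $(a_n)_{n\in\N}$, which collapses $|b|>1$ back to $|b|\le 1$ and hence to the forms above. I would therefore isolate this as the place where the hypothesis must be used with the greatest care, and where the estimates of Subsection \ref{subsec3.2} do the decisive work.
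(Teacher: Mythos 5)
Your route is genuinely different from the paper's. The paper never invokes Theorem \ref{thm7}, asymptotics, or boundedness: it rewrites the constancy condition $a_{k(n+1)+l}=a_{kn+l}$ as a polynomial identity $G(X)=a$ with $G=a\prod_{i=1}^k f(kX+l+i)+\sum_{j=1}^k h_1(kX+l+j)\prod_{i=j+1}^k f(kX+l+i)$, and then compares degrees, successively extracting relations of the shape $af+h_1=b$ until it forces either $af+h_1=a$ (whence $a_n=a$ for $n\geq l$ by induction) or $f=-1$, $h_1=c$ constant (the $(c,0,c,0,\dots)$ case). Note that the recurrence displayed in that proof carries no $h_2$ factor at all, so the paper's argument tacitly treats only $h_2=1$; the cases $h_2=-1$ and $h_2=0$ are handled separately in Corollary \ref{cor2} and Proposition \ref{prop7}, with \emph{different} conclusions. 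Your plan of funnelling everything through Theorem \ref{thm7} and the asymptotic $a_n\sim\xi\prod_{i=n_0+1}^n f(i)$ is a legitimate alternative for the $h_2=1$ core, and your $\xi=0$ analysis is essentially sound there.

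The genuine gap is exactly at the spot you flag. In the branch $f=-h_2=-b$, $h_1=c$ with $|b|>1$ you propose to ``bring in boundedness of $(a_n)_{n\in\N}$,'' but boundedness of the whole sequence is not a hypothesis of Theorem \ref{thm8} --- it is the hypothesis of the later unnamed theorem, which is what \emph{cites} Theorem \ref{thm8}. Constancy of one arithmetic subsequence does not give boundedness: ${\bf a}(-2,1,2)$ is $(1,0,4,0,16,0,\dots)$, whose odd-indexed subsequence is constantly $0$ while the sequence is unbounded and not of the form $(c,0,c,0,\dots)$ for any fixed $c$. So this branch cannot be closed by any argument; it simply does not arise in the paper's proof because there $h_2=1$ forces $b=1$, $f=-1$ and the $(c,0,c,0,\dots)$ shape. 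The same issue undermines your reduction of $h_2=-1$ to $h_2=1$ via the associated sequence: an ultimately constant $\widetilde a_n=c$ yields $a_n=(-1)^nc$ ultimately (you even list a $(c,-c,c,-c,\dots)$ tail among your outputs), which is not an allowed conclusion of Theorem \ref{thm8} --- that case is the content of Corollary \ref{cor2}. A smaller, fixable point: your claim that $a_n=-\sum_{j>n}h_1(j)/\prod_{n<i\le j}f(i)$ tends to a finite limit holds only when $\deg h_1\le\deg f$; when $\deg h_1>\deg f$ you must instead conclude $|a_n|\to\infty$ and contradict the constant subsequence. To make your proof correct you should restrict the statement to $h_2=1$ (as the paper's own proof implicitly does) and drop every appeal to boundedness of the full sequence.
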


\begin{proof}
For $k=1$ the statement is obvious. Hence assume without loss of generality that $k\geq 2$. Let us denote $a = a_{kn+l}, n\in\N$. Then
\begin{equation}\label{eq11,5}
\begin{split}
a & = a_{k(n+1)+l} = a_{kn+l}\prod_{i=1}^k f(kn+l+i) + \sum_{j=1}^k h_1(kn+l+j)\prod_{i=j+1}^k f(kn+l+i) \\
& = a\prod_{i=1}^k f(kn+l+i) + \sum_{j=1}^k h_1(kn+l+j)\prod_{i=j+1}^k f(kn+l+i).
\end{split}
\end{equation}

Let us define
\begin{equation*}
G = a\prod_{i=1}^k f(kX+l+i) + \sum_{j=1}^k h_1(kX+l+j)\prod_{i=j+1}^k f(kX+l+i) \in\Z[X].
\end{equation*}

From (\ref{eq11,5}) we know that $G=a$. If $h_1 = 0$ then $a_n=0$ for all $n\in\N$, so we can assume that $h_1 \neq 0$.

If $\deg f > 0$ then $\deg \prod_{i=1}^k f(kX+l+i) = k\deg f$ and
\begin{equation*}
\deg h_1(kX+l+j)\prod_{i=j+1}^k f(kX+l+i) = (k-j)\deg f + \deg h_1
\end{equation*}
for $j\in\{1,2,...,k\}$. Since $\deg G \leq 0$ we get
\begin{equation*}
\deg \prod_{i=1}^k f(kX+l+i) = \deg h_1(kX+l+1)\prod_{i=2}^k f(kX+l+i),
\end{equation*}
which implies that $\deg f = \deg h_1$. Moreover, we have the following sequence of equivalences.
\footnotesize
\begin{equation*}
\begin{split}
& \deg(a\prod_{i=1}^k f(kX+l+i) + h_1(kX+l+1)\prod_{i=2}^k f(kX+l+i)) \\
& = \deg h_1(kX+l+2)\prod_{i=3}^k f(kX+l+i) \\
\iff & \deg(af(kX+l+1) + h_1(kX+l+1))\prod_{i=2}^k f(kX+l+i) \\
& = \deg h_1(kX+l+2)\prod_{i=3}^k f(kX+l+i) \\
\iff & \deg(af(kX+l+1) + h_1(kX+l+1)) + (k-1)\deg f \\
& = \deg h_1 + (k-2)\deg f \\
\iff & \deg(af(kX+l+1) + h_1(kX+l+1)) = 0
\end{split}
\end{equation*}
\normalsize
Hence $af+h_1=b$ for some integer $b$. Therefore we have
\footnotesize
\begin{equation*}
\begin{split}
G & = (af(kX+l+1)+h_1(kX+l+1))\prod_{i=2}^k f(kXn+l+i) + \sum_{j=2}^k h_1(kX+l+j)\prod_{i=j+1}^k f(kX+l+i) \\
& = b\prod_{i=2}^k f(kX+l+i) + \sum_{j=2}^k h_1(kX+l+j)\prod_{i=j+1}^k f(kX+l+i) \\
& = (bf(kX+l+2)+h_1(kX+l+2))\prod_{i=3}^k f(kX+l+i) + \sum_{j=3}^k h_1(kX+l+j)\prod_{i=j+1}^k f(kX+l+i).
\end{split}
\end{equation*}
\normalsize
Similarly, from the fact that $\deg G \leq 0$ we get the following chain of equivalences
\footnotesize
\begin{equation*}
\begin{split}
& \deg (bf(kX+l+2)+h_1(kX+l+2))\prod_{i=3}^k f(kX+l+i) = \deg h_1(kX+l+3)\prod_{i=4}^k f(kX+l+i) \\
& \iff \deg (bf(kX+l+2)+h_1(kX+l+2)) + (k-3)\deg f = \deg h_1 + (k-4)\deg f \\
& \iff \deg (bf(kX+l+2)+h_1(kX+l+2)) = 0
\end{split}
\end{equation*}
\normalsize
provided $n\geq 3$. If $k=2$ then $\deg (bf(kX+l+2)+h_1(kX+l+2)\leq 0$. Since $\deg (af(kX+l+2)+h_1(kX+l+2)), \deg (bf(kX+l+2)+h_1(kX+l+2)) \leq 0$, hence
\footnotesize
\begin{equation*}
\deg (a-b)f(kX+l+2) = \deg [(af(kX+l+2)+h_1(kX+l+2)) - (bf(kX+l+2)+h_1(kX+l+2))] \leq 0.
\end{equation*}
\normalsize
We made an assumption $\deg f > 0$. That is why $a=b$. We obtain the equality $af+h_1 = a$, which allows us to prove by simple induction that $a_n = a$ for each $n\geq l$.

Assume now that $f=b$ for some integer $b$. Then
\begin{equation}\label{eq12}
G = a = ab^k + \sum_{j=1}^k b^{k-j}h_1(kX+l+j).
\end{equation}

If $\deg h_1 > 0$ and $h_1 = \sum_{i=0}^d w_i X^i$ then the coefficient of $G$ near the $d$th power of variable $X$ is 0 since $\deg G \leq 0$. On the other hand, this coefficient is equal to
\begin{equation*}
k^dw_d \sum_{j=1}^k b^{k-j} =
\begin{cases}
k^dw_d \frac{b^k-1}{b-1}, &\mbox{ if } b\neq 1 \\
k^{d+1}w_d, &\mbox{ if } b = 1
\end{cases},
\end{equation*}
which means that there must be $2\mid k$ and $b=-1$. Denote $k' = \frac{k}{2}$ and take the discrete derivative $\Delta h_1 = h_1(X+1) - h_1(X)$ of the polynomial $h_1$. We know that $\deg \Delta h_1 = \deg h_1 - 1$. The equation (\ref{eq12}) takes the form:
\begin{equation*}
0 = \sum_{j=1}^{k'} h_1(kn+l+2j)-h_1(kn+l+2j-1) = \sum_{j=1}^{k'} \Delta h_1(kn+l+2j-1).
\end{equation*}
Let $H = \sum_{j=1}^{k'} \Delta h_1(kX+l+2j-1) \in\Z[X]$. Then $H = 0$. However, the coefficient of $H$ near $d-1$st power of variable $X$ is equal to $k'$ times the leading coefficient of $\Delta h_1$ - a contradiction.

We are left with the case $h_1 = c$ for some $c\in\Z\bs\{0\}$. By (\ref{eq12}) we have
\begin{equation*}
0 = a(b^k-1) + c\sum_{j=1}^{k} b^{k-j}.
\end{equation*}
On the other hand,
\begin{equation*}
a(b^k-1) + c\sum_{j=1}^{k} b^{k-j} =
\begin{cases}
a(b^k-1) + c\frac{b^k-1}{b-1}, &\mbox{ if } b\neq 1 \\
kc, &\mbox{ if } b = 1
\end{cases}.
\end{equation*}
Since $c\neq 0$, thus $b\neq 1$ and $(a+\frac{c}{b-1})(b^k-1) = 0$. Then $b=-1$ and $(a_n)_{n\in\N} = (c,0,c,0,c,0,...)$ or $c = a(1-b)$, which implies that $ba+c=a$ and $(a_n)_{n\in\N}$ is ultimately constant.
\end{proof}

\begin{ex}
Let us consider the sequence ${\bf a}(X-3,28-7X,1)$. Then $a_1 = -35$, $a_2 = 49$ and $a_n = 7$ for $n\geq 3$. This means that a sequence ${\bf a}$ satisfying assumptions of Theorem \ref{thm8} can be ultimately constant, but not constant.
\end{ex}

\begin{cor}\label{cor2}
Let ${\bf a}={\bf a}(f,h_1,-1)$. If the sequence $(a_{kn+l})_{n\in\N}$ is constant then there is such an integer $c$ that $a_n = (-1)^nc$ for almost all $n\in\N$ or $a_{2n}=c, a_{2n+1}=0$ for all $n\in\N$.
\end{cor}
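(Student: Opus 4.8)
The plan is to reduce everything to the associated sequence $\widetilde{\bf a}=\widetilde{\bf a}(-f,h_1,1)$, for which the third polynomial equals $1$, and then invoke Theorem~\ref{thm8}. Recall from the discussion preceding this corollary that $a_n=(-1)^n\widetilde{a}_n$ for all $n\in\N$. Hence the qualitative behaviour of ${\bf a}$ is completely determined by that of $\widetilde{\bf a}$ up to the alternating factor $(-1)^n$, and the whole argument amounts to pushing the hypothesis through this identity, applying the already-proved classification in the $h_2=1$ case, and pulling the conclusion back.

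First I would transfer the constant arithmetic subsequence from ${\bf a}$ to $\widetilde{\bf a}$. Writing $a_{kn+l}=(-1)^{kn+l}\widetilde{a}_{kn+l}$, I distinguish the parity of $k$. If $k$ is even, then $(-1)^{kn+l}=(-1)^l$ is independent of $n$, so constancy of $(a_{kn+l})_{n\in\N}$ immediately yields constancy of $(\widetilde{a}_{kn+l})_{n\in\N}$. If $k$ is odd, then $(-1)^{kn+l}=(-1)^{n+l}$ alternates with $n$; restricting to even indices $n=2m$ cancels the sign and gives that $(\widetilde{a}_{2km+l})_{m\in\N}$ is constant. In either case $\widetilde{\bf a}$ possesses a constant subsequence of the form $(\widetilde{a}_{k'm+l})_{m\in\N}$ with $k'\in\N_+$, which is precisely the hypothesis required by Theorem~\ref{thm8}.

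Next I would apply Theorem~\ref{thm8} to $\widetilde{\bf a}$. It asserts that $\widetilde{\bf a}$ is either ultimately constant, say $\widetilde{a}_n=c$ for $n\gg 0$, or of the form $(c,0,c,0,\dots)$, i.e. $\widetilde{a}_{2n}=c$ and $\widetilde{a}_{2n+1}=0$ for all $n$. Finally I would pull this back through $a_n=(-1)^n\widetilde{a}_n$. In the first case $a_n=(-1)^nc$ for almost all $n$, and in the second case $a_{2n}=(-1)^{2n}c=c$ while $a_{2n+1}=-\widetilde{a}_{2n+1}=0$ for all $n$. These are exactly the two alternatives in the statement.

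The only genuinely delicate point is the parity bookkeeping in the first step: when $k$ is odd, the identity $a_n=(-1)^n\widetilde{a}_n$ turns a constant subsequence of ${\bf a}$ into an alternating subsequence of $\widetilde{\bf a}$, so one must pass to a subsequence of doubled step $2k$ before Theorem~\ref{thm8} becomes applicable. Everything else is a direct substitution, and no new estimate or computation is needed beyond the results already established.
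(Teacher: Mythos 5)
Your proposal is correct and follows essentially the same route as the paper: pass to the associated sequence via $a_n=(-1)^n\widetilde{a}_n$, note that a constant subsequence of ${\bf a}$ with step $k$ yields a constant subsequence of $\widetilde{\bf a}$ with step $2k$ (the paper uses the doubled step uniformly, whereas you split by the parity of $k$), apply Theorem~\ref{thm8}, and translate back. The extra parity case analysis is harmless but not needed, since taking step $2k$ works in both cases.
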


\begin{proof}
Consider the associated sequence $(\widetilde{a}_n)_{n\in\N}$. Since $\widetilde{a}_n = (-1)^n a_n$ for $n\in\N$, thus the sequence $(\widetilde{a}_{2kn+l})_{n\in\N}$ is constant and by Theorem \ref{thm8} there is such an integer $c$ that $\widetilde{a}_n = c$ for almost all $n\in\N$ or $\widetilde{a}_{2n}=c, \widetilde{a}_{2n+1}=0$ for all $n\in\N$.
\end{proof}

\begin{prop}\label{prop7}
Let us consider a sequence ${\bf a}(f,h_1,0)$. Let $k\in\N_+$ and $l\in\N$. If the sequence $(a_{kn+l})_{n\in\N}$ is constant then $a_n = 0$ for almost all $n\in\N$, $a_n = h_1(0)$ for all $n\in\N$ or $a_n = (-1)^n h_1(0)$ for all $n\in\N$.
\end{prop}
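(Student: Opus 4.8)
The plan is to begin by reducing to the explicit closed form. Since $h_2=0$, for every $n>0$ we have $h_2(n)^n=0^n=0$, while for $n=0$ the convention $0^0=1$ gives $h_2(0)^0=1$; hence the recurrence collapses to $a_0=h_1(0)$ and $a_n=f(n)a_{n-1}$ for $n>0$, so that $a_n=h_1(0)\prod_{i=1}^n f(i)$ for all $n\in\N$ (equivalently, in the general closed formula every summand with $j\geq 1$ vanishes). Writing $c=h_1(0)$, if $c=0$ then $a_n=0=h_1(0)$ for every $n$ and there is nothing to prove, so from now on I would assume $c\neq 0$.

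Next I would split according to whether $f$ vanishes at a positive integer. If $f(r)=0$ for some $r\in\N_+$, then the factor $f(r)$ occurs in $\prod_{i=1}^n f(i)$ for every $n\geq r$, whence $a_n=0$ for all $n\geq r$; that is, $a_n=0$ for almost all $n$, placing us in the first alternative. The substantial case is therefore the one in which $f(i)\neq 0$ for every positive integer $i$, which combined with $c\neq 0$ forces $a_n\neq 0$ for all $n\in\N$.

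In that case I would exploit the hypothesis that $(a_{kn+l})_{n\in\N}$ is constant. Iterating the recurrence $k$ times gives $a_{k(n+1)+l}=a_{kn+l}\prod_{i=1}^k f(kn+l+i)$, so constancy together with $a_{kn+l}\neq 0$ yields $\prod_{i=1}^k f(kn+l+i)=1$ for every $n\in\N$. The key step is to read this as a polynomial identity: the polynomial $P(X)=\prod_{i=1}^k f(kX+l+i)\in\Z[X]$ takes the value $1$ at infinitely many integers, hence $P\equiv 1$. Comparing degrees, $\deg P=k\deg f$, and since $P$ is constant and $k\geq 1$ one concludes $\deg f=0$, i.e. $f=b$ for some $b\in\Z$; then $b^k=1$ forces $b\in\{-1,1\}$. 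Finally $b=1$ gives $a_n=c=h_1(0)$ for all $n$, and $b=-1$ gives $a_n=(-1)^n c=(-1)^n h_1(0)$ for all $n$, which are exactly the remaining two alternatives.

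The argument is short and I do not expect a serious obstacle; the only point requiring genuine care is the passage from the numerical identity $\prod_{i=1}^k f(kn+l+i)=1$ (valid for all $n\in\N$) to the polynomial identity $P\equiv 1$, followed by the degree/unit computation in $\Z[X]$ that pins $f$ down to the constants $\pm 1$. One must also keep the bookkeeping at $n=0$ imposed by the convention $0^0=1$ straight, so that the closed form $a_n=h_1(0)\prod_{i=1}^n f(i)$ is correctly justified from the outset.
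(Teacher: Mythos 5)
Your proof is correct and follows essentially the same route as the paper: reduce to the closed form $a_n = h_1(0)\prod_{i=1}^n f(i)$, dispose of the case where some $a_n$ vanishes, and extract the identity $\prod_{i=1}^k f(kn+l+i)=1$ from the constancy of the subsequence. The only (harmless) difference is the final step: the paper notes that each integer factor must be a unit, so $|f(m)|=1$ for almost all $m$ and hence $f=\pm 1$, whereas you read the product as a polynomial identity and conclude by a degree count; both arguments are valid.
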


\begin{proof}
If $a_{n_0} = 0$ for some $n_0\in\N$ then the sequence $(a_n)_{n\in\N}$ is ultimately constant and equal to 0, so assume that $a_n \neq 0$ for any $n\in\N$. Denote $a = a_{kn+l}$, $n\in\N$. Then
\begin{equation*}
a = a_{k(n+1)+l} = a_{kn+l}\prod_{i=1}^k f(kn+l+i) = a\prod_{i=1}^k f(kn+l+i)
\end{equation*}
and since $a\neq 0$ we get $\prod_{i=1}^k f(kn+l+i)=1$ for all $n\in\N$. Hence $|f(n)|=1$ for all but finitely many $n\in\N$, which implies that $f=1$ or $f=-1$.
\end{proof}

\begin{thm}
Let $(a_n)_{n\in\N}$ be a bounded sequence given by the formula $a_0 = h_1(0), a_n = f(n)a_{n-1} + h_1(n)h_2(n)^n, n>0$. Then one of the following conditions is true:
\begin{itemize}
\item $h_1 = 0$ (and then the sequence $(a_n)_{n\in\N}$ is constantly equal to 0),
\item $h_2 = b$, where $b\in\{-1,0,1\}$.
\end{itemize}
Moreover,
\begin{itemize}
\item if $h_2 = 1$ then there is such an integer $c$ that $a_n = c$ for almost all $n\in\N$ or $a_{2n}=c, a_{2n+1}=0$ for all $n\in\N$,
\item if $h_2 = -1$ then there is such an integer $c$ that $a_n = (-1)^nc$ for almost all $n\in\N$ or $a_{2n}=c, a_{2n+1}=0$ for all $n\in\N$,
\item if $h_2 = 0$ then $a_n = 0$ for almost all $n\in\N$, $a_n = h_1(0)$ for all $n\in\N$ or $a_n = (-1)^n h_1(0)$ for all $n\in\N$.
\end{itemize}
\end{thm}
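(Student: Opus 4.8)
The plan is to reduce the statement to the results already proved for sequences possessing a constant arithmetic subsequence---namely Theorems \ref{thm7} and \ref{thm8}, Corollary \ref{cor2} and Proposition \ref{prop7}---so that the only genuinely new input is the passage from boundedness to the existence of such a subsequence. This is exactly where the periodicity results of Section \ref{subsec3.1} enter.

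First I would show that a bounded ${\bf a}\in\cal{R}$ is eventually periodic. Put $M=\sup_{n\in\N}|a_n|<+\infty$ and fix a prime $p>2M$. For this $p$ the sequence $(a_n\pmod p)_{n\in\N}$ is eventually periodic: if $p\nmid f(n)$ for every integer $n$ this is Proposition \ref{prop5}, and otherwise $p\mid f(n_0)$ for some $n_0$, whence also $p\mid f(n_0+p)$ with $n_0+p\geq p=(p+1)v_p(p)-1$, so Proposition \ref{prop4} applies with $d=p$. (Alternatively one may argue directly: $f(n)\bmod p$ and $h_1(n)h_2(n)^n\bmod p$ are eventually periodic in $n$, hence the pair $(n\bmod p(p-1),\,a_n\bmod p)$ evolves under a fixed self-map of a finite set, and every orbit of such a map is eventually periodic.) Let $T$ be an eventual period of $(a_n\pmod p)_{n\in\N}$ and choose $N$ with $a_{n+T}\equiv a_n\pmod p$ for $n\geq N$. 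Since $p>2M$, distinct integers of $\{-M,\dots,M\}$ have distinct residues modulo $p$, so in fact $a_{n+T}=a_n$ for all $n\geq N$; that is, $(a_n)_{n\in\N}$ is eventually periodic with period $T$.

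Eventual periodicity immediately produces a constant arithmetic subsequence: fixing any $l\geq N$, the subsequence $(a_{Tn+l})_{n\in\N}$ is constant. I would then invoke Theorem \ref{thm7} with $k=T$ to conclude that $h_1=0$, or $h_2=b$ with $b\in\{-1,0,1\}$, or else $f,h_1,h_2$ are constant with $h_1=c$ and $h_2=-f=b\in\Z$. The last, degenerate case is eliminated by boundedness: there $a_{2n}=b^{2n}c$, which is unbounded unless $c=0$ (forcing $h_1=0$) or $|b|\leq 1$ (forcing $h_2=b\in\{-1,0,1\}$). This establishes the dichotomy $h_1=0$ or $h_2\in\{-1,0,1\}$.

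Finally I would read off the precise shape of the sequence in each case. If $h_1=0$ then $a_n=0$ for all $n$ directly from the recurrence. Otherwise $h_2\in\{-1,0,1\}$ and, still having the constant subsequence $(a_{Tn+l})_{n\in\N}$ at our disposal, the three refinements apply verbatim: Theorem \ref{thm8} for $h_2=1$, Corollary \ref{cor2} for $h_2=-1$, and Proposition \ref{prop7} for $h_2=0$, yielding exactly the three listed forms. The main obstacle is the first step---securing eventual periodicity uniformly in $f$; once the correct large prime is chosen and the two periodicity propositions are combined so as to cover both the case $p\nmid f$ and the case $p\mid f(n_0)$, the remainder is a bookkeeping assembly of the earlier results.
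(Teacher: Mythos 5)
Your proposal is correct and follows essentially the same route as the paper: choose a prime $p$ exceeding the spread of the bounded sequence, use the periodicity of $(a_n\bmod p)_{n\in\N}$ from Section \ref{subsec3.1} to lift to (eventual) periodicity of $(a_n)_{n\in\N}$ itself, extract a constant arithmetic subsequence, and then invoke Theorem \ref{thm7}, Theorem \ref{thm8}, Corollary \ref{cor2} and Proposition \ref{prop7}. If anything, you are slightly more careful than the paper in two spots — you specify which periodicity proposition covers which case of $p$ versus $f$, and you explicitly rule out the third alternative of Theorem \ref{thm7} (constant $f,h_1,h_2$ with $h_2=-f$) using boundedness — but the argument is the same.
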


\begin{proof}
By Theorems \ref{thm7} and \ref{thm8}, Corollary \ref{cor2} and Proposition \ref{prop7}, it suffices to show that there are such $k\in\N_+$ and $l\in\N$ that the sequence $(a_{kn+l})_{n\in\N}$ is constant.

Let $p$ be a prime number greater than $\max_{n\in\N} a_n - \min_{n\in\N} a_n$. Then the sequence of remainders $(a_n\pmod{p})_{n\in\N}$ is periodic (see Section \ref{subsec3.1}). Moreover, the values of this sequence and the value $\min_{n\in\N} a_n$ uniquely determine the values of the sequence $(a_n)_{n\in\N}$. Indeed, if $a_{n_1} \equiv a_{n_2} \pmod{p}$ then $a_{n_1} - \min_{n\in\N} a_n \equiv a_{n_2} - \min_{n\in\N} a_n \pmod{p}$ and since $a_{n_1} - \min_{n\in\N}, a_{n_2} - \min_{n\in\N} < p$, thus $a_{n_1} = a_{n_2}$. Therefore the sequence $(a_n)_{n\in\N}$ is periodic. This fact implies the existence of such $k\in\N_+$ and $l\in\N$ that the sequence $(a_{kn+l})_{n\in\N}$ is constant.
\end{proof}

\subsection{The polynomials arising in the recurrence relation for a sequence ${\bf a}\in\cal{R}$ and their real roots}\label{subsec3.3}

Let us consider a sequence ${\bf a}(f,h_1,1)\in\cal{R}'$. In Section \ref{subsubsec3.1.1} we defined polynomials
\begin{equation*}
f_d = \sum_{j=0}^{d-1} h_1(X-j) \prod_{i=0}^{j-1} f(X-i) \in\Z[X], d\in\N.
\end{equation*}

Using the closed formula for $a_n$ we can obtain the recurrence equations, which are generalizations of the recurrence definition of $a_n$.
\begin{equation}\label{eq13}
\begin{split}
a_n & = \sum_{j=0}^n h_1(j) \prod_{i=j+1}^n f(i) = a_{n-d}\prod_{i=n-d+1}^n f(i) + \sum_{j=n-d+1}^n h_1(j) \prod_{i=j+1}^n f(i) \\
& = a_{n-d}\prod_{i=n-d+1}^n f(i) + \sum_{j=0}^{d-1} h_1(n-j) \prod_{i=0}^{j-1} f(n-i) = a_{n-d}\prod_{i=n-d+1}^n f(i) + f_d(n)
\end{split}
\end{equation}
for $n\geq d$. Furthermore, we can obtain the recurrence equations for the polynomials $f_d$, $d\in\N$. For given $d_1, d_2 \in\N$, comparing the formulae for $f_{d_1}$, $f_{d_2}$ and $f_{d_1+d_2}$, we get
\footnotesize
\begin{equation}\label{eq14}
\begin{split}
f_{d_1+d_2} & = \sum_{j=0}^{d_1+d_2-1} h_1(X-j) \prod_{i=0}^{j-1} f(X-i) = \sum_{j=0}^{d_2-1} h_1(X-j) \prod_{i=0}^{j-1} f(X-i) + \\
& + \sum_{j=d_2}^{d_1+d_2-1} h_1(X-j) \prod_{i=0}^{j-1} f(X-i) = f_{d_2} + \prod_{i=0}^{d_2-1} f(X-i)\sum_{j=d_2}^{d_1+d_2-1} h_1(X-j) \prod_{i=d_2}^{j-1} f(X-i) \\
& = f_{d_2} + \prod_{i=0}^{d_2-1} f(X-i)\sum_{j=0}^{d_1-1} h_1(X-d_2-j) \prod_{i=0}^{j-1} f(X-d_2-i) = f_{d_1}(X-d_2)\prod_{i=0}^{d_2-1} f(X-i) + f_{d_2}.
\end{split}
\end{equation}
\normalsize

Similarity of (\ref{eq13}) and (\ref{eq14}) and the fact that $f_{d+1}(d) = a_d$ for $d\in\N$ allow us to say, that the polynomials $f_d$, $d\in\N$ are a generalization of the numbers $a_d$, $d\in\N$.

Analogous formulae can be obtained for a sequence ${\bf a}(f,h_1,-1)\in\cal{R}''$.
\begin{equation*}
a_n = a_{n-d}\prod_{i=n-d+1}^n f(i) + (-1)^n f_d(n), n\geq d,
\end{equation*}
\begin{equation*}
f_{d_1+d_2} = (-1)^{d_2}f_{d_1}(X-d_2)\prod_{i=0}^{d_2-1} f(X-i) + f_{d_2},
\end{equation*}
where $f_d = \sum_{j=0}^{d-1} (-1)^j h_1(X-j) \prod_{i=0}^{j-1} f(X-i), d\in\N$. Moreover, $f_{d+1}(d)=(-1)^d a_d$ for $d\in\N$.

Let us consider the sequence of derangements. Since this sequence is given by the recurrence $D_0 = 1, D_n = nD_{n-1} + (-1)^n, n>0$, hence
\begin{equation*}
f_d = \sum_{j=0}^{d-1} (-1)^j \prod_{i=0}^{j-1} (X-i)
\end{equation*}
for $d\in\N$. Define
\begin{equation*}
\widehat{f}_d = \frac{f_d}{X-1} = -1 + X\sum_{j=2}^{d-1} (-1)^j \prod_{i=2}^{j-1} (X-i), d>1.
\end{equation*}
Since $f_{d+1}(d) = (-1)^dD_d$ for $d\in\N$ we get $\widehat{f}_{d+1}(d) = (-1)^dE_d = (-1)^d\frac{D_d}{d-1}$, $d\geq 2$. It is easy to see from the definition of $\widehat{f}_d$ that
\begin{equation*}
\widehat{f}_d = -1 + X\sum_{j=0}^{d-3} (-1)^j \prod_{i=0}^{j-1} (X-2-i) = Xf_{d-2}(X-2)-1, d\geq 2.
\end{equation*}

If we substitute $n$ into the place of $X$ and $n+1$ into the place of $d$ in the equation above, we obtain the identity
\begin{equation*}
E_n = nD_{n-2}-1, n\geq 2.
\end{equation*}

Let us notice that
\begin{equation*}
\begin{split}
& f_{d-1}(X-2) - f_d(X-1) = \sum_{j=0}^{d-2} (-1)^j \prod_{i=0}^{j-1} (X-2-i) - \sum_{j=0}^{d-1} (-1)^j \prod_{i=0}^{j-1} (X-1-i) \\
= & -1 + \sum_{j=0}^{d-2} (-1)^j \left(\prod_{i=0}^{j-1} (X-2-i) + \prod_{i=0}^j (X-1-i)\right) = -1 + \sum_{j=0}^{d-2} (-1)^j X\prod_{i=0}^{j-1} (X-2-i) \\
= & -1 + X\sum_{j=0}^{d-2} (-1)^j \prod_{i=0}^{j-1} (X-2-i) = \widehat{f}_{d+1}(X), d\geq 1.
\end{split}
\end{equation*}

If we substitute $X=n$ and $d=n$ in the formula above and divide by $(-1)^n$ then we get a well-known identity
\begin{equation*}
E_n = D_{n-2} + D_{n-1}, n\geq 2.
\end{equation*}

%The formulae for the polynomials $f_d$ and $\widehat{f}_d$, which we obtained above, suggest to state that polynomials $f_d$, $d\in\N$ and $\widehat{f}_d$, $d\geq 2$ can be treated as a generalization of the numbers, respectively, $D_d$, $d\in\N$, and $E_d$, $d\geq 2$.

Now we will state and prove a theorem concerning real roots of polynomials $f_d$, $d\geq 3$, related to a sequence ${\bf a}={\bf a}(f,c,1)$, where $f\in\Z[X]$ and $c\in\Z\bs\{0\}$.

\begin{thm}\label{thm9}
Assume that ${\bf a}={\bf a}(f,c,1)$, where $f\in\Z[X]$ and $c\in\Z\bs\{0\}$. Let
\begin{equation*}
f_d = c\sum_{j=0}^{d-1} \prod_{i=0}^{j-1} f(X-i) \in\Z[X]
\end{equation*}
for $d\in\N$. Assume that $d\geq 3$, there is an integer $n_0$, which is the greatest real root of $f$ and $f$ as a function is decreasing on the set $[n_0+1,+\infty)\cap\Z$. Then $f_d$ has at least $d-2$ real roots. More precisely, $f_d$ has a root in the interval $(n_0+l,n_0+l+1)$, where $l\in\{2,3,...,d-2\}$, and
\begin{itemize}
\item if $f(n_0+1) < -1$ or $f'_d(n_0+1)\neq 0$ then $f_d$ has 2 real roots in the interval $(n_0,n_0+2)$; 
\item if $f(n_0+1) = -1$ then $f_d(n_0+1) = 0$.
\end{itemize}
In particular, if $\deg f = 1$ then $f_d$ factorizes into linear polynomials with real coefficients.
\end{thm}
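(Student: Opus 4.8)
The plan is to locate the roots of $f_d$ by computing the sign of $f_d$ at the consecutive integer points $n_0, n_0+1, \dots, n_0+(d-1)$ and applying the intermediate value theorem, after which the factorization claim for $\deg f = 1$ follows from a degree count. First I would pin down the behaviour of $f$ to the right of its greatest root. Since $n_0$ is the greatest real root, $f$ keeps a constant sign on $(n_0,+\infty)$; as $f$ is decreasing on the integers $\geq n_0+1$, the values $f(n_0+1) > f(n_0+2) > \cdots$ form a strictly decreasing sequence of integers of constant sign, which therefore cannot consist of positive numbers. Hence $f(n) < 0$ for every integer $n > n_0$, and writing $b_k := -f(n_0+k)$ I obtain strictly increasing positive integers with $b_k \geq k$; in particular $b_1 \geq 1$, with $b_1 = 1$ precisely when $f(n_0+1) = -1$.

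Next I would evaluate $f_d$ at $X = n_0+m$ for $0 \leq m \leq d-1$. Because $f(n_0) = 0$, the factor $f(n_0+m-i)$ vanishes exactly when $i = m$, so every summand with $j \geq m+1$ is zero and
\[
f_d(n_0+m) = c\sum_{j=0}^{m}\prod_{i=0}^{j-1} f(n_0+m-i) = c\,P_m, \quad P_m := \sum_{j=0}^{m}(-1)^j\, b_m b_{m-1}\cdots b_{m-j+1}.
\]
Factoring $b_m$ out of the terms with $j \geq 1$ gives the clean recursion $P_m = 1 - b_m P_{m-1}$ with $P_0 = 1$. From $P_0 = 1 > 0$, $P_1 = 1 - b_1 \leq 0$, and $P_2 = 1 - b_2 P_1 \geq 1 > 0$, a short induction using $b_m \geq m \geq 3$ shows $\mathrm{sign}(P_m) = (-1)^m$ and $|P_m| \geq 1$ for all $m \geq 2$.

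The roots then drop out of the sign changes. For $m \in \{2,\dots,d-1\}$ the values $f_d(n_0+m) = cP_m$ strictly alternate in sign, so $f_d$ has a root in each interval $(n_0+l,n_0+l+1)$ with $l \in \{2,\dots,d-2\}$, yielding $d-3$ roots. For the region $(n_0,n_0+2)$ I would split on $b_1$: if $b_1 \geq 2$ (i.e.\ $f(n_0+1) < -1$) the signs of $f_d$ at $n_0,n_0+1,n_0+2$ are $+,-,+$ relative to $c$, giving two more roots; if $b_1 = 1$ then $P_1 = 0$ forces $f_d(n_0+1) = 0$, and when additionally $f'_d(n_0+1) \neq 0$ a transversality argument (the simple zero at $n_0+1$ makes $f_d/c$ negative on one side, while it is positive at both $n_0$ and $n_0+2$) produces a second root in $(n_0,n_0+1)$ or $(n_0+1,n_0+2)$. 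This gives at least $d-2$ roots overall, the extremal value $d-2$ occurring only in the degenerate subcase $f(n_0+1) = -1$, $f'_d(n_0+1) = 0$. I expect the careful bookkeeping near $n_0$—justifying $f < 0$ on $(n_0,+\infty)$ and making the boundary case $f(n_0+1) = -1$ precise—to be the main obstacle.

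Finally, for $\deg f = 1$ the leading term of $f_d$ comes from $j = d-1$, so $\deg f_d = d-1$. The roots found above, counted with multiplicity, already total $d-1$: either $d-1$ distinct simple roots, or $d-3$ simple roots together with the root at $n_0+1$, which has multiplicity $\geq 2$ in the degenerate subcase where $f'_d(n_0+1) = 0$. Since non-real roots of a real polynomial come in conjugate pairs, having $d-1$ real roots counted with multiplicity forces every root of the degree-$(d-1)$ polynomial $f_d$ to be real, so $f_d$ splits into linear factors over $\mathbb{R}$. Closing this degree count in the degenerate subcase is exactly where the multiplicity bookkeeping from the previous step must be handled with care.
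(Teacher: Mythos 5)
Your proposal is correct and follows essentially the same route as the paper's proof: determine the sign of $f_d$ at the integer points $n_0, n_0+1,\dots,n_0+d-1$, apply the intermediate value theorem, treat the boundary case $f(n_0+1)=-1$ separately via the possible simple zero at $n_0+1$, and close the $\deg f=1$ case by a degree count. Your recursion $P_m = 1 - b_mP_{m-1}$ is a slightly tidier way of establishing the sign alternation than the paper's term-pairing estimate, but the argument is the same in substance.
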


\begin{proof}
We will compute the signs of the values $f_d(n_0), f_d(n_0+1), ..., f_d(n_0+d-1)$ and use intermediate value theorem to conclude the existence of real roots of $f_d$. We have the following equalities:
\begin{equation*}
\begin{split}
& \sgn (f_d(n_0)) = \sgn (c), \\
& \sgn (f_d(n_0+1)) = \sgn (c(1+f(n_0+1))) =
\begin{cases}
-\sgn (c), &\mbox{ when } f(n_0+1)<-1 \\
0, &\mbox{ when } f(n_0+1)=-1
\end{cases}.
\end{split}
\end{equation*}

Let us fix $l\in\{2,3,...,d-1\}$. Then
\begin{equation*}
\sgn (f_d(n_0+l)) = \sgn (c\sum_{j=0}^l \prod_{i=0}^{j-1} f(n_0+l-i)) = \sgn (c) \sgn (\prod_{i=0}^{l-1} f(n_0+l-i)) = (-1)^l \sgn (c)
\end{equation*}
because $|\prod_{i=0}^{l-1} f(n_0+l-i)|\geq |\prod_{i=0}^{l-2} f(n_0+l-i)|$ and $|\prod_{i=0}^{l-2s+1} f(n_0+l-i)| > |\prod_{i=0}^{l-2s} f(n_0+l-i)|$ for $s\in\{2,3,...,\lfloor\frac{l}{2}\rfloor\}$ (since $f$ is decreasing on the set $[n_0+1,+\infty)\cap\Z$) and $\sgn (\prod_{i=0}^{l-1} f(n_0+l-i)) = \sgn (\prod_{i=0}^{l-2s+1} f(n_0+l-i))$ for $s\in\{2,3,...,\lfloor\frac{l}{2}\rfloor\}$).

By intermediate value theorem there is a root of $f_d$ in each of the intervals of the form $(n_0+l,n_0+l+1)$, where $l\in\{2,3,...,d-2\}$. If $f(n_0+1)<-1$ then $\sgn (f_d(n_0+1)) = -\sgn (c)$. Since $\sgn (f_d(n_0)) = \sgn (f_d(n_0+2)) = \sgn (c)$ by intermediate value theorem there are roots $f_d$ in the intervals $(n_0,n_0+1)$ and $(n_0+1,n_0+2)$. If $f(n_0+1)=-1$ then $f_d(n_0+1) = 0$. If $f_d(n_0+1) = 0$ and $f'_d(n_0+1) \neq 0$ then $\sgn(f_d(x_0)) = -\sgn(c)$ for some $x_0\in (n_0,n_0+2)$. Hence there is a root of $f_d$ in the interval $(n_0,x_0)$, if $x_0<n_0+1$, or there is a root of $f_d$ in the interval $(x_0,n_0+2)$, if $x_0>n_0+1$.

If $\deg f = 1$ then $\deg f_d = d-1$. Hence, if $f(n_0+1) < -1$ or $f'_d(n_0+1)\neq 0$ then $f_d$ has $d-1$ distinct real roots. If $f(n_0+1) = -1$ and $f'_d(n_0+1) = 0$ then $f_d$ has $d-2$ distinct real roots, where $n_0-1$ is its double root. As a consequence of the reasoning presented above, the polynomial $f_d$ factorizes into linear factors over $\R$.
\end{proof}

We can use Theorem \ref{thm9} to obtain similar result for a sequence $(a_n)_{n\in\N}$ given by the formula $a_0 = c, a_n = f(n)a_{n-1} + (-1)^n c, n>0$ for some nonzero integer $c$.

\begin{cor}\label{cor3}
Assume that ${\bf a}={\bf a}(f,c,1)$, where $f\in\Z[X]$ and $c\in\Z\bs\{0\}$. Let
\begin{equation*}
f_d = c\sum_{j=0}^{d-1} (-1)^j\prod_{i=0}^{j-1} f(X-i) \in\Z[X]
\end{equation*}
for $d\in\N$. Assume that $d\geq 3$, there is an integer $n_0$, which is the greatest real root of $f$ and $f$ as a function is increasing on the set $[n_0+1,+\infty)\cap\Z$. Then $f_d$ has at least $d-2$ real roots. More precisely, $f_d$ has a root in the interval $(n_0,n_0+2)$ and in the interval $(n_0+l,n_0+l+1)$, where $l\in\{2,3,...,d-2\}$, and
\begin{itemize}
\item if $f(n_0+1) > 1$ or $f'_d(n_0+1)\neq 0$ then $f_d$ has 2 real roots in the interval $(n_0,n_0+2)$; 
\item if $f(n_0+1) = 1$ then $f_d(n_0+1) = 0$.
\end{itemize}
In particular, if $\deg f = 1$ then $f_d$ factorizes into linear factors with real coefficients.
\end{cor}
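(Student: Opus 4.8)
The plan is to deduce Corollary \ref{cor3} directly from Theorem \ref{thm9} by the substitution $f \mapsto -f$. The key algebraic observation is that distributing the sign $(-1)^j$ into the product converts the alternating sum of Corollary \ref{cor3} into the plain sum of Theorem \ref{thm9}:
\begin{equation*}
c\sum_{j=0}^{d-1} (-1)^j \prod_{i=0}^{j-1} f(X-i) = c\sum_{j=0}^{d-1} \prod_{i=0}^{j-1} \bigl(-f(X-i)\bigr).
\end{equation*}
Thus, writing $g = -f$, the polynomial $f_d$ appearing in Corollary \ref{cor3} coincides, as a polynomial, with $g_d = c\sum_{j=0}^{d-1}\prod_{i=0}^{j-1} g(X-i)$, which is precisely the object to which Theorem \ref{thm9} applies.

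Next I would verify that $g = -f$ satisfies every hypothesis of Theorem \ref{thm9}. Since $g$ and $f$ have the same real roots, $n_0$ is the greatest real root of $g$; and $g = -f$ is decreasing on $[n_0+1,+\infty)\cap\Z$ exactly because $f$ is increasing there. Hence Theorem \ref{thm9} already yields a root of $g_d = f_d$ in each interval $(n_0+l, n_0+l+1)$ for $l\in\{2,3,\dots,d-2\}$, which is the first assertion of the corollary. It then remains only to translate the two case distinctions. The threshold conditions transform as $g(n_0+1) < -1 \iff f(n_0+1) > 1$ and $g(n_0+1) = -1 \iff f(n_0+1) = 1$, while $g_d'(n_0+1) = f_d'(n_0+1)$ since $g_d$ and $f_d$ are literally the same polynomial. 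Therefore the bullet ``if $f(n_0+1) > 1$ or $f_d'(n_0+1)\neq 0$ then $f_d$ has two real roots in $(n_0,n_0+2)$'' and the bullet ``if $f(n_0+1) = 1$ then $f_d(n_0+1)=0$'' follow verbatim from the corresponding bullets of Theorem \ref{thm9}. Finally, because $\deg g = \deg f$, the factorization claim for $\deg f = 1$ carries over unchanged.

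I do not expect a genuine obstacle here: the entire content is the sign bookkeeping that lets the substitution $f\mapsto -f$ transport Theorem \ref{thm9} to the alternating setting. The only point requiring care is checking that each hypothesis and each threshold of Theorem \ref{thm9} is matched correctly under negation, in particular that ``decreasing'' becomes ``increasing'' and that the cutoffs at $-1$ become cutoffs at $+1$. If one preferred to avoid the reduction altogether, one could instead mimic the proof of Theorem \ref{thm9} directly: evaluate $f_d$ at the integers $n_0, n_0+1, \dots, n_0+d-1$ and apply the intermediate value theorem. Since $n_0$ is the greatest real root of $f$ and $f$ is increasing, $f$ is positive beyond $n_0$, and the top surviving summand of $f_d(n_0+l)$ is $c(-1)^l\prod_{i=0}^{l-1} f(n_0+l-i)$, which dominates the others by the same magnitude argument; this gives the sign pattern $\sgn(f_d(n_0+l)) = (-1)^l\sgn(c)$, exactly as in Theorem \ref{thm9}, and the conclusion follows identically.
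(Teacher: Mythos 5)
Your proposal is correct and is essentially the paper's own proof: the paper likewise reduces Corollary \ref{cor3} to Theorem \ref{thm9} by passing to the associated sequence, i.e.\ replacing $f$ by $-f$, which is exactly your identity $(-1)^j\prod_{i=0}^{j-1}f(X-i)=\prod_{i=0}^{j-1}(-f(X-i))$. Your sign bookkeeping for the hypotheses and the thresholds ($<-1$ becoming $>1$, decreasing becoming increasing) matches what the reduction requires.
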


\begin{proof}
Consider the associated sequence $(\widetilde{a}_n)_{n\in\N}$. This sequence is given by the formula $\widetilde{a}_0 = c, \widetilde{a}_n = -f(n)\widetilde{a}_{n-1} + c, n>0$. Then use Theorem \ref{thm9} substituting the sequence $(\widetilde{a}_n)_{n\in\N}$ in the place of $(a_n)_{n\in\N}$ and substituting $-f$ in the place of $f$.
\end{proof}

Let us consider the sequence of derangements. In the definition of derangements we have polynomial $f=X$. By Corollary \ref{cor3}, if $d\geq 4$ then the polynomial $f_d$ (of degree $d-1$) has exactly $d-1$ real roots and exactly one rational root 1 ($n_0$ from Corollary \ref{cor3} is equal to 0 for $f=X$). It suffices to compute $f'_d(1)$.
\begin{equation*}
f'_d(1) = \sum_{j=1}^{d-1} (-1)^j \sum_{s=0}^{j-1}\prod_{i=0, i\neq s}^{j-1} (1-i) = -1 + \sum_{j=2}^{d-1} (-1)^j \cdot (-1)^{j-2} (j-2)! = -1 + \sum_{j=0}^{d-3} j! > 0
\end{equation*}

Corollary \ref{cor3} shows us that all the roots of $f_d$ apart from 1 are noninteger. Since the leading coefficient of $f_d$ is $\pm 1$, thus by theorem on rational roots of a polynomial with integer coefficients, all the rational roots of $f_d$ must be integer. Hence all the roots of $f_d$ apart from 1 are irrational.

From the equation $\widehat{f}_d = \frac{f_d}{X-1}$ for $d\geq 3$ we see that all the complex roots of $\widehat{f}_d$ are real and if $d\geq 4$ then they are irrational. In spite of reduciblity of $\widehat{f}_d = \frac{f_d}{X-1}$, $d\geq 3$, into linear factors over $\R$, we do not know how $\widehat{f}_d = \frac{f_d}{X-1}$ factorizes over $\Q$. Numerical computations show that for each $d\leq 20$ the polynomial $\widehat{f}_d$ is irreducible over $\Q$. In the light of these results we can formulate the following question:

\begin{que}
Is the polynomial $\widehat{f}_d = -1 + X\sum_{j=0}^{d-3} (-1)^j \prod_{i=0}^{j-1} (X-2-i)$ irreducible over $\Q$ for each integer $d\geq 3$?
\end{que}

\subsection{Divisors of a sequence ${\bf a}\in\cal{R}$}\label{subsec3.4}

\subsubsection{\bf Prime divisors of a sequence ${\bf a}\in\cal{R}$}\label{subsubsec3.4.1}

In Section \ref{subsubsec3.1.3} we showed that $n-1\mid D_n$ for each nonnegative integer $n$. Moreover, we proved that there are infinitely many prime numbers $p$ such that $p\mid\frac{D_n}{n-1}$ for some integer $n>1$. Now we will give some conditions for infinitude of the set $\cal{P}_{\bf a}=\{p\in\bbb{P}:\exists_{n\in\N}\mbox{ } p\mid a_n\}$, where ${\bf a}=(a_n)_{n\in\N}\in\cal{R}$.

\begin{thm}
Let ${\bf a}={\bf a}(f,h_1,h_2)$ be an unbounded sequence. If $f$ has a nonnegative integer root then the set $\cal{P}_{\bf a}$ is infinite.
\end{thm}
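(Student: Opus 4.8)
The plan is to argue by contradiction, adapting the method used above to prove that $\cal{B}$ is infinite. Suppose $\cal{P}_{\bf a}=\{p_1,\dots,p_s\}$ is finite and put $d=p_1\cdots p_s$. Since $f$ has a root it is not the zero polynomial and is nonconstant, so $\deg f\geq 1$ and $|f(n)|\to+\infty$. Let $n_0\in\N$ be a root of $f$, so that $f(n_0)=0$ and, by the closed formula $a_n=\sum_{j=0}^n h_1(j)h_2(j)^j\prod_{i=j+1}^n f(i)$, every summand with $j<n_0$ vanishes for $n\geq n_0$; in particular $a_{n_0}=h_1(n_0)h_2(n_0)^{n_0}$ and the tail $(a_n)_{n\geq n_0}$ obeys the same recurrence restarted at $n_0$.

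First I would upgrade unboundedness to $|a_n|\to+\infty$. The asymptotic analysis of Section \ref{subsec3.2} gives $a_n\sim\xi\prod_{i=n_0+1}^n f(i)$ when $|f|$ dominates $|h_2|$ (and an analogous equivalence in the remaining regimes of (\ref{eq5,5})); since $\deg f\geq 1$ forces $|\prod_{i=n_0+1}^n f(i)|\to+\infty$, a bounded subsequence could occur only if the relevant leading constant vanishes, and one checks that this degenerate case makes the whole sequence bounded, contrary to hypothesis. Hence $|a_n|\to+\infty$.

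Next, because $f(n_0)=0$ we have $p\mid f(n_0)$ for every prime $p$, so the periodicity results of Section \ref{subsec3.1} apply to each $p_i$: the sequences $(a_n\pmod{p_i})_{n\in\N_{n_0}}$, and hence $(a_n\pmod d)_{n\in\N_{n_0}}$, are ultimately periodic, say with period $T$. (Eventual periodicity modulo $d$ is in any case automatic, since the pair $(n\bmod \Pi,\,a_n\bmod d)$ evolves under a fixed map on a finite set, where $\Pi$ is a common period of the coefficients $f(n),h_1(n)$ and of the input $h_1(n)h_2(n)^n$ modulo $d$.) On a residue class $n\equiv r\pmod T$ the value $a_n\bmod d$ is constant; if for some $r$ this constant is a unit modulo $d$, then $a_n$ is coprime to $d$ while all its prime factors lie in $\{p_1,\dots,p_s\}$, forcing $a_n=\pm 1$ on an infinite set of indices and contradicting $|a_n|\to+\infty$. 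Thus it suffices to produce one residue class on which $a_n$ is coprime to $d$.

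The hard part is exactly this last step: ruling out that the zero sets $\{n:\,p_i\mid a_n\}$ cover $\N_{n_0}$. I would first perform a descent to discard any prime $p_i$ dividing $a_n$ for all $n$: whenever $p_i\mid h_1$ as a polynomial one may replace ${\bf a}$ by $(a_n/p_i)\in\cal{R}$ (with $h_1$ replaced by $h_1/p_i$), which is still unbounded and still has the root $n_0$; after finitely many such steps each remaining zero set is a proper union of residue classes modulo $T$. To conclude that these proper classes cannot cover $\N_{n_0}$ I would compare sizes: from $|a_n|\sim|\xi|\,|\prod_{i=n_0+1}^n f(i)|$ one gets $\log|a_n|\asymp (\deg f)\,n\log n$, whence $\sum_{n\leq N}\log|a_n|\asymp N^2\log N$, while $\log|a_n|=\sum_{i=1}^s v_{p_i}(a_n)\log p_i$; bounding the densities $\delta_i(K)$ of $\{n:\,v_{p_i}(a_n)\geq K\}$ — for which the Hensel-type description of Theorems \ref{thm1} and \ref{thm5} localizes the indices of high $p_i$-adic valuation near the finitely many $p_i$-adic zeros of $f_{p_i,\infty}$ — and showing $\sum_i\delta_i(K)<1$ for large $K$ would yield a class avoiding every $p_i$. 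Producing this non-covering estimate, equivalently controlling $v_{p_i}(a_n)$ from above against the super-exponential growth of $|a_n|$, is the principal obstacle; the cleanest sufficient situation is $\gcd\big(h_1(n_0)h_2(n_0)^{n_0},d\big)=1$, in which the class of $n_0$ already works and no covering estimate is needed.
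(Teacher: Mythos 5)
Your proposal stalls exactly where you say it does, and the obstacle you are fighting is one the paper never encounters. You try to locate a residue class on which $a_n$ is a \emph{unit} modulo $d=p_1\cdots p_s$, and the covering/density estimate needed to guarantee such a class exists is left unproved (and would be delicate: controlling $v_{p_i}(a_n)$ from above uniformly is not something Theorems \ref{thm1} or \ref{thm5} hand you for free). The paper sidesteps this entirely: since finiteness of $\cal{P}_{\bf a}$ forces $a_n\neq 0$ for all $n$, one may take the class of $n_0$ itself, write $a_{n_0}=\pm p_1^{\alpha_1}\cdots p_s^{\alpha_s}$, and apply the periodicity of Proposition \ref{prop4} modulo the \emph{larger} modulus $p_1^{\alpha_1+2}\cdots p_s^{\alpha_s+2}$. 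Along the resulting arithmetic progression each $v_{p_i}(a_n)$ is pinned to exactly $\alpha_i$, so $|a_n|=|a_{n_0}|$ there, and a one-line sign argument (if the sign flipped, $2p_1^{\alpha_1}\cdots p_s^{\alpha_s}$ would be divisible by $p_1^{\alpha_1+2}\cdots p_s^{\alpha_s+2}$) shows $a_n=a_{n_0}$ identically on the progression. You almost see this — you note that the class of $n_0$ "already works" when $a_{n_0}$ is coprime to $d$ — but raising the modulus past the valuations of $a_{n_0}$ makes that special case the general case, and no descent on $h_1$ or counting of zero sets is needed.

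The second gap is your reliance on $|a_n|\to+\infty$. This is both unjustified as stated (the asymptotic $a_n\sim\xi\prod f(i)$ is only established for $h_2=\pm1$, and the degenerate possibility $\xi=0$ is waved away) and unnecessary: even granting it, a subsequence with $|a_n|=1$ would not by itself contradict unboundedness of the full sequence. The paper instead closes the argument with Theorem \ref{thm8}: a constant subsequence along an arithmetic progression forces the \emph{whole} sequence to be bounded (up to the explicitly listed exceptional shapes), which contradicts the hypothesis directly. That is the step that converts local information on one residue class into a global contradiction, and it is missing from your outline.
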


\begin{proof}
Note that there must be such prime number $p$ that $p\mid a_n$ for some $n\in\N$. Otherwise $|a_n| = 1$ for all $n\in\N$. Assume that there are only finitely many prime divisors of the numbers $a_n$, $n\in\N$. Let us denote these divisors by $p_1, p_2, ..., p_s$.

Let $n_0\in\N$ be a root of $f$. Then by Proposition \ref{prop4}, for each prime number $p$ and positive integer $k$ the sequence $(a_n\pmod{p^k})_{n\in\N_{n_0}}$ is periodic of period $p^k(p-1)$. Since there are only finitely prime divisors of numbers $a_n$, $n\in\N$ then $a_n\neq 0$ for all $n\in\N$. In particular, $a_{n_0}\neq 0$. Then $a_{n_0} = \pm p_1^{\alpha_1}p_2^{\alpha_2}\cdot ...\cdot p_s^{\alpha_s}$ for some $\alpha_1, \alpha_2, ..., \alpha_s \in\N$. Without loss of generality assume that $a_{n_0} = p_1^{\alpha_1}p_2^{\alpha_2}\cdot ...\cdot p_s^{\alpha_s}$. Then, by periodicity of the sequence $(a_n\pmod{p_1^{\alpha_1+2}p_2^{\alpha_2+2}\cdot ...\cdot p_s^{\alpha_s+2}})_{n\in\N_{n_0}}$ we get that
\begin{equation*}
a_{n_0+jp_1^{\alpha_1+2}(p_1-1)p_2^{\alpha_2+2}(p_2-1)\cdot ...\cdot p_s^{\alpha_s+2}(p_s-1)} \equiv p_1^{\alpha_1}p_2^{\alpha_2}\cdot ...\cdot p_s^{\alpha_s} \pmod{p_1^{\alpha_1+2}p_2^{\alpha_2+2}\cdot ...\cdot p_s^{\alpha_s+2}}
\end{equation*}
for each $j\in\N$. Then $|a_{n_0+jp_1^{\alpha_1+2}(p_1-1)p_2^{\alpha_2+2}(p_2-1)\cdot ...\cdot p_s^{\alpha_s+2}(p_s-1)}| = p_1^{\alpha_1}p_2^{\alpha_2}\cdot ...\cdot p_s^{\alpha_s}$. However, if $a_{n_0+jp_1^{\alpha_1+2}(p_1-1)p_2^{\alpha_2+2}(p_2-1)\cdot ...\cdot p_s^{\alpha_s+2}(p_s-1)} = -p_1^{\alpha_1}p_2^{\alpha_2}\cdot ...\cdot p_s^{\alpha_s}$ then there must be
\begin{equation*}
2p_1^{\alpha_1}p_2^{\alpha_2}\cdot ...\cdot p_s^{\alpha_s} \equiv 0 \pmod{p_1^{\alpha_1+2}p_2^{\alpha_2+2}\cdot ...\cdot p_s^{\alpha_s+2}},
\end{equation*}
which means that $p_1^2p_2^2\cdot ...\cdot p_s^2\mid 2$ - a contradiction. Hence
\begin{equation*}
a_{n_0+jp_1^{\alpha_1+2}(p_1-1)p_2^{\alpha_2+2}(p_2-1)\cdot ...\cdot p_s^{\alpha_s+2}(p_s-1)} = p_1^{\alpha_1}p_2^{\alpha_2}\cdot ...\cdot p_s^{\alpha_s}
\end{equation*}
for all $j\in\N$. Then by Theorem \ref{thm8} the sequence $(a_n)_{n\in\N}$ is bounded, which is a contradiction with the assumption of unboundedness of $(a_n)_{n\in\N}$.
\end{proof}

\begin{thm}
Let ${\bf a}$ be an unbounded sequence of the form ${\bf a}(f,h_1,1)$ or ${\bf a}(f,h_1,-1)$. If for each prime number $p$ there are such integers $n,m$ that $p\mid f(n)$ and $p\nmid h_1(m)$ then the set $\cal{P}_{\bf a}$ is infinite.
\end{thm}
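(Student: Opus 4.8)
The plan is to argue by contradiction, playing the super\-polynomial growth of $\prod_{n\le N}|a_n|$ against the fact that a \emph{finite} set of primes can only account for that growth through high powers, which periodicity forbids from occurring too often. First I would reduce to the case $h_2=1$: if ${\bf a}={\bf a}(f,h_1,-1)$, pass to the associated sequence $\widetilde{\bf a}=\widetilde{\bf a}(-f,h_1,1)$, for which $a_n=(-1)^n\widetilde a_n$, so $|a_n|=|\widetilde a_n|$ and $\cal P_{\bf a}=\cal P_{\widetilde{\bf a}}$, while unboundedness and the hypothesis on $f$ pass to $-f$. Two degenerate situations are disposed of immediately: if some $a_n=0$ then every prime divides that term; and $\deg f=0$ is impossible, since $p\mid f\equiv c$ for \emph{all} $p$ forces $c=0$, whence $a_n=\pm h_1(n)$ and the claim is the classical fact that a nonconstant integer polynomial has infinitely many prime divisors. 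So assume ${\bf a}={\bf a}(f,h_1,1)$ with $\deg f\ge 1$.

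Now suppose for contradiction that $\cal P_{\bf a}=\{p_1,\dots,p_s\}$ is finite, so every $a_n=\pm\prod_{i=1}^s p_i^{v_{p_i}(a_n)}$ and $\log|a_n|=\sum_{i=1}^s v_{p_i}(a_n)\log p_i$. For the lower bound I would invoke the asymptotic from Section \ref{subsec3.2}: generically $a_n\sim\xi\prod_{i=n_0+1}^n f(i)$ with $\xi\ne 0$, so that $\log|a_n|\sim\deg f\cdot n\log n$ and hence
\[
\sum_{n\le N}\log|a_n|\sim \tfrac12\,\deg f\cdot N^2\log N .
\]
(The borderline case $\xi=0$ leads, via the explicit tail formula for $a_n$, either to a bounded sequence or to a vanishing term, both already excluded, so it needs only a separate remark.)

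For the matching upper bound, fix $i$ and pick $n_i$ with $p_i\mid f(n_i)$, which the hypothesis supplies. By (\ref{eq3}) we have $a_n\equiv f_{p_i,k}(n)\pmod{p_i^{k}}$ whenever $n\ge n_i+(k-1)p_i$; taking $k=k(n):=\lfloor (n-n_i)/p_i\rfloor+1$ gives, for every index $n$ that is \emph{not} $p_i$-adically close to a zero of the limit $f_{p_i,\infty}=\lim_k f_{p_i,k}$, the bound $v_{p_i}(a_n)=v_{p_i}\!\bigl(f_{p_i,k(n)}(n)\bigr)<k(n)=O(n/p_i)$. Consequently
\[
\sum_{n\le N}v_{p_i}(a_n)=O\!\left(\tfrac{N^{2}}{p_i}\right)+E_i(N),
\]
where $E_i(N)$ collects the exceptional indices with $n\equiv z\pmod{p_i^{k(n)}}$ for some zero $z$ of $f_{p_i,\infty}$. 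Summing over the finitely many $i$ yields $\sum_{n\le N}\log|a_n|=O(N^{2})+\sum_i E_i(N)\log p_i$; since $N^{2}=o(N^{2}\log N)$, this contradicts the lower bound $\tfrac12\deg f\cdot N^{2}\log N$ as soon as the exceptional part is shown to be $o(N^{2}\log N)$, and the contradiction forces $\cal P_{\bf a}$ to be infinite.

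The second hypothesis, $p\nmid h_1(m)$ for some $m$, is exactly what makes the upper bound legitimate: it guarantees $a_m\not\equiv 0\pmod{p_i}$ for some $m$ (otherwise $p_i\mid a_n$ for all $n$ would force $p_i\mid h_1(n)$ for all $n$), so $f_{p_i,\infty}\not\equiv 0\pmod{p_i}$ and $p_i$ does not divide every term; without it the density of indices divisible by $p_i^{k}$ could be $1$ and the estimate would collapse. The hard part is controlling $E_i(N)$, i.e. the exceptional indices lying $p_i$-adically near a zero of $f_{p_i,\infty}$. My plan here is to use Hensel's lemma (Theorem \ref{thm1}) for the pseudo-polynomial decomposition $(f_{p_i,k},1)$: nonsingular zeros, those with $v_{p_i}(q_{p_i}(n_k))=0$, lift uniquely, so the relevant zero set of $f_{p_i,\infty}$ is finite, and an index $n\le N$ can be congruent to such a zero modulo $p_i^{k(n)}$ only for $O(\log N)$ values of $n$ (since $k(n)\approx n/p_i$ grows), each contributing at most $v_{p_i}(a_n)=O(n\log n)$; this makes $E_i(N)=O(N\log^{2}N)=o(N^{2}\log N)$. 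The genuine technical obstacle is ruling out, or absorbing, the singular zeros (where Hensel branches) and showing they cannot produce a positive density of indices that are near\-pure powers of $p_i$; this is where the bulk of the work lies.
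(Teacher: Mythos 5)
Your strategy is entirely different from the paper's and, as written, it does not close. Two gaps are fatal. First, the lower bound. You dismiss the case $\xi=0$ by asserting it forces a bounded sequence or a vanishing term; that is false. Take ${\bf a}(X,\,X^2-X-1,\,1)$: one checks $a_0=-1$ and $a_n=na_{n-1}+n^2-n-1=-(n+1)$ for all $n$, so the sequence is unbounded, satisfies both hypotheses of the theorem (every $p$ divides $f(p)=p$ and no $p$ divides $h_1(0)=-1$), yet $\xi=\lim a_n/n!=0$ and $\sum_{n\le N}\log|a_n|\sim N\log N$, not $\tfrac12\deg f\cdot N^2\log N$. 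In general $\xi=0$ only gives $a_n\sim -h_1(n+1)/f(n+1)$ up to smaller terms, i.e.\ polynomial growth, and your counting scheme collapses there; this case needs a genuinely separate argument, not a remark. Second, the upper bound. You concede that controlling the exceptional set $E_i(N)$ "is where the bulk of the work lies" --- but that is the entire content of the proof. The trivial bound $v_{p_i}(a_n)\le\log|a_n|/\log p_i$ applied on a single residue class modulo $p_i$ already contributes $\sim\frac{1}{p_i}\sum_{n\le N}\log|a_n|$, and since $\tfrac12+\tfrac13+\tfrac15+\tfrac17>1$ the finitely many primes could in principle absorb all of $\sum\log|a_n|$ unless you actually prove that $a_n$ is not a near-pure power of $p_i$ on a positive density of indices. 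Your Hensel argument does not do this: $f_{p,\infty}$ is only a $p$-adic contraction, not a polynomial, so its zero set need not be finite, and at singular zeros (where $v_p(q_p)>0$) the unique-lifting analysis you rely on is unavailable. What remains is a plausible program, not a proof.

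For contrast, the paper's argument is short and entirely elementary, and you may want to internalize it: reduce to $h_2=1$ via the associated sequence; for each of the putative finitely many prime divisors $p_i$ pick $n_i$ with $p_i\mid f(n_i)$, so that $(a_n\bmod p_i^2)$ is eventually periodic with period $p_i^2$ (Section \ref{subsubsec3.1.1}); use $p_i\nmid h_1(m_i)$ and the recurrence to find $\alpha_i$ with $p_i\nmid a_{\alpha_i}$; glue by the Chinese remainder theorem to get $\alpha_0$ with $a_{\alpha_0}=\pm1$, whence $a_{\alpha_0+jp_1^2\cdots p_s^2}\equiv\pm1\pmod{p_1^2\cdots p_s^2}$ forces this subsequence to be constantly $\pm 1$; then Theorem \ref{thm8} (a constant subsequence along an arithmetic progression forces boundedness) contradicts unboundedness. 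Note that this route handles the $\xi=0$ example above with no extra effort, precisely because it never uses growth rates, only periodicity and the structure theorem for sequences in $\cal R$ with a constant arithmetic subsequence.
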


\begin{proof}
Note that there must be such prime number $p$ that $p\mid a_n$ for some $n\in\N$. Otherwise $|a_n| = 1$ for all $n\in\N$. Assume that there are only finitely many prime divisors of the numbers $a_n$, $n\in\N$. Let us denote these divisors by $p_1, p_2, ..., p_s$.

Because each sequence defined by the equations $a_0 = h_1(0), a_n = f(n)a_{n-1}+(-1)^n h_1(n), n>0$ has the associated sequence given by the formula $\widetilde{a}_0 = h_1(0), \widetilde{a}_n = -f(n)\widetilde{a}_{n-1}+h_1(n), n>0$ and $\widetilde{a}_n = (-1)^na_n$, $n\in\N$, thus it suffices to prove the statement for a sequence of the form ${\bf a}(f,h_1,1)$.

For each $i\in\{1,2,...,s\}$ there is such $n_i\in\{0,1,2,...,p_i-1\}$ that $p_i\mid f(n_i)$. Hence the result from Section \ref{subsubsec3.1.1} shows us that the sequence $(a_n\pmod{p_i^2})_{n\geq n_i+p_i}$ is periodic of period $p_i^2$. Let $m_i>n_i+p_i$ be such an integer that $p_i\nmid h_1(m_i)$. The formula $a_{m_i} = f(m_i)a_{m_i-1} + h_1(m_i)$ implies that $p_i\nmid a_{m_i-1}$ or $p_i\nmid a_{m_i}$.

Let $\alpha_i\in\{m_i-1,m_i\}$ be such that $p_i\nmid a_{\alpha_i}$. By Chinese remainder theorem there exists such an integer $\alpha_0\geq\max\{n_1+p_1,n_2+p_2,...,n_s+p_s\}$ that $\alpha_0 \equiv \alpha_i \pmod{p_i^2}$ for each $i\in\{1,2,...,s\}$. Then for all $j\in\N$ we have
\begin{equation*}
a_{\alpha_0 + jp_1^2p_2^2\cdot ...\cdot p_s^2} \equiv a_{\alpha_0} \pmod{p_1^2p_2^2\cdot ...\cdot p_s^2}.
\end{equation*}
Moreover, $a_{\alpha_0}\equiv a_{\alpha_i}\pmod{p_i^2}$ for $i\in\{1,2,...,s\}$. Hence $p_i\nmid a_{\alpha_0}$ for any $i\in\{1,2,...,s\}$ and this means that $a_{\alpha_0}=\pm 1$. Without loss of generality assume that $a_{\alpha_0}=1$. Then for all $j\in\N$ there must be $a_{\alpha_0 + jp_1^2p_2^2\cdot ...\cdot p_s^2} = 1$. Indeed, $a_{\alpha_0 + jp_1^2p_2^2\cdot ...\cdot p_s^2}$ has no prime divisors, so $a_{\alpha_0 + jp_1^2p_2^2\cdot ...\cdot p_s^2}=\pm 1$. Suppose that $a_{\alpha_0 + jp_1^2p_2^2\cdot ...\cdot p_s^2} = -1$ for some $j\in\N$. Then $-1 \equiv 1 \pmod{p_1^2p_2^2\cdot ...\cdot p_s^2}$, which means that $p_1^2p_2^2\cdot ...\cdot p_s^2\mid 2$ and this is a contradiction. Thus $a_{\alpha_0 + jp_1^2p_2^2\cdot ...\cdot p_s^2} = 1$ for any $j\in\N$ and by Theorem \ref{thm8} the sequence $(a_n)_{n\in\N}$ is bounded, which stays in a contradiction with the assumption of unboundedness of $(a_n)_{n\in\N}$. Hence the set $\cal{P}_{\bf a}$ is infinite.
\end{proof}

\begin{prop}
If ${\bf a}={\bf a}(f,h_1,0)$ then the set $\cal{P}_{\bf a}$ is infinite if and only if $h_1(0)=0$ or $\deg f\neq 0$.
\end{prop}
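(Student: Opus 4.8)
The plan is to reduce everything to the explicit closed form of the sequence. Setting $h_2=0$ in the formula $a_n=\sum_{j=0}^n h_1(j)h_2(j)^j\prod_{i=j+1}^n f(i)$ and using the conventions $0^0=1$ and $0^j=0$ for $j>0$, only the $j=0$ summand survives, so $a_0=h_1(0)$ and $a_n=h_1(0)\prod_{i=1}^n f(i)$ for $n\geq 1$. With this description the prime divisors of the terms become transparent, and the whole statement splits into a case analysis according to whether $h_1(0)=0$, whether $f$ is the zero polynomial, and whether $f$ is a nonzero constant.

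For the implication ``$h_1(0)=0$ or $\deg f\neq 0$ $\Rightarrow$ $\cal P_{\bf a}$ infinite'' I would argue as follows. If $h_1(0)=0$, then $a_n=0$ for every $n$, and since every prime divides $0$ we get $\cal P_{\bf a}=\bbb P$. If $h_1(0)\neq 0$ but $f$ is the zero polynomial (so that $\deg f=-\infty\neq 0$), then $a_n=0$ for all $n\geq 1$ and again $\cal P_{\bf a}=\bbb P$. The substantial case is $\deg f\geq 1$: here I would invoke the classical Schur-type result that a nonconstant polynomial in $\Z[X]$ has infinitely many prime divisors among its values, i.e. that $\{p\in\bbb P: p\mid f(i)\text{ for some }i\}$ is infinite, and each such prime divides the corresponding partial product and hence some $a_{i}$.

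For the converse I would argue by contraposition: assume $h_1(0)\neq 0$ and $\deg f=0$, so $f=c$ for some $c\in\Z\bs\{0\}$. Then $a_n=h_1(0)c^n$, and the prime divisors of the terms all lie in the fixed finite set $\{p: p\mid h_1(0)\}\cup\{p: p\mid c\}$; hence $\cal P_{\bf a}$ is finite.

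The main obstacle will be the nonconstant case. The Schur-type theorem supplies primes dividing $f$ at \emph{integer} arguments, whereas the sequence only sees positive-integer arguments $i\geq 1$; I would bridge this gap by noting that $p\mid f(n)$ forces $p\mid f(n+kp)$ for every $k\in\Z$, so one can always shift to an argument $i_0\geq 1$, whence $p\mid\prod_{i=1}^{i_0}f(i)$ and therefore $p\in\cal P_{\bf a}$. If a self-contained argument is preferred over citing Schur's theorem, I would reprove it in the standard way: assuming only finitely many primes $p_1,\dots,p_s$ occur, reduce to the case $f(0)\neq 0$ and evaluate $f$ at $f(0)p_1\cdots p_s k$, obtaining values of the form $f(0)\left(1+p_1\cdots p_s\cdot(\text{integer})\right)$ whose second factor is coprime to every $p_i$ and exceeds $1$ for large $k$, thus producing a prime divisor outside $\{p_1,\dots,p_s\}$ — a contradiction.
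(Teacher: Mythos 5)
Your proposal is correct and follows essentially the same route as the paper: reduce to the closed form $a_n=h_1(0)\prod_{i=1}^n f(i)$, apply the Schur-type fact that a nonconstant polynomial has infinitely many prime divisors among its values for one direction, and observe that $a_n=h_1(0)c^n$ has only finitely many prime divisors for the converse. Your treatment is slightly more careful than the paper's (explicitly handling $f=0$ and the shift to positive arguments), but the argument is the same.
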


\begin{proof}
For all $n\in\N$ we have $a_n = h_1(0)\prod_{i=1}^n f(i)$. Thus if $\deg f\neq 0$ then there are infinitely many prime divisors of the numbers $f(n)$, $n\in\N$, and as a consequence, there are infinitely many prime divisors of the numbers $a_n$, $n\in\N$.

If $h_1(0)\neq 0$ and $f=b$, where $b\in\Z\bs\{0\}$ then $a_n = h_1(0)b^n$ for $n\in\N$ and all prime divisors of the numbers $a_n$, $n\in\N$, are divisors of $h_1(0)$ and $b$.
\end{proof}

\begin{prop}\label{prop9}
If ${\bf a}={\bf a}(b,c,d)$, where $b,c,d\in\Z$ and $bd\neq 0$ (i.e. $f=b$, $h_1=c$ and $h_2=d$) then each prime number $p$ divides $a_n$ for some $n\in\N$. Moreover, for any $k\in\N_+$ there is such $n_k\in\N$ that $p^k\mid a_{n_k}$.
\end{prop}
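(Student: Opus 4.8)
The plan is to work entirely from the closed form
\[
a_n = c\sum_{j=0}^n d^j b^{\,n-j},\qquad n\in\N,
\]
which is the specialization of $a_n=\sum_{j=0}^n h_1(j)h_2(j)^j\prod_{i=j+1}^n f(i)$ to $f=b$, $h_1=c$, $h_2=d$. Fix a prime $p$ and a positive integer $k$, and aim to produce an index $n_k$ with $p^k\mid a_{n_k}$; this settles both assertions at once, since the first is the case $k=1$ and the \emph{moreover} part is the family $\{n_k\}_{k\ge 1}$. First I would dispose of the degenerate reductions ($c=0$ makes the sequence identically zero), so that the heart of the matter is to control $v_p$ of $S_n:=\sum_{j=0}^n d^j b^{\,n-j}$, after which $v_p(a_n)=v_p(c)+v_p(S_n)$.

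The natural split is by how $p$ meets $b$ and $d$. If $p\mid b$ and $p\mid d$, then every summand satisfies $v_p\!\left(d^j b^{\,n-j}\right)=jv_p(d)+(n-j)v_p(b)\ge n$, so $v_p(S_n)\ge n$ and any $n\ge k$ works. If $p\nmid b$ and $p\nmid d$, I would factor out $b^n$ and set $t=db^{-1}\in\Z_p\setminus p\Z_p$, so that $v_p(S_n)=v_p\!\left(\sum_{j=0}^n t^{\,j}\right)$. When $t\not\equiv 1\pmod p$ the geometric sum equals $\frac{t^{\,n+1}-1}{t-1}$ with $p\nmid t-1$, whence $v_p(S_n)=v_p(t^{\,n+1}-1)$, and taking $n+1=\operatorname{ord}_{p^k}(t)$ forces $p^k\mid S_n$; when $t\equiv 1\pmod p$ I would instead use $v_p(t^{\,n+1}-1)=v_p(t-1)+v_p(n+1)$ for odd $p$ (with $p=2$ handled separately by the order computations of the Example treating $\frac{b^{\,n+1}-1}{b-1}$ in Section~\ref{subsec3.1}), taking $n+1=p^{k}$. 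This reuses exactly the order-of-an-element analysis already carried out in Section~\ref{subsec3.1}.

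The delicate case, and the one I expect to be the main obstacle, is the mixed one where $p$ divides exactly one of $b,d$, say $p\mid b$ and $p\nmid d$ (the other possibility is symmetric after reversing the roles of $b$ and $d$ in $S_n$). Here only the top term survives modulo $p$, giving $S_n\equiv d^n\not\equiv 0\pmod p$, so a direct valuation bound on $S_n$ is \emph{not} available and the factor $c$ must be brought in: $p\mid a_n$ forces $p\mid c$, so in this regime the required power $p^k$ can only be supplied by $v_p(c)$ together with the truncation of $S_n$. I would make this precise by isolating, for $n\gg 0$, the top block
\[
a_n\equiv c\sum_{i=0}^{m-1} d^{\,n-i} b^{\,i}\pmod{p^{m}},
\]
and analysing $v_p$ of this finite sum; this is the step that needs the most care, and it is exactly where the hypotheses on $c$ (and the comparison of $v_p(c)$ with $v_p(b)$) have to be used.

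Finally, once each prime power is realised the two claims follow as noted above. As a cross-check on the explicit indices, I would also invoke the periodicity results of the paper: Proposition~\ref{prop4} applies because $f=b$ is constant, so every divisor of $b$ divides $f(n_0)$ for each $n_0$, and Proposition~\ref{prop5} applies whenever $p\nmid b$; in both situations $(a_n\bmod p^k)$ is eventually periodic, which gives an independent route to the existence of the $n_k$ and confirms the valuations computed from the closed form.
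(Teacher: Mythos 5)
Your overall route---reducing to the closed form $a_n=c\sum_{j=0}^n d^jb^{n-j}$ and analysing the geometric sum prime by prime---is essentially the paper's route: the paper writes $a_n=c\frac{b^{n+1}-d^{n+1}}{b-d}$ for $b\neq d$ (and $a_n=b^nc(n+1)$ for $b=d$) and invokes Euler's theorem to obtain $p^k\mid b^{p^{k-1}(p-1)}-d^{p^{k-1}(p-1)}$. Your treatment of the cases $p\mid\gcd(b,d)$ and $p\nmid bd$ is sound (modulo the small point that, after dividing by $b-d$, one should run the argument with $k+v_p(b-d)$ in place of $k$, which is a trivial adjustment).

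The genuine gap is the mixed case you flag yourself, where $p$ divides exactly one of $b,d$---and it cannot be filled, because there the statement is false. If, say, $p\mid b$ and $p\nmid d$, then $a_n\equiv cd^n\pmod p$, so $p\mid a_n$ for some $n$ forces $p\mid c$, exactly as you observed; but the proposition imposes no hypothesis on $c$. Concretely, for ${\bf a}(2,1,1)$ one has $a_n=2^{n+1}-1$, which is odd for every $n$, although $bd=2\neq 0$; so $p=2$ divides no term. Your instinct that ``the hypotheses on $c$ have to be used'' was therefore correct, except that no such hypotheses exist: the proposition needs an extra assumption (e.g.\ that each prime divides both of $b,d$ or neither, or that $p\mid c$ in the mixed case). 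Note that the paper's own proof breaks at precisely the same point: the asserted divisibility $p^k\mid b^{p^{k-1}(p-1)}-d^{p^{k-1}(p-1)}$ holds when $p\nmid bd$ (Euler) or when $p\mid\gcd(b,d)$, but fails when $p$ divides exactly one of $b,d$, since then $b^{m}-d^{m}$ is congruent to $\pm d^{m}$ or $\pm b^{m}$ modulo $p$ and hence is a $p$-adic unit. So your proposal is incomplete as a proof of the stated result, but you correctly located the real obstruction rather than overlooking it.
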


\begin{proof}
For all $n\in\N$ we have
\begin{equation*}
a_n = c\sum_{j=0}^n b^jd^{n-j} =
\begin{cases}
c\frac{b^{n+1}-d^{n+1}}{b-d}, &\mbox{ if } b\neq d \\
b^nc(n+1), &\mbox{ if } b = d
\end{cases}.
\end{equation*}

If $b=d$ then the statement is certainly true. If $b\neq d$ then by Euler's theorem, for any prime number $p$ and positive integer $k$ we have the divisibility $p^k\mid b^{p^{k-1}(p-1)}-d^{p^{k-1}(p-1)}$. Hence for any prime number $p$ and positive integer $k$ there exists such $n_k\in\N$ that $p^k\mid a_{n_k}$.
\end{proof}

\begin{thm}
If a sequence ${\bf a}$ is of the form ${\bf a}(b_1X+b_0,c,1)$ or ${\bf a}(b_1X+b_0,c,-1)$, where $b_0,b_1,c\in\Z$ and $b_1\neq 0$, then the set $\cal{P}_{\bf a}$ is infinite.
\end{thm}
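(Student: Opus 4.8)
The plan is to reduce to the case $h_2=1$ and then, arguing by contradiction, to produce a constant arithmetic subprogression of $(a_n)_{n\in\N}$, which Theorem \ref{thm8} will turn into a global rigidity statement incompatible with $b_1\neq 0$. First I would invoke the associated sequence: since $\widetilde{a}_n=(-1)^na_n$ and $\mathbf a(b_1X+b_0,c,-1)$ has the same prime divisors as $\widetilde{\mathbf a}=\mathbf a(-b_1X-b_0,c,1)$, it suffices to treat $\mathbf a=\mathbf a(b_1X+b_0,c,1)$, i.e. $a_0=c$, $a_n=(b_1n+b_0)a_{n-1}+c$. Two degenerate situations are disposed of immediately: if $c=0$ then $a_n\equiv 0$ and every prime lies in $\cal P_{\mathbf a}$; and if some $a_n=0$ then again $\cal P_{\mathbf a}=\bbb P$. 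So I may assume $c\neq 0$ and $a_n\neq 0$ for all $n$.

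Now suppose for contradiction that $\cal P_{\mathbf a}=\{p_1,\dots,p_s\}$ is finite. If $s=0$ then $|a_n|=1$ for all $n$, which is impossible because $|a_n|\ge |f(n)|-|c|\to\infty$ as $\deg f=1$; hence $s\geq 1$. For each $p_i$ the sequence $(a_n\bmod p_i^{e})_n$ is eventually periodic: this follows from Section \ref{subsubsec3.1.1} (equivalently Proposition \ref{prop4}) when $p_i\mid f(n_0)$ for some $n_0$, and from Proposition \ref{prop5} when $p_i\nmid f(n)$ for every $n$ (the residual case $p_i\mid b_1$, $p_i\nmid b_0$). By Remark \ref{rem1} the sequence $(a_n\bmod M)_n$ is eventually periodic for $M=\prod_{i=1}^s p_i^{e_i}$ with some period $T$, for any choice of exponents $e_i$. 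The key idea is to tailor the modulus to a single index rather than to hunt for a common unit value: I fix one index $n_0$ in the periodic range, set $e_i=v_{p_i}(a_{n_0})+2$, and $M=\prod_i p_i^{e_i}=|a_{n_0}|\prod_i p_i^2$. Along the progression $n_0+jT$ we have $a_{n_0+jT}\equiv a_{n_0}\pmod M$, and since $v_{p_i}(a_{n_0})<e_i$ this forces $v_{p_i}(a_{n_0+jT})=v_{p_i}(a_{n_0})$ for every $i$; as no prime outside $\{p_1,\dots,p_s\}$ can divide a term, $|a_{n_0+jT}|=|a_{n_0}|$ for all $j$. Because $M\ge 4|a_{n_0}|>2|a_{n_0}|$, the same congruence also pins the sign, so $a_{n_0+jT}=a_{n_0}$ for all $j$; that is, $(a_{Tn+n_0})_{n\in\N}$ is constant.

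Finally I would feed this into Theorem \ref{thm8}: the sequence $(a_n)_{n\in\N}$ must then be ultimately constant or of the form $(c',0,c',0,\dots)$. The latter contains a zero term, contradicting $a_n\neq 0$; and if $a_n=a^{\ast}$ for all large $n$, then $a^{\ast}=(b_1n+b_0)a^{\ast}+c$ for infinitely many $n$ forces $b_1a^{\ast}=0$, hence $a^{\ast}=0$ and then $c=0$, a contradiction. This rules out finiteness of $\cal P_{\mathbf a}$. The main obstacle, and the reason the earlier theorem of this subsection does not apply verbatim, is that the primes dividing $c$ divide every single term and so cannot be ``avoided'' by the unit-value trick used there; the device of choosing the exponents $e_i$ from the factorization of one fixed $a_{n_0}$ (rather than trying to realize the minimal valuation at all $p_i$ simultaneously) is exactly what circumvents this, yielding constant absolute value along the progression at once. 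The only points needing care are verifying eventual periodicity of $(a_n\bmod p^e)$ in the residual case $p\mid b_1$, $p\nmid b_0$ via Proposition \ref{prop5}, and the elementary valuation bookkeeping in the displayed step.
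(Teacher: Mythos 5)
Your overall architecture (reduce to $h_2=1$, manufacture a constant subsequence $(a_{kn+l})_{n\in\N}$, invoke Theorem \ref{thm8}, contradict the linear growth forced by $b_1\neq 0$) matches the paper's, and your endgame is fine. The problem is the central step. You ``fix one index $n_0$ in the periodic range'' and then define the modulus $M=\prod_i p_i^{v_{p_i}(a_{n_0})+2}$ --- but the periodic range is a range for a \emph{specific} modulus, and your modulus is defined in terms of $a_{n_0}$. This is circular, and it is not harmless: for a prime $p\nmid b_1$ the congruence (\ref{eq2}) (equivalently Proposition \ref{prop4}) only gives periodicity of $(a_n\bmod p^k)$ from an index of size roughly $p^k$ onward, while $|a_n|$ grows factorially, so $v_{p_i}(a_n)$ at the first admissible index can exceed the exponent $e_i$ you wanted to use. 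The obvious repair --- iterate, enlarging the $e_i$ and hence the starting index --- need not stabilize, because $v_{p_i}(a_n)$ can be unbounded along residue classes (the paper exhibits exactly this phenomenon for $\frac{D_n}{n-1}$ at $p=429943$). Closing the gap requires an extra argument that some arithmetic progression carries \emph{simultaneously constant} $p_i$-adic valuations for all $i$, e.g.\ a $p$-adic compactness/continuity argument in the spirit of Section \ref{subsec2.2}; none of this is in your write-up, and it is not ``elementary valuation bookkeeping.'' A secondary, smaller gap: your periodicity dichotomy (``$p_i\mid f(n_0)$ for some $n_0$, else $p_i\nmid f(n)$ for all $n$'') misses the case $p_i\mid\gcd(b_1,b_0)$ with $v_{p_i}(b_1n+b_0)$ constant and positive, where neither Proposition \ref{prop4} (which needs $p_i^{e_i}\mid f(n_0)$) nor Proposition \ref{prop5} applies; the periodicity is still true but needs the argument of Section \ref{subsubsec3.1.1} redone for that case.

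It is worth noting that the obstruction you designed your trick to avoid is illusory. The paper first writes $a_n=c\widehat{a}_n$ with $\widehat{a}_0=1$, $\widehat{a}_n=(b_1n+b_0)\widehat{a}_{n-1}+1$, so it suffices to treat $c=1$; after this normalization the primes dividing every term disappear and one can aim for a progression on which $|a_n|=1$ outright. The price is the explicit residue-class analysis (splitting the alleged finite set of primes according to divisibility of $b_1$ and of $b_0-1$, with a separate mod-$4$ computation for $p=2$), which is exactly the work your valuation device was meant to replace --- but as it stands the device does not close.
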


\begin{proof}
Because each sequence defined by the equations $a_0 = c, a_n = (b_1n+b_0)a_{n-1}+(-1)^nc, n>0$ has the associated sequence given by the formula $\widetilde{a}_0 = c, \widetilde{a}_n = -(b_1n+b_0)\widetilde{a}_{n-1}+c, n>0$ and $\widetilde{a}_n = (-1)^na_n$, $n\in\N$, thus it suffices to prove the statement for a sequence defined by the formula $a_0 = c, a_n = (b_1n+b_0)a_{n-1}+c,\newline n>0$.

Similarly, if $a_0 = c, a_n = (b_1n+b_0)a_{n-1}+c, n>0$ then $a_n = c\widehat{a}_n$ for $n\in\N$, where $\widehat{a}_0 = 1, \widehat{a}_n = (b_1n+b_0)\widehat{a}_{n-1}+1, n>0$. Hence it suffices to prove the statement for a sequence defined by the formula $a_0 = 1, a_n = (b_1n+b_0)a_{n-1}+1, n>0$.

Assume that there are only finitely many prime divisors of numbers $a_n$, $n\in\N$. Let $p_1 < p_2 < ... < p_s$ be all prime divisors of numbers $a_n$, $n\in\N$, which do not divide $b_1$. Let $q_1 < q_2 < ... < q_t$ be all prime divisors of numbers $a_n$, $n\in\N$, which divide $b_1$ (and so do not divide $b_0$, because otherwise we would have $a_n \equiv 1 \pmod{q_i}$ for all $n\in\N$) and do not divide $b_0-1$. Let $r_1 < r_2 < ... < r_u$ be all prime divisors of numbers $a_n$, $n\in\N$, which divide $b_1$ and $b_0-1$.

For a given $i\in\{1,2,...,s\}$, since $p_i\nmid b_1$, thus there exists such $n_{p_i}\in\N$ that $p_i\mid b_1n_{p_i}+b_0$. Hence the sequence $(a_n\pmod{p_i})_{n\geq n_{p_i}}$ is periodic of period $p_i$ (see Section \ref{subsubsec3.1.1}). From the definition of $a_n$ we conclude that if $p_i\mid a_n$ then $p_i\nmid a_{n+1}$. Thus the number of solutions modulo $p_i$ of the congruence $a_n \equiv 0 \pmod{p_i}$ (where $n\geq n_{p_i}$) is less than or equal to $\left\lfloor\frac{p_i}{2}\right\rfloor$. For $p_i>2$ we have $1+\left\lfloor\frac{p_i}{2}\right\rfloor = \frac{p_i+1}{2}<p_i$. For $p_1=2$, by induction on $n$ we can prove that
\begin{itemize}
\item if $b_1 \equiv 1 \pmod{4}$ then
\begin{equation}\label{eq15}
a_n \equiv
\begin{cases}
1, &\mbox{ for } n\equiv -b_0, 2-b_0\pmod{4} \\
2, &\mbox{ for } n\equiv 1-b_0\pmod{4} \\
0, &\mbox{ for } n\equiv 3-b_0\pmod{4} \\
\end{cases}
\end{equation}
for $n\geq (-b_0\pmod{4})$;
\item if $b_1 \equiv -1 \pmod{4}$ then
\begin{equation}\label{eq16}
a_n \equiv
\begin{cases}
1, &\mbox{ for } n\equiv b_0, 2+b_0\pmod{4} \\
0, &\mbox{ for } n\equiv 1+b_0\pmod{4} \\
2, &\mbox{ for } n\equiv 3+b_0\pmod{4} \\
\end{cases}
\end{equation}
for $n\geq (b_0\pmod{4})$.
\end{itemize}

Let us observe that for a given $i\in\{1,2,...,t\}$, $a_n \equiv \sum_{j=0}^n b_0^j = \frac{b_0^{n+1}-1}{b_0-1} \pmod{q_i}$ (because \newline$q_i\nmid b_0-1$). Hence the sequence $(a_n\pmod{q_i})_{n\in\N}$ has the basic period equal to the order of $b_0$ in the multiplicative group $(\Z /q_i\Z)^*$ of nonzero remainders from division by $q_i$. Let us denote this order by $ord_i$. Then $q_i\mid a_n$ if and only if $ord_i\mid n+1$.

For a given $i\in\{1,2,...,u\}$, by induction on $n$ we get $a_n \equiv n+1 \pmod{r_i}$ for $n\in\N$.

For a given $i\in\{1,2,...,s\}$, if $p_i>2$ then we take such $\alpha_i\in\{0,1,...,p_i-2\}$ that $p_i\nmid a_{jp_i+\alpha_i}$ for any $j\in\N_+$. Since $\alpha_i\neq p_i-1$, thus for each $k\in\{1,2,...,t\}$, if $p_i\mid ord_k$ then $jp_i+\alpha_i \not\equiv -1\pmod{ord_k}$ for all $j\in\N_+$ and as a result, $q_k\nmid a_{jp_i+\alpha_i}$.

For a given $i\in\{1,2,...,u\}$ and for each $k\in\{1,2,...,t\}$, if $r_i\mid ord_k$ then $jr_i \not\equiv -1\pmod{ord_k}$ for all $j\in\N_+$ and as a result, $q_k\nmid a_{jr_i}$.

Let us consider two cases, if $p_1=2$ (it is sure that one of these cases holds):
\begin{itemize}
\item[a)] $2\nmid a_{2j}$ for any $j\in\N_+$; then we take $\alpha_1 = 0$ and for each $k\in\{1,2,...,t\}$, if $2\mid ord_k$ then $2j \not\equiv -1\pmod{ord_k}$ for all $j\in\N_+$ and as a result, $q_k\nmid a_{2j}$;
\item[b)] $2\mid a_{2j}$ for all $j\in\N_+$; then we take such $\alpha_1 \in\{0,2\}$ that $a_{4j+\alpha_1} \equiv 2 \pmod{4}$ for $j\in\N_+$, thus for each $k\in\{1,2,...,t\}$, if $2\mid ord_k$ then $4j+\alpha_1 \not\equiv -1\pmod{ord_k}$ for all $j\in\N_+$ and as a result, $q_k\nmid a_{4j+\alpha_1}$.
\end{itemize}

By Chinese remainder theorem there exists such $\alpha_0$ that $\alpha_0 \equiv \alpha_i \pmod{p_i}$ for all $i\in\{1,2,...,s\}$ (respectively $\alpha_0 \equiv \alpha_1 \pmod{4}$ and $\alpha_0 \equiv \alpha_i \pmod{p_i}$ for all $i\in\{2,3,...,s\}$, if b) holds), $\alpha_0 \equiv 0 \pmod{ord_i}$ for all $i\in\{1,2,...,s\}$ and $\alpha_0 \equiv 0 \pmod{r_i}$ for all $i\in\{1,2,...,u\}$. Then
\begin{equation*}
a_{jp_1\cdot ...\cdot p_s ord_1\cdot ...\cdot ord_t r_1\cdot ...\cdot r_u + \alpha_0} \equiv a_{\alpha_0} \pmod{p_1\cdot ...\cdot p_s q_1\cdot ...\cdot q_t r_1\cdot ...\cdot r_u}
\end{equation*}
for all $j\in\N_+$ and $|a_{\alpha_0}|=1$. Respectively, if b) holds then
\begin{equation*}
a_{j\cdot 4p_2\cdot ...\cdot p_s ord_1\cdot ...\cdot ord_t r_1\cdot ...\cdot r_u + \alpha_0} \equiv a_{\alpha_0} \pmod{4p_2\cdot ...\cdot p_s q_1\cdot ...\cdot q_t r_1\cdot ...\cdot r_u}.
\end{equation*}
for all $j\in\N_+$ and $|a_{\alpha_0}|=2$.

If 2 is not the only prime divisor of the numbers $a_n$, $n\in\N$, or 2 is not a prime divisor of a number $a_n$ for any $n\in\N$ then $a_{jp_1\cdot ...\cdot p_s ord_1\cdot ...\cdot ord_t r_1\cdot ...\cdot r_u + \alpha_0} = a_{\alpha_0}$ for all $j\in\N_+$. Indeed, if $a_{jp_1\cdot ...\cdot p_s ord_1\cdot ...\cdot ord_t r_1\cdot ...\cdot r_u + \alpha_0} = -a_{\alpha_0}$ for some $j\in\N_+$ then
\begin{equation*}
2a_{\alpha_0} \equiv 0\pmod{p_1\cdot ...\cdot p_s q_1\cdot ...\cdot q_t r_1\cdot ...\cdot r_u},
\end{equation*}
which with the fact $|a_{\alpha_0}|=1$ implies that $p_1\cdot ...\cdot p_s q_1\cdot ...\cdot q_t r_1\cdot ...\cdot r_u\mid 2$ - a contradiction. Similarly, when the case b) takes place, if $a_{j\cdot 4p_2\cdot ...\cdot p_s ord_1\cdot ...\cdot ord_t r_1\cdot ...\cdot r_u + \alpha_0} = -a_{\alpha_0}$ for some $j\in\N_+$ then
\begin{equation*}
2a_{\alpha_0} \equiv 0\pmod{4p_2\cdot ...\cdot p_s q_1\cdot ...\cdot q_t r_1\cdot ...\cdot r_u},
\end{equation*}
which with the fact $|a_{\alpha_0}|=2$ implies that $4p_2\cdot ...\cdot p_s q_1\cdot ...\cdot q_t r_1\cdot ...\cdot r_u\mid 4$ - a contradiction. Finally, we use Theorem \ref{thm8} to conclude that the sequence $(a_n)_{n\in\N}$ is bounded.

Assume now that 2 is the only prime divisor of the numbers $a_n$, $n\in\N$. Then there are two possible situations:
\begin{itemize}
\item $2\nmid b_1$; then by the equations (\ref{eq15}) and (\ref{eq16}) we know that there is such number $n_2\in\N$ that $a_{2j+n_2}\equiv 1\pmod{4}$ for all $j\in\N$ and because 2 is the only prime divisor of the numbers $a_n$, $n\in\N$, hence $a_{2j+n_2}=1$ for all $j\in\N$ and by Theorem \ref{thm8} the sequence $(a_n)_{n\in\N}$ is bounded;
\item $2\mid b_1$ and $2\nmid b_0$; then we can compute that
\begin{equation*}
\begin{split}
(a_n\pmod{4})_{n\in\N} & =
\begin{cases}
(1,2,3,0,1,2,3,0,...), &\mbox{ if } 4\mid b_1 \mbox{ and } b_0\equiv 1\pmod{4} \\
(1,0,1,0,1,0,1,0,...), &\mbox{ if } 4\mid b_1 \mbox{ and } b_0\equiv 3\pmod{4} \\
(1,0,1,0,1,0,1,0,...), &\mbox{ if } 4\nmid b_1 \mbox{ and } b_0\equiv 1\pmod{4} \\
(1,2,3,0,1,2,3,0,...), &\mbox{ if } 4\nmid b_1 \mbox{ and } b_0\equiv 3\pmod{4}
\end{cases} \\
& =\begin{cases}
(1,2,3,0,1,2,3,0,...), &\mbox{ if } 4\mid b_1+b_0-1 \\
(1,0,1,0,1,0,1,0,...), &\mbox{ if } 4\nmid b_1+b_0-1
\end{cases},
\end{split}
\end{equation*}
which with the fact that 2 is the only prime divisor of the numbers $a_n$, $n\in\N$, implies that $a_{4j}=1$ for all $j\in\N$ and by Theorem \ref{thm8} the sequence $(a_n)_{n\in\N}$ is bounded.
\end{itemize}

As a consequence of our reasoning, we obtain in all cases the sequence $(a_n)_{n\in\N}$ is bounded. On the other hand, if $a_n\neq 0$ then
\begin{equation*}
|a_{n+1}|=|(b_1(n+1)+b_0)a_n+1|\geq |(b_1(n+1)+b_0)|-1\geq |b_1|(n+1)-|b_0|-1.
\end{equation*}

If $a_n=0$ then $a_{n+1}=1$ and
\begin{equation*}
|a_{n+2}|=|(b_1(n+2)+b_0)a_{n+1}+1|\geq |(b_1(n+2)+b_0)|-1\geq |b_1|(n+2)-|b_0|-1.
\end{equation*}

This means that the sequence $(a_n)_{n\in\N}$ is unbounded - a contradiction. Hence there must be infinitely many prime divisors of the sequence $(a_n)_{n\in\N}$.
\end{proof}

The results on infinitude of the set of prime divisors of a sequence $(a_n)_{n\in\N}$ given by the formula $a_0 = h_1(0), a_n = f(n)a_{n-1} + h_1(n)h_2(n)^n, n>0$ suggest us to state the following:

\begin{con}
If a sequence ${\bf a}\in\cal{R}$ is unbounded and not of the form $(a_n)_{n\in\N} = (cb^n)_{n\in\N}$ for some $b,c\in\Z$ then the set $\cal{P}_{\bf a}$ is infinite.
\end{con}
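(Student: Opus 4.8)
The plan is to push the argument used in the special cases treated above (linear $f$, $f$ with a nonnegative integer root, and constant $f,h_1,h_2$) to its natural common form. In every one of those proofs the skeleton is identical: one assumes for contradiction that $\cal{P}_{\bf a}=\{p_1,\dots,p_s\}$ is finite, uses the periodicity of $(a_n\pmod{p_i^{k}})$ established in Section \ref{subsec3.1} (Propositions \ref{prop4}, \ref{prop5}, \ref{prop6}) to locate, for each $i$, a residue class modulo a suitable period on which $v_{p_i}(a_n)$ stays bounded, glues these classes together by the Chinese remainder theorem into one arithmetic progression $n=\alpha_0+jT$ on which $|a_n|$ equals a fixed value, and finally invokes Theorem \ref{thm8} (through Theorem \ref{thm7}) to conclude that $(a_n)$ is bounded, contradicting unboundedness unless the sequence degenerates to a geometric progression $(cb^n)$. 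So the first step I would take is to isolate this scheme as a single lemma and then verify its two ingredients for arbitrary $f,h_1,h_2$.

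Next I would dispose of the reductions. The associated-sequence identity $a_n=(-1)^n\widetilde a_n$ gives $\cal{P}_{\bf a}=\cal{P}_{\widetilde{\bf a}}$ and $v_p(a_n)=v_p(\widetilde a_n)$ for every prime $p$, so the case $h_2=-1$ collapses to the case $h_2=1$ (cf. Corollary \ref{cor2}); the case $h_2=0$ is already settled by the proposition on ${\bf a}(f,h_1,0)$, and there finiteness of $\cal{P}_{\bf a}$ occurs exactly for the excluded geometric sequences. Thus the genuinely open content is the range $\deg h_2\ge 1$, together with the remaining constant cases $|h_2|\ge 2$. In all of these the periodicity of $(a_n\pmod{p_i^{k}})$ holds with period of the form $t\,p_i^{k}(p_i-1)$, so the Chinese-remainder gluing step of the lemma goes through verbatim \emph{once the per-prime input is available}.

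The hard part will be exactly that per-prime input: producing, for each $p_i$, a residue class on which $v_{p_i}(a_n)$ is bounded. If $p_i\mid a_n$ for \emph{every} $n$ in a full period, then $p_i$ divides the whole sequence and one must factor out its precise power before continuing; but when $\deg h_2\ge 1$ the term $h_1(n)h_2(n)^n$ couples $v_{p_i}(a_n)$ to the exponential contribution $v_{p_i}(h_2(n)^n)$ in a way that the Hensel-type machinery of Section \ref{sec2} (Theorems \ref{thm4} and \ref{thm5}) controls only one residue class at a time, and does not by itself preclude the valuation from growing without bound on \emph{every} class. I would attempt to rule this out by showing that the unit part $a_n/p_i^{v_{p_i}(a_n)}$ cannot be eventually constant along all progressions unless $(a_n)=(cb^n)$, combining the asymptotic $a_n\sim\xi\prod_{i} f(i)$ (and its $h_2$-analogue) from Section \ref{subsec3.2} with a valuation count. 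Making this simultaneous control work for all of $p_1,\dots,p_s$ at once is the genuine obstruction, and is precisely why the statement is recorded here as a conjecture rather than a theorem.
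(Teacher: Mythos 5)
You have read the situation correctly: this statement is recorded in the paper as a conjecture, the paper offers no proof of it in full generality, and your sketch honestly stops at the same obstruction --- controlling $v_{p_i}(a_n)$ simultaneously on every residue class when $\deg h_2\ge 1$. So there is no broken step to report; the scheme you isolate (assume $\cal{P}_{\bf a}=\{p_1,\dots,p_s\}$ finite, use periodicity modulo $p_i^{k}$ from Propositions \ref{prop4}--\ref{prop6}, glue by the Chinese remainder theorem into one progression on which $|a_n|$ is fixed, then invoke Theorem \ref{thm8} to force boundedness) is exactly the skeleton of every special case the paper does settle.

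One correction, however: you place ``the remaining constant cases $|h_2|\ge 2$'' among the genuinely open content, and this is where the paper goes strictly further than your plan, by a route your sketch does not consider. Immediately after stating the conjecture, the paper disposes of every sequence ${\bf a}(f,h_1,c)$ with constant $h_2=c$ by eliminating $c^n$ between two consecutive instances of the recurrence, which yields the three-term relation
$h_1(n+1)a_{n+2}-(ch_1(n+2)+f(n+2)h_1(n+1))a_{n+1}+cf(n+1)h_1(n+2)a_n=0$
with polynomial coefficients; Luca's theorem \cite{Luc} on prime divisors of such holonomic sequences then produces at least $\gamma\log N$ prime divisors unless the sequence is binary recurrent for large $n$, and the binary-recurrent degenerations are checked by hand to reduce either to the excluded geometric progressions or to cases already covered (Proposition \ref{prop9}, and the lemma on nonconstant rational functions $P/Q$ having infinitely many prime divisors of their values). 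So the only genuinely open range is $\deg h_2\ge 1$, where the coupling of $v_p(a_n)$ to $v_p(h_2(n)^n)$ that you identify is indeed the obstruction --- and it is worth noting that the Luca route fails there for a concrete reason: eliminating $h_2(n)^n$ no longer produces a recurrence with polynomial coefficients, so neither your periodicity scheme nor the holonomic-sequence argument currently closes that case.
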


Using a result of Luca (see \cite{Luc}) we can establish the conjecture above for sequences of the form ${\bf a}(f,h_1,c)$, where $f,h_1\in\Z[X]$ and $c\in\Z$. Namely, Luca showed that if a sequence $(a_n)_{n\in\N}$ of rational numbers satisfy a recurrence of the type
\begin{equation}\label{eq25}
F(n)a_{n+2}+G(n)a_{n+1}+H(n)a_n=0
\end{equation}
for some $F,G,H\in\Z[X]$ not all zero and do not exist such $u,v,w\in\Z$ not all zero that
\begin{equation*}
ua_{n+2}+va_{n+1}+wa_n=0
\end{equation*}
for $n\gg 0$ (we call that $(a_n)_{n\in\N}$ is not binary recurrent for sufficiently large $n$) then there exists a constant $\gamma>0$ depending only on the sequence $(a_n)_{n\in\N}$ such that the product of numerators and denominators of all the nonzero numbers $a_n$ for $n\leq N$ has at least $\gamma\log N$ prime divisors as $N\gg 0$ (by $\log$ we mean the natural logarithm).

Let a sequence $(a_n)_{n\in\N}$ be given by the formula $a_0=h_1(0), a_n=f(n)a_{n-1}+c^nh_1(n), n>0$, where $f,h_1\in\Z[X]$ and $c\in\Z$. If $h_1=0$ then the sequence $(a_n)_{n\in\N}$ is constantly equal to $0$, so assume additionaly that $h_1\neq 0$. Write the recurrence formula for $n+1$ and $n+2$:
\begin{equation*}
\begin{split}
a_{n+1}= & f(n+1)a_n+c^{n+1}h_1(n+1) \\
a_{n+2}= & f(n+2)a_{n+1}+c^{n+2}h_1(n+2)
\end{split}
\end{equation*}
After multiplication the first equality by $ch_1(n+2)$ and the second one by $-h_1(n+1)$ and adding them, we obtain
\begin{equation*}
ch_1(n+2)a_{n+1}-h_1(n+1)a_{n+2}=ch_1(n+2)f(n+1)a_n-h_1(n+1)f(n+2)a_{n+1}
\end{equation*}
or equivalently
\begin{equation*}
h_1(n+1)a_{n+2}-(ch_1(n+2)+f(n+2)h_1(n+1))a_{n+1}+cf(n+1)h_1(n+2)a_n=0.
\end{equation*}
Hence the sequence $(a_n)_{n\in\N}$ satisfy a recurrence of the form (\ref{eq25}) and $h_1(X+1)\neq 0$. By the result of Luca we know that if $(a_n)_{n\in\N}$ is not binary recurrent for sufficiently large $n$ then there exists a constant $\gamma>0$ such that the product of all the nonzero numbers $a_n$ for $n\leq N$ has at least $\gamma\log N$ prime divisors as $N\gg 0$.

Let us establish when the sequence $(a_n)_{n\in\N}$ is binary recurrent for sufficiently large $n$. Then, for $n\gg 0$ we have two relations:
\begin{equation*}
\begin{split}
& h_1(n+1)a_{n+2}-(ch_1(n+2)+f(n+2)h_1(n+1))a_{n+1}+cf(n+1)h_1(n+2)a_n=0, \\
& ua_{n+2}+va_{n+1}+wa_n=0.
\end{split}
\end{equation*}
We multiply the first equation by $u$ and the second one by $-h_1(n+1)$ and then we add them to obtain
\begin{equation}
(cuf(n+1)h_1(n+2)-wh_1(n+1))a_n=(cuh_1(n+2)+uf(n+2)h_1(n+1)+vh_1(n+1))a_{n+1}.
\end{equation}

First, let us consider the case when $cuf(X+1)h_1(X+2)-wh_1(X+1)=0$ and $cuh_1(n+2)+uf(n+2)h_1(n+1)+vh_1(n+1)=0$. Then
\begin{equation*}
cuh_1(X+2)+uf(X+2)h_1(X+1)=-vh_1(X+1)
\end{equation*}
and
\begin{equation*}
cuf(X+1)h_1(X+2)=wh_1(X+1).
\end{equation*}
Since $h_1\neq 0$, thus
\begin{equation}\label{eq26}
\frac{cuf(X+1)h_1(X+2)}{h_1(X+1)}=w,
\end{equation}
but then
\begin{equation*}
w=\lim_{n\rightarrow +\infty}\frac{cuf(n+1)h_1(n+2)}{h_1(n+1)}=\lim_{n\rightarrow +\infty} cuf(n+1).
\end{equation*}
This means that $f=b$, where $b\in Z$. The equality (\ref{eq26}) takes the form $\frac{bcuh_1(X+2)}{h_1(X+1)}=w$ and this implies that $h_1=d$, where $d\in\Z$. If $bc=0$ then the sequence $(a_n)_{n\in\N}$ becomes a geometric progression and it has only finitely many prime divisors on condition that it has no zero terms. If $b,c\neq 0$ then by Proposition \ref{prop9} any prime number divides $a_n$ for some $n\in\N$.

Next, if exactly one of the polynomials $cuf(X+1)h_1(X+2)-wh_1(X+1)$, $cuh_1(n+2)+uf(n+2)h_1(n+1)+vh_1(n+1)$ is zero then the sequence $(a_n)_{n\in\N}$ is ultimately equal to zero.

Finally, assume that both the polynomials $cuf(X+1)h_1(X+2)-wh_1(X+1)$, $cuh_1(n+2)+uf(n+2)h_1(n+1)+vh_1(n+1)$ are nonzero. For simplicity of notation let us write them as $P(X)$ and $Q(X)$, respectively. If $\frac{P(X)}{Q(X)}=\alpha\in\Q$ then $a_{n+1}=\alpha a_n$, i. e. $(a_n)_{n\in\N}$ is a geometric progression for $n\gg 0$ and then it can have only finitely many prime divisors. If $\frac{P(X)}{Q(X)}$ is a nonconstant rational function then by the following lemma there are infinitely many prime numbers $p$ such that $p$ divides numerator or denominator of irreducible form of number $\frac{P(n)}{Q(n)}$ for some $n\in\N$. Therefore, since $a_{n+1}=\frac{P(n)}{Q(n)}a_n$ for $n\gg 0$, thus the sequence $(a_n)_{n\in\N}$ has infinitely many prime divisors.

\begin{lem*}
Let $P,Q\in\Z[X]\bs\{0\}$. If $\frac{P}{Q}$ is a nonconstant rational function then there are infinitely many prime numbers $p$ such that $v_p\left(\frac{P(n)}{Q(n)}\right)\neq 0$ for some $n\in\N$.
\end{lem*}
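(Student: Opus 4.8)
The plan is to reduce the statement to the classical theorem that a nonconstant polynomial with integer coefficients has infinitely many prime divisors among its values, and then to transport that information across the quotient $\frac{P}{Q}$ by means of a resultant estimate that controls the finitely many primes where numerator and denominator can interact.

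First I would pass to a coprime representation. Writing $\frac{P}{Q}$ in lowest terms over $\Q(X)$, there exist $P_1,Q_1\in\Z[X]$, coprime in $\Q[X]$, with $\frac{P}{Q}=\frac{P_1}{Q_1}$. As this rational function is nonconstant, $P_1$ and $Q_1$ are not both constant, and for every $n\in\N$ with $Q(n)Q_1(n)\neq 0$ the rational numbers $\frac{P(n)}{Q(n)}$ and $\frac{P_1(n)}{Q_1(n)}$ are equal, so that $v_p\!\left(\frac{P(n)}{Q(n)}\right)=v_p(P_1(n))-v_p(Q_1(n))$ for every prime $p$. Since $\gcd(P_1,Q_1)=1$ over $\Q$, there are $U,V\in\Z[X]$ and a nonzero integer $r$ with $UP_1+VQ_1=r$ (one may take $r$ to be the resultant of $P_1$ and $Q_1$); specializing $X=n$ shows that every common prime divisor of $P_1(n)$ and $Q_1(n)$ divides $r$. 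Hence for every prime $p\nmid r$ the divisibilities $p\mid P_1(n)$ and $p\mid Q_1(n)$ cannot occur simultaneously.

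Now, because $P_1$ and $Q_1$ are not both constant, at least one of them is nonconstant; say $\deg P_1\geq 1$, the case $\deg Q_1\geq 1$ being symmetric (it only reverses the sign of the resulting valuation). By the classical theorem of Schur the set of primes dividing at least one value $P_1(n)$, $n\in\N$, is infinite. I would either cite this fact or insert its short proof: after a shift reducing to $P_1(0)\neq 0$, if only the primes $p_1,\dots,p_k$ occurred then evaluating $P_1$ at suitable large multiples of $P_1(0)p_1\cdots p_k$ produces a value congruent to $P_1(0)$ modulo each $p_i$ yet of absolute value exceeding $|P_1(0)|$, forcing a new prime factor and a contradiction.

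To finish, I would take any prime $p\nmid r$ that divides some value $P_1(n)$; there are infinitely many such $p$, since only finitely many primes divide the fixed nonzero integer $r$. For such a $p$ the set $\{n\in\N:\,p\mid P_1(n)\}$ is a nonempty union of residue classes modulo $p$, hence infinite, so I may choose $n$ in it with $Q(n)Q_1(n)\neq 0$. Then $v_p(P_1(n))\geq 1$ while $v_p(Q_1(n))=0$, so $v_p\!\left(\frac{P(n)}{Q(n)}\right)=v_p(P_1(n))>0$, which is nonzero. This exhibits infinitely many primes with the desired property. The only genuine difficulty is the appeal to Schur's theorem; the remaining steps are routine, the single point requiring care being the removal of the finitely many primes dividing the resultant $r$, which guarantees that a nonzero numerator valuation is not cancelled by the denominator.
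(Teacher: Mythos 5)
Your proof is correct, but it takes a genuinely different route from the paper's. The paper argues by contradiction entirely within the given pair $(P,Q)$: assuming only the primes $p_1,\dots,p_s$ ever occur in the reduced form of $\frac{P(n)}{Q(n)}$, it fixes $n_0$ beyond all integer roots of $P$ and $Q$ and notes that along the progression $n\equiv n_0\pmod{p_1^{m_1+1}\cdots p_s^{m_s+1}}$ the $p_i$-adic valuations of $P(n)$ and $Q(n)$ are frozen at their values at $n_0$; consequently $\frac{P(n)}{Q(n)}$ equals the single rational number $p_1^{k_1-l_1}\cdots p_s^{k_s-l_s}$ for infinitely many $n$, forcing $P-cQ=0$ and hence $\frac{P}{Q}$ constant. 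You instead pass to a coprime pair $(P_1,Q_1)$, decouple numerator and denominator at all primes outside the resultant via the B\'ezout identity $UP_1+VQ_1=r$, and import Schur's theorem for the nonconstant factor. The paper's argument is self-contained (no reduction to lowest terms, no resultant, no Schur) and is symmetric in $P$ and $Q$; yours is more modular and gives slightly more information, namely that for all but finitely many of the primes produced the valuation of the quotient equals $v_p(P_1(n))>0$ (or $-v_p(Q_1(n))<0$), so one even controls its sign. One small caveat: in your sketch of Schur's theorem, congruence of $P_1(cmt)$ to $P_1(0)$ merely modulo each $p_i$ is not quite enough when some $p_i$ divides $P_1(0)$; you need the congruence modulo $P_1(0)\cdot p_1\cdots p_k$ (which evaluation at multiples of $P_1(0)p_1\cdots p_k$ does provide) so that $v_{p_i}(P_1(cmt))=v_{p_i}(P_1(0))$ and the size comparison forces a new prime. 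This is the standard classical argument and is immediately repaired, so it does not affect the validity of your proof.
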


\begin{proof}
Assume that $p_1,...,p_s$ are the only prime numbers occuring in irreducible forms of numbers $\frac{P(n)}{Q(n)}$, $n\in\N$ (if certainly $Q(n)\neq 0$). Let $n_0\in\N$ be such that all the integer roots of polynomials $P$ and $Q$ are less than $n_0$. For $i\in\{1,...,s\}$ we define $k_i=v_{p_i}(P(n_0))$, $l_i=v_{p_i}(Q(n_0))$ and $m_i=\max\{k_i,l_i\}$. Then for each $n\equiv n_0\pmod{p_1^{m_1+1}\cdot ...\cdot p_s^{m_s+1}}$ we have $P(n)=p_1^{k_1}\cdot ...\cdot p_s^{k_s}R(n)$ and $Q(n)=p_1^{l_1}\cdot ...\cdot p_s^{l_s}R(n)$, where $R(n)$ is coprime to $p_1\cdot ...\cdot p_s$. Therefore $\frac{P(n)}{Q(n)}=p_1^{k_1-l_1}\cdot ...\cdot p_s^{k_s-l_s}$ for any $n\equiv n_0\pmod{p_1^{m_1+1}\cdot ...\cdot p_s^{m_s+1}}$. Since the equation $P(n)=p_1^{k_1-l_1}\cdot ...\cdot p_s^{k_s-l_s}Q(n)$ holds for infinitely many $n\in\N$ it holds for all $n\in\N$ and the rational function $\frac{P}{Q}$ is constant. However, this is a contradiction with assumption of Lemma.
\end{proof}

\subsubsection{\bf Divisors of the form $n-b-1$}\label{subsubsec3.4.2}

Now we will consider sequences of type ${\bf a}(X-b,h_1,h_2)$, i. e. given by the recurrence
\begin{equation}\label{def2}
a_0=h_1(0), a_n = (n-b)a_{n-1} + h_1(n)h_2(n)^n, n>0,
\end{equation}
where $b\in\Z$ and $h_1,h_2\in\Z[X]$. We will try to give some conditions for division $a_n$ by $n-b-1$. Notice that for $b=0$, $h_1=1$ and $h_2=-1$ we obtain the sequence of derangements and then the divisibility $n-1\mid D_n$ holds for any $n\in\N$. Our goal is to generalize this property for the sequences defined by the relation (\ref{def2}).

If we use (\ref{def2}) twice then we obtain:
\begin{equation*}
\begin{split}
& a_n = (n-b)a_{n-1} + h_1(n)h_2(n)^n = \\
& = (n-b-1)a_{n-1} + (n-b-1)a_{n-2} + h_1(n-1)h_2(n-1)^{n-1} + h_1(n)h_2(n)^n, n\geq 2.
\end{split}
\end{equation*}
and, what is more, $a_1 = (1-b)h_1(0) + h_1(1)h_2(1)$.

Since polynomials are periodic modulo $n-b-1$, thus for $n\in\N_+$ there holds the equivalence
\begin{equation}\label{eq17}
n-b-1\mid a_n \iff n-b-1\mid h_1(b)h_2(b)^{n-1} + h_1(b+1)h_2(b+1)^n.
\end{equation}

In particular, we have the following:
\begin{prop}\label{prop10}
If $h_1(b) = h_1(b+1)h_2(b+1) = 0$ then $n-b-1\mid a_n$ for all positive integers $n$. If $h_2(b) = h_1(b+1)h_2(b+1) = 0$ then $n-b-1\mid a_n$ for all integers $n\geq 2$.
\end{prop}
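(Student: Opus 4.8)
The plan is to deduce both divisibilities directly from the equivalence (\ref{eq17}), which reduces the question $n-b-1\mid a_n$ to the purely arithmetic condition
\[
n-b-1 \mid h_1(b)h_2(b)^{n-1} + h_1(b+1)h_2(b+1)^n .
\]
In each case I would simply show that, under the stated hypotheses, the right-hand side is the integer $0$; since $n-b-1\mid 0$ trivially, the claim then follows for every $n$ in the asserted range. Thus no estimation or induction is needed: the whole argument is a substitution into (\ref{eq17}) together with bookkeeping about which powers are forced to vanish. The only point demanding any attention will be the behaviour of the powers at the smallest value of $n$, and this is exactly what explains the different ranges of $n$ in the two parts.

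For the first statement I would take $h_1(b)=0$ and $h_1(b+1)h_2(b+1)=0$. For any positive integer $n$ the first summand $h_1(b)h_2(b)^{n-1}$ vanishes because its factor $h_1(b)$ is zero (the power $h_2(b)^{n-1}$ is well defined as $n-1\geq 0$, and its value is irrelevant). The second summand factors as $h_1(b+1)h_2(b+1)\cdot h_2(b+1)^{n-1}$, whose factor $h_1(b+1)h_2(b+1)$ is zero, so it too vanishes for every $n\geq 1$. Hence the right-hand side of (\ref{eq17}) equals $0$ for all $n\in\N_+$, and (\ref{eq17}) gives $n-b-1\mid a_n$ for all positive integers $n$.

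For the second statement I would take $h_2(b)=0$ and $h_1(b+1)h_2(b+1)=0$. The second summand vanishes for every $n\geq 1$ exactly as before. The first summand is now $h_1(b)h_2(b)^{n-1}$, and here lies the one subtlety: by the convention $0^0=1$ adopted in Section \ref{sec1}, at $n=1$ we would have $h_2(b)^{0}=1$, so the summand reduces to $h_1(b)$, which need not be zero. For $n\geq 2$, however, the exponent satisfies $n-1\geq 1$, whence $h_2(b)^{n-1}=0$ and the first summand vanishes. Consequently the right-hand side of (\ref{eq17}) is $0$ for every $n\geq 2$, and (\ref{eq17}) yields $n-b-1\mid a_n$ for all integers $n\geq 2$. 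This convention at the boundary case is the only obstacle worth flagging, and it is precisely what forces the restriction to $n\geq 2$ in the second part while the first part holds for all $n\in\N_+$.
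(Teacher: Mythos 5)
Your proposal is correct and follows exactly the route the paper intends: Proposition \ref{prop10} is stated as an immediate consequence of the equivalence (\ref{eq17}), and your substitution argument, including the careful handling of the convention $0^0=1$ at $n=1$ that forces the restriction to $n\geq 2$ in the second case, is precisely the verification the paper leaves implicit.
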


Assume that $n\geq 2$ and $|n-b-1|$ is a prime number. Then, by (\ref{eq17}), $n-b-1\mid a_n$ if and only if one of the following holds:
\begin{itemize}
\item $n-b-1\mid h_1(b)h_2(b)$, $h_1(b+1)h_2(b+1)$;
\item $n-b-1\nmid h_1(b)h_2(b)$, $h_1(b+1)h_2(b+1)$ \mbox{ and } 
$v_{|n-b-1|}(h_1(b)h_2(b)^{b+1} + h_1(b+1)h_2(b+1)^{b+2})>0$ \mbox{ (by Fermat's little theorem)}.
\end{itemize}

The fact presented above implies the following:
\begin{prop}
If $h_1(b)h_2(b)^{b+1} + h_1(b+1)h_2(b+1)^{b+2} \neq 0$ then there are only finitely many nonnegative integers $n$ such that $|n-b-1|$ is a prime number and $n-b-1\mid a_n$. If $h_1(b)h_2(b)^{b+1} + h_1(b+1)h_2(b+1)^{b+2} = 0$ then $n-b-1\mid a_n$ for almost all nonnegative integers $n$ such that $|n-b-1|$ is a prime number. 
\end{prop}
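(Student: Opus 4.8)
The plan is to reduce both assertions to one uniform statement: writing $C=h_1(b)h_2(b)^{b+1}+h_1(b+1)h_2(b+1)^{b+2}$ for the fixed integer in the hypothesis, I will show that for all sufficiently large primes $p$, setting $n=b+1+p$, one has
\begin{equation*}
p\mid a_n\iff p\mid C.
\end{equation*}
Granting this, both halves are immediate: if $C\neq 0$, only the finitely many prime divisors of $C$ can give $p\mid a_n$, so there are finitely many admissible $n$; while if $C=0$, the condition $p\mid C$ holds for every $p$, so $p\mid a_n$ for all large $p$, i.e. $n-b-1\mid a_n$ for almost all $n$ with $|n-b-1|$ prime.

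First I would discard the boundary contributions. Among nonnegative $n$ with $|n-b-1|$ prime, those with $n-b-1<0$ force $0\le n\le b$, hence are finite in number and affect neither conclusion; so I may assume $n-b-1=p>0$ is a large prime and $n=b+1+p\ge 2$. For the core computation I start from the equivalence (\ref{eq17}), namely $p\mid a_n\iff p\mid h_1(b)h_2(b)^{n-1}+h_1(b+1)h_2(b+1)^n$, and write the exponents as $n-1=(b+1)+(p-1)$ and $n=(b+2)+(p-1)$. When $p\nmid h_2(b)$ and $p\nmid h_2(b+1)$ --- which for large $p$ is exactly the condition that $h_2(b)$ and $h_2(b+1)$ are nonzero --- Fermat's little theorem gives $h_2(b)^{p-1}\equiv h_2(b+1)^{p-1}\equiv 1\pmod p$, whence $h_1(b)h_2(b)^{n-1}+h_1(b+1)h_2(b+1)^n\equiv C\pmod p$ and the desired equivalence follows. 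This is precisely the non-degenerate branch already recorded in the dichotomy stated before the proposition.

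The step I expect to be the main obstacle is the degenerate situation where $h_2(b)$ or $h_2(b+1)$ vanishes, since Fermat's theorem no longer applies to that factor. I would treat these directly: if, say, $h_2(b)=0$ then (using $b\ge 0$, so that the exponent $n-1=b+p$ is positive) the first summand vanishes and $p\mid a_n$ becomes equivalent to $p\mid h_1(b+1)$, while at the same time $C$ reduces to $h_1(b+1)h_2(b+1)^{b+2}$, which for large $p$ is divisible by $p$ iff $p\mid h_1(b+1)$; so the equivalence $p\mid a_n\iff p\mid C$ persists. The symmetric case $h_2(b+1)=0$ and the doubly degenerate case $h_2(b)=h_2(b+1)=0$ (where $C=0$ and the expression is identically zero) are handled the same way. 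Verifying that the uniform equivalence survives across all of these cases --- together with absorbing the finitely many small primes where the reductions might fail into the ``finitely many''/``almost all'' conclusions --- completes the proof.
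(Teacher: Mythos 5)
Your proof is correct and takes essentially the same route as the paper's: both reduce, via the equivalence (\ref{eq17}) and Fermat's little theorem, to the statement that for all but finitely many admissible $n$ the condition $n-b-1\mid a_n$ is equivalent to $|n-b-1|$ dividing the fixed quantity $h_1(b)h_2(b)^{b+1}+h_1(b+1)h_2(b+1)^{b+2}$, and then count its prime divisors. The only difference is presentational: the paper invokes the dichotomy it records just before the proposition (handling the vanishing cases through its first bullet and Proposition \ref{prop10}), while you package the same reduction as a single uniform equivalence and treat the degenerate cases $h_2(b)=0$ or $h_2(b+1)=0$ explicitly.
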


\begin{proof}
First, consider the case $h_1(b)h_2(b)^{b+1} + h_1(b+1)h_2(b+1)^{b+2} \neq 0$. Since there are only finitely many prime numbers dividing simultaneously $h_1(b)h_2(b)$ and $h_1(b+1)h_2(b+1)$ and prime numbers $p$ such that
\begin{equation*}
v_p(h_1(b)h_2(b)^{b+1} + h_1(b+1)h_2(b+1)^{b+2})>0,
\end{equation*}
hence the first part of the statement is true.

Assume now that $h_1(b)h_2(b)^{b+1} + h_1(b+1)h_2(b+1)^{b+2} = 0$. If $h_1(b)h_2(b) = h_1(b+1)h_2(b+1) = 0$ then by Proposition \ref{prop10} we have $n-b-1\mid a_n$ for all integers $n>1$. If $h_1(b)h_2(b), h_1(b+1)h_2(b+1) \neq 0$ and $|n-b-1|$ is a prime number not dividing either $h_1(b)h_2(b)$ or $h_1(b+1)h_2(b+1)$ then $n-b-1\mid a_n$.
\end{proof}

Let us consider the case when $h_1(b)h_2(b)\neq 0$ and $h_1(b+1)h_2(b+2) = 0$. Try to characterize indices $n\in\N_+$ such that $n-b-1\mid a_n$. Let
\begin{equation*}
h_2(b) = p_1^{\alpha_1}p_2^{\alpha_2}\cdot ...\cdot p_s^{\alpha_s},
\end{equation*}
where $\alpha_1,...,\alpha_s\in\N_+$, be the factorization of $h_2(b)$ and
\begin{equation*}
h_1(b) = p_1^{\beta_1}p_2^{\beta_2}\cdot ...\cdot p_s^{\beta_s}q_1^{\gamma_1}q_2^{\gamma_2}\cdot ... \cdot q_t^{\gamma_t},
\end{equation*}
where $\beta_1,...,\beta_s\in\N$ and $\gamma_1,...,\gamma_s\in\N_+$, be the factorization of $h_1(b)$.

Then, by (\ref{eq17}), $n-b-1\mid a_n$ if and only if
\begin{equation*}
|n-b-1| = p_1^{\delta_1}p_2^{\delta_2}\cdot ...\cdot p_s^{\delta_s}q_1^{\eps_1}q_2^{\eps_2}\cdot ... \cdot q_t^{\eps_t},
\end{equation*}
where $\delta_i \leq (n-1)\alpha_i + \beta_i$ for $i\in\{1,2,...,s\}$ and $\eps_i \leq \gamma_i$ for $i\in\{1,2,...,t\}$.

If $b\geq 0$ then for $i\in\{1,2,...,s\}$ and $n\geq\max\{2,b\}$ we have
\begin{equation*}
\delta_i < p_i^{\delta_i} \leq |n-b-1| \leq n-1 \leq (n-1)\alpha_i + \beta_i.
\end{equation*}
Hence, if $n\geq\max\{2,b\}$ then:
\begin{equation*}
n-b-1\mid a_n\Leftrightarrow |n-b-1| = p_1^{\delta_1}p_2^{\delta_2}\cdot ...\cdot p_s^{\delta_s}q_1^{\eps_1}q_2^{\eps_2}\cdot ... \cdot q_t^{\eps_t},
\end{equation*}
where $\delta_i\in\N$ for $i\in\{1,2,...,s\}$ and $\eps_i \leq \gamma_i$ for $i\in\{1,2,...,t\}$. It remains to check indices $n\in\{1,2,3,...,b\}$ one by one.

If $b<0$ then the situation is slightly more difficult. Fix $i\in\{1,2,...,s\}$. If $p_i^{\delta_i}-\delta_i \geq -b\alpha-\beta$ then
\begin{equation*}
\delta_i \leq p_i^{\delta_i} + b\alpha_i + \beta_i \leq n-b-1 + b\alpha_i + \beta_i \leq (n-1)\alpha_i + b(\alpha_i-1) + \beta_i \leq (n-1)\alpha_i + \beta_i.
\end{equation*}
thus the inequality $\delta_i \leq (n-1)\alpha_i + \beta_i$ holds for $p_i^{\delta_i} - \delta_i\geq - b\alpha_i - \beta_i$. In particular, if $\delta_i \geq -b\alpha_i-\beta_i$ then
\begin{equation*}
p_i^{\delta_i} - \delta_i \geq p_i^{\delta_i} - p_i^{\delta_i-1} \geq p_i^{\delta_i-1} \geq \delta_i \geq - b\alpha_i - \beta_i
\end{equation*}
and the inequality $\delta_i \leq (n-1)\alpha_i + \beta_i$ holds.

We can summarize our discussion in the following:

\begin{prop}
Let us consider a sequence ${\bf a}(X-b,h_1,h_2)$. Assume that $h_1(b+1)h_2(b+1) = 0$. Let $h_2(b) = p_1^{\alpha_1}p_2^{\alpha_2}\cdot ...\cdot p_s^{\alpha_s}$ and $h_1(b) = p_1^{\beta_1}p_2^{\beta_2}\cdot ...\cdot p_s^{\beta_s}q_1^{\gamma_1}q_2^{\gamma_2}\cdot ... \cdot q_t^{\gamma_t}$ be the factorizations of numbers $h_2(b)$ and $h_1(b)$, respectively. Then for $n\in\N_+$,
\begin{equation*}
n-b-1\mid a_n\Leftrightarrow |n-b-1| = p_1^{\delta_1}p_2^{\delta_2}\cdot ...\cdot p_s^{\delta_s}q_1^{\eps_1}q_2^{\eps_2}\cdot ... \cdot q_t^{\eps_t},
\end{equation*}
where $\delta_i \leq (n-1)\alpha_i + \beta_i$ for $i\in\{1,2,...,s\}$ and $\eps_i \leq \gamma_i$ for $i\in\{1,2,...,t\}$. In particular,
\begin{itemize}
\item if $b\geq 0$ and $n\geq\max\{2,b\}$ then $n-b-1\mid a_n$ if and only if
\begin{equation*}
|n-b-1| = p_1^{\delta_1}p_2^{\delta_2}\cdot ...\cdot p_s^{\delta_s}q_1^{\eps_1}q_2^{\eps_2}\cdot ... \cdot q_t^{\eps_t},
\end{equation*}
where $\delta_i\in\N$ for $i\in\{1,2,...,s\}$ and $\eps_i \leq \gamma_i$ for $i\in\{1,2,...,t\}$;
\item if $b<0$ and $n>0$ then $n-b-1\mid a_n$ when
\begin{equation*}
|n-b-1| = p_1^{\delta_1}p_2^{\delta_2}\cdot ...\cdot p_s^{\delta_s}q_1^{\eps_1}q_2^{\eps_2}\cdot ... \cdot q_t^{\eps_t},
\end{equation*}
where $\eps_i \leq \gamma_i$ for $i\in\{1,2,...,t\}$ and for each $i\in\{1,2,...,s\}$ at least one of the equalities $p_i^{\delta_i} - \delta_i\geq - b\alpha_i - \beta_i$, $\delta_i\leq\beta_i$ holds.
\end{itemize}
\end{prop}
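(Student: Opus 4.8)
The plan is to derive everything from the equivalence (\ref{eq17}), specialized to the hypothesis $h_1(b+1)h_2(b+1)=0$. First I would observe that for every $n\in\N_+$ the factor $h_1(b+1)h_2(b+1)^n$ vanishes: if $h_1(b+1)=0$ this is immediate, and if $h_2(b+1)=0$ then $h_2(b+1)^n=0$ for $n\ge 1$. Hence (\ref{eq17}) collapses to
\[
n-b-1\mid a_n \iff n-b-1\mid h_1(b)h_2(b)^{n-1}.
\]
Since divisibility is insensitive to sign, I would then pass to $|n-b-1|$ and to the given factorizations of $h_1(b)$ and $h_2(b)$.

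Substituting these factorizations yields, up to sign,
\[
h_1(b)h_2(b)^{n-1}=p_1^{(n-1)\alpha_1+\beta_1}\cdots p_s^{(n-1)\alpha_s+\beta_s}\,q_1^{\gamma_1}\cdots q_t^{\gamma_t},
\]
and by unique factorization $|n-b-1|$ divides this integer exactly when $|n-b-1|$ is built from the primes $p_i,q_j$ alone, with the exponent of $p_i$ at most $(n-1)\alpha_i+\beta_i$ and that of $q_j$ at most $\gamma_j$. This is precisely the claimed equivalence and settles the general statement.

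For the two refinements I would determine when the upper bounds on the $\delta_i$ become automatic. If $b\ge 0$ and $n\ge\max\{2,b\}$, the chain
\[
\delta_i<p_i^{\delta_i}\le|n-b-1|\le n-1\le(n-1)\alpha_i+\beta_i,
\]
valid since $\alpha_i\ge 1$ and $\beta_i\ge 0$ (and using $p_i^{\delta_i}\mid|n-b-1|$), shows the constraint on $\delta_i$ holds for every $\delta_i\in\N$, leaving only $\eps_i\le\gamma_i$. For $b<0$ one has $|n-b-1|=n-b-1$, which may exceed $n-1$, so the bound is genuine; here I would check the two stated sufficient conditions. Assuming $p_i^{\delta_i}-\delta_i\ge -b\alpha_i-\beta_i$ gives
\[
\delta_i\le p_i^{\delta_i}+b\alpha_i+\beta_i\le(n-b-1)+b\alpha_i+\beta_i\le(n-1)\alpha_i+\beta_i,
\]
the last inequality reducing to $n-b-1\le(n-b-1)\alpha_i$, true as $\alpha_i\ge 1$ and $n-b-1>0$; and $\delta_i\le\beta_i$ gives the bound at once. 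One also records that $\delta_i\ge -b\alpha_i-\beta_i$ already forces $p_i^{\delta_i}-\delta_i\ge p_i^{\delta_i-1}\ge\delta_i\ge -b\alpha_i-\beta_i$, so the hypothesis of the first case is itself easy to meet.

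The only delicate point is the $b<0$ case: because $|n-b-1|$ is no longer controlled by $n-1$, the exponents cannot be bounded uniformly, and the sufficient condition must be phrased prime by prime through the inequalities $p_i^{\delta_i}-\delta_i\ge -b\alpha_i-\beta_i$ and $\delta_i\le\beta_i$. The elementary estimates $2^{\delta}>\delta$ and $p_i^{\delta_i-1}\ge\delta_i$ that close these chains are routine, so beyond careful bookkeeping no serious obstacle remains.
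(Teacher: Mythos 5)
Your argument is correct and is essentially the paper's own: the paper likewise specializes the equivalence (\ref{eq17}) using $h_1(b+1)h_2(b+1)^n=0$ for $n\geq 1$, reads the exponent conditions off the factorization of $h_1(b)h_2(b)^{n-1}$ via unique factorization, and closes the two refinements with the same chains $\delta_i<p_i^{\delta_i}\leq|n-b-1|\leq n-1\leq(n-1)\alpha_i+\beta_i$ for $b\geq 0$ and $\delta_i\leq p_i^{\delta_i}+b\alpha_i+\beta_i\leq(n-1)\alpha_i+\beta_i$ (or trivially $\delta_i\leq\beta_i$) for $b<0$. No gaps beyond those already present in the paper's own discussion.
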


The same consideration allows us to obtain the characterization of indices $n$ such that $n-b-1\mid a_n$ in case when $h_1(b)h_2(b) = 0$ and $h_1(b+1)h_2(b+2)\neq 0$ and in case when $h_2(b)=h_2(b+1)=c$.

\begin{prop}
Let us consider a sequence ${\bf a}(X-b,h_1,h_2)$. Assume that $h_1(b)h_2(b) = 0$. Let $h_2(b+1) = p_1^{\alpha_1}p_2^{\alpha_2}\cdot ...\cdot p_s^{\alpha_s}$ and $h_1(b+1) = p_1^{\beta_1}p_2^{\beta_2}\cdot ...\cdot p_s^{\beta_s}q_1^{\gamma_1}q_2^{\gamma_2}\cdot ... \cdot q_t^{\gamma_t}$ be the factorizations of numbers $h_2(b+1)$ and $h_1(b+1)$, respectively. Then for $n\in\N_+$,
\begin{equation*}
n-b-1\mid a_n\Leftrightarrow |n-b-1| = p_1^{\delta_1}p_2^{\delta_2}\cdot ...\cdot p_s^{\delta_s}q_1^{\eps_1}q_2^{\eps_2}\cdot ... \cdot q_t^{\eps_t},
\end{equation*}
where $\delta_i \leq (n-1)\alpha_i + \beta_i$ for $i\in\{1,2,...,s\}$ and $\eps_i \leq \gamma_i$ for $i\in\{1,2,...,t\}$. In particular,
\begin{itemize}
\item if $b\geq 0$ and $n\geq\max\{2,b\}$ then $n-b-1\mid a_n$ if and only if
\begin{equation*}
|n-b-1| = p_1^{\delta_1}p_2^{\delta_2}\cdot ...\cdot p_s^{\delta_s}q_1^{\eps_1}q_2^{\eps_2}\cdot ... \cdot q_t^{\eps_t},
\end{equation*}
where $\delta_i\in\N$ for $i\in\{1,2,...,s\}$ and $\eps_i \leq \gamma_i$ for $i\in\{1,2,...,t\}$;
\item if $b<0$ and $n>0$ then $n-b-1\mid a_n$ when
\begin{equation*}
|n-b-1| = p_1^{\delta_1}p_2^{\delta_2}\cdot ...\cdot p_s^{\delta_s}q_1^{\eps_1}q_2^{\eps_2}\cdot ... \cdot q_t^{\eps_t},
\end{equation*}
where $\eps_i \leq \gamma_i$ for $i\in\{1,2,...,t\}$ and for each $i\in\{1,2,...,s\}$ at least one of the equalities $p_i^{\delta_i} - \delta_i\geq - b\alpha_i - \beta_i$, $\delta_i\leq\beta_i$ holds.
\end{itemize}
\end{prop}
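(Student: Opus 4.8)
The plan is to run the discussion preceding the previous proposition with the pairs $(h_1(b),h_2(b))$ and $(h_1(b+1),h_2(b+1))$ interchanged, since all the genuine work has already been localized in the congruence (\ref{eq17}). First I would recall that, by (\ref{eq17}), for every $n\in\N_+$ one has $n-b-1\mid a_n$ if and only if $n-b-1\mid h_1(b)h_2(b)^{n-1}+h_1(b+1)h_2(b+1)^n$. The hypothesis $h_1(b)h_2(b)=0$ makes the first summand disappear: it vanishes identically when $h_1(b)=0$, and it vanishes for every $n\geq 2$ when $h_2(b)=0$, since then $h_2(b)^{n-1}=0^{n-1}=0$. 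Thus for $n\geq 2$ the divisibility collapses to $n-b-1\mid h_1(b+1)h_2(b+1)^n$. The one index requiring separate inspection is $n=1$ in the subcase $h_2(b)=0$, $h_1(b)\neq 0$, where the convention $0^0=1$ leaves the residue $h_1(b)$; this single value I would check by hand against $a_1=(1-b)h_1(0)+h_1(1)h_2(1)$.

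Next I would insert the given factorizations. Because $h_2(b+1)^n=\prod_{i=1}^s p_i^{n\alpha_i}$, we obtain
\begin{equation*}
h_1(b+1)h_2(b+1)^n=\prod_{i=1}^s p_i^{\,n\alpha_i+\beta_i}\prod_{j=1}^t q_j^{\gamma_j},
\end{equation*}
a factorization into the pairwise distinct primes $p_1,\dots,p_s,q_1,\dots,q_t$. By unique factorization in $\Z$, a positive integer $m$ divides this number exactly when $m=\prod_{i=1}^s p_i^{\delta_i}\prod_{j=1}^t q_j^{\eps_j}$ with $\delta_i\leq n\alpha_i+\beta_i$ and $\eps_j\leq\gamma_j$. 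Applying this with $m=|n-b-1|$ yields the asserted equivalence (the $p_i$-bound being the one forced by the surviving exponent $n\alpha_i+\beta_i$).

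The two ``in particular'' clauses are then purely elementary, and this is where essentially all the remaining care lies. For $b\geq 0$ and $n\geq\max\{2,b\}$ I would note that $p_i^{\delta_i}\leq|n-b-1|=n-b-1\leq n-1$, whence $\delta_i\leq n-1<n\leq n\alpha_i+\beta_i$ using $\alpha_i\geq 1$; so the $p_i$-conditions hold automatically and only $\eps_j\leq\gamma_j$ survives. The case $b<0$ is the main obstacle, because now $|n-b-1|$ may exceed $n-1$ and the bound on $\delta_i$ is no longer free. Here I would reproduce the inequality chain of the preceding discussion: assuming $p_i^{\delta_i}-\delta_i\geq -b\alpha_i-\beta_i$ one rearranges, using $p_i^{\delta_i}\leq n-b-1$ and $\alpha_i\geq 1$, $b<0$, to
\begin{equation*}
\delta_i\leq p_i^{\delta_i}+b\alpha_i+\beta_i\leq (n-b-1)+b\alpha_i+\beta_i\leq (n-1)\alpha_i+\beta_i\leq n\alpha_i+\beta_i,
\end{equation*}
and one checks that this hypothesis is automatic once $\delta_i\geq -b\alpha_i-\beta_i$, via $p_i^{\delta_i}-\delta_i\geq p_i^{\delta_i-1}\geq\delta_i\geq -b\alpha_i-\beta_i$, while the complementary range $\delta_i\leq\beta_i$ gives $\delta_i\leq\beta_i\leq n\alpha_i+\beta_i$ at once. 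In either alternative the exponent bound needed for divisibility is met, so the displayed condition is sufficient for $n-b-1\mid a_n$, which completes the argument.
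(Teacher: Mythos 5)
Your proof is the paper's own argument: the paper dismisses this proposition with the remark that ``the same consideration'' as in the preceding case applies, namely congruence (\ref{eq17}) with the roles of $(h_1(b),h_2(b))$ and $(h_1(b+1),h_2(b+1))$ interchanged, followed by unique factorization and the identical inequality chains for the two ``in particular'' clauses --- which is exactly what you write. Your two side remarks are both fair and both concern the statement rather than the proof: the surviving exponent really is $n\alpha_i+\beta_i$ rather than the printed $(n-1)\alpha_i+\beta_i$ (a harmless carry-over from the previous proposition, absorbed by the ``in particular'' clauses), and the index $n=1$ in the subcase $h_2(b)=0$, $h_1(b)\neq 0$ does require the separate check you flag because of the convention $0^0=1$.
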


\begin{prop}
Let us consider a sequence ${\bf a}(X-b,h_1,h_2)$. Assume that $h_2(b)=h_2(b+1)=c\in\Z$. Let $c = p_1^{\alpha_1}p_2^{\alpha_2}\cdot ...\cdot p_s^{\alpha_s}$ and $h_1(b)+ch_1(b+1) = p_1^{\beta_1}p_2^{\beta_2}\cdot ...\cdot p_s^{\beta_s}q_1^{\gamma_1}q_2^{\gamma_2}\cdot ... \cdot q_t^{\gamma_t}$ be the factorizations of numbers $c$ and $h_1(b)+ch_1(b+1)$, respectively. Then for $n\in\N_+$,
\begin{equation*}
n-b-1\mid a_n\Leftrightarrow |n-b-1| = p_1^{\delta_1}p_2^{\delta_2}\cdot ...\cdot p_s^{\delta_s}q_1^{\eps_1}q_2^{\eps_2}\cdot ... \cdot q_t^{\eps_t},
\end{equation*}
where $\delta_i \leq (n-1)\alpha_i + \beta_i$ for $i\in\{1,2,...,s\}$ and $\eps_i \leq \gamma_i$ for $i\in\{1,2,...,t\}$. In particular,
\begin{itemize}
\item if $b\geq 0$ and $n\geq\max\{2,b\}$ then $n-b-1\mid a_n$ if and only if
\begin{equation*}
|n-b-1| = p_1^{\delta_1}p_2^{\delta_2}\cdot ...\cdot p_s^{\delta_s}q_1^{\eps_1}q_2^{\eps_2}\cdot ... \cdot q_t^{\eps_t},
\end{equation*}
where $\delta_i\in\N$ for $i\in\{1,2,...,s\}$ and $\eps_i \leq \gamma_i$ for $i\in\{1,2,...,t\}$;
\item if $b<0$ and $n>0$ then $n-b-1\mid a_n$ when
\begin{equation*}
|n-b-1| = p_1^{\delta_1}p_2^{\delta_2}\cdot ...\cdot p_s^{\delta_s}q_1^{\eps_1}q_2^{\eps_2}\cdot ... \cdot q_t^{\eps_t},
\end{equation*}
where $\eps_i \leq \gamma_i$ for $i\in\{1,2,...,t\}$ and for each $i\in\{1,2,...,s\}$ at least one of the equalities $p_i^{\delta_i} - \delta_i\geq - b\alpha_i - \beta_i$, $\delta_i\leq\beta_i$ holds.
\end{itemize}
If $c=0$ or $h_1(b)+ch_1(b+1) = 0$ then $n-b-1\mid a_n$ for each $n>0$.
\end{prop}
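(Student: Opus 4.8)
The plan is to run the same divisor-counting argument used in the two preceding propositions, the only genuinely new input being an algebraic simplification that is available precisely because $h_2$ takes the \emph{same} value $c$ at both $b$ and $b+1$. First I would invoke the equivalence (\ref{eq17}), which for $n\in\N_+$ reduces the divisibility $n-b-1\mid a_n$ to
\[
n-b-1\mid h_1(b)h_2(b)^{n-1}+h_1(b+1)h_2(b+1)^n .
\]
Substituting $h_2(b)=h_2(b+1)=c$ and pulling out $c^{n-1}$ collapses the right-hand side to the single integer
\[
c^{n-1}\bigl(h_1(b)+ch_1(b+1)\bigr),
\]
so the whole problem becomes a question about the divisors of this number.

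Writing $m=|n-b-1|$ and inserting the two prescribed factorizations (with $\alpha_i\ge 1$, $\beta_i\in\N$, $\gamma_j\ge 1$, and the $p_i,q_j$ pairwise distinct), the product $c^{n-1}(h_1(b)+ch_1(b+1))$ equals $\prod_{i=1}^s p_i^{(n-1)\alpha_i+\beta_i}\prod_{j=1}^t q_j^{\gamma_j}$. The general equivalence is then the elementary fact that $m$ divides this product iff $m=\prod_i p_i^{\delta_i}\prod_j q_j^{\eps_j}$ with $\delta_i\le(n-1)\alpha_i+\beta_i$ and $\eps_j\le\gamma_j$. For the refinement when $b\ge 0$ and $n\ge\max\{2,b\}$ I would show the constraint on the $\delta_i$ is vacuous: in this range $|n-b-1|\le n-1$, so from $m=\prod_i p_i^{\delta_i}\prod_j q_j^{\eps_j}$ we get $\delta_i<p_i^{\delta_i}\le m\le n-1\le(n-1)\alpha_i+\beta_i$, using $\alpha_i\ge 1$ and $\beta_i\ge 0$, leaving only $\delta_i\in\N$ and $\eps_j\le\gamma_j$.

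For $b<0$ the quantity $n-b-1$ is positive for $n\ge 1$, hence $n-1=m+b$. Substituting this into the target inequality $\delta_i\le(n-1)\alpha_i+\beta_i=m\alpha_i+b\alpha_i+\beta_i$ and bounding $m\alpha_i\ge p_i^{\delta_i}$ shows it suffices that $p_i^{\delta_i}-\delta_i\ge-b\alpha_i-\beta_i$; the alternative $\delta_i\le\beta_i$ forces the inequality directly, since $(n-1)\alpha_i\ge 0$. This produces exactly the stated sufficient condition. The degenerate cases are then immediate from the factored form: if $h_1(b)+ch_1(b+1)=0$ the right-hand side is $0$ for every $n\ge 1$ (with the convention $c^0=1$), giving $n-b-1\mid a_n$ throughout, while if $c=0$ then $c^{n-1}=0$ settles all indices $n\ge 2$.

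The step I expect to demand the most care is the $b<0$ analysis, where the index $n$ itself depends on $m=|n-b-1|$ and hence on the very exponents $\delta_i,\eps_j$ being constrained; the delicate point is to repackage the $n$-dependent bound $\delta_i\le(n-1)\alpha_i+\beta_i$ as the two $n$-free alternatives. I would also flag that the degenerate subcase $c=0$ with $h_1(b)\neq 0$ requires the index $n=1$ to be examined on its own, because there $c^{n-1}=c^0=1$ and the divisibility can genuinely fail; the clean universal statement is the one governed by $h_1(b)+ch_1(b+1)=0$.
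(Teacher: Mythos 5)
Your proof is correct and follows the paper's own route exactly: the paper's entire argument is the observation that (\ref{eq17}) collapses to $n-b-1\mid c^{n-1}\bigl(h_1(b)+ch_1(b+1)\bigr)$, after which one repeats verbatim the divisor-counting analysis already carried out before the first proposition of this block --- the same vacuity argument $\delta_i<p_i^{\delta_i}\leq|n-b-1|\leq n-1\leq(n-1)\alpha_i+\beta_i$ for $b\geq 0$, and the same two $n$-free sufficient alternatives for $b<0$ obtained by bounding $p_i^{\delta_i}\leq|n-b-1|$. Your closing remark that the degenerate case $c=0$ genuinely requires $n=1$ to be set aside (since $c^{n-1}=0^0=1$ there, so the divisibility reduces to $-b\mid h_1(b)$, which can fail) is a valid catch of an edge case that the paper's one-line proof glosses over.
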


\begin{proof}
It suffices to see that if $n>0$ then $n-b-1\mid a_n\Leftrightarrow n-b-1\mid c^{n-1}h_1(b)+c^nh_1(b+1)$.
\end{proof}

Let us assume now that $h_1(b)=h_1(b+1)=c\neq 0$ and $h_2(b+1)=\pm 1$.

\begin{prop}
Let us consider a sequence ${\bf a}(X-b,h_1,h_2)$. Assume that $h_1(b)=h_1(b+1)=c\neq 0$ and $h_2(b+1)=1$. If $n_1,n_2,n_3\in\N_+$ are such that $n_1-b-1\mid a_{n_1}$, $n_2-b-1\mid a_{n_2}$, $n_3-b-1\mid\lcm(n_1-b-1, n_2-b-1)$, $v_2(n_1-1)=v_2(n_2-1)=v_2(n_3-1)$ and $\lcm(n_1-1,n_2-1)\mid n_3-1$ then $n_3-b-1\mid a_{n_3}$. In particular, if $b>0$, $a_{b+1}=0$ and $n_2\in\N_+$ is such that $v_2(n_2-1)=v_2(b)$ and $n_2-b-1\mid a_{n_2}$ then $n_3-b-1\mid a_{n_3}$ for any $n_3\in\N_+$ such that $\lcm(b,n_2-1)\mid n_3-1$ and $v_2(n_3-1)=v_2(b)$.
\end{prop}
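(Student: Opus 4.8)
The plan is to reduce everything to the equivalence (\ref{eq17}), which under the present hypotheses $h_1(b)=h_1(b+1)=c$ and $h_2(b+1)=1$ collapses to a clean shape. Writing $m:=h_2(b)$, the right-hand side of (\ref{eq17}) becomes $h_1(b)h_2(b)^{n-1}+h_1(b+1)h_2(b+1)^n = c(m^{n-1}+1)$, so that
\begin{equation*}
n-b-1 \mid a_n \iff n-b-1 \mid c(m^{n-1}+1)
\end{equation*}
for every $n\in\N_+$. Thus the whole statement turns into a divisibility assertion about the integers $c(m^{n_i-1}+1)$, and the sequence $(a_n)_{n\in\N}$ plays no further role beyond this translation.

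First I would record the elementary lemma that for any integer $x$ and any odd positive integer $k$ one has $x+1\mid x^k+1$, which is immediate from the factorisation $x^k+1=(x+1)(x^{k-1}-x^{k-2}+\dots+1)$. I would then observe that the hypothesis $\lcm(n_1-1,n_2-1)\mid n_3-1$ gives $n_i-1\mid n_3-1$ for $i=1,2$, and that together with $v_2(n_i-1)=v_2(n_3-1)$ this forces the quotient $(n_3-1)/(n_i-1)$ to be odd, its $2$-adic valuation being $v_2(n_3-1)-v_2(n_i-1)=0$. Applying the lemma with $x=m^{n_i-1}$ and $k=(n_3-1)/(n_i-1)$ yields $m^{n_i-1}+1\mid m^{n_3-1}+1$, hence $c(m^{n_i-1}+1)\mid c(m^{n_3-1}+1)$.

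Next I would combine this with the assumptions $n_i-b-1\mid a_{n_i}$. By the displayed equivalence these give $n_i-b-1\mid c(m^{n_i-1}+1)$, and by the previous step $c(m^{n_i-1}+1)\mid c(m^{n_3-1}+1)$, so transitivity yields $n_i-b-1\mid c(m^{n_3-1}+1)$ for $i=1,2$. The universal property of the least common multiple then gives $\lcm(n_1-b-1,n_2-b-1)\mid c(m^{n_3-1}+1)$, and since $n_3-b-1\mid\lcm(n_1-b-1,n_2-b-1)$ by hypothesis, a second application of transitivity produces $n_3-b-1\mid c(m^{n_3-1}+1)$. Reading the equivalence backwards at $n=n_3$ gives the desired conclusion $n_3-b-1\mid a_{n_3}$. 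The particular case follows by specialising $n_1=b+1$: then $n_1-b-1=0$, the hypothesis $n_1-b-1\mid a_{n_1}$ is exactly $a_{b+1}=0$, and the remaining conditions reduce verbatim to $v_2(n_2-1)=v_2(b)=v_2(n_3-1)$ and $\lcm(b,n_2-1)\mid n_3-1$, the requirement $n_3-b-1\mid\lcm(0,n_2-b-1)=0$ being automatic.

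The one point demanding care throughout is the bookkeeping with zero, and I expect this to be the only genuinely delicate step; the rest is the mechanical divisibility chain above. Concretely, when a term such as $n_1-b-1$ or $\lcm(n_1-b-1,n_2-b-1)$ vanishes one must check that transitivity of divisibility and the universal property of $\lcm$ still apply, using the conventions that $x\mid 0$ for all $x$ and that $a\mid N$ with $a=0$ forces $N=0$. With these conventions the statement $a_{b+1}=0$ translates via the equivalence into $m^b=-1$, whence $m^{n_3-1}=(m^b)^{(n_3-1)/b}=-1$ since $(n_3-1)/b$ is odd, so that $c(m^{n_3-1}+1)=0$ and every divisor, in particular $n_3-b-1$, divides it; this confirms that the zero case causes no difficulty.
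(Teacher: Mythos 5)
Your proposal is correct and follows essentially the same route as the paper's own proof: both reduce the divisibility $n-b-1\mid a_n$ to $n-b-1\mid c(h_2(b)^{n-1}+1)$ via (\ref{eq17}), use the oddness of $(n_3-1)/(n_i-1)$ (forced by the equal $2$-adic valuations) to get $h_2(b)^{n_i-1}+1\mid h_2(b)^{n_3-1}+1$, and then chain the divisibilities through the least common multiple. Your explicit treatment of the degenerate case $n_1-b-1=0$ in the "in particular" clause is a welcome bit of bookkeeping that the paper leaves implicit.
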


\begin{proof}
By (\ref{eq17}) we know that $n-b-1\mid a_n$ if and only if $n-b-1\mid ch_2(b)^{n-1}+c$. Since $v_2(n_1-1)=v_2(n_2-1)=v_2(n_3-1)$, thus $\frac{n_3-1}{n_1-1}$ and $\frac{n_3-1}{n_2-1}$  are odd numbers. Hence we get
\begin{equation*}
h_2(b)^{n_1-1}+1,h_2(b)^{n_2-1}+1\mid h_2(b)^{n_3-1}+1.
\end{equation*}
As a consequence we get
\begin{equation*}
\begin{split}
& n_3-b-1\mid\eqlcm(n_1-b-1, n_2-b-1)\mid \eqlcm(c(h_2(b)^{n_1-1}+1),c(h_2(b)^{n_2-1}+1))\mid \\
& \mid c(h_2(b)^{n_3-1}+1),
\end{split}
\end{equation*}
which means that $n_3-b-1\mid a_{n_3}$.
\end{proof}

\begin{prop}
Let us consider a sequence ${\bf a}(X-b,h_1,h_2)$. Assume that $h_1(b)=h_1(b+1)=c\neq 0$ and $h_2(b+1)=-1$. If $n_1,n_2,n_3\in\N_+$ are such that $n_1-b-1\mid a_{n_1}$, $n_2-b-1\mid a_{n_2}$ $n_3-b-1\mid\lcm(n_1-b-1, n_2-b-1)$ and $\lcm(n_1-1,n_2-1)\mid n_3-1$ then $n_3-b-1\mid a_{n_3}$. In particular, if $b>0$, $a_{b+1}=0$ and $n_2\in\N_+$ is such that $n_2-b-1\mid a_{n_2}$ then $n_3-b-1\mid a_{n_3}$ for any $n_3\in\N_+$ such that $\lcm(b,n_2-1)\mid n_3-1$.
\end{prop}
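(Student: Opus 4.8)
The plan is to mirror the proof of the preceding proposition (the case $h_2(b+1)=1$), with the crucial simplification that the relevant algebraic divisibility now holds unconditionally, which is exactly why the hypothesis on $v_2$ can be dropped. Write $g=h_2(b)$. Since $h_1(b)=h_1(b+1)=c$ and $h_2(b+1)=-1$, the equivalence (\ref{eq17}) specializes, for every $n\in\N_+$, to
\begin{equation*}
n-b-1\mid a_n \iff n-b-1\mid c\left(g^{n-1}+(-1)^n\right)=c\left(g^{n-1}-(-1)^{n-1}\right),
\end{equation*}
the last equality merely recording $(-1)^n=-(-1)^{n-1}$. Thus it suffices to track divisibility of the integers $c\left(g^{m}-(-1)^{m}\right)$ with $m=n-1$.

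First I would establish the transfer step: if $n_1-1\mid n_3-1$ then $g^{n_1-1}-(-1)^{n_1-1}\mid g^{n_3-1}-(-1)^{n_3-1}$. Setting $k=(n_3-1)/(n_1-1)\in\N_+$ and using $x-y\mid x^k-y^k$ with $x=g^{n_1-1}$, $y=(-1)^{n_1-1}$, this is immediate once one observes that $\left((-1)^{n_1-1}\right)^k=(-1)^{(n_1-1)k}=(-1)^{n_3-1}$ and $\left(g^{n_1-1}\right)^k=g^{n_3-1}$. This is precisely where the present case is genuinely easier than the $+1$ case: there one needs $x^k+y^k$, forcing $k$ odd and hence the equal-$2$-adic-valuation condition, whereas $x-y\mid x^k-y^k$ holds for \emph{every} $k$ and even when $x=y$ (both sides then being $0$). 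I expect this bookkeeping to be the only real content; there is no serious obstacle.

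Then I would assemble the conclusion. From $\lcm(n_1-1,n_2-1)\mid n_3-1$ we get both $n_1-1\mid n_3-1$ and $n_2-1\mid n_3-1$, so the transfer step gives $c\left(g^{n_1-1}-(-1)^{n_1-1}\right)\mid M$ and $c\left(g^{n_2-1}-(-1)^{n_2-1}\right)\mid M$, where $M:=c\left(g^{n_3-1}-(-1)^{n_3-1}\right)$. Combining these with $n_i-b-1\mid a_{n_i}$ (hence $n_i-b-1\mid c\left(g^{n_i-1}-(-1)^{n_i-1}\right)$ by the displayed equivalence) for $i=1,2$ yields $n_1-b-1\mid M$ and $n_2-b-1\mid M$, whence $\lcm(n_1-b-1,n_2-b-1)\mid M$. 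Since $n_3-b-1\mid\lcm(n_1-b-1,n_2-b-1)$, it follows that $n_3-b-1\mid M$, and reading the displayed equivalence at $n=n_3$ gives $n_3-b-1\mid a_{n_3}$.

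For the \emph{in particular} statement I would specialize $n_1=b+1$ (legitimate since $b>0$). Evaluating the two-step recurrence at $n=b+1$ makes the coefficients $n-b-1$ vanish, giving $a_{b+1}=h_1(b)h_2(b)^b+h_1(b+1)h_2(b+1)^{b+1}=c\left(g^{b}-(-1)^{b}\right)$; so the hypothesis $a_{b+1}=0$ together with $c\neq 0$ forces $g^{b}=(-1)^{b}$. As $b\mid\lcm(b,n_2-1)\mid n_3-1$, raising to the power $(n_3-1)/b$ gives $g^{n_3-1}=(-1)^{n_3-1}$, so $M=c\left(g^{n_3-1}-(-1)^{n_3-1}\right)=0$; then $n_3-b-1\mid M$ holds trivially and the equivalence gives $n_3-b-1\mid a_{n_3}$. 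Equivalently, one checks that with $n_1=b+1$ all hypotheses of the main statement hold under the convention $0\mid 0$, the degenerate divisibilities being automatic from the factorization above.
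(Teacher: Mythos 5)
Your proof is correct and follows essentially the same route as the paper: specialize (\ref{eq17}) to $n-b-1\mid c\bigl(h_2(b)^{n-1}-(-1)^{n-1}\bigr)$, use $x-y\mid x^k-y^k$ to transfer divisibility from exponents $n_1-1,n_2-1$ to $n_3-1$, and chain through $\lcm(n_1-b-1,n_2-b-1)$. Your treatment of the \emph{in particular} clause (deriving $a_{b+1}=c\bigl(h_2(b)^{b}-(-1)^{b}\bigr)$ and the degenerate $0\mid 0$ divisibilities) is actually more explicit than the paper, which leaves that specialization unproved.
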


\begin{proof}
By (\ref{eq17}) we know that $n-b-1\mid a_n$ if and only if $n-b-1\mid ch_2(b)^{n-1}+(-1)^nc$. Since $\eqlcm(n_1-1, n_2-1)\mid n_3-1$, thus
\begin{equation*}
\begin{split}
& h_2(b)^{n_i-1}-(-1)^{n_i-1}\mid h_2(b)^{\elcm(n_1-1,n_2-1)}-(-1)^{\elcm(n_1-1,n_2-1)}\mid \\
& \mid h_2(b)^{n_3-1}-(-1)^{n_3-1}.
\end{split}
\end{equation*}
for $i\in\{1,2\}$. Finally,
\footnotesize
\begin{equation*}
\begin{split}
& n_3-b-1\mid\eqlcm(n_1-b-1, n_2-b-1)\mid \eqlcm(c(h_2(b)^{n_1-1}-(-1)^{n_1-1}),c(h_2(b)^{n_2-1}-(-1)^{n_2-1}))\mid \\
& \mid c(h_2(b)^{\elcm(n_1-1,n_2-1)}-(-1)^{\elcm(n_1-1,n_2-1)})\mid c(h_2(b)^{n_3-1}-(-1)^{n_3-1}),
\end{split}
\end{equation*}
\normalsize
which means that $n_3-b-1\mid a_{n_3}$.
\end{proof}

Similarly we can prove analogous propositions for $h_1(b)=h_1(b+1)=c\neq 0$ and $h_2(b)=\pm 1$.

\begin{prop}\label{prop17}
Let us consider a sequence ${\bf a}(X-b,h_1,h_2)$. Assume that $h_1(b)=h_1(b+1)=c\neq 0$ and $h_2(b)=1$. If $n_1,n_2,n_3\in\N_+$ are such that $n_1-b-1\mid a_{n_1}$, $n_2-b-1\mid a_{n_2}$, $n_3-b-1\mid\lcm(n_1-b-1, n_2-b-1)$, $v_2(n_1)=v_2(n_2)=v_2(n_3)$ and $\lcm(n_1,n_2)\mid n_3$ then $n_3-b-1\mid a_{n_3}$. In particular, if $b\geq 0$, $a_{b+1}=0$ and $n_2\in\N_+$ is such that $v_2(n_2)=v_2(b+1)$ and $n_2-b-1\mid a_{n_2}$ then $n_3-b-1\mid a_{n_3}$ for any $n_3\in\N_+$ such that $\lcm(b+1,n_2)\mid n_3$ and $v_2(n_3)=v_2(b+1)$.
\end{prop}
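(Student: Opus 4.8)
The plan is to imitate the proofs of the two preceding propositions, which handle the companion cases $h_2(b+1)=\pm1$. First I would substitute the hypotheses $h_1(b)=h_1(b+1)=c$ and $h_2(b)=1$ into the equivalence (\ref{eq17}); since
\begin{equation*}
h_1(b)h_2(b)^{n-1}+h_1(b+1)h_2(b+1)^n = c+ch_2(b+1)^n = c\left(h_2(b+1)^n+1\right),
\end{equation*}
the criterion becomes: for $n\in\N_+$, $n-b-1\mid a_n$ if and only if $n-b-1\mid c(h_2(b+1)^n+1)$. The crucial difference from the $h_2(b+1)=1$ case is that the exponent here is $n$ rather than $n-1$; this is exactly why the present proposition is phrased in terms of $v_2(n_i)$ and $\lcm(n_1,n_2)\mid n_3$ instead of $v_2(n_i-1)$ and $\lcm(n_1-1,n_2-1)\mid n_3-1$.

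Next I would extract the parity fact that drives the argument. From $\lcm(n_1,n_2)\mid n_3$ I get $n_1\mid n_3$ and $n_2\mid n_3$, and combining $n_i\mid n_3$ with $v_2(n_i)=v_2(n_3)$ shows that the quotients $n_3/n_1$ and $n_3/n_2$ are odd. Applying the elementary factorization $x^m+1=(x+1)(x^{m-1}-\cdots+1)$, valid for odd $m$, with $x=h_2(b+1)^{n_i}$ and $m=n_3/n_i$, I obtain $h_2(b+1)^{n_i}+1\mid h_2(b+1)^{n_3}+1$ for $i\in\{1,2\}$.

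Then I would assemble the divisibility chain. The hypotheses $n_i-b-1\mid a_{n_i}$ give $n_i-b-1\mid c(h_2(b+1)^{n_i}+1)$, and together with $n_3-b-1\mid\lcm(n_1-b-1,n_2-b-1)$ and the divisibilities just obtained this yields
\begin{equation*}
n_3-b-1\mid\eqlcm(n_1-b-1,n_2-b-1)\mid\eqlcm\!\left(c(h_2(b+1)^{n_1}+1),c(h_2(b+1)^{n_2}+1)\right)\mid c(h_2(b+1)^{n_3}+1),
\end{equation*}
whence $n_3-b-1\mid a_{n_3}$ by the specialized criterion.

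Finally, for the \emph{in particular} clause I would apply the general statement with $n_1=b+1$: then $n_1-b-1=0$ and the assumption $a_{b+1}=0$ is precisely $n_1-b-1\mid a_{n_1}$, while $v_2(n_1)=v_2(b+1)$ and $\lcm(n_1,n_2)=\lcm(b+1,n_2)$ match the remaining conditions $v_2(n_2)=v_2(n_3)=v_2(b+1)$ and $\lcm(b+1,n_2)\mid n_3$. The step requiring care is this degenerate case $n_1-b-1=0$, where the lcm/divisibility conventions of the chain above must be read correctly: here (\ref{eq17}) with $n=b+1$ forces $h_2(b+1)^{b+1}=-1$, and since $b+1\mid n_3$ with $n_3/(b+1)$ odd this propagates to $h_2(b+1)^{n_3}=-1$, i.e.\ $c(h_2(b+1)^{n_3}+1)=0$, so $n_3-b-1\mid a_{n_3}$ holds trivially. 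I expect the only genuine obstacle to be this bookkeeping when a modulus $n_i-b-1$ vanishes; everything else is the same odd-exponent divisibility already used in the companion propositions.
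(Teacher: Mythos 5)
Your proof is correct and is exactly the adaptation the paper intends when it says the propositions for $h_2(b)=\pm 1$ follow "similarly": the criterion from (\ref{eq17}) becomes $n-b-1\mid c(h_2(b+1)^n+1)$ with exponent $n$ instead of $n-1$, and the odd-quotient divisibility $h_2(b+1)^{n_i}+1\mid h_2(b+1)^{n_3}+1$ drives the same lcm chain. Your careful treatment of the degenerate modulus $n_1-b-1=0$ in the \emph{in particular} clause (forcing $h_2(b+1)^{b+1}=-1$ and propagating it to $n_3$) is a detail the paper glosses over, and it is handled correctly.
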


\begin{prop}\label{prop18}
Let us consider a sequence ${\bf a}(X-b,h_1,h_2)$. Assume that $h_1(b)=h_1(b+1)=c\neq 0$ and $h_2(b)=-1$. If $n_1,n_2,n_3\in\N_+$ are such that $n_1-b-1\mid a_{n_1}$, $n_2-b-1\mid a_{n_2}$, $n_3-b-1\mid\lcm(n_1-b-1, n_2-b-1)$ and $\lcm(n_1,n_2)\mid n_3$ then $n_3-b-1\mid a_{n_3}$. In particular, if $b\geq 0$, $a_{b+1}=0$ and $n_2\in\N_+$ is such that $n_2-b-1\mid a_{n_2}$ then $n_3-b-1\mid a_{n_3}$ for any $n_3\in\N_+$ divisible by $\lcm(b+1,n_2)$.
\end{prop}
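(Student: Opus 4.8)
The plan is to reduce the statement, via the equivalence (\ref{eq17}), to an elementary divisibility property of the numbers $h_2(b+1)^n-(-1)^n$, exactly along the lines of the two preceding propositions treating $h_2(b+1)=\pm1$. First I would specialize (\ref{eq17}) to the present hypotheses $h_1(b)=h_1(b+1)=c$ and $h_2(b)=-1$. Since
\begin{equation*}
h_1(b)h_2(b)^{n-1}+h_1(b+1)h_2(b+1)^n=c(-1)^{n-1}+c\,h_2(b+1)^n=c\bigl(h_2(b+1)^n-(-1)^n\bigr),
\end{equation*}
the equivalence (\ref{eq17}) becomes, for every $n\in\N_+$, that $n-b-1\mid a_n$ if and only if $n-b-1\mid c\bigl(h_2(b+1)^n-(-1)^n\bigr)$. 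Writing $g=h_2(b+1)$, the hypotheses $n_i-b-1\mid a_{n_i}$ for $i\in\{1,2\}$ translate into $n_i-b-1\mid c\bigl(g^{n_i}-(-1)^{n_i}\bigr)$.

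The key step is the elementary lemma that $x^m-y^m\mid x^N-y^N$ whenever $m\mid N$, which I would apply with $x=g$ and $y=-1$. From $\eqlcm(n_1,n_2)\mid n_3$ we get $n_1\mid n_3$ and $n_2\mid n_3$, hence
\begin{equation*}
g^{n_i}-(-1)^{n_i}\mid g^{\eqlcm(n_1,n_2)}-(-1)^{\eqlcm(n_1,n_2)}\mid g^{n_3}-(-1)^{n_3}
\end{equation*}
for $i\in\{1,2\}$. Combining this with the hypothesis $n_3-b-1\mid\eqlcm(n_1-b-1,n_2-b-1)$, and using that $u\mid x$ and $v\mid y$ force $\eqlcm(u,v)\mid\eqlcm(x,y)$ together with $\eqlcm(cu,cv)=|c|\,\eqlcm(u,v)$, I would assemble the chain
\begin{equation*}
n_3-b-1\mid\eqlcm(n_1-b-1,n_2-b-1)\mid c\bigl(g^{\eqlcm(n_1,n_2)}-(-1)^{\eqlcm(n_1,n_2)}\bigr)\mid c\bigl(g^{n_3}-(-1)^{n_3}\bigr).
\end{equation*}
The right-hand side is exactly the specialized form of (\ref{eq17}) for $n=n_3$, so $n_3-b-1\mid a_{n_3}$.

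The one point I would emphasize, and the only place where this case genuinely differs from Proposition \ref{prop17}, is that no condition on the $2$-adic valuations $v_2(n_i)$ is required here. With $h_2(b)=-1$ the relevant quantity is $g^n-(-1)^n$, which behaves monotonically under divisibility of exponents; whereas with $h_2(b)=1$ one is forced to work with $g^n+1$, and $g^m+1\mid g^N+1$ holds only when $N/m$ is odd, which is precisely what the hypothesis $v_2(n_1)=v_2(n_2)=v_2(n_3)$ secures in Proposition \ref{prop17}. Consequently I do not expect any serious obstacle: the whole argument is a transcription of the $h_2(b+1)=-1$ proposition with the roles of the two exponents $n-1$ and $n$ interchanged.

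For the \emph{in particular} statement I would simply apply the general part with $n_1=b+1$. Since $b\geq0$ we have $n_1=b+1\in\N_+$ and $n_1-b-1=0$; the assumption $a_{b+1}=0$ gives the divisibility $n_1-b-1\mid a_{n_1}$ (read as $0\mid 0$), the required condition $n_3-b-1\mid\eqlcm(n_1-b-1,n_2-b-1)=\eqlcm(0,n_2-b-1)=0$ holds because every integer divides $0$, and $\eqlcm(n_1,n_2)=\eqlcm(b+1,n_2)\mid n_3$ is exactly the divisibility imposed on $n_3$. Thus all hypotheses of the general statement are met for the triple $(b+1,n_2,n_3)$, and it yields $n_3-b-1\mid a_{n_3}$, which finishes the proof.
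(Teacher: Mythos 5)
Your proof is correct and is essentially the paper's own argument: the paper proves the $h_2(b+1)=-1$ case explicitly and states that Proposition \ref{prop18} follows ``similarly,'' and your transcription — reducing via (\ref{eq17}) to $n-b-1\mid c\bigl(h_2(b+1)^n-(-1)^n\bigr)$ and chaining divisibilities through $g^{m}-(-1)^{m}\mid g^{N}-(-1)^{N}$ for $m\mid N$ — is exactly that intended proof, including the correct handling of the \emph{in particular} part with $n_1=b+1$.
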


\begin{rem}
If $b=0$ then due to Proposition \ref{prop18,1} below we see that Propositions \ref{prop17} and \ref{prop18} are useful only in the case when $a_1=0$ beacause there holds:
\end{rem}

\begin{prop}\label{prop18,1}
If $n_1,n_2,n_3\in\N_+$ are such that $n_3-1\mid\lcm(n_1-1, n_2-1)$ and $\lcm(n_1,n_2)\mid n_3$ then $n_1\mid n_2$ or $n_2\mid n_1$. Additionaly, if $n_1,n_2>1$ then $n_3=\max\{n_1,n_2\}$.
\end{prop}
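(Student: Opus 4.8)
The plan is to show that the two divisibility hypotheses force $n_3$ to coincide with $\max\{n_1,n_2\}$, after which both assertions drop out at once. First I would dispose of the trivial situation: if $n_1=1$ or $n_2=1$ then one of $n_1\mid n_2$, $n_2\mid n_1$ holds automatically, so I may assume $n_1,n_2>1$ and, relabelling if necessary, $n_1\le n_2$; the goal then becomes $n_3=n_2$. Since $n_2\mid\lcm(n_1,n_2)\mid n_3$, I can write $n_3=n_2t$ with $t\in\N_+$, and the whole problem reduces to proving $t=1$.

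Assume for contradiction that $t\ge 2$. The crucial idea is to exploit $n_3-1\mid\lcm(n_1-1,n_2-1)$ in a sharpened form rather than as a bare size bound. From the identity $n_2t-1=(n_2-1)t+(t-1)$ I obtain $\gcd(n_3-1,n_2-1)=\gcd(t-1,n_2-1)\le t-1$. Since both $n_3-1$ and $n_2-1$ divide $\lcm(n_1-1,n_2-1)$, so does their least common multiple; writing each lcm as a product divided by the corresponding gcd and cancelling the common positive factor $n_2-1$ gives
\[
\frac{n_3-1}{\gcd(n_3-1,n_2-1)}\ \le\ \frac{n_1-1}{\gcd(n_1-1,n_2-1)}\ \le\ n_1-1,
\]
whence $n_3-1\le(t-1)(n_1-1)$.

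To close the argument I combine this with $n_3-1=n_2t-1\ge n_1t-1$ (using $n_1\le n_2$), obtaining $n_1t-1\le(t-1)(n_1-1)$, which simplifies to $t+n_1\le 2$, contradicting $t\ge 2$ and $n_1\ge 2$. Hence $t=1$, i.e. $n_3=n_2=\max\{n_1,n_2\}$, which is the second assertion. Finally $\lcm(n_1,n_2)\mid n_3=n_2$ together with $\lcm(n_1,n_2)\ge n_2$ forces $\lcm(n_1,n_2)=n_2$, that is $n_1\mid n_2$, giving the divisibility conclusion too.

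The step I expect to be genuinely delicate is recognising that crude size estimates are insufficient: for instance with $n_1=4$, $n_2=6$ the inequality $\lcm(n_1,n_2)\le\lcm(n_1-1,n_2-1)+1$ that follows from $\lcm(n_1,n_2)\le n_3\le\lcm(n_1-1,n_2-1)+1$ actually holds, so no contradiction can be extracted from magnitudes alone. The entire argument hinges on upgrading the divisibility $n_3-1\mid\lcm(n_1-1,n_2-1)$ to the bound $n_3-1\le(t-1)(n_1-1)$ via the gcd identity $\gcd(n_3-1,n_2-1)=\gcd(t-1,n_2-1)$; once this is in place the remaining inequality manipulation is routine.
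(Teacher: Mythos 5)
Your proof is correct, and it takes a genuinely different and considerably shorter route than the paper's. The paper writes $d=\gcd(n_1,n_2)$, $n_1=dn'_1$, $n_2=dn'_2$, $n_3=kdn'_1n'_2$, weakens the hypothesis to $n_3-1\mid(n_1-1)(n_2-1)$, performs the division $d=qk+r$, and then runs through the cases $r=0$, $r=1$, $r>1$, each split further according to the sign of $q+rdn'_1n'_2-dn'_1-dn'_2+1$; this occupies roughly two pages. You instead keep the full strength of $n_3-1\mid\lcm(n_1-1,n_2-1)$: writing $n_3=n_2t$ (with $n_1\le n_2$), the congruence $n_2t-1\equiv t-1\pmod{n_2-1}$ gives $\gcd(n_3-1,n_2-1)=\gcd(t-1,n_2-1)\le t-1$ for $t\ge 2$, and comparing $\eqlcm(n_3-1,n_2-1)\mid\eqlcm(n_1-1,n_2-1)$ after cancelling the factor $n_2-1$ yields $n_3-1\le(t-1)(n_1-1)$, which against $n_3-1\ge n_1t-1$ collapses to the impossible $n_1+t\le 2$. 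All steps check out: the cancellation of $n_2-1$ and the bound on the gcd use exactly the hypotheses $n_1,n_2>1$ and $t\ge 2$ that you have in force, the case $n_1=1$ or $n_2=1$ is correctly disposed of at the outset, and the endgame --- $\eqlcm(n_1,n_2)\mid n_3=n_2$ forcing $\eqlcm(n_1,n_2)=n_2$, i.e.\ $n_1\mid n_2$ --- is sound. What your approach buys is brevity and a single clean inequality in place of an extended case analysis; the paper's exhaustive cases yield nothing extra for the stated proposition, since they only reconfirm $k=1$ together with the degenerate triples containing a coordinate equal to $1$.
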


\begin{proof}
Let us write $d=\gcd(n_1,n_2)$, $n_1=dn'_1$, $n_2=dn'_2$ and $n_3=kdn'_1n'_2$ for some $n'_1,n'_2,k\in\N_+$, where $\gcd(n'_1,n'_2)=1$. Since $n_3-1\mid (n_1-1)(n_2-1)$, thus we have
\begin{equation}\label{eq19}
kdn'_1n'_2 - 1 \mid d^2n'_1n'_2 - dn'_1 - dn'_2 + 1.
\end{equation}
If $d=qk+r$ for some $q\in\N$ and $r\in\{0,1,2,...,k-1\}$ then the divisibility (\ref{eq19}) takes the form
\begin{equation}\label{eq19,5}
kdn'_1n'_2 - 1 \mid qkdn'_1n'_2 + rdn'_1n'_2 - dn'_1 - dn'_2 + 1.
\end{equation}
Since $kdn'_1n'_2 - 1 \mid qkdn'_1n'_2 - q$, hence (\ref{eq19,5}) is equivalent to
\begin{equation}\label{eq20}
kdn'_1n'_2 - 1 \mid q + rdn'_1n'_2 - dn'_1 - dn'_2 + 1.
\end{equation}

If the number $kdn'_1n'_2 - 1$ is equal to 0 then $d=n'_1=n'_2=1$, which means that $n_1=n_2=1$. Hence, assume that one of the numbers $n_1,n_2$ is greater than 1. Then $kdn'_1n'_2 - 1>0$.

Let us consider the case $r=0$. If $q - dn'_1 - dn'_2 + 1 > 0$ then by (\ref{eq20}) there must be
\begin{equation*}
\begin{split}
 & kdn'_1n'_2 - 1 \leq q - dn'_1 - dn'_2 + 1 \\
\Longrightarrow\quad  & dn'_1n'_2 + dn'_1 + dn'_2 \leq kdn'_1n'_2 + dn'_1 + dn'_2 \leq q+2 \leq d+2 \\
\Longrightarrow\quad & dn'_1n'_2 + dn'_1 + dn'_2 + d \leq 2d+2 \\
\Longrightarrow\quad & d(n'_1+1)(n'_2+1) - 2d \leq 2 \\
\Longrightarrow\quad & d[(n'_1+1)(n'_2+1) - 2] \leq 2,
\end{split}
\end{equation*}
but $d[(n'_1+1)(n'_2+1) - 2] \geq d(2\cdot 2 - 2) = 2d \geq 2$ (because $\min\{d,n'_1,n'_2\}\geq 1$), hence $d=n'_1=n'_2=1$. Then $n_1=n_2=1$, but we assumed that one of the numbers $n_1,n_2$ is greater than 1 - a contradiction.

If $q - dn'_1 - dn'_2 + 1 < 0$ then
\begin{equation*}
\begin{split}
 & kdn'_1n'_2 - 1 \leq |q - dn'_1 - dn'_2 + 1| = - q + dn'_1 + dn'_2 - 1 \\
\Longrightarrow\quad & kdn'_1n'_2 - dn'_1 - dn'_2 \leq -q \leq 0 \\
\Longrightarrow\quad & (k-1)dn'_1n'_2 + dn'_1n'_2 - dn'_1 - dn'_2 + d \leq d \\
\Longrightarrow\quad & (k-1)dn'_1n'_2 + d(n'_1-1)(n'_2-1) - d \leq 0 \\
\Longrightarrow\quad & d[(k-1)n'_1n'_2 + (n'_1-1)(n'_2-1) - 1] \leq 0,
\end{split}
\end{equation*}
which is possible only for $(k,n'_1,n'_2)\in\{(1,1,x),(1,x,1),(1,2,2),(2,1,1): x\in\N_+\}$ (the condition $(k,n'_1,n'_2)=(1,2,2)$ does not hold because $n'_1,n'_2$ are coprime).

If $q - dn'_1 - dn'_2 + 1 = 0$ then $d+1\geq q+1 = d(n'_1+n'_2)$, which means that $1\geq d(n'_1+n'_2-1)$ and this holds only for $d,n'_1,n'_2=1$. Then $n_1=n_2=1$ - a contradiction with the assumption that one of the numbers $n_1,n_2$ is greater than 1.

Let us consider the case $r=1$. If $q + dn'_1n'_2 - dn'_1 - dn'_2 + 1 > 0$ then by (\ref{eq20}) there must be
\begin{equation*}
\begin{split}
 & kdn'_1n'_2 - 1 \leq q + dn'_1n'_2 - dn'_1 - dn'_2 + 1 \\
\Longrightarrow\quad & (k-1)dn'_1n'_2 + dn'_1 + dn'_2 \leq q+2.
\end{split}
\end{equation*}
Since $dn'_1 + dn'_2 \leq (k-1)dn'_1n'_2 + dn'_1 + dn'_2$ and $q+2 \leq d+2$ we obtain the following chain of inequalities.
\begin{equation*}
\begin{split}
 & dn'_1 + dn'_2 \leq d+2 \\
\Longrightarrow\quad & dn'_1 + dn'_2 - d \leq 2 \\
\Longrightarrow\quad & d(n'_1+n'_2-1) \leq 2
\end{split}
\end{equation*}
The last inequality holds only if $(d,n'_1,n'_2)\in\{(1,1,2),(1,2,1),(2,1,1)\}$ because $\min\{d,n'_1,n'_2\}\geq 1$. However, for $d=2$, since $r=1$ and $k\in\N_+$, thus $k=1\mid d$ and $r=0$ - a contradiction.

If $q + dn'_1n'_2 - dn'_1 - dn'_2 + 1 < 0$ then
\begin{equation*}
\begin{split}
 & kdn'_1n'_2 - 1 \leq |q + dn'_1n'_2 - dn'_1 - dn'_2 + 1| = - q - dn'_1n'_2 + dn'_1 + dn'_2 - 1 \\
\Longrightarrow\quad & (k+1)dn'_1n'_2 - dn'_1 - dn'_2 \leq -q \leq 0, \\
\Longrightarrow\quad & kdn'_1n'_2 + dn'_1n'_2 - dn'_1 - dn'_2 + d - d \leq 0, \\
\Longrightarrow\quad & d[kn'_1n'_2 + (n'_1-1)(n'_2-1) - 1] \leq 0,
\end{split}
\end{equation*}
which is possible only for $k=n'_1=n'_2=1$.

If $q + dn'_1n'_2 - dn'_1 - dn'_2 + 1 = 0$ then
\begin{equation*}
\begin{split}
 & dn'_1n'_2 - dn'_1 - dn'_2 + d - d + 1 = -q \leq 0 \\
\Longrightarrow\quad & d[(n'_1-1)(n'_2-1)-1]+1 \leq 0,
\end{split}
\end{equation*}
which holds only for $n'_1=1$ or $n'_2=1$. Without loss of generality assume that $n'_2=1$. Then $q + dn'_1n'_2 - dn'_1 - dn'_2 + 1 = q + dn'_1 - dn'_1 - d + 1 = q-d+1 = 0$. Hence $q+1=d=qk+1$. If $q=0$ then $d=1$ and $n_2=1$. If $q>0$ then $k=1$.

Assume now that $r>1$. Then $q + rdn'_1n'_2 - dn'_1 - dn'_2 + 1 > 0$, so by (\ref{eq20}) we have
\begin{equation*}
\begin{split}
 & kdn'_1n'_2 - 1 \leq q + rdn'_1n'_2 - dn'_1 - dn'_2 + 1 \\
\Longrightarrow\quad & (k-r)dn'_1n'_2 + dn'_1 + dn'_2 \leq q+2 \leq d+2 \\
\Longrightarrow\quad & d[(k-r)n'_1n'_2 + n'_1 + n'_2 - 1] \leq 2.
\end{split}
\end{equation*}
Since all the numbers $k-r,d,n'_1,n'_2$ are $\geq 1$ and one of the values $d,n'_1,n'_2$ is $>2$ (because one of the values $n_1,n_2$ is $>2$), thus $d[(k-r)n'_1n'_2 + n'_1 + n'_2 - 1] > 2$ - a contradiction.

Summing up our discussion, we see that if the assumptions of the proposition are satisfied then $n'_1=1$ or $n'_2=1$, which means that $\gcd(n_1,n_2)=d$ is equal to $n_1$ or $n_2$. Thus $n_1\mid n_2$ or $n_2\mid n_1$.

If $n_1,n_2>1$ then one of the following conditions holds:
\begin{itemize}
\item $r=0$ and $q - dn'_1 - dn'_2 + 1 < 0$;
\item $r=1$ and $q + dn'_1n'_2 - dn'_1 - dn'_2 + 1 = 0$.
\end{itemize}
However, then we have $k=1$, which means that $n_3 = kdn'_1n'_2 = \eqlcm(n_1,n_2) = \max\{n_1,n_2\}$.
\end{proof}

\pagebreak

\section{Arithmetic properties of the sequences of even and odd derangements}\label{sec4}

The knowledge of arithmetic properties of the sequence of derangements suggests us to explore the sequences of numbers of even and odd derangements. Let us denote by $D_n^{(e)}$ the number of all even derangements of an $n$-element set, i. e. the number of all even permutations of a set with $n$ elements which have no fixed points. Similarly, let $D_n^{(o)}$ denote the number of all odd derangements of a set with $n$ elements. The sequences $(D_n^{(e)})_{n\in\N}$ and $(D_n^{(o)})_{n\in\N}$ satisfy the following system of recurrence relations
\begin{equation}\label{eq23}
\begin{split}
D_0^{(e)}=1,\mbox{ } D_1^{(e)}=0,\mbox{ } & D_0^{(o)}=0,\mbox{ } D_1^{(o)}=0, \\
D_n^{(e)}=(n-1)(D_{n-2}^{(o)}+D_{n-1}^{(o)}),\mbox{ } & D_n^{(o)}=(n-1)(D_{n-2}^{(e)}+D_{n-1}^{(e)}), n\geq 2.
\end{split}
\end{equation}

The first terms of the sequences $(D_n^{(e)})_{n\in\N}$ and $(D_n^{(o)})_{n\in\N}$ are presented in the table below.
\begin{equation*}
\begin{tabular}{c|ccccccccccc}
n & 0 & 1 & 2 & 3 & 4 & 5 & 6 & 7 & 8 & 9 & 10 \\ \hline
$D_n^{(e)}$ & 1 & 0 & 0 & 2 & 3 & 24 & 130 & 930 & 7413 & 66752 & 667476 \\
$D_n^{(o)}$ & 0 & 0 & 1 & 0 & 6 & 20 & 135 & 924 & 7420 & 66744 & 667485
\end{tabular}
\end{equation*}

The relation $D_n^{(e)}=(n-1)(D_{n-2}^{(o)}+D_{n-1}^{(o)}), n\geq 2,$ is provided by the following argument. Each even derangement $\sigma$ of the set $\{1,2,...,n\}$ can be written in the form $\sigma = \sigma\circ (n, \sigma(n))\circ (n, \sigma(n))$, where $\sigma\circ (n, \sigma(n))$ is an odd permutation with one or two fixed points. Thus $\sigma\circ (n, \sigma(n))$ can be treated as a derangement of a set with $n-2$ or $n-1$ elements, so it can be chosen in $D_{n-2}^{(o)}+D_{n-1}^{(o)}$ ways. Furthermore, the number $\sigma(n)$ can be chosen in $n-1$ ways. The relation $D_n^{(e)}=(n-1)(D_{n-2}^{(o)}+D_{n-1}^{(o)}), n\geq 2,$ can be explained in the same way.

Certainly $D_n^{(o)}+D_n^{(e)}=D_n$ for $n\in\N$. From the relations (\ref{eq23}) we can compute $D_n^{(o)}-D_n^{(e)}$ which will allow us to write $D_n^{(o)}$ and $D_n^{(e)}$ as expressions dependent only on $D_n$.

\begin{prop}
$D_n^{(o)}-D_n^{(e)}=(-1)^n(n-1)$ for $n\in\N$.
\end{prop}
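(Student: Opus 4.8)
The plan is to reduce the statement to a single homogeneous recurrence for the difference sequence and then finish by a straightforward two-step induction. First I would introduce the auxiliary sequence $\delta_n := D_n^{(o)} - D_n^{(e)}$ and record its first two values directly from the initial data in (\ref{eq23}): since $D_0^{(o)} = 0$, $D_0^{(e)} = 1$ and $D_1^{(o)} = D_1^{(e)} = 0$, we get $\delta_0 = -1$ and $\delta_1 = 0$. These will serve as the base cases of the induction, and one checks they already agree with the target formula, because $(-1)^0(0-1) = -1$ and $(-1)^1(1-1) = 0$.

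Next I would subtract the two recurrences in (\ref{eq23}) from one another. For $n \geq 2$ we have
\begin{equation*}
D_n^{(o)} - D_n^{(e)} = (n-1)\left[\left(D_{n-2}^{(e)} + D_{n-1}^{(e)}\right) - \left(D_{n-2}^{(o)} + D_{n-1}^{(o)}\right)\right],
\end{equation*}
and collecting the differences term by term this collapses to the clean homogeneous relation
\begin{equation*}
\delta_n = -(n-1)\left(\delta_{n-1} + \delta_{n-2}\right), \quad n \geq 2.
\end{equation*}
This step is the heart of the argument: it eliminates all reference to the individual sequences and leaves a recurrence involving $\delta$ alone.

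Then I would prove the formula $\delta_n = (-1)^n(n-1)$ by induction on $n$, the cases $n = 0, 1$ being the base cases above. Assuming the formula holds for $n-1$ and $n-2$, I compute
\begin{equation*}
\delta_{n-1} + \delta_{n-2} = (-1)^{n-1}(n-2) + (-1)^{n-2}(n-3) = (-1)^{n-2}\left[-(n-2) + (n-3)\right] = (-1)^{n-1},
\end{equation*}
using $(-1)^{n-1} = -(-1)^{n-2}$ to factor. Substituting into the recurrence gives $\delta_n = -(n-1)(-1)^{n-1} = (-1)^n(n-1)$, which completes the induction and hence the proof that $D_n^{(o)} - D_n^{(e)} = (-1)^n(n-1)$ for all $n \in \N$.

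There is no genuine obstacle here; the only point demanding attention is the sign bookkeeping in the inductive step, specifically the telescoping of $-(n-2) + (n-3) = -1$ together with the parity identity $(-1)^{n-1} = -(-1)^{n-2}$. Once the difference recurrence is isolated, everything else is routine. It is worth noting that this proposition is exactly the ingredient needed to combine with the identity $D_n^{(o)} + D_n^{(e)} = D_n$ in order to express $D_n^{(o)}$ and $D_n^{(e)}$ individually in terms of $D_n$, namely $D_n^{(e)} = \tfrac{1}{2}\left(D_n - (-1)^n(n-1)\right)$ and $D_n^{(o)} = \tfrac{1}{2}\left(D_n + (-1)^n(n-1)\right)$, which is presumably the motivation for stating it.
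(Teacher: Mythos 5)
Your proof is correct and follows essentially the same route as the paper: both subtract the two recurrences in (\ref{eq23}) to get a relation for the difference $D_n^{(o)}-D_n^{(e)}$ and then run a two-step induction with base cases $n=0,1$. The only difference is cosmetic (you name the difference $\delta_n$ and isolate the homogeneous recurrence explicitly before inducting, whereas the paper carries out the same computation inline).
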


\begin{proof}
Obviously the equality is true for $n\in\{0,1\}$. Assume now that $n\geq 2$ and the equality holds for $n-2$ and $n-1$. Then
\begin{equation*}
\begin{split}
& D_n^{(o)}-D_n^{(e)}=(n-1)[D_{n-2}^{(e)}+D_{n-1}^{(e)}-D_{n-2}^{(o)}-D_{n-1}^{(o)}] \\
= & (n-1)[D_{n-2}^{(e)}-D_{n-2}^{(o)}+D_{n-1}^{(e)}-D_{n-1}^{(o)}] \\
= & (n-1)[-(-1)^{n-2}(n-3)-(-1)^{n-1}(n-2)] \\
= & (-1)^n(n-1)[-(n-3)+(n-2)]=(-1)^n(n-1).
\end{split}
\end{equation*}
\end{proof}

\begin{cor}\label{cor6}
$D_n^{(o)} = \frac{D_n+(-1)^n(n-1)}{2} = \frac{1}{2}\left[\sum_{j=0}^{n-1} \frac{n!}{j!} (-1)^j + (-1)^nn\right]$,\newline $D_n^{(e)} = \frac{D_n-(-1)^n(n-1)}{2} = \frac{1}{2}\left[\sum_{j=0}^{n-1} \frac{n!}{j!} (-1)^j - (-1)^n(n-2)\right]$ for $n\in\N$.
\end{cor}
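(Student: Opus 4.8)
The plan is to treat the two identities as a linear system in the unknowns $D_n^{(o)}$ and $D_n^{(e)}$. First I would combine the two relations already in hand: the elementary observation $D_n^{(o)}+D_n^{(e)}=D_n$, valid for all $n\in\N$ since every derangement is either even or odd, together with the Proposition just proved, $D_n^{(o)}-D_n^{(e)}=(-1)^n(n-1)$. Adding these two equalities and dividing by $2$ yields $D_n^{(o)}=\frac{D_n+(-1)^n(n-1)}{2}$, while subtracting them gives $D_n^{(e)}=\frac{D_n-(-1)^n(n-1)}{2}$. This already establishes the first equality in each of the two lines of the statement.

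For the second equality in each line I would insert the explicit series $D_n=\sum_{j=0}^n \frac{n!}{j!}(-1)^j$ (available from the earlier computation of $\frac{D_n}{n!}=\sum_{j=0}^n\frac{(-1)^j}{j!}$) and split off its top summand. Since the $j=n$ term equals $(-1)^n$, one has $D_n=\sum_{j=0}^{n-1}\frac{n!}{j!}(-1)^j+(-1)^n$. Substituting this into $D_n^{(o)}=\frac{1}{2}(D_n+(-1)^n(n-1))$ and collecting the two $(-1)^n$ contributions, namely $(-1)^n+(-1)^n(n-1)=(-1)^n n$, gives $\frac{1}{2}\left[\sum_{j=0}^{n-1}\frac{n!}{j!}(-1)^j+(-1)^n n\right]$. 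Likewise, for $D_n^{(e)}=\frac{1}{2}(D_n-(-1)^n(n-1))$ the constant terms combine as $(-1)^n-(-1)^n(n-1)=(-1)^n(2-n)=-(-1)^n(n-2)$, producing the claimed expression.

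There is essentially no obstacle here: the mathematical content lies entirely in the two input relations, both of which are already established, so the corollary follows by solving a $2\times 2$ linear system and rewriting $D_n$ through its explicit series. The only point demanding any care is the bookkeeping of the $(-1)^n$ terms when the top summand of the derangement series is isolated, and the routine simplification $2-n=-(n-2)$ in the even case.
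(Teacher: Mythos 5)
Your proposal is correct and matches the paper's intended derivation: the paper states the corollary as an immediate consequence of $D_n^{(o)}+D_n^{(e)}=D_n$ and the preceding proposition $D_n^{(o)}-D_n^{(e)}=(-1)^n(n-1)$, solved exactly as a $2\times 2$ system, with the second form obtained by substituting the closed formula $D_n=\sum_{j=0}^{n}\frac{n!}{j!}(-1)^j$ and absorbing the $j=n$ term. Your sign bookkeeping checks out in both cases.
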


The formulae in Corollary \ref{cor6} combined with properties of the sequence of derangements will give us properties of the sequences of even and odd derangements.

First, we establish the asymptotics of $(D_n^{(e)})_{n\in\N}$ and $(D_n^{(o)})_{n\in\N}$. Directly from Corollary \ref{cor6} and the fact that $\lim_{n\rightarrow +\infty}\frac{D_n}{n!}=\frac{1}{e}$ we obtain the following:

\begin{prop}\label{prop18,2}
$\lim_{n\rightarrow +\infty}\frac{D_n^{(e)}}{n!}=\lim_{n\rightarrow +\infty}\frac{D_n^{(o)}}{n!}=\frac{1}{2e}$
\end{prop}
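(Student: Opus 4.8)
The plan is to read off the result directly from the closed forms for $D_n^{(e)}$ and $D_n^{(o)}$ established in Corollary \ref{cor6} together with the known asymptotic limit $\lim_{n\rightarrow +\infty}\frac{D_n}{n!}=\frac{1}{e}$, which was derived in the asymptotics part of Section \ref{subsec3.2}. Since both target quantities are built from $D_n$ by adding or subtracting the same correction term $(-1)^n(n-1)$ and halving, the entire argument reduces to dividing by $n!$ and passing to the limit term by term.

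Concretely, I would start from the identity
\begin{equation*}
\frac{D_n^{(o)}}{n!} = \frac{1}{2}\cdot\frac{D_n}{n!} + \frac{(-1)^n(n-1)}{2\cdot n!},
\end{equation*}
and the analogous one for $D_n^{(e)}$ with a minus sign in front of the second summand. The first step is to observe that the correction term vanishes in the limit: since
\begin{equation*}
\left|\frac{(-1)^n(n-1)}{2\cdot n!}\right| = \frac{n-1}{2\cdot n!} \leq \frac{1}{2(n-2)!} \xrightarrow[n\to\infty]{} 0,
\end{equation*}
we have $\lim_{n\rightarrow +\infty}\frac{(-1)^n(n-1)}{2\cdot n!} = 0$. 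The second step is to invoke the already established fact that $\frac{D_n}{n!}\to\frac{1}{e}$, so that $\frac{1}{2}\cdot\frac{D_n}{n!}\to\frac{1}{2e}$.

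Combining these two limits by the algebra of limits yields $\lim_{n\rightarrow +\infty}\frac{D_n^{(o)}}{n!}=\frac{1}{2e}$, and the identical computation with the sign flipped on the (vanishing) correction term gives $\lim_{n\rightarrow +\infty}\frac{D_n^{(e)}}{n!}=\frac{1}{2e}$ as well. I do not anticipate any genuine obstacle here: the only thing worth checking carefully is that the correction term really is negligible against $n!$, which is immediate from the factorial growth, and everything else is a mechanical application of Corollary \ref{cor6} and the previously proved asymptotics for $(D_n)_{n\in\N}$.
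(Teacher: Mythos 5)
Your proof is correct and follows exactly the route the paper takes: the paper derives this proposition directly from Corollary \ref{cor6} and the limit $\lim_{n\rightarrow +\infty}\frac{D_n}{n!}=\frac{1}{e}$, which is precisely your argument with the (easy) vanishing of the correction term $\frac{(-1)^n(n-1)}{2\cdot n!}$ made explicit. No issues.
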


Now we show periodicity of sequences of remainders $(D_n^{(o)}\pmod{d})_{n\in\N}$ and $(D_n^{(e)}\pmod{d})_{n\in\N}$ for a given positive integer $d>1$ and compute their basic period.

\begin{prop}
Let $d>1$ be a positive integer. Then the sequences $(D_n^{(o)}\pmod{d})_{n\in\N}$ and $(D_n^{(e)}\pmod{d})_{n\in\N}$ are periodic with period $2d$.
\end{prop}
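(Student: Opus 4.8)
The plan is to reduce everything to the known periodicity of $(D_n \pmod{m})_{n\in\N}$ via the closed forms of Corollary \ref{cor6}. Writing $2D_n^{(o)} = D_n + (-1)^n(n-1)$ and $2D_n^{(e)} = D_n - (-1)^n(n-1)$, I would aim to prove directly that $d \mid D_{n+2d}^{(o)} - D_n^{(o)}$ and $d \mid D_{n+2d}^{(e)} - D_n^{(e)}$ for every $n\in\N$, which is exactly the assertion that $2d$ is a period of both residue sequences.

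First I would compute, for the odd case,
\begin{equation*}
2\bigl(D_{n+2d}^{(o)} - D_n^{(o)}\bigr) = (D_{n+2d} - D_n) + \bigl[(-1)^{n+2d}(n+2d-1) - (-1)^n(n-1)\bigr].
\end{equation*}
Since $2d$ is even, $(-1)^{n+2d} = (-1)^n$, so the bracketed term collapses to $(-1)^n\bigl[(n+2d-1)-(n-1)\bigr] = (-1)^n\cdot 2d$. Hence
\begin{equation*}
2\bigl(D_{n+2d}^{(o)} - D_n^{(o)}\bigr) = (D_{n+2d} - D_n) + (-1)^n\cdot 2d.
\end{equation*}
The second summand is divisible by $2d$, so the whole task comes down to showing $2d \mid D_{n+2d} - D_n$.

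For this I would invoke the periodicity result established in Section \ref{subsec3.1}: the basic period of $(D_n \pmod{m})_{n\in\N}$ equals $m$ when $m$ is even and $2m$ when $m$ is odd. Applying it with the even modulus $m = 2d$ gives basic period $2d$, whence $2d \mid D_{n+2d} - D_n$ for all $n$. Combining the two displays, $2d \mid 2\bigl(D_{n+2d}^{(o)} - D_n^{(o)}\bigr)$, and dividing by $2$ yields $d \mid D_{n+2d}^{(o)} - D_n^{(o)}$. The even case is identical up to a sign: the bracket becomes $-(-1)^n\cdot 2d$, still a multiple of $2d$, and the same conclusion follows for $D_n^{(e)}$.

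The one subtlety to flag, and the main place a careless argument would go wrong, is the division by $2$. It is tempting to note merely that $2d$ is a period of $(D_n\pmod{d})_{n\in\N}$ and then try to halve the resulting congruence; but $2$ is not invertible modulo an even $d$, so reducing modulo $d$ too early loses information. The fix, as above, is to keep the factor $2$ on the left and instead use that $D_n$ is periodic modulo $2d$ rather than merely modulo $d$. The evenness of the modulus $2d$ is precisely what forces its period to be $2d$ and not $4d$, so no spurious doubling creeps in and the final congruence is modulo $d$ exactly as required.
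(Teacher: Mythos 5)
Your proof is correct and follows essentially the same route as the paper: both reduce to the closed form $2D_n^{(o)} = D_n + (-1)^n(n-1)$ and the known periodicity of $(D_n \pmod{m})_{n\in\N}$. The only difference is cosmetic: the paper splits into cases ($d$ odd, where $2$ is invertible modulo $d$; $d$ even, where one works modulo $2d$ and halves at the end), whereas you work modulo $2d$ uniformly, which handles both parities at once and is a slight streamlining.
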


\begin{proof}
We will present the proof only for the sequence $(D_n^{(o)}\pmod{d})_{n\in\N}$ because the same argument allows us to claim that $(D_n^{(e)}\pmod{d})_{n\in\N}$ is periodic of period $2d$.

Recall from Section \ref{subsec3.1} that the basic period of the sequence of remainders $(D_n\pmod{d})_{n\in\N}$ is equal to $d$ if $2\mid d$ and $2d$ otherwise. The sequence $((-1)^n\pmod{d})_{n\in\N}$ has period $2$ for $d>2$ and is constant for $d=2$. Obviously, the sequence $(n-1\pmod{d})_{n\in\N}$ has period $d$. Thus, if $2\nmid d$ then the sequence $(2D_n^{(o)}\pmod{d})_{n\in\N_2}=(D_n+(-1)^n(n-1)\pmod{d})_{n\in\N}$ is periodic of period $2d$, so does the sequence $(D_n^{(o)}\pmod{d})_{n\in\N_2}$. If $2\mid d$ then $2d$ is a period of the sequence $(2D_n^{(o)}\pmod{2d})_{n\in\N}=(D_n+(-1)^n(n-1)\pmod{2d})_{n\in\N}$, hence the sequence $(D_n^{(o)}\pmod{d})_{n\in\N_2}$ has period $2d$.
\end{proof}

The relations (\ref{eq23}) show us that $n-1\mid D_n^{(e)}, D_n^{(o)}$ for each $n\in\N$. Let us put $E_n^{(e)}=\frac{D_n^{(e)}}{n-1}=D_{n-2}^{(o)}+D_{n-1}^{(o)}$ and $E_n^{(o)}=\frac{D_n^{(o)}}{n-1}=D_{n-2}^{(e)}+D_{n-1}^{(e)}$ for $n\geq 2$. Recall that in Section \ref{subsubsec3.1.3} we defined $E_n=\frac{D_n}{n-1}$ for $n\geq 2$. Using Corollary \ref{cor6} we can write
\begin{equation*}
E_n^{(e)} = \frac{E_n-(-1)^n}{2}, E_n^{(o)} = \frac{E_n+(-1)^n}{2}, n\geq 2.
\end{equation*}
Let $\widetilde{E}_n^{(e)}=(-1)^nE_n^{(e)}$ and $\widetilde{E}_n^{(o)}=(-1)^nE_n^{(o)}$ for $n\geq 2$. Using Corollary \ref{cor6}, for $n\geq 2$ we obtain
\begin{equation*}
\begin{split}
\widetilde{E}_n^{(e)}= & (-1)^n(D_{n-2}^{(o)}+D_{n-1}^{(o)})=\frac{1}{2}(\widetilde{D}_{n-2}+(n-3)-\widetilde{D}_{n-1}-(n-2)) \\
= & \frac{1}{2}(\widetilde{D}_{n-2}-\widetilde{D}_{n-1}-1)=\frac{1}{2}(\widetilde{E}_n-1)
\end{split}
\end{equation*}
and similarly
\begin{equation*}
\begin{split}
\widetilde{E}_n^{(o)}= & (-1)^n(D_{n-2}^{(e)}+D_{n-1}^{(e)})=\frac{1}{2}(\widetilde{D}_{n-2}-(n-3)-\widetilde{D}_{n-1}+(n-2)) \\
= & \frac{1}{2}(\widetilde{D}_{n-2}-\widetilde{D}_{n-1}+1)=\frac{1}{2}(\widetilde{E}_n+1),
\end{split}
\end{equation*}
where $\widetilde{E}_n=(-1)^nE_n=\widetilde{D}_{n-2}-\widetilde{D}_{n-1}$. Since for each positive integer $d>1$ the sequence $(\widetilde{D}_n\pmod{d})_{n\in\N}$ is periodic of period $d$, hence the sequences $(\widetilde{E}_n^{(e)}\pmod{d})_{n\in\N}$ and $(\widetilde{E}_n^{(o)}\pmod{d})_{n\in\N}$ are periodic of period $d$, if $2\nmid d$, and $2d$ otherwise. Hence, if $2\nmid d$ and $n_1\equiv n_2\pmod{d}$, or $2\mid d$ and $n_1\equiv n_2\pmod{2d}$ then $d\mid E_{n_1}^{(e)}\iff d\mid E_{n_2}^{(e)}$ and $d\mid E_{n_1}^{(o)}\iff d\mid E_{n_2}^{(o)}$. If we combine this with the fact that $E_2^{(e)}=E_3^{(o)}=0$ then we obtain the following divisibilities.

\begin{prop}\label{prop18,3}
The following divisibilities hold:
\begin{itemize}
\item If $2\nmid d$ then $d\mid \frac{D_{md+2}^{(e)}}{md+1}, \frac{D_{md+3}^{(o)}}{md+2}$ for all $m\in\N$. In particular, $d(md+1)\mid D_{md+2}^{(e)}$ and $d(md+2)\mid D_{md+3}^{(o)}$.
\item If $2\mid d$ then $d\mid \frac{D_{2md+2}^{(e)}}{2md+1}, \frac{D_{2md+3}^{(o)}}{2md+2}$ for all $m\in\N$. In particular, $d(2md+1)\mid D_{2md+2}^{(e)}$ and $d(2md+2)\mid D_{2md+3}^{(o)}$.
\end{itemize}
\end{prop}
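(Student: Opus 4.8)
The plan is to reduce the two families of divisibilities to the divisibility properties of the auxiliary sequences $(\widetilde{E}_n^{(e)})_{n\geq 2}$ and $(\widetilde{E}_n^{(o)})_{n\geq 2}$ introduced immediately above, and then to invoke their periodicity modulo $d$. First I would observe that, since $\widetilde{E}_n^{(e)}=(-1)^nE_n^{(e)}$ and $\gcd(d,(-1)^n)=1$, the condition $d\mid E_n^{(e)}$ is equivalent to $d\mid\widetilde{E}_n^{(e)}$, and likewise $d\mid E_n^{(o)}\iff d\mid\widetilde{E}_n^{(o)}$. Hence it suffices to decide, for each residue of $n$, whether $d$ divides $\widetilde{E}_n^{(e)}$ (respectively $\widetilde{E}_n^{(o)}$), a question that by periodicity depends only on $n$ modulo the period.

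Second, I would recall the periodicity statement established just before the proposition: the sequence $(\widetilde{E}_n^{(e)}\pmod{d})_{n\geq 2}$, and similarly $(\widetilde{E}_n^{(o)}\pmod{d})_{n\geq 2}$, is periodic of period $d$ when $2\nmid d$ and of period $2d$ when $2\mid d$. Thus for odd $d$ the truth of $d\mid\widetilde{E}_n^{(e)}$ depends only on $n$ modulo $d$, while for even $d$ it depends only on $n$ modulo $2d$; the same dichotomy holds for $\widetilde{E}_n^{(o)}$.

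Third, I would pin down the two base values. From $E_n^{(e)}=D_{n-2}^{(o)}+D_{n-1}^{(o)}$ and $E_n^{(o)}=D_{n-2}^{(e)}+D_{n-1}^{(e)}$ together with $D_0^{(o)}=D_1^{(o)}=D_1^{(e)}=D_2^{(e)}=0$ (read off from the relations (\ref{eq23}) and the table of initial values), I obtain $E_2^{(e)}=0$ and $E_3^{(o)}=0$, so $d\mid E_2^{(e)}$ and $d\mid E_3^{(o)}$ for every $d$. Now for odd $d$ one has $md+2\equiv 2\pmod{d}$ and $md+3\equiv 3\pmod{d}$ for all $m\in\N$, so the period-$d$ periodicity yields $d\mid E_{md+2}^{(e)}$ and $d\mid E_{md+3}^{(o)}$; for even $d$ the same reduction applies with the stronger congruences $2md+2\equiv 2\pmod{2d}$ and $2md+3\equiv 3\pmod{2d}$, giving $d\mid E_{2md+2}^{(e)}$ and $d\mid E_{2md+3}^{(o)}$. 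Finally, since $E_n^{(e)}=\frac{D_n^{(e)}}{n-1}$ and $E_n^{(o)}=\frac{D_n^{(o)}}{n-1}$ are integers, multiplying each divisibility $d\mid E_n^{(\cdot)}$ back by the corresponding factor $n-1$ produces the ``in particular'' conclusions $d(md+1)\mid D_{md+2}^{(e)}$ and $d(md+2)\mid D_{md+3}^{(o)}$, and their even-$d$ analogues.

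I do not expect a genuine obstacle: the argument is an immediate bookkeeping consequence of the periodicity already proved and the vanishing of the two base terms. The only point requiring a moment's care is matching the modulus of the residue class (namely $d$ versus $2d$) to the parity of $d$, exactly as dictated by the period of $(\widetilde{E}_n^{(\cdot)}\pmod{d})$; using $d$ rather than $2d$ in the even case would invalidate the reduction of $E_{2md+2}^{(e)}$ and $E_{2md+3}^{(o)}$ to the base indices $2$ and $3$.
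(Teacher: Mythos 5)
Your proof is correct and follows essentially the same route as the paper: the paragraph preceding the proposition already derives the periodicity of $(\widetilde{E}_n^{(e)}\pmod{d})$ and $(\widetilde{E}_n^{(o)}\pmod{d})$ (period $d$ for odd $d$, $2d$ for even $d$) and combines it with $E_2^{(e)}=E_3^{(o)}=0$ exactly as you do. Your care in matching the modulus ($d$ versus $2d$) to the parity of $d$ is precisely the point the paper's statement encodes in its two cases.
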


If we substitute $d=p^k$, where $p$ is a prime number and $k$ is a positive integer then we infer that $p^k$ divides numbers $E_n^{(e)}$ and $E_m^{(o)}$ for infinitely many $n,m\in\N$. This means that each prime number $p$ divides numbers $E_n^{(e)}$ and $E_m^{(o)}$ for some $n,m\in\N$ and $p$-adic valuation on the sets $\{E_n^{(e)}\}_{n\in\N_2}$ and $\{E_n^{(o)}\}_{n\in\N_2}$ is unbounded. These facts are in striking opposition to results on prime divisors and $p$-adic valuations of numbers $E_n$, $n\in\N$. Let us recall that not each prime number $p$ divides $E_n$ for some $n\in\N$ and even when $p$ does divide $E_n$ for some $n\in\N$ then $p$-adic valuation can be bounded (this situation happens for $p=2633$).

Despite the fact that $p$-adic valuation of numbers $E_n^{(e)}$ and $E_n^{(o)}$, $n\in\N$, is unbounded, one can check its behavior. This task is simple, since the sequences $(2E_n^{(e)})_{n\in\N_2}$ and $(2E_n^{(o)})_{n\in\N_2}$ have pseudo-polynomial decomposition modulo $p$ on the set $\{n\in\N: n\geq 2\}$. For $n\geq 2$ we have
\begin{equation}\label{eq23,5}
\begin{split}
& 2E_n^{(e)}=2(-1)^n\widetilde{E}_n^{(e)}=(-1)^n(\widetilde{E}_n-1) \\
& 2E_n^{(o)}=2(-1)^n\widetilde{E}_n^{(o)}=(-1)^n(\widetilde{E}_n+1) \\
\end{split}
\end{equation}
and $(f_{p,k}(X-2)-f_{p,k}(X-1),1)_{k\in\N_2}$ is a pseudo-polynomial decomposition of $(\widetilde{E}_n)_{n\in\N_2}$, where $f_{p,k} = \sum_{j=0}^{kp-1} (-1)^j \prod_{i=0}^{j-1} (X-i), k>1$ (see Section \ref{subsubsec3.1.3}). Hence $(f_{p,k}(X-2)-f_{p,k}(X-1)-1,(-1)^n)_{k\in\N_2}$ is a pseudo-polynomial decomposition of $(2E_n^{(e)})_{n\in\N_2}$ and $(f_{p,k}(X-2)-f_{p,k}(X-1)+1,(-1)^n)_{k\in\N_2}$ is a pseudo-polynomial decomposition of $(2E_n^{(o)})_{n\in\N_2}$.

\begin{thm}\label{thm16}
Let $p\in\bbb{P}$, $k\in\N_+$ and $n_k\geq 2$ be such an integer that $p^k\mid\frac{D_{n_k}^{(e)}}{n_k-1}$. Let us define $q_p(n_k) = \frac{2}{p}\left(\frac{(-1)^{n_k+p}D_{n_k+p}^{(e)}}{n_k+p-1}-\frac{(-1)^{n_k}D_{n_k}^{(e)}}{n_k-1}\right) = \frac{2}{p}\left(\frac{(-1)^{n_k+p}D_{n_k+p}^{(o)}}{n_k+p-1}-\frac{(-1)^{n_k}D_{n_k}^{(o)}}{n_k-1}\right)$ (where the last equality holds because of equations (\ref{eq23,5})).

For $p>2$ we have the following implications.
\begin{itemize}
\item If $p\nmid q_p(n_k)$ then there exists a unique $n_{k+1}$ modulo $p^{k+1}$ such that $n_{k+1}\equiv n_k \pmod{p^k}$ and $p^{k+1}\mid\frac{D_n^{(e)}}{n-1}$ for all $n\geq 2$ congruent to $n_{k+1}$ modulo $p^{k+1}$. What is more, $n_{k+1} \equiv n_k-\frac{D_{n_k}^{(e)}}{(n_k-1)q_p(n_k)} \pmod{p^{k+1}}$.
\item If $p\mid q_p(n_k)$ and $p^{k+1}\mid\frac{D_{n_k}^{(e)}}{n_k-1}$ then $p^{k+1}\mid\frac{D_{n}^{(e)}}{n-1}$ for all $n$ satisfying $n\equiv n_k \pmod{p^k}$ and $n\geq 2$.
\item If $p\mid q_p(n_k)$ and $p^{k+1}\nmid\frac{D_{n_k}^{(e)}}{n_k-1}$ then $p^{k+1}\nmid\frac{D_{n}^{(e)}}{n-1}$ for any $n$ satisfying $n\equiv n_k \pmod{p^k}$ and $n\geq 2$.
\end{itemize}
In particular, if $k=1$, $p\mid \frac{D_{n_1}^{(e)}}{n_1-1}$ and $p\nmid\frac{2}{p}\left(\frac{D_{n_1+p}^{(e)}}{n_1+p-1}+\frac{D_{n_1}^{(e)}}{n_1-1}\right)$ then for any $l\in\mathbb{N}_+$ there exists a unique $n_l$ modulo $p^l$ such that $n_l\equiv n_1 \pmod{p}$ and $v_p\left(\frac{D_n^{(e)}}{n-1}\right)\geq{l}$ for all $n\geq 2$ congruent to $n_l$ modulo $p^l$. Moreover, $n_l$ satisfies the congruence $n_l \equiv n_{l-1}-\frac{D_{n_{l-1}}^{(e)}}{(n_{l-1}-1)q_p(n_1)} \pmod{p^l}$ for $l>1$.

If $p=2$ then for each $n_1\geq 2$ and $l\in\N_+$ there exists a unique $n_l$ modulo $2^l$ such that $n_l\equiv n_1 \pmod{2}$ and $v_2\left(\frac{D_n^{(e)}}{n-1}\right)\geq l-1$ for all $n\geq 2$ congruent to $n_l$ modulo $2^l$. Moreover, $n_l$ satisfies the congruence $n_l \equiv n_{l-1}-\frac{D_{n_{l-1}}^{(e)}}{(n_{l-1}-1)q_2(n_1)} \pmod{2^l}$ for $l>1$.

The whole statement above remains true, if we change $D_n^{(e)}$ by $D_n^{(o)}$.
\end{thm}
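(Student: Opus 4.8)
The plan is to deduce the whole statement from the general Hensel's lemma (Theorem \ref{thm1}) applied to the auxiliary sequences $(2E_n^{(e)})_{n\in\N_2}$ and $(2E_n^{(o)})_{n\in\N_2}$, for which pseudo-polynomial decompositions modulo $p$ on $S=\N_2$ have already been exhibited, namely $(f_{p,k}(X-2)-f_{p,k}(X-1)-1,(-1)^n)_{k\in\N_2}$ and $(f_{p,k}(X-2)-f_{p,k}(X-1)+1,(-1)^n)_{k\in\N_2}$. Writing $a_n=2E_n^{(e)}=2\frac{D_n^{(e)}}{n-1}$ and $g_{p,2}(n)=(-1)^n$, the quantity from Theorem \ref{thm1} equals $\frac1p\big((-1)^{n_k+p}2E_{n_k+p}^{(e)}-(-1)^{n_k}2E_{n_k}^{(e)}\big)$, which is exactly the $q_p(n_k)$ declared in the statement; the identities $(-1)^n2E_n^{(e)}=\widetilde E_n-1$ and $(-1)^n2E_n^{(o)}=\widetilde E_n+1$ from (\ref{eq23,5}) show at once that the $(e)$- and $(o)$-versions of $q_p(n_k)$ coincide, since the additive constants cancel in the difference. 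First I would check the hypotheses of Theorem \ref{thm1}: $S=\N_2$ is dense in each residue class $\{n\in\N:n\equiv n_k\pmod{p^k}\}$ (it is cofinite in it, hence $p$-adically dense), so the density requirement of the first bullet is met.

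For $p>2$ the argument is essentially a translation. Since $p\nmid2$, we have $v_p(a_n)=v_p\big(\frac{D_n^{(e)}}{n-1}\big)$, so $p^k\mid\frac{D_{n_k}^{(e)}}{n_k-1}$ is equivalent to $p^k\mid a_{n_k}$, and likewise for higher powers. The three bullets of the theorem are then precisely the three cases of Theorem \ref{thm1} according to whether $v_p(q_p(n_k))=0$ or $>0$, and the ``in particular'' clause for $k=1$ is its iterated version. The point needing care is the explicit recursion for $n_{k+1}$: substituting $a_{n_k}=2E_{n_k}^{(e)}$ and $g_{p,k+1}(n_k)=(-1)^{n_k}$ into $n_{k+1}\equiv n_k-\frac{a_{n_k}}{g_{p,k+1}(n_k)q_p(n_k)}$ and simplifying the resulting unit factors (the $2$ and the sign $(-1)^{n_k}$) modulo $p^{k+1}$ yields the stated congruence; this is routine bookkeeping of those factors.

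The main obstacle is the prime $p=2$, where the extra factor $2$ in $a_n=2E_n^{(e)}$ collides with the prime itself. Here $v_2(a_n)=1+v_2\big(\frac{D_n^{(e)}}{n-1}\big)$, so ``$2^{l}\mid a_n$'' becomes ``$v_2\big(\frac{D_n^{(e)}}{n-1}\big)\ge l-1$'', which accounts for the shift by $1$ in the $p=2$ statement. To run Theorem \ref{thm1} from $k=1$ I must verify that $2\mid a_{n_1}$ always holds and that $v_2(q_2(n_1))=0$ for every $n_1$; the first is immediate because $\widetilde E_n$ is odd for all $n$, and the second reduces, via $q_2(n_k)=\frac{\widetilde E_{n_k+2}-\widetilde E_{n_k}}{2}$, to checking that $\widetilde E_{n+2}-\widetilde E_n\equiv2\pmod4$ for all $n$. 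Since $\widetilde E_n=\widetilde D_{n-2}-\widetilde D_{n-1}$ and $(\widetilde D_n\bmod4)_{n\in\N}$ is periodic of period $4$, this is a finite verification; it makes $q_2(n_1)$ a $2$-adic unit for \emph{every} residue of $n_1$, which is exactly why the $p=2$ conclusion is unconditional in $n_1$. Feeding $v_2(q_2(n_1))=0$ into the iterated ($k=1$) form of Theorem \ref{thm1} then produces, for each $l$, a unique $n_l$ modulo $2^l$ with $n_l\equiv n_1\pmod2$ and $v_2(a_n)\ge l$, i.e. $v_2\big(\frac{D_n^{(e)}}{n-1}\big)\ge l-1$.

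Finally, the statement for odd derangements follows by repeating the above with $(2E_n^{(o)})_{n\in\N_2}$ and its decomposition $(f_{p,k}(X-2)-f_{p,k}(X-1)+1,(-1)^n)_{k\in\N_2}$; since the two $q_p(n_k)$ agree and $v_p(2E_n^{(o)})$ relates to $v_p\big(\frac{D_n^{(o)}}{n-1}\big)$ in the same way (with the same $p>2$ versus $p=2$ split), every step transfers verbatim, which justifies the closing remark that the whole statement remains true with $D_n^{(e)}$ replaced by $D_n^{(o)}$.
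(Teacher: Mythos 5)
Your proposal is correct and follows essentially the same route as the paper: the author likewise applies Theorem \ref{thm1} to the doubled sequences $\bigl(2D_n^{(e)}/(n-1)\bigr)_{n\in\N_2}$ and $\bigl(2D_n^{(o)}/(n-1)\bigr)_{n\in\N_2}$ via their pseudo-polynomial decompositions, uses $v_p(2x)=v_p(x)$ for $p>2$ versus $v_2(2x)=v_2(x)+1$ to account for the shift at $p=2$, and settles the $p=2$ case by the finite check that $q_2(2)=1$ and $q_2(3)=-5$ are odd (your verification via the period-$4$ behaviour of $\widetilde D_n$ modulo $4$ is the same computation in different clothing).
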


\begin{proof}
We apply Theorem \ref{thm1} to the sequences $\left(\frac{2D_n^{(e)}}{n-1}\right)_{n\in\N_2}$ and $\left(\frac{2D_n^{(o)}}{n-1}\right)_{n\in\N_2}$ and use the obvious fact that $v_2\left(\frac{D_n^{(e)}}{n-1}\right)=v_2\left(\frac{2D_n^{(e)}}{n-1}\right)-1$ and $v_p\left(\frac{D_n^{(e)}}{n-1}\right)=v_p\left(\frac{2D_n^{(e)}}{n-1}\right)$ for prime number $p>2$.

For $p=2$, $q_2(2)=1$ and $q_2(3)=-5$ are odd, so Theorem \ref{thm1} gives us precise description of $2$-adic valuation of numbers $\frac{2D_n^{(e)}}{n-1}$ and $\frac{2D_n^{(o)}}{n-1}$, $n\in\N_2$.
\end{proof}

\begin{cor}\label{cor7}
We have the following formulae for $p$-adic valuations of numbers $\frac{D_n^{(e)}}{n-1}$ and $\frac{D_n^{(o)}}{n-1}$:
\begin{itemize}
\item $v_2\left(\frac{D_n^{(e)}}{n-1}\right) = v_2(n-2)-1$ for even $n\geq 2$.
\item $v_2\left(\frac{D_n^{(o)}}{n-1}\right) = v_2(n-3)-1$ for odd $n\geq 2$.
\item If $p$ is an odd prime number and $p\nmid q_p(2)$ then $v_p\left(\frac{D_n^{(e)}}{n-1}\right) = v_p(n-2)$ for $n\equiv 2\pmod{p}$.
\item If $p$ is an odd prime number and $p\nmid q_p(3)$ then $v_p\left(\frac{D_n^{(o)}}{n-1}\right) = v_p(n-3)$ for $n\equiv 3\pmod{p}$.
\end{itemize}
\end{cor}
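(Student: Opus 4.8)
The plan is to derive all four formulae from Theorem \ref{thm16} together with the two base identities $E_2^{(e)}=\frac{D_2^{(e)}}{2-1}=0$ and $E_3^{(o)}=\frac{D_3^{(o)}}{3-1}=0$ recorded above, and the values $q_2(2)=1$, $q_2(3)=-5$ computed in the proof of Theorem \ref{thm16}. For the even statements I would take the base index $n_1=2$, and for the odd statements the base index $n_1=3$; in each case the relevant residue branch is $n\equiv 2\pmod{p}$ (resp. $n\equiv 3\pmod{p}$), which is exactly the hypothesis under which the formula is asserted.

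First I would treat $p=2$. Since $E_2^{(e)}=0$, the integer $2$ is a genuine zero of the sequence $\left(\frac{2D_n^{(e)}}{n-1}\right)_{n\in\N_2}$, and $q_2(2)=1$ is odd, so we are in the non-degenerate case of Theorem \ref{thm16}. The theorem then produces, for each $l\in\N_+$, a unique residue $n_l$ modulo $2^l$ with $n_l\equiv 2\pmod{2}$ on whose class $v_2\!\left(\frac{D_n^{(e)}}{n-1}\right)\geq l-1$; because $n=2$ itself has infinite valuation, uniqueness forces $n_l\equiv 2\pmod{2^l}$. It then remains to upgrade this lower bound to an equality: comparing the levels $l$ and $l+1$ shows that $v_2\!\left(\frac{D_n^{(e)}}{n-1}\right)\geq l-1$ holds precisely when $n\equiv 2\pmod{2^l}$, i.e. precisely when $v_2(n-2)\geq l$, so setting $l=v_2(n-2)$ gives $v_2\!\left(\frac{D_n^{(e)}}{n-1}\right)=v_2(n-2)-1$ for even $n$. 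The same argument with $n_1=3$, $q_2(3)=-5$ and $E_3^{(o)}=0$ yields $v_2\!\left(\frac{D_n^{(o)}}{n-1}\right)=v_2(n-3)-1$ for odd $n$.

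For an odd prime $p$ with $p\nmid q_p(2)$ the reasoning is identical, except that now $v_p\!\left(\frac{2D_n^{(e)}}{n-1}\right)=v_p\!\left(\frac{D_n^{(e)}}{n-1}\right)$, so the shift by $1$ disappears. Theorem \ref{thm16} (applied with $n_1=2$, using $E_2^{(e)}=0$ to locate the zero at the integer $2$) gives the unique lift $n_l\equiv 2\pmod{p^l}$, and the same level-comparison turns "$v_p\geq l$" into "$v_p(n-2)\geq l$", whence $v_p\!\left(\frac{D_n^{(e)}}{n-1}\right)=v_p(n-2)$ for $n\equiv 2\pmod{p}$. The odd-derangement formula follows verbatim with $n_1=3$ and the hypothesis $p\nmid q_p(3)$. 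The restriction $n\equiv 2\pmod p$ (resp. $n\equiv 3$) is what keeps us in the single Hensel branch containing the zero, so that $2$ (resp. $3$) is the only relevant $p$-adic zero.

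The step I expect to be the main obstacle is the exactness, i.e. passing from the one-sided bound $v_p\geq l-1$ furnished by Theorem \ref{thm16} to the exact value. I would secure it through the simple-zero description of Section \ref{subsec2.2}: since $q_p(n_1)\equiv f'_{p,\infty}(n_1)\pmod p$ is a $p$-adic unit and $f_{p,\infty}$ vanishes at the integer $2$ (resp. $3$), expanding the approximating polynomial via (\ref{eq1}) as $f_{p,k}(n)=f_{p,k}(2)+(n-2)f'_{p,k}(2)+(n-2)^2r_k(n)$ shows that for $k$ larger than $v_p(n-2)$ the middle term dominates, so $v_p(f_{p,k}(n))=v_p(n-2)$, and hence $v_p$ of the sequence equals $v_p(n-2)$ (corrected by the factor $2$ when $p=2$). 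This local-isometry argument is precisely what converts the uniqueness in Theorem \ref{thm16} into the claimed equalities, and handling the factor of $2$ consistently between the $p=2$ and odd-$p$ cases is the only genuinely delicate bookkeeping.
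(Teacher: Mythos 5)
Your proposal is correct and follows essentially the same route as the paper: apply Theorem \ref{thm16} with base points $n_1=2$ (resp.\ $n_1=3$), use $E_2^{(e)}=0$ (resp.\ $E_3^{(o)}=0$) together with uniqueness to conclude $n_l\equiv 2$ (resp.\ $3$) modulo $p^l$ at every level, and convert the resulting ``if and only if'' into the exact valuation formula. The closing paragraph about the local-isometry expansion of $f_{p,k}$ is harmless but redundant, since the uniqueness clause of Theorem \ref{thm16} already supplies the converse direction needed for exactness.
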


\begin{proof}
By Theorem \ref{thm16}, for $l\in\N_+$ the only $n_l$ modulo $p^l$, such that $n_l\equiv 2\pmod{p}$ and $v_p\left(\frac{D_n^{(e)}}{n-1}\right)\geq l$ (respectively $v_2\left(\frac{D_n^{(e)}}{n-1}\right)\geq l-1$) for any $n\geq 2$ congruent to $n_l$ modulo $p^l$, is equal to $2$. Hence, if $n\equiv 2\pmod{p}$ and $n\geq 2$ then $v_p\left(\frac{D_n^{(e)}}{n-1}\right)\geq l$ (respectively $v_2\left(\frac{D_n^{(e)}}{n-1}\right)\geq l-1$) if and only if $v_p(n-2)\geq l$.

In a similar way we can prove the equalities of $p$-adic valuations of numbers $\frac{D_n^{(o)}}{n-1}$.
\end{proof}

According to numerical computations, among all odd prime numbers $p<10^6$, if $p\mid\frac{D_{n_1}^{(e)}}{n_1-1}$ then $p\nmid q_p(n_1)$. Thus, for any $l\in\mathbb{N}_+$ there exists a unique $n_l$ modulo $p^l$ such that $n_l\equiv n_1 \pmod{p}$ and $v_p\left(\frac{D_n^{(e)}}{n-1}\right)\geq{l}$  for all $n\geq 2$ congruent to $n_l$ modulo $p^l$. Summing up, if $p<10^6$ then we have the description of $p$-adic valuation of numbers $\frac{D_n^{(e)}}{n-1}$, $n\geq 2$. It is natural to ask the following question.

\begin{que}
Is there any prime number $p$ and a positive integer $n_1\geq 2$ such that $p\mid\frac{D_{n_1}^{(e)}}{n_1-1}$ and  $p\mid q_p(n_1)$?
\end{que}

In case of the sequence of odd derangements $(D_n^{(o)})_{n\in\N}$, the trial of description of $p$-adic valuations of numbers $D_n^{(o)}$, $n\in\N$, comes down to description of $p$-adic valuations of numbers $D_n$, $n\in\N$. In fact, the number $D_n^{(o)}$ of odd derangements of $n$-element set is equal to the number of all permutations of $n$-element set with exactly two fixed points, when $n\geq 2$ (see \cite{OEIS}). The number of all permutations of $n$-element set with exactly two fixed points equals $\frac{n(n-1)}{2}D_{n-2}$ because we choose two fixed points in $n\choose 2$ ways and we treat a permutation as an derangement of a set with $n-2$ elements.

\begin{prop}\label{prop18,5}
$D_n^{(o)} = \frac{n(n-1)}{2}D_{n-2}$ for $n\geq 2$.
\end{prop}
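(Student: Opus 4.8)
The plan is to bypass the combinatorial route sketched in the text preceding the statement (namely the identification of odd derangements with permutations having exactly two fixed points, which rests on the nontrivial fact cited from \cite{OEIS}) and instead derive the formula purely algebraically from Corollary \ref{cor6}. That corollary already furnishes the closed form
\begin{equation*}
D_n^{(o)} = \frac{D_n+(-1)^n(n-1)}{2}, \quad n\in\N,
\end{equation*}
so the claim $D_n^{(o)} = \frac{n(n-1)}{2}D_{n-2}$ becomes equivalent to the purely derangement-theoretic identity
\begin{equation*}
D_n + (-1)^n(n-1) = n(n-1)D_{n-2}, \quad n\geq 2.
\end{equation*}
Thus the entire argument reduces to verifying this single equality, after which the result drops out by substitution.

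To establish it, I would unfold the defining recurrence $D_n = nD_{n-1}+(-1)^n$ one extra step. Writing $D_{n-1} = (n-1)D_{n-2}+(-1)^{n-1}$ and inserting this gives
\begin{equation*}
D_n = n\bigl[(n-1)D_{n-2}+(-1)^{n-1}\bigr]+(-1)^n = n(n-1)D_{n-2} + n(-1)^{n-1}+(-1)^n.
\end{equation*}
The only place demanding a little care is the bookkeeping of the two sign terms: combining them via $n(-1)^{n-1}+(-1)^n = (-1)^{n-1}(n-1) = -(-1)^n(n-1)$ yields $D_n = n(n-1)D_{n-2} - (-1)^n(n-1)$, which is exactly the desired identity rearranged. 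Feeding this back into Corollary \ref{cor6} produces $D_n^{(o)} = \frac{n(n-1)}{2}D_{n-2}$ at once. Note that the restriction $n\geq 2$ is precisely what is needed for $D_{n-2}$ to be defined, and $D_0=1$ covers the boundary case $n=2$.

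The hard part here is essentially nonexistent: the computation is routine once one commits to the algebraic reduction, the single subtlety being the parity cancellation of the $(-1)^n$ terms. For completeness I would also remark on the combinatorial alternative, counting permutations of an $n$-element set with exactly two fixed points as ${n \choose 2}D_{n-2} = \frac{n(n-1)}{2}D_{n-2}$ (choosing the two fixed points and deranging the rest); this matches the formula directly, but as a \emph{proof} of the statement it would additionally require the identity $D_n^{(o)} = {n \choose 2}D_{n-2}$ relating odd derangements to two-fixed-point permutations, so the algebraic derivation from Corollary \ref{cor6} is the cleaner and fully self-contained choice.
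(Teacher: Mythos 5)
Your proof is correct and follows essentially the same route as the paper: the paper also combines Corollary \ref{cor6} with the identity $D_n = n(n-1)D_{n-2} - (-1)^n(n-1)$, which it obtains as the $d=2$ case of the general recurrence from Section \ref{subsec3.3} (with $f_2 = 1-X$), whereas you re-derive that identity directly by unfolding $D_n = nD_{n-1}+(-1)^n$ twice. The sign bookkeeping in your cancellation $n(-1)^{n-1}+(-1)^n = -(-1)^n(n-1)$ is correct, so the argument is complete.
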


\begin{proof}
We know from Section \ref{subsec3.3} that
\begin{equation}\label{eq24}
D_n=n(n-1)D_{n-2}+(-1)^nf_2(n)
\end{equation}
for each $n\in\N_2$. Recalling that $f_2=1-X$ the equality (\ref{eq24}) takes the form
\begin{equation*}
D_n=n(n-1)D_{n-2}-(-1)^n(n-1), \mbox{ } n\in\N_2.
\end{equation*}
Hence
\begin{equation*}
D_n^{(o)}=\frac{D_n+(-1)^n(n-1)}{2}=\frac{n(n-1)}{2}D_{n-2}
\end{equation*}
for every $n\in\N_2$, which completes the proof.
\end{proof}

Hence, $v_p(D_n^{(o)})=v_p\left(\frac{n(n-1)}{2}\right)+v_p(D_{n-2})$ for each $n\geq 2$ and $p\in\bbb P$. What is more, since $E_n=\frac{D_n}{n-1}$ and $E_n^{(o)}=\frac{D_n^{(o)}}{n-1}$ for $n\geq 2$, thus we have $v_p(E_n^{(o)})=v_p\left(\frac{n(n-1)}{2}\right)+v_p(E_{n-2})$ for each $n\geq 4$ and $p\in\bbb P$.

\section{Some diophantine equations with numbers of derangements}\label{sec4,5}

\subsection{Diophantine equations involving factorials}\label{subsec4,5.1}

Diophantine equations involving terms of given sequences are a subject of interest of number theorists. There are many papers concerning problems of the following type: when a term of a given sequence is a factorial or sum, difference or product of factorials (see \cite{BerGal}, \cite{Dab}, \cite{ErdObl}, \cite{Gaw}, \cite{GroLuc}, \cite{Luc2}, \cite{Luc3}, \cite{LucSik}, \cite{MarLeng}, \cite{Ul}).

Using information about $2$-adic valuations of numbers of derangements we will prove that $D_0=D_2=1$ and $D_3=2$ are the unique numbers of derangements being factorials.

\begin{prop}
All the solutions $(n,m)$ of the diophantine equation $D_n=m!$, $n,m\in\N_+$, are $(2,1)$ and $(3,2)$.
\end{prop}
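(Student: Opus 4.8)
The plan is to combine the exact $2$-adic valuation of $D_n$ with elementary size estimates in order to pin down any factorial value. The key input is that $2\in\cal A$ (see Section~\ref{subsubsec3.1.3}), which means $v_2(D_n)=v_2(n-1)$ for every $n\in\N$. First I would dispose of the small indices by direct inspection: $D_1=0$ is not a factorial (since $m!\geq 1$ for $m\in\N_+$), while $D_2=1=1!$ and $D_3=2=2!$ produce exactly the two claimed solutions $(2,1)$ and $(3,2)$. From now on assume $n\geq 4$ and $D_n=m!$ with $m\in\N_+$.

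Next I would sandwich $m$ between two consecutive integers. On one hand, the derangements form a proper subset of all permutations, so $D_n<n!$ and hence $m\leq n-1$. On the other hand, the recurrence $D_n=(n-1)(D_{n-1}+D_{n-2})$ yields, by a straightforward induction, the lower bound $D_n>(n-2)!$ for $n\geq 4$; indeed $D_n\geq (n-1)D_{n-1}>(n-1)(n-3)!>(n-2)!$ whenever $D_{n-1}>(n-3)!$, with base case $D_4=9>2!$. Consequently $m!=D_n>(n-2)!$ forces $m\geq n-1$, and combining the two estimates gives $m=n-1$, that is $D_n=(n-1)!$.

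The $2$-adic valuation then delivers the contradiction for large $n$. Applying $2\in\cal A$ together with Legendre's formula $v_2((n-1)!)=(n-1)-s_2(n-1)$ (see \cite{Le}) to the equality $D_n=(n-1)!$ gives $(n-1)-s_2(n-1)=v_2(n-1)$, i.e. $n-1=v_2(n-1)+s_2(n-1)$. Since $2^{v_2(n-1)}\mid n-1$ we have $v_2(n-1)\leq\log_2(n-1)$, and clearly $s_2(n-1)\leq\log_2(n-1)+1$; hence $n-1\leq 2\log_2(n-1)+1$, which fails for all $n\geq 8$. It then remains to treat $n\in\{4,5,6,7\}$ by hand, checking that $D_n\neq (n-1)!$ (for instance $D_4=9\neq 6$, $D_5=44\neq 24$, $D_6=265\neq 120$, $D_7=1854\neq 720$), which excludes any further solution.

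The step I expect to be the main obstacle is the clean two-sided squeeze $m=n-1$: one must be sure the size estimates are strict and mutually compatible, since the whole argument hinges on reducing the problem to the single equation $D_n=(n-1)!$, after which the identity $v_2(D_n)=v_2(n-1)$ makes the contradiction transparent. Everything else—the small-index verifications and the elementary inequality $n-1\leq 2\log_2(n-1)+1$—is routine.
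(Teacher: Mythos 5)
Your argument is correct, and its middle section takes a genuinely different route from the paper's. Both proofs rest on the same two inputs — the identity $v_2(D_n)=v_2(n-1)$ (i.e. $2\in\cal{A}$) and Legendre's formula — but they deploy them in opposite order. The paper uses the valuation first, to get $n-1\geq 2^{v_2(m!)}$, and then compares $m!$ with $D_{M(m)}$ for $M(m)=1+2^{v_2(m!)}$ via the approximation $D_N\approx N!/e$, which requires a slightly delicate induction showing $m!<\frac{M(m)!}{e}-\frac{1}{2}$ for $m\geq 4$; this bounds $m\leq 3$ outright. You instead use the elementary two-sided size estimate $(n-2)!<D_n<n!$ (the upper bound because derangements are a proper subset of permutations, the lower bound by an easy induction from $D_n\geq(n-1)D_{n-1}$) to pin down $m=n-1$ exactly, and only then invoke the valuation: $v_2(n-1)=v_2((n-1)!)=(n-1)-s_2(n-1)$ forces $n-1=v_2(n-1)+s_2(n-1)\leq 2\log_2(n-1)+1$, which fails for $n\geq 8$, leaving a finite check. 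Your squeeze to $m=n-1$ is sound (strictness of both inequalities holds for $n\geq 4$, and monotonicity of the factorial converts them into $n-2<m<n$), and the reduction buys a cleaner endgame: the contradiction becomes a one-line digit-sum inequality rather than a comparison of factorials against $N!/e$. The paper's version, by contrast, bounds $m$ absolutely and independently of $n$, which is the shape it then reuses for the analogous equations with $D_n^{(o)}$ and $D_n^{(e)}$ and for the generalization to $D_n=q\cdot m!$; your squeeze to $m=n-1$ would not survive the introduction of the rational factor $q$, so the paper's more robust framing is better adapted to those later results even though yours is tidier for the specific equation at hand.
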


\begin{proof}
If $D_n=m!$ then $v_2(D_n)=v_2(m!)$. We know that $v_2(D_n)=v_2(n-1)$ for any nonnegative integer $n$. Therefore $v_2(n-1)=v_2(m!)$, which means that $n-1\geq 2^{v_2(n-1)}=2^{v_2(m!)}$. If $n>1$ then $D_n\geq D_{1+2^{v_2(m!)}}$. If we denote the number $1+2^{v_2(m!)}$ by $M(m)$ and use the formula $D_{M(m)}=\left\lfloor\frac{M(m)!}{e}+\frac{1}{2}\right\rfloor$ then we obtain $m!=D_n\geq D_{M(m)}\geq \frac{M(m)!}{e}-\frac{1}{2}$.

However, we will prove by induction on $m$ that $m!<\frac{M(m)!}{e}-\frac{1}{2}$ for $m\geq 4$. Indeed, $4!<\frac{M(4)!}{e}-\frac{1}{2}=\frac{9!}{e}-\frac{1}{2}$ and $5!<\frac{M(5)!}{e}-\frac{1}{2}=\frac{9!}{e}-\frac{1}{2}$. Assume now that $m!<\frac{M(m)!}{e}-\frac{1}{2}$ for some integer $m\geq 4$. Then
\begin{equation*}
\begin{split}
(m+2)! & =m!\cdot (m+1)(m+2)<(m+1)(m+2)\left(\frac{M(m)!}{e}-\frac{1}{2}\right)< \\
& <(m+1)(m+2)\frac{M(m)!}{e}-\frac{1}{2}.
\end{split}
\end{equation*}
It suffices to show that $M(m)!\cdot (m+1)(m+2)\leq M(m+2)!$. Since $m!<\frac{M(m)!}{e}-\frac{1}{2}$, thus $m<M(m)$. We know that $v_2((m+2)!)>v_2(m!)$, so $M(m+2)-M(m)\geq 2$. Hence
\begin{equation*}
M(m)!\cdot (m+1)(m+2)<M(m)!\cdot (M(m)+1)(M(m)+2)\leq M(m+2)
\end{equation*}
and we are done. We proved that $m!<\frac{M(m)!}{e}-\frac{1}{2}$ for $m\geq 4$.

Summing up, if $D_n=m!$ then $m\leq 3$. Finally, we check one by one for each $m\in\{0,1,2,3\}$ that $D_0=D_2=1$ and $D_3=2$ are the only factorials in the sequence of derangements.
\end{proof}

We can generalize the result above as follows.

\begin{prop}
For any positive rational number $q$ the diophantine equation $D_n=q\cdot m!$ has only finitely many solutions $(n,m)\in\N_+^2$ and these solutions satisfy the inequality $q\cdot m!>\frac{(1+2^{v_2(q)+v_2(m!)})!}{e}-\frac{1}{2}$.
\end{prop}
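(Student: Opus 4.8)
The plan is to mimic the proof of the preceding proposition, replacing the equation $D_n=m!$ by $D_n=q\cdot m!$ and carefully tracking the rational constant $q$ through the $2$-adic valuation. First I would observe that any solution has $n\geq 2$ (since $D_1=0<q\cdot m!$), and then take $v_2$ of both sides of $D_n=q\cdot m!$. Using the identity $v_2(D_n)=v_2(n-1)$ recalled in Section~\ref{subsubsec3.1.3} (valid because $2\in\cal{A}$), I obtain $v_2(n-1)=v_2(q)+v_2(m!)$. Setting $t:=v_2(q)+v_2(m!)$, note that $t\geq 0$ because $D_n$ is a positive integer. Since $2^{t}\mid n-1$ and $n-1>0$, this forces $n-1\geq 2^{t}$, that is $n\geq M$, where $M:=1+2^{t}\geq 2$.

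Next I would use the monotonicity of the derangement sequence. For $n\geq 3$ one has $D_n-D_{n-1}=(n-1)D_{n-1}+(-1)^n\geq 2D_{n-1}-1>0$, so $(D_n)_{n\in\N_2}$ is strictly increasing; as $n\geq M\geq 2$ this gives $D_n\geq D_M$. Combining with the approximation bound $\left|\frac{M!}{e}-D_M\right|<\frac1M\leq\frac12$ from the end of Section~\ref{subsec3.2}, I get
\[
q\cdot m! = D_n \geq D_M > \frac{M!}{e}-\frac12 = \frac{(1+2^{v_2(q)+v_2(m!)})!}{e}-\frac12,
\]
which is exactly the asserted inequality and settles that half of the statement for every solution.

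For finiteness I would show this last inequality can hold only for boundedly many $m$. The key input is Legendre's formula $v_2(m!)=m-s_2(m)$ (Section~\ref{sec2}), which yields $v_2(m!)\geq m-\log_2 m-1$, hence
\[
M = 1+2^{t} \geq 1 + 2^{v_2(q)-1}\cdot\frac{2^{m}}{m},
\]
so $M$ grows faster than any polynomial in $m$. In particular $M>m$ and $M-m\to+\infty$, whence $\frac{M!}{m!}=\prod_{j=m+1}^{M}j\geq (m+1)^{\,M-m}\to+\infty$. Therefore
\[
\frac{M!/e-1/2}{q\cdot m!} = \frac{1}{eq}\cdot\frac{M!}{m!}-\frac{1}{2q\cdot m!}\longrightarrow +\infty,
\]
so for all sufficiently large $m$ one has $q\cdot m!<\frac{M!}{e}-\frac12$, contradicting the inequality of the previous paragraph. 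Thus $m$ is bounded, say $m\leq m_0(q)$, and for each such fixed $m$ the strict monotonicity of $(D_n)_{n\in\N_2}$ allows at most one $n$ with $D_n=q\cdot m!$; hence there are only finitely many pairs $(n,m)$.

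The step I expect to be the main obstacle is making the growth comparison of the last paragraph rigorous and uniform in $q$: one must verify that the threshold in $m$ beyond which $M>m$, and then beyond which $\frac{M!}{m!}>eq$, is finite and depends only on $q$. This is precisely where the explicit lower bound $v_2(m!)\geq m-\log_2 m-1$ is needed rather than the mere fact $v_2(m!)\to\infty$; everything else is a direct adaptation of the preceding proposition.
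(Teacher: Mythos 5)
Your proposal is correct and follows essentially the same route as the paper: take $v_2$ of both sides to get $n-1\geq 2^{v_2(q)+v_2(m!)}$, use monotonicity of $(D_n)$ together with the estimate $D_M>\frac{M!}{e}-\frac{1}{2}$ to derive the stated inequality, and then apply Legendre's formula to show $M(m)$ outgrows $m$ so that the inequality fails for $m\gg 0$. The only difference is that you make explicit some steps the paper leaves implicit (strict monotonicity of $D_n$, the bound $M!/m!\to+\infty$, and uniqueness of $n$ for each fixed $m$), which is fine.
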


\begin{proof}
If $D_n=q\cdot m!$ then $v_2(n-1)=v_2(D_n)=v_2(q)+v_2(m!)$ and $n-1\geq 2^{v_2(n-1)}=2^{v_2(q)+v_2(m!)}$. The product $q\cdot m!$ is an integer, thus $v_2(q)+v_2(m!)\geq 0$. If $n>1$ then $D_n\geq D_{1+2^{v_2(q)+v_2(m!)}}$. Let us put $M(m)=1+2^{v_2(q)+v_2(m!)}$. We use the formula $D_{M(m)}=\left\lfloor\frac{M(m)!}{e}+\frac{1}{2}\right\rfloor$ and get
\begin{equation*}
q\cdot m!=D_n\geq D_{M(m)}>\frac{M(m)!}{e}-\frac{1}{2}.
\end{equation*}
However, if we apply Legendre's formula $v_2(m!)=m-s_2(m)$ (where $s_2(m)$ is the sum of binary digits of a number $m$) then we get the following limit:
\begin{equation*}
\begin{split}
\lim_{m\rightarrow +\infty}\frac{M(m)}{m}=\lim_{m\rightarrow +\infty}\frac{1+2^{v_2(q)+v_2(m!)}}{m}\geq \lim_{m\rightarrow +\infty}\frac{2^{v_2(q)+m-s_2(m)}}{m}\geq \\
\geq\lim_{m\rightarrow +\infty}\frac{2^{v_2(q)+m-\log_2 m-1}}{m}=\lim_{m\rightarrow +\infty}\frac{2^{v_2(q)+m-1}}{m^2}=+\infty
\end{split}
\end{equation*}
Therefore we conclude that $\lim_{m\rightarrow +\infty}\frac{\frac{M(m)!}{e}-\frac{1}{2}}{q\cdot m!}=+\infty$, which implies that $q\cdot m!<\frac{M(m)!}{e}-\frac{1}{2}$ for $m\gg 0$.

Hence there exists a positive integer $m_0$ such that if $D_n=q\cdot m!$ then $m<m_0$.
\end{proof}

It is worth to notice that the set of positive rational values $q$ such that the equation $D_n=q\cdot m!$ has a solution $(n,m)\in\N_+^2$ is a discrete subset of the real halfline $[0,+\infty)$ (with respect to Euclidean topology) with exactly one accumulation point $0$. It is obvious that this set is exactly the set $S:=\{\frac{D_n}{m!}: n\in\N_2, m\in\N_+\}$. Hence it suffices to prove that for each $k\in\N_+$ the set $S_k:=S\cap\left[\frac{1}{ke},+\infty\right)$ is discrete without accumulation points. First of all we show that the set $S_1$ is discrete without accumulation points. If $\frac{1}{e}\leq\frac{D_n}{m!}$ then using the fact that $D_n\in\left(\frac{n!}{e}-\frac{1}{n},\frac{n!}{e}+\frac{1}{n}\right)$ we conclude the inequality
\begin{equation*}
\frac{1}{e}\leq\frac{n!}{e\cdot m!}+\frac{1}{n\cdot m!}.
\end{equation*}
The above inequality is equivalent to the following one:
\begin{equation*}
m!\leq n!+\frac{e}{n},
\end{equation*}
which is satisfied, if $m\leq n$. Hence the set $S_1$ consists of values which are ''close'' to the multiplicities of $\frac{1}{e}$ ($\left|\frac{D_n}{m!}-\frac{n!}{e\cdot m!}\right|<\frac{1}{n\cdot m!}$) and as a result $S_1$ is discrete without accumulation points. Now it remains us to prove that for each $k\in\N_+$ there are only finitely many tuples $(n,m)$ such that $\frac{D_n}{m!}\in S_k$ and $m>n$. Such tuples satisfy the inequality
\begin{equation*}
\frac{1}{ke}\leq\frac{n!}{e\cdot m!}+\frac{1}{n\cdot m!},
\end{equation*}
which is equivalent to
\begin{equation*}
m\cdot ...\cdot (n+1)\leq k+\frac{ke}{n\cdot n!}.
\end{equation*}
Since $n\geq 2$, we have $k+\frac{ke}{n\cdot n!}<2k$ and the inequality $m\cdot ...\cdot (n+1)<2k$ has only finitely many solutions $(n,m)\in\N_2\times\N_+$ with $n<m$. Hence for any $k\in\N_+$ the set $S_k$ is discrete and has no accumulation points. Finally $S$ is discrete. Any positive real number $x$ is not an accumulation point of the set $S$ since $x\in S_k$ for some $k\in\N_+$ and $S_k$ has no accumulation points. The number $0$ is an accumulation point of $S$ because $\lim_{m\rightarrow +\infty}\frac{D_n}{m!}=0$ for arbitrary $n\in\N_2$.

On the other hand, there are infinitely many positive rational values $q$ such that the diophantine equation $D_n=q\cdot m!$ has at least two solutions of the form $(n_0,m_0)$, $(n_1,m_0+1)$. Namely, we set $q=\frac{D_{n_0}}{m_0!}$. Then $\frac{D_{n_1}}{(m_0+1)!}=\frac{D_{n_0}}{m_0!}$ if and only if $m_0+1=\frac{D_{n_1}}{D_{n_0}}$. Hence, it suffices to set $n_0,n_1\in\N_2$ such that $D_{n_0}\mid D_{n_1}$ (this condition holds for example when $D_{n_0}\mid n_1-1$, but not only in this case, which shows the example $n_0=5$ and $n_1=49$, where $D_5=44\mid D_49$ and $44\nmid 48$) and $m_0=\frac{D_{n_1}}{D_{n_0}}-1$.

However, we do not know if there exists $q\in\Q$, $q>0$ such that the equation $D_n=q\cdot m!$ has at least three solutions.

\begin{que}
Is there any $q\in\Q$, $q>0$ such that the equation $D_n=q\cdot m!$ has at least three solutions?
\end{que}

In the light of Proposition \ref{prop18,5} we are able to establish analogue results on diophantine equations involving numbers of odd derangements and factorials.

\begin{prop}
All the solutions $(n,m)$ of the diophantine equation $D_n^{(o)}=m!$, $n,m\in\N_+$, are $(2,1)$ and $(4,3)$.
\end{prop}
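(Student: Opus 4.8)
The plan is to combine Proposition \ref{prop18,5}, which states $D_n^{(o)} = \frac{n(n-1)}{2}D_{n-2}$ for $n \geq 2$, with the growth estimates for the ordinary derangement numbers established in Section \ref{subsec3.2}, in order to trap $D_n^{(o)}$ strictly between two consecutive factorials. Concretely, I would first prove that
\begin{equation*}
(n-1)! < D_n^{(o)} < n! \quad \text{for all } n \geq 6.
\end{equation*}
Since no factorial lies strictly between the consecutive factorials $(n-1)!$ and $n!$, this inequality immediately rules out any solution of $D_n^{(o)} = m!$ with $n \geq 6$, and it only remains to inspect the finitely many small indices.

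For the upper bound I would use the trivial estimate $D_{n-2} < (n-2)!$ (valid for $n \geq 6$), which via Proposition \ref{prop18,5} gives $D_n^{(o)} = \frac{n(n-1)}{2}D_{n-2} < \frac{n(n-1)}{2}(n-2)! = \frac{n!}{2} < n!$. The lower bound is the delicate part. Rewriting $D_n^{(o)} > (n-1)!$ through Proposition \ref{prop18,5} reduces it to the inequality $\frac{D_{n-2}}{(n-2)!} > \frac{2}{n}$. Here I would invoke the bound $\left|\frac{N!}{e} - D_N\right| < \frac{1}{N}$ from Section \ref{subsec3.2} (equivalently, the convergence $\frac{D_N}{N!} = \sum_{j=0}^N \frac{(-1)^j}{j!} \to e^{-1}$), which shows that $\frac{D_{n-2}}{(n-2)!} > \frac{1}{3}$ for every $n \geq 6$; since $\frac{2}{n} \leq \frac{1}{3}$ exactly when $n \geq 6$, the required inequality follows.

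The final step is a direct check of the cases $n \in \{1,2,3,4,5\}$ against the values of $D_n^{(o)}$: one finds $D_1^{(o)} = D_3^{(o)} = 0$ and $D_5^{(o)} = 20$, none of which is a factorial of a positive integer, while $D_2^{(o)} = 1 = 1!$ and $D_4^{(o)} = 6 = 3!$ yield exactly the two asserted solutions. The main obstacle I anticipate is the lower bound, and in particular the borderline index $n = 6$, where the sandwich is tightest ($120 = 5! < 135 = D_6^{(o)} < 720 = 6!$); care is needed to verify that the oscillating ratio $\frac{D_N}{N!}$ stays above $\frac{1}{3}$ throughout the relevant range rather than merely in the limit.
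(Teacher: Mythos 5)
Your proof is correct, but it takes a genuinely different route from the paper. You sandwich $D_n^{(o)}$ strictly between consecutive factorials: the identity $D_n^{(o)}=\frac{n(n-1)}{2}D_{n-2}$ together with $D_{n-2}<(n-2)!$ gives $D_n^{(o)}<\frac{n!}{2}<n!$, while the lower bound $(n-1)!<D_n^{(o)}$ reduces to $\frac{D_{n-2}}{(n-2)!}>\frac{2}{n}$, which holds for $n\geq 6$ since the alternating partial sums $\sum_{j=0}^{N}\frac{(-1)^j}{j!}$ are bounded below by $\frac{11}{30}>\frac13$ for $N\geq 4$ (your caution about the borderline case $n=6$, where $\frac{2}{n}=\frac13$ exactly, is warranted and resolved by the strict inequality $\frac{D_4}{4!}=\frac38>\frac13$). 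This leaves only $n\leq 5$ to check, and the case analysis matches the claimed solutions. The paper instead argues $2$-adically: from $v_2(D_{n-2})=v_2(n-3)$ it gets $v_2\bigl(D_n^{(o)}\bigr)=v_2\bigl(n(n-1)(n-3)\bigr)=v_2(m!)$, hence $n>2^{\lfloor v_2(m!)/3\rfloor}=:M(m)$, and then shows by induction that $m!<\frac{M(m)!}{4e}<D_n^{(o)}$ once $m\geq 16$, reducing to a check of $m\leq 15$. Your argument is more elementary and requires a shorter verification, but the paper's $2$-adic template is the one that survives the generalization in the very next proposition to $D_n^{(o)}=q\cdot m!$ for arbitrary positive rational $q$, where a fixed sandwich between $(n-1)!$ and $n!$ no longer pins down $m$; there the valuation constraint tying $n$ to $v_2(q)+v_2(m!)$ is doing work that your approach cannot replicate.
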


\begin{proof}
If $n\geq 2$ and $D_n^{(o)}=m!$ then $v_2\left(D_n^{(o)}\right)=v_2(m!)$. By Proposition \ref{prop18,5} $D_n^{(o)}=\frac{n(n-1)}{2}D_{n-2}$. Since $v_2(D_{n-2})=v_2(n-3)$ for any integer $n\geq 2$, thus $v_2\left(D_n^{(o)}\right)=v_2(n(n-1)(n-3))$. As a result $v_2(n(n-1)(n-3))=v_2(m!)$, which means that $n^3>n(n-1)(n-3)\geq 2^{v_2(n(n-1)(n-3))}=2^{v_2(m!)}$. Hence $n>2^{\frac{v_2(m!)}{3}}\geq 2^{\left\lfloor\frac{v_2(m!)}{3}\right\rfloor}$. If $n\geq 4$ then $D_n^{(o)}\geq D_{M(m)}^{(o)}$, where $M(m)=2^{\left\lfloor\frac{v_2(m!)}{3}\right\rfloor}$. By Corollary \ref{cor6}, for $t\geq 4$ we obtain the inequality
\begin{equation*}
D_{t}^{(o)}=\frac{D_t+(-1)^t(t-1)}{2}>\frac{t!}{2e}-\frac{t-1}{2}>\frac{t!}{2e}-\frac{t!}{4e}=\frac{t!}{4e}.
\end{equation*}
Then we obtain $m!=D_n^{(o)}\geq D_{M(m)}^{(o)}>\frac{M(m)!}{4e}$ provided that $M(m)\geq 4$.

However, we will prove by induction on $m$ that
\begin{equation}\label{eq27}
m!<\frac{M(m)!}{4e}
\end{equation}
for $m\geq 16$. Indeed, the inequality (\ref{eq27}) holds for $m\in\{16,17,18,19\}$. We will show that if (\ref{eq27}) is valid for $m$ then (\ref{eq27}) is true for $m+4$. In order to do this we note that
\begin{equation*}
(m+4)!=m!\cdot (m+1)(m+2)(m+3)(m+4)<(m+1)(m+2)(m+3)(m+4)\frac{M(m)!}{4e}
\end{equation*}
It suffices to show that $M(m)!\cdot (m+1)(m+2)\leq M(m+4)!$. Since $m!<\frac{M(m)!}{4e}$, thus $m<M(m)$. Because $v_2((m+4)!)-v_2(m!)\geq 3$, so $\left\lfloor\frac{v_2((m+4)!)}{3}\right\rfloor-\left\lfloor\frac{v_2((m)!)}{3}\right\rfloor\geq 1$. Since $m\geq 16$, thus $v_2(m!)\geq 15$ and $M(m)\geq 32$. Hence $M(m+4)-M(m)\geq 32>4$ and
\begin{equation*}
\begin{split}
& M(m)!\cdot (m+1)(m+2)(m+3)(m+4)< \\
& <M(m)!\cdot (M(m)+1)(M(m)+2)(M(m)+3)(M(m)+4)\leq M(m+4)!.
\end{split}
\end{equation*}
We proved that $m!<\frac{M(m)!}{e}$ for $m\geq 16$.

Summing up, if $D_n^{(o)}=m!$ then $m\leq 15$. Finally, we check one by one for each $m\in\{0,1,2,...,15\}$ that $D_2=1$ and $D_4=6$ are the only factorials in the sequence of odd derangements.
\end{proof}

\begin{prop}
For any positive rational number $q$ the diophantine equation $D_n^{(o)}=q\cdot m!$ has only finitely many solutions $(n,m)\in\N_+^2$ and these solutions satisfy the inequality $q\cdot m!>\frac{\left(2^{\left\lfloor\frac{v_2(q)+v_2(m!)}{3}\right\rfloor}\right)!}{4e}$.
\end{prop}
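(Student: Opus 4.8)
The plan is to follow closely the proof of the preceding proposition for $D_n=q\cdot m!$, with the $2$-adic input $v_2(D_n)=v_2(n-1)$ replaced by its analogue for odd derangements. First I would record, exactly as in the proof that $D_n^{(o)}=m!$ forces $m\leq 15$, that Proposition \ref{prop18,5} together with $v_2(D_{n-2})=v_2(n-3)$ yields $v_2(D_n^{(o)})=v_2(n(n-1)(n-3))$ for $n\geq 2$. Taking $2$-adic valuations of $D_n^{(o)}=q\cdot m!$ then gives $v_2(n(n-1)(n-3))=v_2(q)+v_2(m!)$; since a solution forces $D_n^{(o)}\geq 1$ (so $n\notin\{0,1,3\}$), this common value is a nonnegative integer. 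Using $v_2(N)\leq\log_2 N$ for every positive integer $N$, I obtain $n^3>n(n-1)(n-3)\geq 2^{v_2(q)+v_2(m!)}$, hence $n>2^{(v_2(q)+v_2(m!))/3}\geq M(m)$, where I set $M(m)=2^{\lfloor(v_2(q)+v_2(m!))/3\rfloor}$.

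Next I would establish the displayed inequality. From Corollary \ref{cor6} and the estimate $D_t>\frac{t!}{e}-\frac{1}{t}$, one gets for $t\geq 4$
\begin{equation*}
D_t^{(o)}=\frac{D_t+(-1)^t(t-1)}{2}>\frac{t!}{2e}-\frac{t-1}{2}>\frac{t!}{4e},
\end{equation*}
the last step because $2e(t-1)<t!$ for $t\geq 4$. The sequence $(D_n^{(o)})_{n\geq 4}$ is increasing (visible from Proposition \ref{prop18,5}, or from the asymptotics in Proposition \ref{prop18,2}), so if $M(m)\geq 4$ then $n>M(m)$ gives $q\cdot m!=D_n^{(o)}\geq D_{M(m)}^{(o)}>\frac{M(m)!}{4e}$, which is precisely the asserted bound. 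When $M(m)\in\{1,2\}$ the bound is trivial, since then $\frac{M(m)!}{4e}<1\leq q\cdot m!$.

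For finiteness I would show that this inequality can hold only for small $m$. Legendre's formula gives $v_2(m!)=m-s_2(m)\geq m-\log_2 m-1$, so $\log_2 M(m)=\lfloor(v_2(q)+v_2(m!))/3\rfloor$ grows like $\frac{m}{3}$ and hence $\frac{M(m)}{m}\to+\infty$. Consequently $M(m)>m$ for $m\gg 0$ and the ratio $\frac{M(m)!/(4e)}{q\cdot m!}\to+\infty$, so the inequality $q\cdot m!>\frac{M(m)!}{4e}$ fails for all sufficiently large $m$; thus every solution has $m<m_0$ for some fixed $m_0$. For each of the finitely many remaining values of $m$ the number $q\cdot m!$ is fixed while $D_n^{(o)}\to+\infty$, so only finitely many $n$ occur, and the solution set is finite.

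The main obstacle, as in the unnormalized case, is bookkeeping rather than a deep idea: I must invoke monotonicity of $(D_n^{(o)})$ only on the range $n\geq 4$ (the sequence dips, since $D_3^{(o)}=0$), isolate the small cases $M(m)<4$ where the clean lower bound is unavailable, and make the growth estimate $M(m)/m\to+\infty$ quantitative enough, via an induction of the type $M(m)!\cdot(m+1)(m+2)(m+3)(m+4)\leq M(m+4)!$ used before, to guarantee the failure of the inequality for large $m$.
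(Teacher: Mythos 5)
Your proposal is correct and follows essentially the same route as the paper: the identity $v_2(D_n^{(o)})=v_2(n(n-1)(n-3))$ from Proposition \ref{prop18,5}, the bound $D_t^{(o)}>\frac{t!}{4e}$ for $t\geq 4$, and the Legendre-formula growth estimate $M(m)/m\to+\infty$ forcing $m<m_0$. The extra care you take with the small cases $M(m)<4$ and with finiteness of $n$ for each fixed $m$ is a minor tightening of details the paper leaves implicit, not a different argument.
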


\begin{proof}
If $D_n^{(o)}=q\cdot m!$ and $n\geq 2$ then $v_2(n(n-1)(n-3))=v_2\left(D_n^{(o)}\right)=v_2(q)+v_2(m!)$ and $n^3>n(n-1)(n-3)\geq 2^{v_2(n(n-1)(n-3))}=2^{v_2(q)+v_2(m!)}$. Hence $n>2^{\frac{v_2(q)+v_2(m!)}{3}}\geq 2^{\left\lfloor\frac{v_2(q)+v_2(m!)}{3}\right\rfloor}$. The product $q\cdot m!$ is an integer, thus $v_2(q)+v_2(m!)\geq 0$. If $n\geq 4$ then $D_n^{(o)}\geq D_{M(m)}^{(o)}$, where $M(m)=2^{\left\lfloor\frac{v_2(q)+v_2(m!)}{3}\right\rfloor}$. We use the inequality $D_{M(m)}>\frac{M(m)!}{4e}$, valid for $M(m)\geq 4$, to obtain
\begin{equation*}
q\cdot m!=D_n^{(o)}\geq D_{M(m)}^{(o)}>\frac{M(m)!}{4e}.
\end{equation*}
However, if we apply Legendre's formula $v_2(m!)=m-s_2(m)$ then we get the following limit:
\begin{equation*}
\begin{split}
\lim_{m\rightarrow +\infty}\frac{M(m)}{m}=\lim_{m\rightarrow +\infty}\frac{2^{\left\lfloor\frac{v_2(q)+m-s_2(m)}{3}\right\rfloor}}{m}\geq \lim_{m\rightarrow +\infty}\frac{2^{\frac{v_2(q)+m-s_2(m)-3}{3}}}{m}\geq \\
\geq\lim_{m\rightarrow +\infty}\frac{2^{\frac{v_2(q)+m-\log_2 m-4}{3}}}{m}=\lim_{m\rightarrow +\infty}\frac{2^{\frac{v_2(q)+m-4}{3}}}{m^{\frac{4}{3}}}=+\infty
\end{split}
\end{equation*}
Therefore we conclude that $\lim_{m\rightarrow +\infty}\frac{M(m)!}{4eq\cdot m!}=+\infty$, which implies that $q\cdot m!<\frac{M(m)!}{4e}$ for $m\gg 0$.

Hence there exists a positive integer $m_0$ such that if $D_n^{(o)}=q\cdot m!$ then $m<m_0$.
\end{proof}

In case of numbers $D_n^{(e)}$ we will use knowledge about their $3$-adic valuation to establish results on diophantine equations involving these numbers and factorials.

\begin{prop}
All the solutions $(n,m)$ of the diophantine equation $D_n^{(e)}=m!$, $n,m\in\N$, are $(0,0)$, $(0,1)$, $(3,2)$ and $(5,4)$.
\end{prop}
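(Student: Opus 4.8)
The plan is to mirror the method used for the equation $D_n=m!$, but with the $2$-adic valuation replaced by the $3$-adic one, because $v_3(D_n^{(e)})$ turns out to grow only logarithmically in $n$ whereas $v_3(m!)$ grows linearly in $m$. \textbf{The first and decisive step is to compute $v_3(D_n^{(e)})$.} Writing $D_n^{(e)}=(n-1)E_n^{(e)}$ gives $v_3(D_n^{(e)})=v_3(n-1)+v_3(E_n^{(e)})$. For $n\equiv 2\pmod 3$, Corollary \ref{cor7} applies once we verify its hypothesis $3\nmid q_3(2)$: indeed $q_3(2)=\frac{2}{3}\left(\frac{-D_5^{(e)}}{4}-0\right)=-4$, so there $v_3(E_n^{(e)})=v_3(n-2)$. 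For $n\equiv 0,1\pmod 3$ the periodicity of $(\widetilde{E}_n^{(e)}\bmod 3)_{n\in\N_2}$ (period $3$, with repeating values $2,1,0$, see the discussion preceding \eqref{eq23,5}) shows $3\nmid E_n^{(e)}$, hence $v_3(E_n^{(e)})=0$. Collecting the three residue classes yields the clean formula
\begin{equation*}
v_3(D_n^{(e)})=v_3((n-1)(n-2)),\qquad n\ge 3,
\end{equation*}
and since $n-1,n-2$ are consecutive, at most one is divisible by $3$, so $3^{v_3(D_n^{(e)})}\le n-1$.

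\textbf{Next} suppose $D_n^{(e)}=m!$ with $n\ge 4$. Then $v_3(m!)=v_3(D_n^{(e)})$, so $3^{v_3(m!)}\le n-1$, i.e. $n\ge M(m):=1+3^{v_3(m!)}$. The sequence $(D_n^{(e)})_{n\ge 3}$ is increasing, and from $D_t^{(e)}=\frac{D_t-(-1)^t(t-1)}{2}$ (Corollary \ref{cor6}) together with $|D_t-t!/e|<1/t$ (Section \ref{subsec3.2}) one gets $D_t^{(e)}>\frac{t!}{2e}-\frac t2>\frac{t!}{4e}$ for $t\ge 4$. Hence, provided $M(m)\ge 4$ (which holds for $m\ge 3$),
\begin{equation*}
m!=D_n^{(e)}\ge D_{M(m)}^{(e)}>\frac{M(m)!}{4e}.
\end{equation*}

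\textbf{The opposite estimate} $m!<\frac{M(m)!}{4e}$ for $m\ge 6$ is proved by induction with step $3$, exactly as for $D_n^{(o)}$. The inductive hypothesis itself forces $m<M(m)$ (otherwise $m!\ge M(m)!>\frac{M(m)!}{4e}$), while Legendre's formula $v_3(m!)=\frac{m-s_3(m)}{2}$ gives $v_3((m+3)!)\ge v_3(m!)+1$, so $M(m+3)\ge 3M(m)-2\ge M(m)+3$. Therefore
\begin{equation*}
(m+3)!=m!\,(m+1)(m+2)(m+3)<\frac{(M(m)+3)!}{4e}\le\frac{M(m+3)!}{4e},
\end{equation*}
which closes the induction after checking the bases $m\in\{6,7,8\}$ (where $M(m)=10$). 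For $m\ge 6$ this contradicts the previous paragraph, so every solution with $n\ge 4$ satisfies $m\le 5$; and solutions with $n\le 3$ satisfy $m!=D_n^{(e)}\le 2$, hence $m\le 2$.

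\textbf{Finally}, in all cases $m\le 5$, so $m!\le 120<130=D_6^{(e)}$, and by monotonicity $n\le 5$ as well. Inspecting $D_0^{(e)},\dots,D_5^{(e)}=1,0,0,2,3,24$, the factorials among them are $1=0!=1!$ (at $n=0$), $2=2!$ (at $n=3$) and $24=4!$ (at $n=5$), giving exactly the claimed list $(0,0),(0,1),(3,2),(5,4)$. I expect the genuine obstacle to be the valuation formula rather than the size comparison: Corollary \ref{cor7} only covers the class $n\equiv 2\pmod 3$, so the residues $n\equiv 0,1\pmod 3$ must be disposed of separately via periodicity, and one must first confirm $3\nmid q_3(2)$ before invoking it; everything after that is the routine induction used twice already in this section.
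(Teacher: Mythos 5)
Your proposal is correct and follows essentially the same route as the paper: the $3$-adic valuation formula $v_3(D_n^{(e)})=v_3((n-1)(n-2))$, the bound $n\geq M(m)=1+3^{v_3(m!)}$, the estimate $D_t^{(e)}>\frac{t!}{4e}$ for $t\geq 4$, and the step-$3$ induction showing $m!<\frac{M(m)!}{4e}$ for $m\geq 6$ with bases $m\in\{6,7,8\}$. You in fact supply slightly more detail than the paper does, by explicitly checking $3\nmid q_3(2)$ and disposing of the residue classes $n\equiv 0,1\pmod 3$ via periodicity of $(\widetilde{E}_n^{(e)}\bmod 3)$.
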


\begin{proof}
By Corollary \ref{cor7} and the fact that $3\nmid\frac{D_n^{(e)}}{n-1}$ we know that $v_3\left(\frac{D_n^{(e)}}{n-1}\right)=v_3(n-2)$ for every $n\in\N_2$. Hence $v_3(D_n^{(e)}=v_3((n-1)(n-2))=\max\{v_3(n-1), v_3(n-2)\}$ for each $n\in\N$. Thus if $D_n^{(e)}=m!$ then $\max\{v_3(n-1), v_3(n-2)\}=v_3(m!)$, which means that $n-1\geq 3^{\max\{v_3(n-1), v_3(n-2)\}}=3^{v_3(m!)}$. Hence $n\geq 1+3^{v_3(m!)}$. If $n\geq 3$ then $D_n^{(e)}\geq D_{M(m)}^{(e)}$, where $M(m)=1+3^{v_3(m!)}$. By Corollary \ref{cor6}, for $t\geq 4$ we obtain the inequality
\begin{equation*}
D_{t}^{(o)}=\frac{D_t-(-1)^t(t-1)}{2}>\frac{t!}{2e}-\frac{t-1}{2}>\frac{t!}{2e}-\frac{t!}{4e}=\frac{t!}{4e}.
\end{equation*}
Then we obtain $m!=D_n^{(e)}\geq D_{M(m)}^{(e)}>\frac{M(m)!}{4e}$ provided that $M(m)\geq 4$. However, we will prove by induction on $m$ that
\begin{equation}\label{eq27,5}
m!<\frac{M(m)!}{4e}
\end{equation}
for $m\geq 6$. Indeed, the inequality (\ref{eq27,5}) holds for $m\in\{6,7,8\}$. We will show that if (\ref{eq27,5}) is valid for $m$ then (\ref{eq27,5}) is true for $m+3$. In order to do this we note that
\begin{equation*}
(m+3)!=m!\cdot (m+1)(m+2)(m+3)<(m+1)(m+2)(m+3)\frac{M(m)!}{4e}
\end{equation*}
It suffices to show that $M(m)!\cdot (m+1)(m+2)(m+3)\leq M(m+3)!$. Since $m!<\frac{M(m)!}{4e}$, thus $m<M(m)$. We have $v_3((m+3)!)-v_3(m!)\geq 1$. Since $m\geq 6$, thus $v_3(m!)\geq 2$ and $M(m)\geq 10>1$. Hence $M(m+3)-M(m)\geq 3$ and
\begin{equation*}
\begin{split}
& M(m)!\cdot (m+1)(m+2)(m+3)< \\
& <M(m)!\cdot (M(m)+1)(M(m)+2)(M(m)+3)\leq M(m+3)!.
\end{split}
\end{equation*}
We proved that $m!<\frac{M(m)!}{e}$ for $m\geq 6$.

Summing up, if $D_n^{(e)}=m!$ then $m\leq 5$. Finally, we check one by one for each $m\in\{0,1,2,3,4,5\}$ that $D_0=1$, $D_3=2$ and $D_5=24$ are the only factorials in the sequence of even derangements.
\end{proof}

\begin{prop}
For any positive rational number $q$ diophantine equation $D_n^{(e)}=q\cdot m!$ has only finitely many solutions $(n,m)\in\N^2$ and these solutions satisfy the inequality $q\cdot m!>\frac{\left(1+3^{v_3(q)+v_3(m!)}\right)!}{4e}$.
\end{prop}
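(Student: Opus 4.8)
The plan is to mirror the argument just used for the equation $D_n^{(o)}=q\cdot m!$, replacing the role of the $2$-adic valuation by the $3$-adic valuation. First I would recall the valuation identity established in the proof of the analogous equation $D_n^{(e)}=m!$, namely that
\begin{equation*}
v_3(D_n^{(e)})=v_3((n-1)(n-2))=\max\{v_3(n-1),v_3(n-2)\}
\end{equation*}
for every $n\in\N$; this follows from Corollary \ref{cor7} together with $3\nmid q_3(2)$, using that among the consecutive integers $n-1,n-2$ at most one is divisible by $3$. Assuming $D_n^{(e)}=q\cdot m!$, this gives $\max\{v_3(n-1),v_3(n-2)\}=v_3(q)+v_3(m!)$, and since one of $n-1,n-2$ realizes this valuation we obtain $n-1\geq 3^{v_3(q)+v_3(m!)}$, whence $n\geq M(m):=1+3^{v_3(q)+v_3(m!)}$. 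Here $v_3(q)+v_3(m!)\geq 0$ because $q\cdot m!$ is a positive integer.

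Next I would invoke the eventual monotonicity of $(D_n^{(e)})_{n\geq 3}$ to conclude that if $n\geq 3$ then $D_n^{(e)}\geq D_{M(m)}^{(e)}$. Combining this with the lower bound
\begin{equation*}
D_t^{(e)}=\frac{D_t-(-1)^t(t-1)}{2}>\frac{t!}{2e}-\frac{t-1}{2}>\frac{t!}{2e}-\frac{t!}{4e}=\frac{t!}{4e},
\end{equation*}
valid for $t\geq 4$ exactly as in the odd case (via Corollary \ref{cor6} and $D_t>\frac{t!}{e}-\frac{1}{t}$), yields
\begin{equation*}
q\cdot m!=D_n^{(e)}\geq D_{M(m)}^{(e)}>\frac{M(m)!}{4e}
\end{equation*}
provided $M(m)\geq 4$. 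This is precisely the asserted inequality $q\cdot m!>\frac{(1+3^{v_3(q)+v_3(m!)})!}{4e}$.

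To deduce finiteness I would show that this lower bound is incompatible with the growth of $M(m)$. Applying Legendre's formula $v_3(m!)=\frac{m-s_3(m)}{2}$ and the digit bound $s_3(m)\leq 2\log_3 m+2$, I would estimate
\begin{equation*}
\frac{M(m)}{m}\geq\frac{3^{v_3(q)+\frac{m-s_3(m)}{2}}}{m}\geq\frac{3^{v_3(q)+\frac{m}{2}-\log_3 m-1}}{m}=\frac{3^{v_3(q)+\frac{m}{2}-1}}{m^2}\longrightarrow+\infty.
\end{equation*}
Consequently $\lim_{m\rightarrow +\infty}\frac{M(m)!}{4e\,q\cdot m!}=+\infty$, so $q\cdot m!<\frac{M(m)!}{4e}$ for all $m\gg 0$, contradicting the lower bound above. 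Hence only finitely many $m$ can occur, and since $(D_n^{(e)})_{n\in\N}$ is eventually strictly increasing, each admissible $m$ gives only finitely many $n$; the finitely many small indices with $n<3$ or $M(m)<4$ are checked directly. I expect the only genuinely delicate point to be confirming the valuation identity $v_3(D_n^{(e)})=\max\{v_3(n-1),v_3(n-2)\}$ for all $n$ (not merely for $n\equiv 2\pmod 3$), which rests on $3\nmid q_3(2)$; the remaining steps are the same growth comparison as in the $2$-adic case.
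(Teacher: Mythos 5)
Your proposal is correct and follows essentially the same route as the paper's own proof: the identity $v_3(D_n^{(e)})=\max\{v_3(n-1),v_3(n-2)\}$ gives $n\geq M(m)=1+3^{v_3(q)+v_3(m!)}$, the bound $D_t^{(e)}>\frac{t!}{4e}$ for $t\geq 4$ yields the stated inequality, and Legendre's formula shows $\frac{M(m)!}{4e\,q\cdot m!}\rightarrow+\infty$, forcing $m$ to be bounded. Your added remarks (the more careful digit bound $s_3(m)\leq 2\log_3 m+2$, and the note that the valuation identity needs the easy check $3\nmid E_n^{(e)}$ for $n\not\equiv 2\pmod 3$ in addition to $3\nmid q_3(2)$) only tighten details the paper treats more briefly.
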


\begin{proof}
If $D_n^{(e)}=q\cdot m!$ then $\max\{v_3(n-1), v_3(n-2)\}=v_3\left(D_n^{(e)}\right)=v_3(q)+v_3(m!)$ and $n-1\geq 3^{\max\{v_3(n-1), v_3(n-2)\}}=3^{v_3(q)+v_3(m!)}$. The product $q\cdot m!$ is an integer, thus $v_3(q)+v_3(m!)\geq 0$. If $n\geq 3$ then $D_n^{(e)}\geq D_{M(m)}^{(e)}$, where $M(m)=1+3^{v_3(q)+v_3(m!)}$. We use the inequality $D_{M(m)}>\frac{M(m)!}{4e}$, valid for $M(m)\geq 4$, to obtain
\begin{equation*}
q\cdot m!=D_n^{(e)}\geq D_{M(m)}^{(e)}>\frac{M(m)!}{4e}.
\end{equation*}
However, if we apply Legendre's formula $v_3(m!)=\frac{m-s_3(m)}{2}$ then we get the following limit:
\begin{equation*}
\begin{split}
& \lim_{m\rightarrow +\infty}\frac{M(m)}{m}=\lim_{m\rightarrow +\infty}\frac{1+3^{v_3(q)+v_3(m!)}}{m}\geq \lim_{m\rightarrow +\infty}\frac{3^{v_3(q)+v_3(m!)}}{m}\geq \\
& \geq\lim_{m\rightarrow +\infty}\frac{3^{v_3(q)+\frac{m-\log_3 m-1}{2}}}{m}=\lim_{m\rightarrow +\infty}\frac{3^{\frac{2v_3(q)+m-\log_3 m-1}{2}}}{m}=\lim_{m\rightarrow +\infty}\frac{3^{\frac{2v_3(q)+m-1}{2}}}{m^{\frac{3}{2}}}=+\infty
\end{split}
\end{equation*}
Therefore we conclude that $\lim_{m\rightarrow +\infty}\frac{M(m)!}{4eq\cdot m!}=+\infty$, which implies that $q\cdot m!<\frac{M(m)!}{4e}$ for $m\gg 0$. Hence there exists a positive integer $m_0$ such that if $D_n^{(e)}=q\cdot m!$ then $m<m_0$.
\end{proof}

Using analogous reasoning as in case of numbers $D_n$ one can prove that the set of all positive rational numbers $q$ such that the equation $D_n^{(o)}=q\cdot m!$ ($D_n^{(e)}=q\cdot m!$, respectively) has a solution $(n,m)\in\N_+^2$ is discrete subset of the real half-line $[0, +\infty)$ and it has exactly one accumulation point $0$. However, there are infinitely many values $q$ such that the equation $D_n^{(o)}=q\cdot m!$ ($D_n^{(e)}=q\cdot m!$, respectively) has at least two solutions of the form $(n_0,m_0)$ and $(n_1,m_0+1)$. Analogously as for numbers $D_n$, it suffices to put $n_0,n_1\in\N_2$ such that $D_{n_0}^{(o)}\mid D_{n_1}^{(o)}$ and $m_0=\frac{D_{n_1}^{(o)}}{D_{n_0}^{(o)}}-1$ ($D_{n_0}^{(e)}\mid D_{n_1}^{(e)}$ and $m_0=\frac{D_{n_1}^{(e)}}{D_{n_0}^{(e)}}-1$, respectively). As well as in case of classical numbers of derangements we can ask the following question.

\begin{que}
Is there any $q\in\Q$, $q>0$ such that the equation $D_n^{(o)}=q\cdot m!$ ($D_n^{(e)}=q\cdot m!$, respectively) has at least three solutions?
\end{que}

\subsection{When a number of derangements is a power of a prime number?}\label{subsec4,5.2}

Let us consider diophantine equation $D_n=p^k$, where $p$ is a given prime number and $n,k\in\N_+$ are unknowns.

\begin{prop}\label{prop29}
For any prime number $p$ the diophantine equation $D_n=p^k$, $n,k\in\N_+$, has only finitely many solutions $(n,k)$. More precisely, the number of solutions is at most equal to $v_p(\sum_{j=1}^{+\infty} j!)$ ($\sum_{j=1}^{+\infty} j!\in\Z_p\bs\{0\}$, so its $p$-adic valuation is well defined and finite).
\end{prop}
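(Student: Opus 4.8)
The plan is to reduce the equation $D_n=p^k$ to a statement about the auxiliary sequence $E_n=\frac{D_n}{n-1}=D_{n-2}+D_{n-1}$ and then to exploit its $p$-adic behaviour. First I would invoke the divisibility $n-1\mid D_n$ from Section \ref{subsubsec3.1.3}: if $D_n=p^k$ with $k\ge 1$, then $n-1$ is a divisor of $p^k$, hence $n-1=p^b$ for some $0\le b\le k$, and necessarily $n\ge 3$ (since $b=0$ gives $D_2=1$, which is not a positive power). Dividing out, $E_n=\frac{D_n}{n-1}=p^{k-b}$ is itself a power of $p$. Thus every solution has the shape $n=p^b+1$ with $E_{p^b+1}$ a power of $p$, and then $k=b+v_p(E_{p^b+1})$ is determined; since distinct solutions correspond to distinct exponents $b\ge 1$, it suffices to bound the number of admissible $b$.

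Next I would feed in the $p$-adic description of $(E_n)_{n\ge 2}$ recorded in Remark \ref{rem2} and Section \ref{subsubsec3.1.3}: one has $E_n=(-1)^n\bigl(f_{p,\infty}(n-2)-f_{p,\infty}(n-1)\bigr)$, where $f_{p,\infty}=\sum_{j=0}^{+\infty}(-1)^j\prod_{i=0}^{j-1}(X-i)$ is a uniform limit of polynomials over $\Z$, hence a $p$-adic contraction (Section \ref{subsec2.2}). Evaluating along $n=p^b+1$ and letting $b\to+\infty$, I note that $p^b-1\to-1$ and $p^b\to 0$ in $\Z_p$, while the parity of $p^b+1$ is constant in $b$ for fixed $p$, so the sign $(-1)^n=:\sigma$ does not vary. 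The contraction property then yields
\begin{equation*}
v_p\bigl(E_{p^b+1}-\sigma S\bigr)\ge b,\qquad S:=\sum_{j=1}^{+\infty}j!=-E_1=f_{p,\infty}(-1)-f_{p,\infty}(0),
\end{equation*}
because $v_p\bigl(f_{p,\infty}(p^b-1)-f_{p,\infty}(-1)\bigr)\ge v_p(p^b)=b$ and likewise for the second term. Equivalently $E_{p^b+1}\equiv\sigma S\pmod{p^b}$, where $S\in\Z_p\setminus\{0\}$ has finite valuation $V:=v_p(S)$.

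With this congruence, finiteness is immediate: for every $b>V$ one has $v_p(\sigma S)=V<b\le v_p(E_{p^b+1}-\sigma S)$, whence $v_p(E_{p^b+1})=V$, so if $E_{p^b+1}$ is a power of $p$ it must equal $p^{V}$. Since $\frac{E_n}{(n-2)!\,n}=\frac{D_n}{n!}\to e^{-1}$ forces $E_n\to+\infty$, the value $p^{V}$ is attained only finitely often, and only the finitely many $b\le V$ remain. To sharpen this toward the stated bound I would split the admissible $b$ according to whether $v_p(E_{p^b+1})\ge b$ (which by the congruence forces $\sigma S\equiv 0\pmod{p^b}$, hence $b\le V$) or $v_p(E_{p^b+1})<b$ (which forces $v_p(E_{p^b+1})=v_p(\sigma S)=V$ and pins $\sigma S\equiv p^{V}\pmod{p^b}$), and then tally these two types against the $p$-adic expansion of $S$ to reach the bound $v_p\!\left(\sum_{j=1}^{+\infty}j!\right)$.

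The hard part will be precisely this last bookkeeping: converting the single congruence $E_{p^b+1}\equiv\sigma S\pmod{p^b}$ into an exact count requires controlling, for each $b$, the real size and the $p$-adic valuation of $E_{p^b+1}$ \emph{simultaneously}, and the case $p=2$ demands separate treatment, since there $\sigma=-1$ and $v_2(S)=0$ while the solution $D_3=2$ still occurs. A second point that must be secured is the non-vanishing $S=\sum_{j\ge1}j!\ne 0$ in $\Z_p$ (so that $V<\infty$), which the whole argument rests upon.
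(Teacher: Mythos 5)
Your proposal follows the paper's proof almost exactly: the reduction $n-1=p^{b}$, the congruence $(-1)^{1+p^{b}}E_{1+p^{b}}\equiv f_{p,\infty}(-1)-f_{p,\infty}(0)=\sum_{j=1}^{+\infty}j!\pmod{p^{b}}$ obtained from the contraction property of $f_{p,\infty}$, and the conclusion that $v_p\bigl(E_{1+p^{b}}\bigr)=V$ for all $b>V$, combined with the growth of $E_n$, are precisely the paper's congruence (\ref{eq28}) and the surrounding argument. The finiteness assertion is therefore fully established by what you wrote.

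The ``bookkeeping'' you defer at the end is not actually needed: the paper simply notes that for $b>V$ one has $E_{1+p^{b}}\geq p^{b}>p^{V}$ (valid as soon as $p^{b}\geq 3$), so $E_{1+p^{b}}$, having valuation exactly $V$, cannot be any power of $p$; and $b=0$ contributes nothing because $D_2=1$. Hence only $b\in\{1,\dots,V\}$ can occur, which is the stated count. However, your unease about $p=2$ is well founded and in fact exposes a flaw in the statement rather than a gap in your argument: for $p=2$ and $b=1$ the size estimate $p^{b}\geq 3$ fails, $E_3=D_1+D_2=1=2^{0}$, and $D_3=2=2^{1}$ is a genuine solution of $D_n=2^{k}$ with $n,k\in\N_+$, while $v_2\bigl(\sum_{j=1}^{+\infty}j!\bigr)=0$ because every partial sum $1+2+6+\cdots$ is odd. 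So the bound ``at most $v_p\bigl(\sum_{j=1}^{+\infty}j!\bigr)$ solutions'' is violated at $p=2$, as is the later assertion that $D_n=p^{k}$ has no solutions for primes $p<10^{6}$ outside $\{3,11\}$. In short: your proof of finiteness stands and matches the paper's; the exact count you could not complete is not completable as stated, and the correct reading is that the bound holds for all $p$ with $p^{b}\geq 3$ forced, i.e.\ for $p>2$, with $p=2$ requiring the one extra solution $(n,k)=(3,1)$ to be admitted.
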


\begin{proof}
If $D_n=p^k$, $n,k\in\N_+$ then obviously $n\geq 2$. Since $D_n=(n-1)E_n$, thus $n-1=p^l$ for some $l\in\N$. Let us recall that for each integer $n\geq 2$ there holds the equality $(-1)^nE_n=f_{p,\infty}(n-2)-f_{p,\infty}(n-1)$, where the function $f_{p,\infty}:\Z_p\rightarrow\Z_p$ is given by the formula $f_{p,\infty}(x)=\sum_{j=0}^{+\infty} (-1)^j \prod_{i=0}^{j-1} (x-i)$ (see Section \ref{subsubsec3.1.3}, Remark \ref{rem2}). Then
\begin{equation}\label{eq28}
(-1)^{1+p^l}E_{1+p^l}=f_{p,\infty}(p^l-1)-f_{p,\infty}(p^l)\equiv f_{p,\infty}(-1)-f_{p,\infty}(0)=\sum_{j=1}^{+\infty} j!\pmod{p^l}.
\end{equation}
By Remark \ref{rem2} we know that $\sum_{j=1}^{+\infty} j!\in\Z_p\bs\Z$. In particular $\sum_{j=1}^{+\infty} j!\neq 0$ and $k_0:=v_p(\sum_{j=1}^{+\infty} j!)\in\N$. Hence, if $l>k_0$ then by (\ref{eq28}), $v_p(E_{1+p^l})=v_p(\sum_{j=1}^{+\infty} j!)=k_0$. However, if $p^l\geq 3$ then $E_{1+p^l}\geq p^l>p^{k_0}$, which means that $E_{1+p^l}$ is not a power of $p$. As a result $D_{1+p^l}$ is not a power of $p$ for any integer $l>k_0$.
\end{proof}

Numerical computations show that if $p$ is a prime number less than $10^6$ then $v_p(\sum_{j=1}^{+\infty} j!)>0$ only for $p\in\{3,11\}$. This means that the equation $D_n=p^k$, $n,k\in\N_+$, has no solutions for prime number $p$ less than $10^6$ and not equal to 3 or 11. For $p=3$ we have $v_3(\sum_{j=1}^{+\infty} j!)=v_3(\sum_{j=1}^{8} j!)=2$ and there is one solution $n=1+3^1=4$, $k=2$. If $p=11$ then $v_{11}(\sum_{j=1}^{+\infty} j!)=v_{11}(\sum_{j=1}^{21} j!)=1$ and there is no solution $(n,k)\in\N_+^2$.

It is easy to note that $p|\sum_{j=1}^{+\infty} j!$ if and only if $p|\sum_{j=1}^{p-1} j!$.

\begin{con}\label{con5}
$p\nmid\sum_{j=1}^{p-1} j!$ for any prime number $p>11$.
\end{con}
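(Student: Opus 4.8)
The plan is first to recast the statement $p$-adically. As already observed just before Conjecture \ref{con5}, for every prime $p$ one has $\sum_{j=1}^{p-1} j! \equiv \sum_{j=1}^{+\infty} j! \pmod{p}$, since $p \mid j!$ for $j \geq p$; hence the conjecture asserts exactly that the fixed $p$-adic integer $K_p := \sum_{j=1}^{+\infty} j!$ satisfies $v_p(K_p) = 0$ for all $p > 11$. Using the function $f_{p,\infty}$ from Remark \ref{rem2}, we have $K_p = f_{p,\infty}(-1) - f_{p,\infty}(0)$, so the assertion is equivalent to $f_{p,\infty}(-1) \not\equiv f_{p,\infty}(0) = 1 \pmod{p}$, that is, to $1 + K_p = \sum_{j=0}^{+\infty} j! \not\equiv 1 \pmod p$. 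This reformulation places the problem squarely in the circle of Kurepa-type questions on the left factorial $!p = \sum_{j=0}^{p-1} j!$.

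Next I would try to produce a usable closed congruence for $S_p := \sum_{j=1}^{p-1} j! \bmod p$. The natural tool is Wilson's theorem together with the reflection $(p-1)!/j! = (j+1)(j+2)\cdots(p-1) \equiv (-1)^{p-1-j}(p-1-j)! \pmod p$, which, since $(p-1)! \equiv -1$, rewrites each term $j!$ through $(p-1-j)!$ and thus exhibits an involution $j \leftrightarrow p-1-j$ on the summands. Combining this with the derangement identities of Section \ref{subsec3.3} — in particular $D_n = n(n-1)D_{n-2} - (-1)^n(n-1)$ and $E_n = D_{n-2}+D_{n-1}$ — I would attempt to express $S_p$ in terms of $D_{p-1}$, $D_{p-2}$ reduced modulo $p$, for which the pseudo-polynomial machinery of Section \ref{subsec2.2} already gives good control. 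The hope is that such an identity reduces the non-vanishing of $S_p$ to the non-vanishing of a quantity with visible arithmetic meaning, rather than an opaque alternating sum.

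The hard part — and here I must be candid — is that there is no apparent algebraic reason forcing $S_p \not\equiv 0$: numerically the residues $S_p \bmod p$ behave as if equidistributed in $\{0,1,\dots,p-1\}$. Consequently any genuine proof seems to require either an unexpected structural identity of the kind sketched above or analytic/sieve input, and in full strength the statement is of the same order of difficulty as Kurepa's still-open conjecture $p \nmid\, !p$. Moreover the standard heuristic, assigning ``probability'' $\tfrac1p$ to the event $p \mid S_p$ and invoking the divergence of $\sum_p \tfrac1p$, actually predicts \emph{infinitely many} primes dividing $\sum_{j=1}^{p-1} j!$; thus, before investing in a proof, I would first extend the numerical search described after Proposition \ref{prop29} well beyond $10^6$ to test whether Conjecture \ref{con5} is even plausible, since that heuristic casts real doubt on it. In short, the realistic outcome of this plan is not a proof but a reduction of Conjecture \ref{con5} to a transparent Kurepa-type non-vanishing statement, together with computational evidence weighing its truth.
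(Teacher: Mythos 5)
This statement is labelled a \emph{conjecture} in the paper, and the paper offers no proof of it; there is therefore nothing to compare your argument against, and you were right not to manufacture one. Your opening reformulation --- that $\sum_{j=1}^{p-1} j! \equiv \sum_{j=1}^{+\infty} j! \pmod{p}$, so the claim is exactly $v_p\bigl(\sum_{j=1}^{+\infty} j!\bigr)=0$ for $p>11$ --- is precisely the observation the paper itself makes in the sentence immediately preceding the conjecture and in Proposition \ref{prop29} (where the $p$-adic integer $\sum_{j=1}^{+\infty}j!$ is shown to be nonzero via Remark \ref{rem2}, so that its valuation is finite), and your identification of the statement as a sibling of Kurepa's conjecture matches the paper's own remark. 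The middle paragraph of your proposal (Wilson's theorem, the involution $j\leftrightarrow p-1-j$, re-expression through $D_{p-1}, D_{p-2}$) is a reasonable thing to try but, as you concede, does not close the gap, and the paper attempts nothing of the sort.

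One point in your write-up deserves emphasis rather than criticism: the heuristic you raise --- that the residues $S_p \bmod p$ should be roughly equidistributed, so $\sum_p \tfrac1p = \infty$ predicts infinitely many primes with $p \mid S_p$ --- genuinely cuts against the conjecture as stated for \emph{all} $p>11$, even though it is consistent with the set of exceptional primes having density zero. The paper supports the conjecture only by computation up to $10^6$ and by analogy with Kurepa; it does not address this heuristic tension. So your conclusion is the correct one: the honest deliverable here is a clean equivalent formulation ($v_p(\sum_{j\geq 1} j!)=0$, i.e.\ a left-factorial non-vanishing statement) plus extended numerical evidence, not a proof, and no gap can be charged to you that is not equally present in the paper.
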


The conjecture above resembles Kurepa's conjecture that $p\nmid\sum_{j=0}^{p-1} j!$ for any prime number $p$ (see \cite[Section B44]{Guy} and \cite{Kur}). If Conjecture \ref{con5} is true then by Proposition \ref{prop29} for any prime number $p>11$ there are no solutions of the equation $D_n=p^k$, $n,k\in\N_+$. Therefore the diophantine equation $D_n=p^k$ with variables $p\in\bbb{P}$ and $n,k\in\N_+$ has only one solution $(p,n,k)=(3,4,2)$.

Despite the equation $D_n=p^k$ with variables $(p,n,k)$, problem of solving the equations $D_n^{(o)}=p^k$ and $D_n^{(e)}=p^k$ with unknowns $p\in\bbb P$ and $n,k\in\N_+$ is very easy.

\begin{prop}
The diophantine equation $D_n^{(o)}=p^k$ with unknowns $p\in\bbb{P}$ and $n,k\in\N_+$ has no solutions while the equation $D_n^{(e)}=p^k$ with unknowns $p\in\bbb{P}$ and $n,k\in\N_+$ has one solution $(p,n,k)=(2,3,1)$.
\end{prop}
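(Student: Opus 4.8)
The plan is to treat the two equations separately, in each case combining the divisibility $n-1\mid D_n^{(\ast)}$ with the multiplicative structure of Proposition \ref{prop18,5} and the $p$-adic valuation formulae of Corollary \ref{cor7}. For the odd derangements the argument is completely elementary, so I would do it first. Suppose $D_n^{(o)}=p^k$ with $k\geq 1$; then $D_n^{(o)}\geq 2$, forcing $n\geq 4$. By Proposition \ref{prop18,5} we have $2D_n^{(o)}=n(n-1)D_{n-2}$, whence simultaneously $n\mid 2p^k$ and $n-1\mid p^k$. The second divisibility gives $n-1=p^l$ for some $l\geq 0$. If $l\geq 1$ then $\gcd(n,p)=\gcd(p^l+1,p)=1$, so $n\mid 2p^k$ collapses to $n\mid 2$, i.e. $n\leq 2$, contradicting $n\geq 4$; if $l=0$ then $n=2$ and $D_2^{(o)}=1\neq p^k$. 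Hence $D_n^{(o)}=p^k$ has no solutions.

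For the even derangements I would first record, from Corollary \ref{cor6} and identity (\ref{eq24}), the closed shape $D_n^{(e)}=\tfrac{n(n-1)}{2}D_{n-2}-(-1)^n(n-1)=(n-1)E_n^{(e)}$. Thus $n-1\mid D_n^{(e)}=p^k$ again yields $n-1=p^l$ and $E_n^{(e)}=p^{k-l}$, and the whole problem splits according to whether $k=l$ or $k>l$. The case $k=l$ means $E_n^{(e)}=1$; since $E_n^{(e)}=D_{n-2}^{(o)}+D_{n-1}^{(o)}\geq 2$ for every $n\geq 5$, it forces $n\in\{3,4\}$, and the direct check of these indices returns $(p,n,k)=(2,3,1)$ together with $D_4^{(e)}=3^1$. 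So the whole content of the statement lies in showing that the case $k>l$, equivalently $p\mid E_n^{(e)}$ (i.e. $p^2\mid D_n^{(e)}$), produces nothing new.

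The attack on the case $k>l$ is to bound $n$. The low primes can be removed by Corollary \ref{cor7}: since $D_n^{(e)}=p^k$ with $p$ odd forces $v_3(D_n^{(e)})=0$, the formula $v_3(D_n^{(e)})=v_3\big((n-1)(n-2)\big)$ eliminates all $p\geq 5$ with $3\mid n-2$ and, writing $n-1=2^l$, eliminates $p=2$ whenever $l$ is even (there $2^l-1\equiv 0\pmod 3$); the companion relation $D_n^{(e)}=D_n^{(o)}+(n-1)$ for odd $n$ together with $v_2\!\left(D_n^{(o)}\right)$ from Corollary \ref{cor7} handles the parity bookkeeping for $p=2$. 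What survives, and is the main obstacle, are the primes $p\geq 5$ (and $p=2$ with $l$ odd) for which one must decide whether $E_n^{(e)}=p^{k-l}$ can be a genuine power. Reducing $E_n^{(e)}=\tfrac{n}{2}D_{n-2}-(-1)^n$ modulo $p$ along $n=p^l+1$, via the periodicity of $\big(\widetilde D_m\bmod p\big)$, shows that $p\mid E_n^{(e)}$ is equivalent to a Kurepa-type congruence, namely $D_{p-1}\equiv 2\pmod p$ (for $l=1$ this is exactly $p^2\mid D_{p+1}^{(e)}$).

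To close this last case I would invoke the Hensel-type control of $v_p\!\left(E_n^{(e)}\right)$ furnished by Theorem \ref{thm16}: whenever a base index $n_1$ with $p\mid E_{n_1}^{(e)}$ satisfies $p\nmid q_p(n_1)$, the valuation $v_p\!\left(E_n^{(e)}\right)$ grows only like $v_p(n-n_1)$ along $n\equiv n_1\pmod p$, so $E_n^{(e)}=p^{k-l}$ can hold for only finitely many $n$, which are then checked by hand. The genuinely hard point, exactly parallel to the treatment of $D_n=p^k$ in Proposition \ref{prop29} and Conjecture \ref{con5}, is that ruling out the exceptional congruence $D_{p-1}\equiv 2\pmod p$ (and the degenerate Hensel branch $p\mid q_p$) seems to require a finite computational input rather than a closed argument; this is where I expect the real difficulty to sit, and where the paper's numerical search must be brought in to certify that no further prime powers occur.
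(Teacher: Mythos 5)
Your treatment of $D_n^{(o)}=p^k$ is complete, correct, and in fact cleaner than the paper's: you extract $n\mid 2p^k$ from $2D_n^{(o)}=n(n-1)D_{n-2}$ (Proposition \ref{prop18,5}) and $n-1\mid p^k$ from $n-1\mid D_n^{(o)}$, and coprimality of $n$ with $n-1=p^l$ finishes it, whereas the paper instead uses the divisibility $n-3\mid D_n^{(o)}$ (resp.\ $\frac{n-3}{2}\mid D_n^{(o)}$) from Proposition \ref{prop18,3} and checks a few small cases. For $D_n^{(e)}=p^k$, however, there is a genuine gap: your branch $k>l$, i.e.\ $p\mid E_n^{(e)}$, is never closed, and you explicitly fall back on a hoped-for numerical certification via Theorem \ref{thm16} and a Kurepa-type congruence. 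None of that machinery is needed. The idea you are missing is that Proposition \ref{prop18,3} supplies a \emph{second} divisor of $D_n^{(e)}$ coprime to $n-1$: taking $d=n-2$ for odd $n$, or $d=\frac{n-2}{2}$ for even $n$, gives $n-2\mid D_n^{(e)}$, resp.\ $\frac{n-2}{2}\mid D_n^{(e)}$. Then $n-1$ and $n-2$ (resp.\ $\frac{n-2}{2}$) are both powers of $p$, and since $\gcd(n-1,n-2)=1$ at least one of them must equal $1$; this forces $n\in\{2,3,4\}$ outright, with no analysis of $E_n^{(e)}$, no Hensel lifting, and no computation.

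On the other hand, you have correctly caught an error in the statement itself: $D_4^{(e)}=3=3^1$, so $(p,n,k)=(3,4,1)$ is a solution and the proposition is false as written. The paper's own proof of the even-$n$ branch concludes $p\mid(n-1)-(n-2)=1$ without noticing that this step requires $\frac{n-2}{2}>1$; the subcase $\frac{n-2}{2}=1$, i.e.\ $n=4$, escapes and yields precisely the solution your $k=l$ analysis produced. The correct conclusion is that $D_n^{(e)}=p^k$ has exactly the two solutions $(2,3,1)$ and $(3,4,1)$.
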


\begin{proof}
By Proposition \ref{prop18,3}, if $n$ is even then $n-3\mid D_n^{(o)}$. Moreover $n-1\mid D_n^{(o)}$. Thus $n-3$ and $n-1$ are powers of $p$. This gives three posibilities:
\begin{itemize}
\item $n-1=1$, but then $D_n^{(o)}=D_2^{(o)}=1$;
\item $n-3=1$, but then $D_n^{(o)}=D_4^{(o)}=6$;
\item $n-1\neq 1, n-3\neq 1$; in this case $p\mid (n-1)-(n-3)=2$, which means that $p=2$ and there must be $n-3=2$ and $n-1=4$; but then $D_n^{(o)}=D_5^{(o)}=20$.
\end{itemize}
If $n$ is odd then by Proposition \ref{prop18,3}, $\frac{n-3}{2}\mid D_n^{(o)}$. Then $\frac{n-3}{2}$ and $n-1$ are powers of $p$ and we have three possibilities:
\begin{itemize}
\item $n-1=1$, but then $D_n^{(o)}=D_2^{(o)}=1$;
\item $\frac{n-3}{2}=1$, but then $D_n^{(o)}=D_5^{(o)}=20$;
\item $n-1\neq 1, \frac{n-3}{2}\neq 1$; in this case $p\mid (n-1)-(n-3)=2$, which means that $p=2$ and there must be $n-3=2$ and $n-1=4$; but then $D_n^{(o)}=D_5^{(o)}=20$.
\end{itemize}
We proved that the equation $D_n^{(o)}=p^k$ with variables $p\in\bbb{P}$, $n,k\in\N_+$ has no solutions.

Now we consider the equation $D_n^{(e)}=p^k$. By Proposition \ref{prop18,3}, if $n$ is odd then $n-2\mid D_n^{(e)}$. Moreover $n-1\mid D_n^{(e)}$. Thus $n-2$ and $n-1$ are powers of $p$, which means that $p=2$ and $n=3$. Indeed we get the solution $(p,n,k)=(2,3,1)$.

If $n$ is even then by Proposition \ref{prop18,3}, $\frac{n-2}{2}\mid D_n^{(e)}$. Then $\frac{n-2}{2}$ and $n-1$ are powers of $p$ and we have two possibilities:
\begin{itemize}
\item $n-1=1$, but then $D_n^{(e)}=D_2^{(e)}=0$;
\item $n-1\neq 1$; then $n-2\neq 1$ (since $n$ is even) and $p\mid (n-1)-(n-2)=1$, a contradiction.
\end{itemize}
\end{proof}

\section{Arithmetic properties of $h$-Schenker sums}\label{sec5}

In this section we will generalize results on arithmetic properties of Schenker sums $a_n = \sum_{j=0}^n \frac{n!}{j!}n^j, n\in\N$, onto some wider class of sequences. We define these sequences by the closed formula
\begin{equation*}
a_n = \sum_{j=0}^n \frac{n!}{j!}h(n)^j, n\in\N,
\end{equation*}
where $h\in\Z[X]$ is fixed. A sequence of such form will be called \emph{the sequence of $h$-Schenker sums}.

Certainly, if $h=X$ then the sequence $(a_n)_{n\in\N}$ is the sequence of Schenker sums. Hence the sequences defined above can be thought as a generalization of the sequence of Schenker sums. This is a motivation of name of these sequences.

Let us notice that for $h=-1$ the sequence $(a_n)_{n\in\N}$ is the sequence of derangements. Moreover, if $h = b\in\Z$ then $(a_n)_{n\in\N}$ is the generalization of the sequence of derangements in the sense of section \ref{sec3}, i.e. it is given by the recurrence relation $a_0 = 1, a_n = na_{n-1} + b^n, n>0$. In particular, for $h=0$ the sequence $(a_n)_{n\in\N} = (n!)_{n\in\N}$ is the sequence of factorials (recall that we set $0^0=1$).

In \cite{AmdCalMo} and \cite{Mi} we can find various results concerning Schenker sums ($p$-adic valuations and infinitude of the set of Schenker primes, i.e. prime numbers $p$ with such property that $p$ divides $a_n$ for some positive integer $n$ not divisible by $p$). Now we will study these properties for our more general $h$-Schenker sums.

\subsection{Divisibility by primes, periodicity modulo $p$ and $p$-adic valuations of $h$-Schenker sums}\label{subsec5.1}

In \cite{AmdCalMo}, Amdeberhan, Callan and Moll showed that for any prime number $p$ and any positive integer $n$ divisible by $p$, $p$-adic valuation of $n$-th Schenker sum is equal to $p$-adic valuation of $n!$ (and by Legendre's formula this value is equal to $\frac{n-s_p(n)}{p-1}$, where $s_p(n)$ denotes the sum of digits of positive integer $n$ in base $p$). Now we will prove a  generalization of the mentioned result.

\begin{thm}\label{thm14}
Let $h\in\Z[X]$ and consider the sequence $(a_n)_{n\in\N}$ of $h$-Schenker sums. If $p$ is a prime number and $n$ is such a nonnegative integer that $p\mid h(n)$ then $v_p(a_n) = v_p(n!) = \frac{n-s_p(n)}{p-1}$.
\end{thm}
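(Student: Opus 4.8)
The plan is to exploit the non-Archimedean (ultrametric) nature of the $p$-adic valuation: I would show that among the $n+1$ summands defining $a_n$, the single term corresponding to $j=0$ has strictly smaller $p$-adic valuation than every other term, whence the valuation of the whole sum coincides with the valuation of this dominating term.

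First I would isolate the $j=0$ term. Writing $a_n = \sum_{j=0}^n \frac{n!}{j!} h(n)^j$, the summand for $j=0$ equals $\frac{n!}{0!}h(n)^0 = n!$ (using the convention $0^0=1$), so its valuation is exactly $v_p(n!)$, which by Legendre's formula equals $\frac{n-s_p(n)}{p-1}$. The statement will follow once I verify that every remaining summand has strictly larger valuation.

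For $1 \leq j \leq n$ I would compute $v_p\left(\frac{n!}{j!} h(n)^j\right) = v_p(n!) - v_p(j!) + j\,v_p(h(n))$. Since $p \mid h(n)$ we have $v_p(h(n)) \geq 1$ (and $v_p(h(n))=+\infty$ if $h(n)=0$, in which case the term vanishes), so this quantity is at least $v_p(n!) + \left(j - v_p(j!)\right)$. The heart of the matter is the elementary inequality $j - v_p(j!) \geq 1$ for $j \geq 1$: using Legendre's formula $v_p(j!) = \frac{j-s_p(j)}{p-1}$ together with $s_p(j) \geq 1$ and $p \geq 2$, one obtains $v_p(j!) \leq \frac{j-1}{p-1} \leq j-1$, hence $j - v_p(j!) \geq 1$. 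Consequently every summand with $j \geq 1$ has valuation at least $v_p(n!)+1 > v_p(n!)$.

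Finally I would invoke the isosceles-triangle principle for the $p$-adic valuation: when one term of a finite sum has valuation strictly below all the others, the valuation of the sum equals that strict minimum. Applying this with the $j=0$ term as the unique minimizer yields $v_p(a_n) = v_p(n!) = \frac{n-s_p(n)}{p-1}$, as desired. No genuine obstacle arises here; the only point demanding care is the strict domination $j - v_p(j!) \geq 1$, since it is precisely the strictness that lets the ultrametric principle pin the valuation down exactly rather than merely bound it from below.
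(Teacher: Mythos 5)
Your proof is correct and follows essentially the same route as the paper: isolate the $j=0$ summand (equal to $n!$), show via Legendre's formula and $v_p(h(n))\geq 1$ that every summand with $j\geq 1$ has strictly larger valuation, and conclude by the ultrametric principle. The only cosmetic difference is that you package the strictness as $j-v_p(j!)\geq 1$, whereas the paper extracts it from $s_p(j)\geq 1$ inside the same inequality chain; the content is identical.
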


\begin{proof}
It suffices to verify that for $j\in\{1,2,...,n\}$, the $j$-th summand in the sum $\sum_{j=0}^n \frac{n!}{j!}h(n)^j$ has $p$-adic valuation strictly greater than the $0$-th summand, equal to $n!$:
\begin{equation*}
\begin{split}
v_p\left(\frac{n!}{j!}h(n)^j\right) & = \frac{n-s_p(n)}{p-1} - \frac{j-s_p(j)}{p-1} + jv_p(h(n)) > \frac{n-s_p(n)}{p-1} - \frac{j}{p-1} + j \geq \\
& \geq \frac{n-s_p(n)}{p-1} = v_p(n!)
\end{split}
\end{equation*}
This means that $v_p(a_n) = v_p(n!) = \frac{n-s_p(n)}{p-1}$.
\end{proof}

Another result given in \cite{AmdCalMo} states that it suffices to check the divisibility of Schenker sums $a_n, n\in\N_+$, by a given prime number $p$ only for indices $n$ less than $p$ because if $n\equiv r\pmod{p}$ then $p\mid a_n\iff p\mid a_r$. We will give a generalization of this fact.

\begin{prop}\label{prop19}
Let $h\in\Z[X]$ and consider the sequence $(a_n)_{n\in\N}$ of $h$-Schenker sums. Let $p$ be a prime number and $k$ be a positive integer. Then if $n_1,n_2\in\N$ are such that $n_1\equiv n_2\pmod{p^k}$ and $p\nmid h(n_1)$ then $\frac{a_{n_1}}{h(n_1)^{n_1}}\equiv \frac{a_{n_2}}{h(n_2)^{n_2}}\pmod{p^k}$. In particular, $p^k\mid a_{n_1}\iff p^k\mid a_{n_2}$.
\end{prop}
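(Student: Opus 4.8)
The plan is to isolate the factor $h(n)^n$ and reindex the sum so that it becomes a polynomial-like expression in $n$ to which the standard periodicity argument applies. Since $p\nmid h(n_1)$, and $n_1\equiv n_2\pmod{p^k}$ forces $h(n_1)\equiv h(n_2)\pmod p$, both $h(n_1)$ and $h(n_2)$ are units in $\Z_p$; in particular $h(n)^n$ is a $p$-adic unit for $n\in\{n_1,n_2\}$, so that $v_p(a_n)=v_p\!\left(\frac{a_n}{h(n)^n}\right)$ and it suffices to compare the quotients $\frac{a_n}{h(n)^n}$ modulo $p^k$. Substituting $i=n-j$ I would rewrite
\begin{equation*}
\frac{a_n}{h(n)^n}=\sum_{i=0}^n \frac{\prod_{l=0}^{i-1}(n-l)}{h(n)^i},
\end{equation*}
an identity valid in $\Z_p$ because $h(n)$ is invertible there.

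Next I would truncate this sum modulo $p^k$. For $i\geq kp$ the numerator $\prod_{l=0}^{i-1}(n-l)$ is a product of $i$ consecutive integers, hence (as already used in Section~\ref{sec2}) satisfies $v_p\!\left(\prod_{l=0}^{i-1}(n-l)\right)\geq\lfloor i/p\rfloor\geq k$; since $h(n)^{-i}$ is a unit, every such term is $\equiv 0\pmod{p^k}$. Moreover, for $i>n$ the product contains the factor $n-n=0$ and so vanishes identically; this lets me extend or truncate the summation range freely, and for every $n$ (large or small) I obtain
\begin{equation*}
\frac{a_n}{h(n)^n}\equiv\sum_{i=0}^{kp-1}\frac{\prod_{l=0}^{i-1}(n-l)}{h(n)^i}\pmod{p^k}.
\end{equation*}

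It then remains to check that each summand of this finite sum depends only on $n$ modulo $p^k$. The numerator $\prod_{l=0}^{i-1}(X-l)\in\Z[X]$ is a polynomial with integer coefficients, so $n_1\equiv n_2\pmod{p^k}$ gives $\prod_{l=0}^{i-1}(n_1-l)\equiv\prod_{l=0}^{i-1}(n_2-l)\pmod{p^k}$. For the denominator, $h(n_1)\equiv h(n_2)\pmod{p^k}$ and both are units, whence $h(n_1)^{-i}\equiv h(n_2)^{-i}\pmod{p^k}$ (a congruence between two units is preserved under inversion and under taking powers). Adding the $kp$ terms yields $\frac{a_{n_1}}{h(n_1)^{n_1}}\equiv\frac{a_{n_2}}{h(n_2)^{n_2}}\pmod{p^k}$, and the ``in particular'' follows since $h(n_i)^{n_i}$ is a unit, so that $p^k\mid a_{n_1}\iff p^k\mid\frac{a_{n_1}}{h(n_1)^{n_1}}\iff p^k\mid\frac{a_{n_2}}{h(n_2)^{n_2}}\iff p^k\mid a_{n_2}$.

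I expect the only real subtlety---rather than any deep obstacle---to be the uniform bookkeeping of the truncation: making sure the congruence to the $kp$-term sum holds simultaneously for small indices $n<kp$ and for large indices, which is precisely what the vanishing of the terms with $i>n$ takes care of. Everything else reduces to the elementary facts that integer polynomials are periodic modulo $p^k$ and that units of $\Z_p$ reduce well modulo $p^k$.
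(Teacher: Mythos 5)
Your proof is correct and follows essentially the same route as the paper: reindex the sum via $i=n-j$ so the general term is $h(n)^{n-i}\prod_{l=0}^{i-1}(n-l)$, truncate at $kp$ terms using $v_p\bigl(\prod_{l=0}^{i-1}(n-l)\bigr)\geq\lfloor i/p\rfloor$ together with the vanishing of terms with $i>n$, divide out the unit $h(n)^n$, and conclude by periodicity modulo $p^k$ of the integer polynomials and of inverse powers of the unit $h(n)$. The observation that $p\nmid h(n_2)$ follows from $p\nmid h(n_1)$ and $n_1\equiv n_2\pmod{p^k}$ is a point the paper leaves implicit; you handle it explicitly.
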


\begin{proof}
It suffices to note that
\begin{equation}\label{eq22}
\begin{split}
a_n & = \sum_{j=0}^n \frac{n!}{j!}h(n)^j = \sum_{j=0}^n \frac{n!}{(n-j)!}h(n)^{n-j} = \sum_{j=0}^n h(n)^{n-j}\prod_{i=0}^{j-1} (n-i) \equiv \\
\\ & \equiv \sum_{j=0}^{kp-1} h(n)^{n-j}\prod_{i=0}^{j-1} (n-i) \pmod{p^k},
\end{split}
\end{equation}
since $p^k\mid\frac{n!}{(n-j)!}$ for $j\geq kp$ and $\prod_{i=0}^{j-1} (n-i) = 0$ in case $n<j<kp$. By equivalence (\ref{eq22})
\begin{equation*}
\frac{a_{n_1}}{h(n_1)^{n_1}}\equiv \sum_{j=0}^{kp-1} h(n_1)^{-j}\prod_{i=0}^{j-1} (n_1-i) \pmod{p^k},
\end{equation*}
since $p\nmid h(n_1)$. The congruence $n_1\equiv n_2\pmod{p^k}$ implies that $h(n_1)\equiv h(n_2)\pmod{p^k}$ and as a consequence
\begin{equation*}
\frac{a_{n_1}}{h(n_1)^{n_1}}\equiv \sum_{j=0}^{kp-1} h(n_1)^{-j}\prod_{i=0}^{j-1} (n_1-i)\equiv \sum_{j=0}^{kp-1} h(n_2)^{-j}\prod_{i=0}^{j-1} (n_2-i)\equiv \frac{a_{n_2}}{h(n_2)^{n_2}} \pmod{p^k}.
\end{equation*}
Our proposition is proved.
\end{proof}

\begin{cor}\label{cor5}
Let $h\in\Z[X]$ and consider the sequence $(a_n)_{n\in\N}$ of $h$-Schenker sums. Let $p$ be a prime number, $k$ be a positive integer and $n_0\in\N_+$ be the smallest positive integer such that $p^k\mid n_0!$. Then the sequence $(a_n\pmod{p^k})_{n\in\N_{n_0}}$ is periodic of period $p^k(p-1)$.
\end{cor}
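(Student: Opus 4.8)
The plan is to reduce everything to the single congruence $a_{n+p^k(p-1)} \equiv a_n \pmod{p^k}$ for every $n \geq n_0$, which is precisely the assertion that $p^k(p-1)$ is a period. I would split the argument according to whether or not $p$ divides $h(n)$, because the two earlier results apply to exactly these two regimes: Proposition \ref{prop19} governs the case $p \nmid h(n)$, while Theorem \ref{thm14} governs the case $p \mid h(n)$. Throughout I would set $m = n + p^k(p-1)$ and record that $m \equiv n \pmod{p^k}$, whence $h(m) \equiv h(n) \pmod{p^k}$; in particular $p \mid h(m)$ if and only if $p \mid h(n)$.

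In the first case, $p \nmid h(n)$ and so $p \nmid h(m)$. Proposition \ref{prop19} then yields $\frac{a_m}{h(m)^m} \equiv \frac{a_n}{h(n)^n} \pmod{p^k}$, the quotients being taken in the unit group of $\Z/p^k\Z$. It therefore suffices to check that the normalizing factors coincide, $h(m)^m \equiv h(n)^n \pmod{p^k}$, for multiplying the quotient congruence by this common value gives $a_m \equiv a_n$. To verify the factor congruence I would first use $h(m) \equiv h(n) \pmod{p^k}$ to rewrite $h(m)^m \equiv h(n)^m \pmod{p^k}$, reducing matters to $h(n)^m \equiv h(n)^n \pmod{p^k}$, i.e. to $h(n)^{m-n} \equiv 1$. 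This follows from Euler's theorem: since $\varphi(p^k) = p^{k-1}(p-1)$ divides the exponent gap $m - n = p^k(p-1)$ and $p \nmid h(n)$, we have $h(n)^{m-n} \equiv 1 \pmod{p^k}$.

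In the second case, $p \mid h(n)$, the exponential argument is unavailable, but Theorem \ref{thm14} forces both terms to vanish. Because $n \geq n_0$ and $n_0$ is the smallest integer with $p^k \mid n_0!$, we get $p^k \mid n_0! \mid n!$, so Theorem \ref{thm14} gives $v_p(a_n) = v_p(n!) \geq k$; the same estimate applies to $m \geq n_0$ with $p \mid h(m)$. Hence $a_m \equiv 0 \equiv a_n \pmod{p^k}$, which closes this case and the proof.

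The bookkeeping is routine, and the genuinely delicate point is the exponent comparison in the first case: one must apply Euler's theorem with the correct modulus $\varphi(p^k) = p^{k-1}(p-1)$ and keep track that the period claimed is $p^k(p-1)$ rather than $p^{k-1}(p-1)$, the extra factor of $p$ being harmless precisely because it only makes $m-n$ a larger multiple of $\varphi(p^k)$. I would also note explicitly that the hypothesis $n \geq n_0$ is used solely in the second case, to ensure $p^k \mid a_n$; the first case is valid for all $n$ with $p \nmid h(n)$.
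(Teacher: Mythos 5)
Your proof is correct and follows essentially the same route as the paper's: Theorem \ref{thm14} handles the indices with $p\mid h(n)$ (where $n\geq n_0$ forces $v_p(a_n)=v_p(n!)\geq k$), and Proposition \ref{prop19} combined with Euler's theorem modulo $\varphi(p^k)=p^{k-1}(p-1)$ handles the indices with $p\nmid h(n)$. Your explicit remark that $p\mid h(m)\iff p\mid h(n)$ (needed so the two cases match up across a period) is a small point the paper leaves implicit, but otherwise the arguments coincide.
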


\begin{proof}
If $n\geq n_0$ is such that $p\mid h(n)$ then by Theorem \ref{thm14}, $v_p(a_n)=v_p(n!)\geq k$. Assume now that $n_1,n_2\geq n_0$ are such that $n_1\equiv n_2\pmod{p^k(p-1)}$ and $p\nmid h(n_1)
\cdot h(n_2)$. Since $n_1\equiv n_2\pmod{p^k}$, thus by Proposition \ref{prop19} we have $\frac{a_{n_1}}{h(n_1)^{n_1}}\equiv \frac{a_{n_2}}{h(n_2)^{n_2}}\pmod{p^k}$ and $h(n_1)^{n_1}\equiv h(n_2)^{n_1}\pmod{p^k}$. In addition, $n_1\equiv n_2\pmod{p^{k-1}(p-1)}$, so by Euler's theorem  $h(n_2)^{n_1}\equiv h(n_2)^{n_2}\pmod{p^k}$. Summing up our reasoning we conclude that $a_{n_1}\equiv a_{n_2}\pmod{p^k}$ and corollary follows.
\end{proof}

\begin{rem}
It is possible that the sequence $(a_n\pmod{p^k})_{n\in\N_{n_0-1}}$ is not periodic, but only on condition that $p\mid h(n_0-1)$ and then $v_p(a_{n_0-1})=v_p((n_0-1)!)<k$ (let us observe that if $p\nmid h(n_0-1)$ then the consideration from the proof of Corollary \ref{cor5} allows us to claim that $a_{n_0-1}\equiv a_n\pmod{p^k}$ for any $n\equiv n_0-1\pmod{p^k(p-1)}$).

We claim that if $p\mid h(n_0-1)$ and the basic period of the sequence $(a_n\pmod{p^k})_{n\in\N_{n_0}}$ is divisible by $p$ then the sequence $(a_n\pmod{p^k})_{n\in\N_{n_0-1}}$ is not periodic. If we assume the contrary then the basic period of $(a_n\pmod{p^k})_{n\in\N_{n_0-1}}$ must be divisble by $p$, but $a_n\equiv 0\not\equiv a_{n_0-1}\pmod{p^k}$ for any $n>n_0-1$ such that $n\equiv n_0-1\pmod{p}$. Hence it suffices to give such a sequence $(a_n)_{n\in\N}$ that the sequence $(a_n\pmod{p^k})_{n\in\N_{n_0}}$ has the basic period divisible by $p$ and $p\mid h(n_0-1)$.

Let us consider $a_n = \sum_{j=0}^n \frac{n!}{j!}(n+1)^j, n\in\N$, $p=5$ and $k=2$. Then the basic period of the sequence $(a_n\pmod{5^2})_{n\in\N_{10}}$ is equal to $100 = 5^2\cdot 4$. Therefore, the sequence $(a_n\pmod{5^2})_{n\in\N_9}$ is not periodic. Moreover, this example shows that it is possible that $p^k(p-1)$ is the basic period of the sequence $(a_n\pmod{p^k})_{n\in\N_{n_0}}$.
\end{rem}

Using Remark \ref{rem1} we obtain

\begin{cor}
Let $h\in\Z[X]$ and consider the sequence $(a_n)_{n\in\N}$ of $h$-Schenker sums. Let $d=p_1^{k_1}\cdot ...\cdot p_s^{k_s}$ be a positive integer and $n_0\in\N_+$ be the smallest positive integer such that $p_i^{k_i}\mid n_0!$ for each $i\in\{1,2,...,s\}$. Then the sequence $(a_n\pmod{d})_{n\in\N_{n_0}}$ is periodic of period $\lcm\{p_i^{k_i}(p_i-1): i\in\{1,2,...,s\}\}$.
\end{cor}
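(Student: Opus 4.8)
The plan is to reduce the composite modulus $d$ to its prime-power factors and glue the resulting periodicities together with Remark \ref{rem1}. First I would record the elementary observation that the integer $n_0$ in the statement is exactly $\max_{1\le i\le s} n_0^{(i)}$, where $n_0^{(i)}$ denotes the least positive integer with $p_i^{k_i}\mid n_0^{(i)}!$: indeed, the divisibility $p_i^{k_i}\mid n!$ is preserved under increasing $n$, so the least $n$ satisfying all $s$ conditions simultaneously is the largest of the individual thresholds. In particular $n_0\ge n_0^{(i)}$ for every $i$.

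Next, I would apply Corollary \ref{cor5} to each prime power $p_i^{k_i}$ separately. It yields that $(a_n\pmod{p_i^{k_i}})_{n\in\N_{n_0^{(i)}}}$ is periodic of period $p_i^{k_i}(p_i-1)$, and since $n_0\ge n_0^{(i)}$, the same period works for the tail $(a_n\pmod{p_i^{k_i}})_{n\in\N_{n_0}}$ (passing to a later starting index never destroys a period). Shifting the index by $n_0$, i.e. considering $b_n:=a_{n+n_0}$, turns each of these into a sequence indexed by $\N$, which is precisely the setting in which Remark \ref{rem1} is stated.

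Then I would combine the $s$ prime powers. Because $p_1^{k_1},\dots,p_s^{k_s}$ are pairwise coprime and $d=p_1^{k_1}\cdots p_s^{k_s}$, an induction on $s$ using Remark \ref{rem1} shows that the basic period of $(b_n\pmod d)_{n\in\N}$ equals $\lcm\{P_i: 1\le i\le s\}$, where $P_i$ is the basic period of $(b_n\pmod{p_i^{k_i}})_{n\in\N}$. By the previous paragraph each $P_i$ divides $p_i^{k_i}(p_i-1)$, hence $\lcm\{P_i\}$ divides $\lcm\{p_i^{k_i}(p_i-1)\}$, and therefore the latter number is a period of $(b_n\pmod d)_{n\in\N}$. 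Undoing the index shift gives that $\lcm\{p_i^{k_i}(p_i-1): i\in\{1,\dots,s\}\}$ is a period of $(a_n\pmod d)_{n\in\N_{n_0}}$, as claimed.

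The only genuine subtlety — and the step I expect to need the most care — is the mismatch between Corollary \ref{cor5}, which furnishes a period $p_i^{k_i}(p_i-1)$ that need not be minimal, and Remark \ref{rem1}, which is phrased in terms of basic periods. This is resolved exactly by the divisibility argument above (each $P_i\mid p_i^{k_i}(p_i-1)$), or alternatively bypassed entirely by a direct Chinese Remainder Theorem argument: if $T:=\lcm\{p_i^{k_i}(p_i-1)\}$ then $T$ is a period of each tail $(a_n\pmod{p_i^{k_i}})_{n\in\N_{n_0}}$, so $a_{n+T}\equiv a_n\pmod{p_i^{k_i}}$ for all $i$ and all $n\ge n_0$, and coprimality of the $p_i^{k_i}$ forces $a_{n+T}\equiv a_n\pmod d$. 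Either route is routine once the threshold $n_0=\max_i n_0^{(i)}$ has been pinned down.
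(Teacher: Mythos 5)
Your proposal is correct and follows essentially the same route as the paper, which derives the corollary directly from Corollary \ref{cor5} applied to each prime power together with Remark \ref{rem1}; your extra care about the threshold $n_0=\max_i n_0^{(i)}$ and about basic versus non-basic periods just makes explicit what the paper leaves implicit.
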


\bigskip

Theorem \ref{thm14} allows us to describe $p$-adic valuation of the $h$-Schenker sum $a_n$, $n\in\N$, when $p$ is such a prime number that $p\mid a_n$ only if $p\mid h(n)$. Namely, in this situation we have
\begin{equation*}
v_p(a_n)=
\begin{cases}
v_p(n!)=\frac{n-s_p(n)}{p-1} & \mbox{ if } p\mid h(n) \\
0 & \mbox{ if } p\nmid h(n)
\end{cases}.
\end{equation*}

However, it is possible that $p\mid a_n$ and $p\nmid h(n)$ for some $n\in\N$.

\begin{defi}
Let $h\in\Z[X]$ and a sequence $(a_n)_{n\in\N}$ be the sequence of $h$-Schenker sums. Then a prime number $p$ will be called $h$-Schenker prime if $p\mid a_n$ and $p\nmid h(n)$ for some $n\in\N$.
\end{defi}

In order to verify, if a given prime number $p$ is a $h$-Schenker prime, it suffices to check divisibility of $a_n$ by $p$ for $n\in\{0,1,...,p-1\}$ under the condition $p\nmid h(n)$. By Proposition \ref{prop19}, if $n$ is a positive integer such that $p\nmid h(n)$ and $r$ is remainder of $n$ from division by $p$ then $p\mid a_n\iff p\mid a_r$.

If $p\mid a_{n_1}$ and $p\nmid h(n_1)$ for some $n_1\in\N$ then using Theorem \ref{thm1} we can obtain description of $p$-adic valuation of the number $a_n$, where $n\equiv n_1\pmod{p}$. The congruence $\frac{a_{n_1}}{h(n_1)^{n_1}}\equiv \frac{a_{n_2}}{h(n_2)^{n_2}}\pmod{p^k}$ from the statement of Proposition \ref{prop19} suggests us that the sequence $(a_n)_{n\in\N}$ has pseudo-polynomial decomposition modulo $p$ on the set $\{n\in\N: n\equiv n_1\pmod{p}\}$. Using similar computation as in (\ref{eq22}) we obtain
\begin{equation*}
\begin{split}
a_n & = \sum_{j=0}^n \frac{n!}{j!}h(n)^j = \sum_{j=0}^n \frac{n!}{(n-j)!}h(n)^{n-j} = \sum_{j=0}^n h(n)^{n-j}\prod_{i=0}^{j-1} (n-i) \equiv \\
& \equiv \sum_{j=0}^{p^k} h(n)^{n-j}\prod_{i=0}^{j-1} (n-i) = h(n)^{n-p^k-1}\sum_{j=0}^{p^k} h(n)^{p^k+1-j}\prod_{i=0}^{j-1} (n-i) \pmod{p^k}.
\end{split}
\end{equation*}
Let us put $f_{p,k}(X) = \sum_{j=0}^{p^k} h(X)^{p^k+1-j}\prod_{i=0}^{j-1} (X-i) \in\Z[X]$ and $g_{p,k}(n) = h(n)^{n-p^k-1}$ for $k\in\N_+$. If $n\equiv n_1\pmod{p}$ then $g_{p,k}(n)\in\Z\bs p\Z$. What is more, for any $k\geq 2$ we have
\footnotesize
\begin{equation*}
\begin{split}
f'_{p,k}(n) & = (p^k+1)h(n)^{p^k}h'(n) + \\
& + \sum_{j=1}^{p^k} [(p^k+1-j)h(n)^{p^k-j}h'(n)\prod_{i=0}^{j-1} (n-i) + h(n)^{p^k+1-j}\sum_{s=0}^{j-1}\prod_{i=0, i\neq s}^{j-1} (n-i)] \equiv \\
& \equiv h(n)h'(n) + \sum_{j=1}^{2p-1} [(1-j)h(n)^{1-j}h'(n)\prod_{i=0}^{j-1} (n-i) + h(n)^{2-j}\sum_{s=0}^{j-1}\prod_{i=0, i\neq s}^{j-1} (n-i)]\pmod{p},
\end{split}
\end{equation*}
\normalsize
since $p\nmid h(n)$ for $n\equiv n_1\pmod{p}$ and by Fermat's little theorem $h(n)^{p^k}\equiv h(n)\pmod{p}$. Finally, $(f_{p,k}, g_{p,k})_{k\in\N_2}$ is a pseudo-polynomial decomposition modulo $p$ of the sequence $(a_n)_{n\in\N}$ on the set $\{n\in\N: n\equiv n_1\pmod{p}\}$. Hence, we can apply Theorem \ref{thm1} to describe behavior of $p$-adic valuation of the $h$-Schenker sum $a_n$, where $n\equiv n_1\pmod{p}$.

\begin{thm}
Let $h\in\Z[X]$ and a sequence $(a_n)_{n\in\N}$ be the sequence of $h$-Schenker sums. Let $p$ be a $h$-Schenker prime, $k\in\N_+$ and $n_k\in\N$ be such that $p^k\mid a_{n_k}$ and $p\nmid h(n_k)$. Let us define $\widehat{q}_p(n_k)=\frac{1}{p}(a_{n_k+p}-h(n)^pa_{n_k})$.
\begin{itemize}
\item If $v_p(\widehat{q}_p(n_k))=0$ then there exists a unique $n_{k+1}$ modulo  for which $n_{k+1}\equiv n_k \pmod{p^k}$ and $p^{k+1}\mid{a_n}$ for all $n$ congruent to $n_{k+1}$ modulo $p^{k+1}$. What is more, $n_{k+1} \equiv n_k-\frac{a_{n_kh(n_k)}}{\widehat{q}_p(n_k)} \pmod{p^{k+1}}$.
\item If $v_p(\widehat{q}_p(n_k))>0$ and $p^{k+1}\mid{a_{n_k}}$ then $p^{k+1}\mid{a_n}$ for all $n$ satisfying $n\equiv n_k \pmod{p^k}$.
\item If $v_p(\widehat{q}_p(n_k))>0$ and $p^{k+1}\nmid{a_{n_k}}$ then $p^{k+1}\nmid{a_n}$ for any $n$ satisfying $n\equiv n_k \pmod{p^k}$.
\end{itemize}
In particular, if $k=1$, $p\mid{a_{n_1}}$ and $v_p(\widehat{q}_p(n_1))=0$ then for any $l\in\mathbb{N}_+$ there exists a unique $n_l$ modulo $p^l$ such that $n_l\equiv n_1 \pmod{p}$ and $v_p(a_n)\geq{l}$ for all $n\equiv n_l\pmod{p^l}$. Moreover, the number $n_l$ satisfies the congruence $n_l \equiv n_{l-1}-\frac{a_{n_{l-1}}h(n_{l-1})}{\widehat{q}_p(n_1)} \pmod{p^l}$ for $l>1$.
\end{thm}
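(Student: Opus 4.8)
The plan is to deduce the statement by applying Theorem \ref{thm1} to the pseudo-polynomial decomposition $(f_{p,k},g_{p,k})_{k\in\N_2}$ with $g_{p,k}(n)=h(n)^{n-p^k-1}$ that was constructed immediately above, now based at the congruence class of $n_k$; this is legitimate since $p^k\mid a_{n_k}$ forces $p\mid a_{n_k}$ and we are given $p\nmid h(n_k)$. First I would check the density hypothesis of Theorem \ref{thm1} on $S=\{n\in\N:n\equiv n_k\pmod p\}$: the whole progression $\{n\in\N:n\equiv n_k\pmod{p^k}\}$ is contained in $S$, and an arithmetic progression in $\N$ is dense in the corresponding ball of $\Z_p$, so the existence, uniqueness and trichotomy assertions of Theorem \ref{thm1} transfer once the quantity $q_p(n_k)$ appearing there is matched with $\widehat q_p(n_k)$.

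The core computation is to rewrite $q_p(n_k)=\frac1p\bigl(a_{n_k+p}g_{p,2}(n_k+p)^{-1}-a_{n_k}g_{p,2}(n_k)^{-1}\bigr)$ by factoring out the $p$-adic unit $h(n_k)^{p^2+1-n_k-p}$ (here $p\nmid h(n_k)$). Setting $w=\bigl(h(n_k+p)/h(n_k)\bigr)^{p^2+1-n_k-p}$ and using $h(n_k+p)\equiv h(n_k)\pmod p$ to write $w=1+p\eps$ with $\eps\in\Z_p$, together with $p\widehat q_p(n_k)=a_{n_k+p}-h(n_k)^p a_{n_k}$, I obtain
\[
q_p(n_k)=h(n_k)^{p^2+1-n_k-p}\Bigl(\widehat q_p(n_k)+\eps\,a_{n_k+p}\Bigr).
\]
The decisive fact is $p\mid a_{n_k+p}$: since $p\mid a_{n_k}$, $n_k+p\equiv n_k\pmod p$ and $p\nmid h(n_k+p)$, Proposition \ref{prop19} gives $p\mid a_{n_k+p}$. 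Hence the correction term $\eps\,a_{n_k+p}$ has positive valuation, and as $h(n_k)^{p^2+1-n_k-p}$ is a unit I conclude $v_p(q_p(n_k))=0\iff v_p(\widehat q_p(n_k))=0$. This is precisely the dichotomy driving the three bullet cases of the statement.

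For the explicit recursion I would reduce the congruence $n_{k+1}\equiv n_k-\frac{a_{n_k}}{g_{p,k+1}(n_k)q_p(n_k)}\pmod{p^{k+1}}$ of Theorem \ref{thm1}. Because $p^k\mid a_{n_k}$, only the residues modulo $p$ of $g_{p,k+1}(n_k)$ and $q_p(n_k)$ are relevant. Fermat's little theorem yields $h(n_k)^{p^{k+1}}\equiv h(n_k)$ and $h(n_k)^{p^2}\equiv h(n_k)\pmod p$, so $g_{p,k+1}(n_k)\equiv h(n_k)^{n_k-2}$ and $q_p(n_k)\equiv h(n_k)^{1-n_k}\widehat q_p(n_k)\pmod p$; multiplying gives $g_{p,k+1}(n_k)q_p(n_k)\equiv h(n_k)^{-1}\widehat q_p(n_k)\pmod p$, which transforms the recursion into $n_{k+1}\equiv n_k-\frac{a_{n_k}h(n_k)}{\widehat q_p(n_k)}\pmod{p^{k+1}}$. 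The concluding ``$k=1$'' part then follows by the same induction on $l$ as in the proof of Theorem \ref{thm1}, using $q_p(n_l)\equiv q_p(n_1)\pmod p$ along the chain.

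The main obstacle I anticipate is the second step: one must verify that the unit factor coming from $g_{p,2}$, and the discrepancy between $h(n_k+p)$ and $h(n_k)$, perturb $q_p(n_k)$ only by a term of positive valuation, so that $v_p(q_p(n_k))$ genuinely records the same zero/positive alternative as $v_p(\widehat q_p(n_k))$. This is exactly where the divisibility $p\mid a_{n_k+p}$ furnished by Proposition \ref{prop19} is indispensable; without it the identification of the two valuations, and hence the entire trichotomy, would break down.
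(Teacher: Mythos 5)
Your proposal is correct and follows essentially the same route as the paper: both apply Theorem \ref{thm1} to the pseudo-polynomial decomposition $(f_{p,k},g_{p,k})_{k\in\N_2}$ with $g_{p,k}(n)=h(n)^{n-p^k-1}$ and identify $q_p(n_k)$ with $\widehat q_p(n_k)$ up to a $p$-adic unit modulo $p$, then simplify the recursion via Fermat's little theorem. The only difference is that you spell out explicitly (via Proposition \ref{prop19} and the divisibility $p\mid a_{n_k+p}$) why the discrepancy between $h(n_k+p)$ and $h(n_k)$ in the denominator of $g_{p,2}$ only perturbs $q_p(n_k)$ by a term of positive valuation — a point the paper's proof passes over silently but which your argument correctly supplies.
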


\begin{proof}
The number $q_p(n_k)$ as specified in Theorem \ref{thm1} takes the form
\begin{equation*}
q_p(n_k)=\frac{1}{p}\left(\frac{a_{n_k+p}}{h(n_k+p)^{n_k+p-p^k-1}}-\frac{a_{n_k}}{h(n_k)^{n_k-p^k-1}}\right).
\end{equation*}
Hence $q_p(n_k)\equiv\frac{1}{ph(n_k)^{n_k+p-p^k-1}}(a_{n_k+p}-h(n_k)^pa_{n_k})=\frac{\widehat{q}_p(n_k)}{h(n_k)^{n_k+p-p^k-1}}\pmod{p}$ and since $p\nmid h(n_k)$we have $p\mid q_p(n_k)\iff p\mid \widehat{q}_p(n_k)$. Moreover $h(n_k)^{n_k-p^k-1}q_p(n_k)=\frac{\widehat{q}_p(n_k)}{h(n_k)^p}\equiv \frac{\widehat{q}_p(n_k)}{h(n_k)}\pmod{p}$, where the last equality holds by Fermat's little theorem. Then we can use Theorem \ref{thm1} to obtain the statement of our theorem.
\end{proof}

\subsection{Bounds on $h$-Schenker sums and infinitude of the set of $h$-Schenker primes}\label{subsec5.2}

Firstly, we will prove that for any polynomial $h$ the sequence of absolute values of $h$-Schenker sums diverges to $+\infty$. More precisely, we have the following:

\begin{thm}\label{thm15}
Let $h\in\Z[X]$ and consider the sequence $(a_n)_{n\in\N}$ of $h$-Schenker sums.
\begin{enumerate}
\item If $h(n)>n$ for $n\gg 0$ then
\begin{equation*}
(n+1)!<h(n)^n<a_n<(n+1)h(n)^n
\end{equation*}
for $n\gg 0$.
\item If $h=X-b$ for some $b\in\N$ and $n\geq b+2$ then
\begin{equation*}
n!<(n-b)^{n-b}\prod_{i=0}^{b-1} (n-i)<a_n<(n+1)(n-b)^{n-b}\prod_{i=0}^{b-1} (n-i).
\end{equation*}
\item If $h=b$ for some $b\in\Z$ then
\begin{equation*}
\lim_{n\rightarrow +\infty}\frac{a_n}{n!} = e^b.
\end{equation*}
In particular, $a_n=O(n!)$, when $n\rightarrow +\infty$.
\item If $h=-X+b$ for some $b\in\N$ then
\begin{equation*}
n!<(n-b)^{n-b}\prod_{i=0}^{b-3} (n-i)<|a_n|<(n-1)(n-b)^{n-b+1}\prod_{i=0}^{b-2} (n-i)
\end{equation*}
for $n\gg 0$.
\item If $-h(n)>n$ for $n\gg 0$ then
\begin{equation*}
n!<|h(n)|^{n-1}(|h(n)|-n)<|a_n|<(n+1)|h(n)|^n
\end{equation*}
for $n\gg 0$.
\end{enumerate}
In particular, if the leading coefficient of $h$ is positive or $\deg h > 0$ then $|a_n|>n!$ for $n\gg 0$.
\end{thm}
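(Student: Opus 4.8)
The backbone of the whole argument is the reversal identity already established in the proof of Proposition~\ref{prop19}, namely
\begin{equation*}
a_n=\sum_{j=0}^n\frac{n!}{j!}h(n)^j=n!\sum_{j=0}^n\frac{h(n)^j}{j!}=\sum_{j=0}^n h(n)^{n-j}\prod_{i=0}^{j-1}(n-i).
\end{equation*}
Writing $T_j=h(n)^{n-j}\prod_{i=0}^{j-1}(n-i)$ and computing the ratio $|T_{j+1}|/|T_j|=(n-j)/|h(n)|$, I see that the magnitudes $|T_j|$ are unimodal in $j$: they increase while $n-j>|h(n)|$ and decrease afterwards. This single observation locates the dominant term and drives every estimate. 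I split according to the sign of $h(n)$ for $n\gg0$: when $h(n)>0$ all $T_j$ are nonnegative (cases (1),(2),(3)), while when $h(n)<0$ one has $\sgn T_j=(-1)^{n-j}$, so the sum is alternating (cases (4),(5)). Throughout I use the elementary inequality $(n+1)^{n-1}=\prod_{k=2}^n(n+1)>\prod_{k=2}^n k=n!$ (equivalently $m^m>m!$ for $m\ge2$) to push each lower bound past $n!$.

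In the positive regime, for case (1) the condition $h(n)>n$ makes $(n-j)/h(n)<1$ for every $j\ge1$, so $T_0=h(n)^n$ is the largest term; positivity of the remaining terms gives $a_n>h(n)^n\ge(n+1)^n>(n+1)!$ (the last step from $(n+1)^{n-1}>n!$), while bounding each $T_j$ by $h(n)^n$ via $\prod_{i=0}^{j-1}(n-i)\le n^j<h(n)^j$ yields $a_n<(n+1)h(n)^n$. For case (2) with $h=X-b$ the ratio is $(n-j)/(n-b)$, which exceeds $1$ for $j<b$ and is $<1$ for $j>b$, so the peak is the $j=b$ term $(n-b)^{n-b}\prod_{i=0}^{b-1}(n-i)$; positivity gives it as a lower bound for $a_n$, and $(n-b)^{n-b}>(n-b)!$ upgrades this past $n!$, while the crude estimate ``$n+1$ terms, each at most the peak'' gives the stated upper bound. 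Case (3) is immediate: $a_n/n!=\sum_{j=0}^n b^j/j!\to e^b$.

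In the negative regime, which is the crux, $a_n=n!\sum_{j=0}^n(-M)^j/j!$ with $M=|h(n)|$ and $c_j=M^j/j!$. For case (5), $M>n$ forces $c_j$ to increase on $0\le j\le n$, and the standard estimate for an alternating sum with monotone terms gives $c_n-c_{n-1}\le\bigl|\sum_{j=0}^n(-1)^jc_j\bigr|\le c_n$; multiplying by $n!$ produces $M^{n-1}(M-n)\le|a_n|\le M^n<(n+1)M^n$, and $M^{n-1}(M-n)\ge M^{n-1}>n!$ closes the lower bound. The genuinely delicate case is (4), where $h=-X+b$ gives $M=n-b<n$, so the peak of $c_j$ sits \emph{inside} the summation range and no single term dominates. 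The plan is to pass to the tail: $\sum_{j=0}^n(-M)^j/j!=e^{-M}-R$ with $R=\sum_{j=n+1}^\infty(-M)^j/j!$. Since $e^{-M}=e^{-(n-b)}$ is negligible, $|a_n|$ is governed by $n!\,|R|$; as $j>M$ on the tail the magnitudes $c_j$ now decrease, so $|R|$ is sandwiched between its first term $\tfrac{M^{n+1}}{(n+1)!}$ and the first minus the second $\tfrac{M^{n+1}}{(n+1)!}\bigl(1-\tfrac{M}{n+2}\bigr)$, the latter factor being $(b+2)/(n+2)>0$. Translating these two-sided bounds on $n!\,|R|$ into the products $\prod_{i=0}^{b-3}(n-i)$ and $\prod_{i=0}^{b-2}(n-i)$ in the statement is the one piece of genuinely careful bookkeeping, and making the contribution of $e^{-M}$ rigorous uniformly for $n\gg0$ is where I expect the main difficulty to lie.

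For the concluding ``in particular'' clause, with the five estimates in hand the assertion is a classification of $h$. If $\deg h=0$ the hypothesis forces $h=b$ with $b\ge1$, and case (3) gives $a_n/n!\to e^b>1$, hence $a_n>n!$ eventually. If $\deg h\ge1$ with positive leading coefficient, then $h(n)-n\to+\infty$ unless $h=X-b$ with $b\ge0$; this is exactly the dichotomy between case (1) and the boundary family of case (2). Symmetrically, if $\deg h\ge1$ with negative leading coefficient, then $-h(n)-n\to+\infty$ unless $h=-X+b$ with $b\ge0$, the dichotomy between case (5) and the boundary family of case (4); the linear polynomials $X-b$ and $-X+b$ are precisely the ones escaping the ``$|h(n)|>n$'' hypotheses, which is the structural reason cases (2) and (4) must be isolated. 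In each case the proven lower bound strictly exceeds $n!$ for $n\gg0$ — namely $(n+1)!$, $n!$, $e^b\,n!$, $(n-b)^{n-b}\prod_{i=0}^{b-3}(n-i)>n!$, and $M^{n-1}(M-n)>n!$ respectively — so $|a_n|>n!$ for $n\gg0$, as claimed.
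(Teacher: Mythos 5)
Your proof is correct in substance but takes a genuinely different route from the paper's in the two alternating cases, which are the only hard ones. The paper works directly with the finite sum $\sum_{j=0}^{n}h(n)^{n-j}\prod_{i=0}^{j-1}(n-i)$: in case (5) it pairs the $(n-2j)$-th and $(n-2j-1)$-st summands into single terms of constant sign $(-1)^n$, and in case (4) -- where the peak sits inside the range and adjacent terms nearly cancel -- it groups summands in fours around the peak and expands $(n-b+2j)\prod_{i=1}^{2j-1}((n-b)^2-i^2)-(n-b)^{4j}$ by hand to extract a positive leading contribution. You instead read $a_n/n!=\sum_{j=0}^{n}h(n)^j/j!$ as a partial sum of $e^{h(n)}$ and apply the alternating-series estimate: to the reversed sum in case (5), and to the tail $R=e^{-M}-\sum_{j=0}^{n}(-M)^j/j!$ in case (4). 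This is cleaner and in fact yields the sharper two-sided bound $\frac{b+2}{n+2}\cdot\frac{(n-b)^{n+1}}{n+1}\leq n!\,|R|\leq\frac{(n-b)^{n+1}}{n+1}$, from which the stated bounds follow by comparing powers of $n$. The two difficulties you flag in case (4) are not real. First, $e^{-M}<1$ while $|R|\geq\frac{(b+2)(n-b)^{n+1}}{(n+2)!}\rightarrow+\infty$ (use $m^m/m!\geq 2^{\lfloor m/2\rfloor}$ with $m=n-b$, which beats the remaining polynomial factors), so $e^{-M}=o(|R|)$ and $|a_n|=n!\,|R|\,(1+o(1))$ for $n\gg 0$ regardless of the sign pattern. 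Second, the remaining link $n!<(n-b)^{n-b}\prod_{i=0}^{b-3}(n-i)$, which you assert but do not prove, reduces to $(n-b)^{n-b}/(n-b)!$ outgrowing a quadratic in $n$, again immediate from the same inequality. Your concluding classification of $h$ into the five cases is sound and is actually more explicit than the paper, whose proof never addresses the final clause; the only imprecision is that the correct dichotomy for positive leading coefficient and $\deg h\geq 1$ is ``$h(n)>n$ eventually'' versus ``$h=X-b$ with $b\in\N$'' (e.g.\ $h=X+1$ has $h(n)-n$ constant yet falls under case (1)), which changes nothing in the conclusion.
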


\begin{proof}
\begin{enumerate}
\item If $h(n)>n$ for $n\gg 0$ then the $n$-th summand in the sum $a_n = \sum_{j=0}^n \frac{n!}{j!}h(n)^j$ is the biggest one, thus $a_n < (n+1)h(n)^n$. On the other hand, each summand in the mentioned sum is positive, therefore $a_n > h(n)^n$. Moreover, $h(n)\geq n+1$ for $n\gg 0$, hence $h(n)^n>(n+1)\cdot ...\cdot 2=(n+1)!$.
\item If $h=X-b$ for some $b\in\N$ and $n>b$ then $n-b-1$-st and $n-b$-th summands in the sum $a_n = \sum_{j=0}^n \frac{n!}{j!}h(n)^j$ are the biggest ones and equal to $(n-b)^{n-b}\prod_{i=0}^{b-1} (n-i)$. That is why $a_n\leq (n+1)(n-b)^{n-b}\prod_{i=0}^{b-1} (n-i)$ and the inequality is strict for $n\geq b+2$. On the other hand, each summand in the mentioned sum is positive, therefore $a_n > (n-b)^{n-b}\prod_{i=0}^{b-1} (n-i) > n!$.
\item If $h=b$ for some $b\in\Z$ then $a_n = \sum_{j=0}^n \frac{n!}{j!}b^j = n!\sum_{j=0}^n \frac{b^j}{j!}$, which means that $\lim_{n\rightarrow +\infty}\frac{a_n}{n!} = \lim_{n\rightarrow +\infty}\sum_{j=0}^n \frac{b^j}{j!} = e^b$ and $a_n=O(n!)$, when $n\rightarrow +\infty$.
\item If $h=-X+b$ for some $b\in\N$ then for $1\leq j\leq\left\lfloor\frac{b}{2}\right\rfloor$ we add up $n-b+2j$-th, $n-b+2j-1$-st, $n-b-2j$-th and $n-b-2j-1$-st summands together.
\begin{equation*}
\begin{split}
& (-n+b)^{n-b+2j}\prod_{i=0}^{b-2j-1} (n-i) + (-n+b)^{n-b+2j-1}\prod_{i=0}^{b-2j} (n-i) + \\
& \quad + (-n+b)^{n-b-2j}\prod_{i=0}^{b+2j-1} (n-i) + (-n+b)^{n-b-2j-1}\prod_{i=0}^{b+2j} (n-i) \\
& = (-1)^{n-b}\left[(n-b)^{n-b+2j}\prod_{i=0}^{b-2j-1} (n-i) - (n-b)^{n-b+2j-1}\prod_{i=0}^{b-2j} (n-i) +\right. \\
& \quad\left. + (n-b)^{n-b-2j}\prod_{i=0}^{b+2j-1} (n-i) - (n-b)^{n-b-2j-1}\prod_{i=0}^{b+2j} (n-i)\right] \\
& = (-1)^{n-b}\left[-2j(n-b)^{n-b+2j-1}\prod_{i=0}^{b-2j-1} (n-i) + 2j(n-b)^{n-b-2j-1}\prod_{i=0}^{b+2j-1} (n-i)\right] \\
& = (-1)^{n-b}\cdot 2j(n-b)^{n-b-2j-1}\prod_{i=0}^{b-2j-1} (n-i)\times \\
& \quad\times\left[(n-b+2j)\cdot ...\cdot (n-b-2j+1) - (n-b)^{4j}\right] \\
& = (-1)^{n-b}\cdot 2j(n-b)^{n-b-2j}\prod_{i=0}^{b-2j-1} (n-i)\times \\
& \quad\times\left[(n-b+2j)\prod_{i=1}^{2j-1} ((n-b)^2 - i^2) - (n-b)^{4j-1}\right] \\
& = (-1)^{n-b}\cdot 2j(n-b)^{n-b-2j}\prod_{i=0}^{b-2j-1} (n-i)\times \\
& \quad\times\left[(n-b)^{4j-1} + 2j(n-b)^{4j-2} + p_j(n-b) - (n-b)^{4j-1}\right] \\
& = (-1)^{n-b}\cdot 2j(n-b)^{n-b-2j}\prod_{i=0}^{b-2j-1} (n-i)\cdot [2j(n-b)^{4j-2} + p_j(n-b)],
\end{split}
\end{equation*}
where $p_j\in\Z[X]$ and $\deg p_j\leq 4j-3$. Hence $2j(n-b)^{4j-2} + p_j(n-b) > 0$ for $n\gg 0$. Since there are only finitely values $2j(n-b)^{4j-2} + p_j(n-b) > 0$, $1\leq j\leq\left\lfloor\frac{b}{2}\right\rfloor$, thus all these values are positive for $n\gg 0$.

For $\left\lfloor\frac{b}{2}\right\rfloor + 1\leq j\leq\left\lfloor\frac{n-b-1}{2}\right\rfloor$ we add up $n-b-2j$-th and $n-b-2j-1$-st summands together.
\begin{equation*}
\begin{split}
& (-n+b)^{n-b-2j}\prod_{i=0}^{b+2j-1} (n-i) + (-n+b)^{n-b-2j-1}\prod_{i=0}^{b+2j} (n-i) \\
& = (-1)^{n-b}[(n-b)^{n-b-2j}\prod_{i=0}^{b+2j-1} (n-i) - (n-b)^{n-b-2j-1}\prod_{i=0}^{b+2j} (n-i)] \\
& = (-1)^{n-b}\cdot 2j(n-b)^{n-b-2j-1}\prod_{i=0}^{b+2j-1} (n-i)
\end{split}
\end{equation*}

If $2\nmid b$ then the sign of the $n$-th summand in the sum $a_n = \sum_{j=0}^n \frac{n!}{j!}(-n+b)^j$ is equal to $(-1)^n$, this means it is opposite to $(-1)^{n+b}$. Therefore, if $n\gg 0$ then we obtain (in case, when $2\mid n-b$, we can omit the $0$-th summand in the sum $a_n = \sum_{j=0}^n \frac{n!}{j!}(-n+b)^j$ because its sign is $(-1)^{n-b}=1$):
\begin{equation*}
\begin{split}
& |a_n| \geq \sum_{j=1}^{\lfloor\frac{b}{2}\rfloor} 2j(n-b)^{n-b-2j}\prod_{i=0}^{b-2j-1} (n-i)\cdot [2j(n-b)^{4j-2} + p_j(n-b)] + \\
& \quad + \sum_{j=\lfloor\frac{b}{2}\rfloor + 1}^{\lfloor\frac{n-b-1}{2}\rfloor} 2j(n-b)^{n-b+2j-1}\prod_{i=0}^{b+2j-1} (n-i) - (n-b)^n \geq \\
& \geq 2(n-b)^{n-b-2}\prod_{i=0}^{b-3} (n-i)\cdot [2(n-b)^2 + p_1(n-b)] - (n-b)^n \geq \\
& \geq 2(n-b)^{n-b-2}\prod_{i=0}^{b-3} (n-i)\cdot (n-b)^2 - (n-b)^n \\
& = 2(n-b)^{n-b}\prod_{i=0}^{b-3} (n-i) - (n-b)^n > (n-b)^{n-b}\prod_{i=0}^{b-3} (n-i) \\
& = (n-b)\cdot (n-b)^{n-b-6}\cdot (n-b)^5\prod_{i=0}^{b-3} (n-i) \geq \\
& \geq 3!\cdot 4\cdot ...\cdot (n-b-3)(n-b-2)(n-b-1)\times \\
& \quad\times(n-b)(n-b+1)(n-b+2)\prod_{i=0}^{b-3} (n-i) = n!,
\end{split}
\end{equation*}
where the last inequality holds for $n-b\geq 3!=6$. Then $(n-b)^{n-b-6}\geq 4\cdot ...\cdot (n-b-3)$ and $(n-b)^5\geq (n-b-2)(n-b-1)(n-b)(n-b+1)(n-b+2)$.

If $2\mid b$ then the $n$-th summand in the sum $a_n = \sum_{j=0}^n \frac{n!}{j!}(-n+b)^j$ is added up together with $n-1$-st, $n-2b$-th and $n-2b-1$-st summand and analogous estimation as above allows us to state that
\begin{equation*}
\begin{split}
|a_n| & \geq 2(n-b)^{n-b-2}\prod_{i=0}^{b-3} (n-i)\cdot [2(n-b)^2 + p_1(n-b)]\geq \\
& \geq 2(n-b)^{n-b}\prod_{i=0}^{b-3} (n-i)\geq 2\cdot 3\cdot ...\cdot (n-b-3)\cdot (n-b)^5\prod_{i=0}^{b-3} (n-i)\geq n!
\end{split}
\end{equation*}
for $n\gg 0$.

To obtain the upper bound on $|a_n|$ it suffices to note that if $n>b$ then $n-b-1$-st and $n-b$-th summand in the sum $a_n = \sum_{j=0}^n \frac{n!}{j!}(-n+b)^j$ reduce and the $n-b+1$-st summand has the biggest absolute value among all the remaining summands. Therefore $|a_n|\leq (n-1)(n-b)^{n-b+1}\prod_{i=0}^{b-2} (n-i)$, where the inequality is strict for $n\geq b+2$.
\item If $-h(n)>n$ for $n\gg 0$ then for any $0\leq j\leq\left\lfloor\frac{n-1}{2}\right\rfloor$ the sum of $n-2j$-th and $n-2j-1$-st summand appearing in the sum $a_n = \sum_{j=0}^n \frac{n!}{j!}h(n)^j$ is equal to
\begin{equation*}
\begin{split}
& h(n)^{n-2j}\prod_{i=0}^{2j-1} (n-i) + h(n)^{n-2j-1}\prod_{i=0}^{2j} (n-i) \\
& = (-1)^n[|h(n)|^{n-2j}\prod_{i=0}^{2j-1} (n-i) - |h(n)|^{n-2j-1}\prod_{i=0}^{2j} (n-i)] \\
& = (-1)^n|h(n)|^{n-2j-1}(|h(n)|-n+2j)\prod_{i=0}^{2j-1} (n-i)
\end{split}
\end{equation*}
Hence
\begin{equation*}
|a_n| =
\begin{cases}
1 + \sum_{j=0}^{\frac{n-2}{2}} |h(n)|^{n-2j-1}(|h(n)|-n+2j)\prod_{i=0}^{2j-1} (n-i), & \mbox{ if } 2\mid n \\
\sum_{j=0}^{\frac{n-1}{2}} |h(n)|^{n-2j-1}(|h(n)|-n+2j)\prod_{i=0}^{2j-1} (n-i), & \mbox{ if } 2\nmid n
\end{cases},
\end{equation*}
which implies that $|a_n|>|h(n)|^{n-1}(|h(n)|-n)$ for $n\gg 0$.

In order to obtain the upper bound on $|a_n|$ it suffices to note that the $n$-th summand in the sum $a_n = \sum_{j=0}^n \frac{n!}{j!}h(n)^j$ has the biggest absolute value, thus $|a_n|<(n+1)|h(n)|^n$ for $n\gg 0$.
\end{enumerate}
\end{proof}

Theorem \ref{thm15} allows us to prove that for any nonzero polynomial $h\in\Z[X]$ there are infinitely many $h$-Schenker primes (certainly, if $h=0$ then any prime number $p$ is not an $h$-Schenker prime).

\begin{thm}
For any $h\in\Z[X]\bs\{0\}$ there are infinitely many $h$-Schenker primes.
\end{thm}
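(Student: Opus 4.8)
The plan is to argue by contradiction, first peeling off the constant case and then running a size-versus-valuation estimate against the lower bound of Theorem \ref{thm15}. If $h=b\in\Z\bs\{0\}$ is constant, then the sequence of $h$-Schenker sums coincides with $\mathbf a(X,1,b)\in\mathcal R$ (one checks directly that $a_n=na_{n-1}+b^n$); it is unbounded since $a_n\sim e^bn!$, and because $f=X$ has the nonnegative root $0$, the result of Section \ref{subsubsec3.4.1} shows $\mathcal P_{\mathbf a}$ is infinite. As $h(n)=b$ has only finitely many prime divisors, all but finitely many primes of $\mathcal P_{\mathbf a}$ divide no value $h(n)$ and are thus $h$-Schenker primes. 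So I may assume $\deg h\geq 1$, and then Theorem \ref{thm15} gives the crucial lower bound $|a_n|>n!$ for $n\gg 0$.

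Now suppose, for contradiction, that $p_1,\dots,p_s$ are all the $h$-Schenker primes. Fix $n$. Any prime $p\mid a_n$ with $p\nmid h(n)$ is, by definition, an $h$-Schenker prime, hence lies in $\{p_1,\dots,p_s\}$; whereas for $p\mid h(n)$ Theorem \ref{thm14} gives $v_p(a_n)=v_p(n!)$. Consequently $|a_n|=N(n)M(n)$, where $N(n)=\prod_{p\mid h(n)}p^{v_p(n!)}$ collects the primes dividing $h(n)$ and $M(n)=\prod_{p_i\nmid h(n)}p_i^{v_{p_i}(a_n)}$ collects the $h$-Schenker primes. For $N(n)$ I would use $v_p(n!)\leq n/(p-1)$ together with the fact that the distinct primes dividing $h(n)$ have product at most $|h(n)|=O(n^{\deg h})$; since the sum $\sum_{p\mid h(n)}\tfrac{\log p}{p-1}$ is largest when these primes are as small as possible, Chebyshev's bound on $\prod_{p\leq y}p$ forces them to lie below $y=O(\log n)$, and Mertens' estimate then yields $\log N(n)\leq n\sum_{p\mid h(n)}\tfrac{\log p}{p-1}\leq n(\log\log n+O(1))$. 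Thus $N(n)\leq\exp\!\big(o(n\log n)\big)$, so $N(n)/n!\to 0$.

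It remains to bound $M(n)$, and this is the main obstacle: I must control the valuations $v_{p_i}(a_n)$ uniformly. For $p\nmid h(n)$ one has $v_p(a_n)=v_p(\phi_p(n))$, where $\phi_p(x)=\sum_{j=0}^{\infty}h(x)^{-j}\prod_{i=0}^{j-1}(x-i)$; on each disk $\{x\equiv r\pmod p\}$ with $p\nmid h(r)$ this is a convergent power series, since the $j$-th term has valuation at least $\lfloor j/p\rfloor$ uniformly. It is not identically zero on the disk, because at integer arguments $\phi_p(n)=a_n/h(n)^n\neq 0$ for $n\gg 0$ (again by Theorem \ref{thm15}). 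Hence Strassmann's theorem gives $\phi_{p_i}$ only finitely many zeros in $\Z_{p_i}$. The pseudo-polynomial decomposition of $(a_n)_{n\in\N}$ on residue classes established in Section \ref{subsec5.1} is exactly the input that legitimises this description of $v_{p_i}(a_n)$.

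To finish, I would choose $n$ by the Chinese remainder theorem in a residue class modulo a high power of each $p_i$ that keeps $p_i\nmid h(n)$ (possible by Proposition \ref{prop19}, as $h\not\equiv 0\pmod{p_i}$) and stays $p_i$-adically away from every zero of $\phi_{p_i}$; for such $n$ each $v_{p_i}(a_n)$ is bounded by a constant, so $M(n)\leq C$ for some fixed $C$. Along the infinitely many $n\gg 0$ in this class we would then obtain
\[
n!<|a_n|=N(n)M(n)\leq C\exp\!\big(o(n\log n)\big),
\]
which is impossible for large $n$. This contradiction shows that the set of $h$-Schenker primes is infinite.
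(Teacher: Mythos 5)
Your argument is correct, but it departs from the paper's proof at both key points, and in each case the paper's route is substantially lighter. For constant $h=b$ the paper argues directly: letting $t$ be the product of the (assumed finitely many) $h$-Schenker primes, Proposition \ref{prop19} together with $a_0=1$ forces $|a_{mt}|=\prod_{p\mid b}p^{v_p((mt)!)}$, whence $|a_{mt}|/(mt)!\to 0$, contradicting the limit $e^b\neq 0$ from Theorem \ref{thm15}; your reduction to the theorem of Section \ref{subsubsec3.4.1} on unbounded sequences ${\bf a}(f,h_1,h_2)$ with $f$ having a nonnegative integer root is a legitimate alternative that recycles earlier machinery instead of the asymptotic. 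For $\deg h\geq 1$ the essential difference is how the valuations $v_{p_i}(a_n)$ are tamed. The paper does it in one line: fix $n_0$ with $h(n_0)a_{n_0}\neq 0$, set $k_i=v_{p_i}(a_{n_0})$, and restrict to $n\equiv n_0\pmod{p_i^{k_i+1}}$; Proposition \ref{prop19} then gives $v_{p_i}(a_n)=k_i$ exactly, with no analytic input. You instead pass to the analytic extension $\phi_{p_i}$ on residue disks, invoke Strassmann's theorem to get finitely many zeros (correctly using $|a_n|>n!$ from Theorem \ref{thm15} to exclude the identically-zero case), and bound the valuation away from the zeros via a Weierstrass-type factorization. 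This is sound --- the terms of $\phi_p$ do lie in the Tate algebra of each disk and tend to zero uniformly --- but it is far heavier than needed, and the very congruence from Proposition \ref{prop19} that you cite for the CRT step would already have handed you the exact valuation. Likewise your Chebyshev--Mertens estimate $\log N(n)=o(n\log n)$ is a sharper version of what is required: the paper only uses that $\prod_{p\mid h(n)}p^{v_p(n!)}$ divides $n!$ with a cofactor that eventually swamps the bounded contribution of the finitely many $h$-Schenker primes. Both routes terminate in the same contradiction with $|a_n|>n!$ from Theorem \ref{thm15}, and the factorization $|a_n|=N(n)M(n)$ you use (splitting prime divisors of $a_n$ according to whether they divide $h(n)$, with Theorem \ref{thm14} handling the first class) is exactly the paper's.
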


\begin{proof}
Let us assume that there are only finitely many $h$-Schenker primes. We consider two cases.
\begin{enumerate}
\item The case of $\deg h >0$. Let $n_0\in\N$ be such that $h(n_0)\cdot a_{n_0}\neq 0$ (we can find such an $n_0$, since $|a_n|>n!$ for $n\gg 0$). Let $p_1,...,p_s$ be all the $h$-Schenker primes that do not divide $h(n_0)$. Let $k_i = v_{p_i}(a_{n_0})$ for $i\in\{1,...,s\}$. Let us put $t=p_1^{k_1+1}\cdot ...\cdot p_s^{k_s+1}$. By Proposition \ref{prop19}, $v_{p_i}(a_{mt+n_0})=k_i$ for any $m\in\N$ and $i\in\{1,...,s\}$, so by Theorem \ref{thm14} and Theorem \ref{thm15} we obtain
\begin{equation*}
(mt+n_0)! < |a_{mt+n_0}| = \prod_{i=1}^s p_i^{k_i} \cdot \prod_{p \mbox{\scriptsize{ prime, }} p\mid h(mt+n_0)} p^{v_p((mt+n_0)!)} < (mt+n_0)!
\end{equation*}
for $m\gg 0$ - a contradiction.
\item The case $\deg h =0$. Then $h=b$ for some $b\in\Z\bs\{0\}$. If some prime number $p$ divides $a_n$ for some $n\in\N$ and $p$ is not an $h$-Schenker prime then $p\mid b$. Let $p_1,...,p_s$ be all the $h$-Schenker primes that do not divide $b$ and $t=p_1\cdot ...\cdot p_s$. Since $a_0=1$, thus $p_1,...,p_s$ do not divide $a_{mt}$ for any $m\in\N$. Hence $|a_{mt}|=\prod_{p \mbox{\scriptsize{ prime, }} p\mid b} p^{v_p((mt)!)}$ and
\begin{equation*}
\lim_{m\rightarrow +\infty}\frac{|a_{mt}|}{(mt)!} = \lim_{m\rightarrow +\infty}\frac{1}{\prod_{p \mbox{\scriptsize{ prime, }} p\nmid b} p^{v_p((mt)!)}} = 0,
\end{equation*}
but by Theorem \ref{thm15}, $\lim_{m\rightarrow +\infty}\frac{|a_{mt}|}{(mt)!} = e^b\neq 0$ - once again we obtain a contradiction.
\end{enumerate}
\end{proof}

\section*{Acknowledgements}

I wish to thank my MSc thesis advisor, Maciej Ulas for suggesting the sequence of derangements and trial of generalizing this sequence as a subject of MSc thesis, for scientific care and for help with edition of this paper. I would like also to thank Jakub Byszewski for discussion which was useful in preparing Section \ref{sec2} and giving the heuristic reasoning to Conjecture \ref{con1}. I wish to thank Maciej Gawron for help with numerical computations, too.

\end{document}